\newtheorem{theorem}{Theorem}[section]
\newtheorem{corollary}[theorem]{Corollary}
\newtheorem{lemma}[theorem]{Lemma}
\newtheorem{proposition}[theorem]{Proposition}
\newtheorem{definition}[theorem]{Definition}
\newtheorem{remark}[theorem]{Remark}
\newcommand{\cref}[1]{Chapter~\ref{#1}}
\lstdefinestyle{matlab} {
        language=Matlab,
        keywordstyle=\color{blue},
        commentstyle=\color[rgb]{0.13,0.55,0.13}\em,
        stringstyle=\color[rgb]{0.7,0,0} }
\newcommand\supp{\mathop{\rm supp}}
\newcommand\id{\mathop{\rm id}}
\newcommand{\calg}{$C^*$-algebra}
\newcommand{\norm}[1]{\lVert#1\rVert}
\newcommand{\bignorm}[1]{\big\lVert#1\big\rVert}
\newcommand{\Bignorm}[1]{\Big\lVert#1\Big\rVert}
\newcommand{\angles}[2]{\langle{#1}\,,{#2}\rangle}
\newcommand{\BH}{\mathcal{B}(\mathcal{H})}
\newcommand{\BHn}{\mathcal{B}(\mathcal{H}^{(n)})}
\newcommand{\BK}{\mathcal{B}(\mathcal{K})}
\newcommand{\hilb}{\mathcal{H}}
\newcommand{\name}{dual operator system homomorphism}
\newcommand\omin{\mathop{\rm OMIN}}
\newcommand\omax{\mathop{\rm OMAX}}
\newcommand\mitp{\otimes_{\min}}
\newcommand\mtp{\otimes_{\max}}
\newcommand\ctp{\otimes_{\rm c}}
\newcommand\SPAN{\textup{span}}
\newcommand\cattop{\bf{Top}}
\newcommand\limtop{\underleftarrow{\lim}_{\bf{Top}}}
\newcommand\catcalg{\bf{C}^*}
\newcommand\limcalg{\underrightarrow{\lim}_{\bf{C}^*}}
\newcommand\catou{\bf{OU}}
\newcommand\limou{\underrightarrow{\lim}_{\bf{OU}}}
\newcommand\cataou{\bf{AOU}}
\newcommand\limaou{\underrightarrow{\lim}_{\bf{AOU}}}
\newcommand\catmou{\bf{MOU}}
\newcommand\limmou{\underrightarrow{\lim}_{\bf{MOU}}}
\newcommand\catos{\bf{OS}}
\newcommand\limos{\underrightarrow{\lim}_{\bf{OS}}}
\newcommand\catosp{\bf{OSp}}
\newcommand\catdos{\bf{DOS}}
\newcommand\catoas{\bf{OC}^*\bf{S}}
\newcommand\limoas{\underrightarrow{\lim}_{\bf{OC}^*\bf{S}}}
\newcommand\limcat{\underrightarrow{\lim}_{\bf{C}}}
\newcommand\lou[1]{\ddot{#1}_{\infty}}
\newcommand\laou[1]{{#1_{\infty}}}
\newcommand\los[1]{\cl{#1}_{\infty}}
\newcommand\losc[1]{\widehat{\cl{#1}}_{\infty}}
\newcommand\lcalg[1]{\widehat{\cl{#1}}_{\infty}}
\newcommand{\cl}[1]{\mathcal{#1}}
\newcommand{\bb}[1]{\mathbb{#1}}
\newcommand{\vertiii}[1]{{\left\vert\kern-0.25ex\left\vert\kern-0.25ex\left\vert #1
    \right\vert\kern-0.25ex\right\vert\kern-0.25ex\right\vert}}
\begin{document}

\title{Inductive limits in the operator system and related categories}

\author[L. Mawhinney]{Linda Mawhinney}
\address{Mathematical Sciences Research Centre,
  Queen's University Belfast, Belfast BT7 1NN, United Kingdom}
\email{lmawhinney03@qub.ac.uk}

\author[I. Todorov]{Ivan G. Todorov}
\address{Mathematical Sciences Research Centre,
  Queen's University Belfast, Belfast BT7 1NN, United Kingdom}
\email{i.todorov@qub.ac.uk}

\date{15 April 2017}

\begin{abstract}
We present a systematic development of inductive limits in the categories of ordered *-vector spaces, 
Archimedean order unit spaces, matrix ordered spaces, operator systems and operator C*-systems. 
We show that the inductive limit intertwines the operation of passing to the maximal operator system structure 
of an Archimedean order unit space, and that the same holds true for the minimal operator system 
structure if the connecting maps are complete order embeddings. 
We prove that the inductive limit commutes with 
the operation of taking the maximal tensor product with another operator system, and
establish analogous results for injective functorial tensor products provided the connecting maps are complete order embeddings. 
We identify the inductive limit of quotient operator systems as a quotient of the inductive limit,
in case the involved kernels are completely biproximinal. 
We describe the inductive limit of graph operator systems as operator systems of topological graphs, 
show that two such operator systems are completely order isomorphic if and only if 
their underlying graphs are isomorphic, identify the C*-envelope of such an operator system, 
and prove a version of Glimm's Theorem on the isomorphism of UHF algebras in the category of operator systems. 
\end{abstract}

\maketitle

\tableofcontents

\section{Introduction}\label{s_intro}

Operator systems were first studied in the late 1960s by Arveson~\cite{Arveson}. 
Over the past five decades they have played a significant role in the development of 
non-commutative functional analysis and 
nowadays there is an extensive body of literature on their structure and properties
\cite{BlecherLeMerdy, ChoiEffros, PaulsenBook}.

Compared to the longer-studied category of C*-algebras, operator systems have the advantage to 
capture in a more subtle way properties of non-commutative order. 
It has become clear, for instance, 
that they can behave very differently than C*-algebras regarding the formation of 
categorical constructs such as tensor products 
\cite{PaulsenSeveriniStahlkeTodorovWinter, PaulsenTodorovTomforde}.
At the same time, their simpler structure
allows one to express complexities of infinite-dimensional phenomena 
through finite-dimensional objects. For example, finite dimensional operator systems
can be used to both reformulate the Connes Embedding Problem \cite{KavrukPaulsenTodorovTomforde}
and to characterise the weak expectation property of C*-algebras \cite{fkpt}.

Inductive limits of C*-algebras first appeared over fifty years ago in~\cite{Glimm}, and have ever since 
occupied a prominent place in C*-algebra theory. 
In addition to their cornerstone role in Elliott's classification programme \cite{Elliott76},
they have been instrumental in applications to quantum physics,
where questions of fundamental theoretical nature can be expressed in those terms 
\cite{fnw}. 
In contrast, there is no similar development in the operator system category. 
While inductive limits of complete operator systems were introduced by Kirchberg in \cite{Kirchberg}, 
and some very recent additions have been made through \cite{LuthraKumar} and \cite{Li},
no systematic study of operator system inductive limits and their applications has been conducted.

The purpose of this paper is to begin a systematic investigation of 
inductive limits of ordered *-vector spaces, operator systems and related categories,
and to highlight some first applications. 
Our approach differs substantially from the one of \cite{Kirchberg}. 
Indeed, due to the emphasis on the development of approximation techniques, 
Kirchberg's interest lies in complete operator systems.
Subsequently, his definition of the inductive limit relies
on the norm structure of the operator systems in question. 
Here, we are interested in the interactions between operator system structures and inductive limits, 
as well as in the tensor product theory, which was developed in \cite{PaulsenTodorovTomforde}  
in the more general case of non-complete operator systems. 
This setting allows to infer all desired properties based on the (matrix) order through the 
Archimedeanisation techniques introduced in \cite{PaulsenTomforde} and \cite{PaulsenTodorovTomforde}
and avoids to a substantial extent the use of norms. 

The paper is organised as follows. 
After recalling some preliminary background in Section \ref{chapt: preliminaries}, 
we construct, in Section \ref{chapt:inductive limit AOU}, the inductive limits in the categories of 
ordered *-vector spaces and Archimedean order unit (AOU) spaces.
We identify the state space of the inductive limit as the inverse limit of the corresponding state spaces. 
In Section \ref{chpt:Inductive limit of operator systems}, which is the core part of the paper, 
we develop in detail the inductive limit in the operator system category. 
We show that the
{\rm OMAX} operation, introduced in \cite{PaulsenTodorovTomforde},  
is intertwined by the inductive limit construction. A similar result holds true 
for the {\rm OMIN} operation, in the case the connecting morphisms are complete order embeddings. 
We identify the inductive limit of quotient operator systems as a quotient of an inductive limit, 
in case the involved kernels are completely biproximinal. 
We then establish a general intertwining theorem for the inductive limit and any 
injective functorial tensor products, provided the connecting maps are complete order embeddings. 
These results apply to the minimal tensor 
product to give a result, recently established in the case of complete operator systems, in 
\cite{LuthraKumar}, and have as a consequence a corresponding commutation result 
for the commuting tensor product that was also highlighted, in the case of complete operator systems, in \cite{LuthraKumar}.
Although this general theorem does not apply to the maximal tensor product, we show that 
the inductive limit intertwines this tensor product as well.
We also develop the inductive limit for the category of operator C*-systems \cite{PaulsenBook}, that is, 
operator systems that are bimodules over a given C*-algebra and whose matrix order structure is 
compatible with the module actions. 

In Section \ref{chpt:graph operator systems}, we 
consider inductive limits of graph operator systems. 
This class of operator systems was 
introduced in \cite{dsw} and subsequently studied in \cite{PaulsenOrtiz}, 
where the authors showed that the graph operator system is a complete isomorphism invariant for the 
corresponding graph, and identified its C*-envelope. 
In view of the importance of graph operator systems in Quantum Information Theory, where they
correspond to confusability graphs of quantum channels \cite{dsw}, 
we establish inductive limit versions of the aforementioned results.
Namely, we show that the inductive limit of graph operator systems
can be thought of as a topological graph operator system, 
and prove that two such operator systems are completely order isomorphic
precisely when their underlying topological graphs are isomorphic. 
We also identify the C*-envelope of such an operator system as 
the C*-subalgebra of the surrounding UHF algebra, generated by the operator system.
Finally, we prove an operator system version of the classical 
theorem of Glimm's that characterises *-isomorphism of two UHF algebras
in terms of embeddings of the intermediate matrix algebras. 
Crucial for this section are Power's monograph \cite{Power} and the development of 
topological equivalence relations therein.

\section{Preliminaries} \label{chapt: preliminaries}

In this section, we gather necessary preliminary material that will be needed in the remainder of the paper.

\subsection{AOU spaces} \label{section:Prelimiaries:AOU}
This subsection contains the basics on Archimedean order unit vector spaces,
as developed by Pauslen and Tomforde in \cite{PaulsenTomforde}. 
A {\it *-vector space} \index{*-vector space} is a complex vector space $V$ equipped with an involution 
$^*$, that is, a mapping $^*: V \rightarrow V$ 
 such that $x^{**}=x$, $(x+y)^* = x^*+y^*$ and $(\lambda x)^* = \bar{\lambda} x^*$ for all $x, y \in V$ and all $\lambda \in \mathbb{C}$.
     Let $V_h = \lbrace x \in V : x^*=x \rbrace$ and call the elements of $V_h$ {\it hermitian}. For any $x \in V$, we have that 
    \[
        x = {\rm Re}(x) + i {\rm Im}(x),
    \] where
    \[{\rm Re}(x) = \frac{x+x^*}{2} \text{~~~and~~~}{\rm Im}(x) = \frac{x-x^*}{2i} \]
    are hermitian.
    An {\it ordered *-vector space}\index{ordered $^*$-vector space} is a pair $(V,V^+)$, 
    where~$V$ is a *-vector space and~$V^+$ is a cone in~$V_h$ 
    (that is, a subset of $V_h$ closed under addition and multiplication by a non-negative scalar) 
    such that $V^+ \cap (-V^+) = \lbrace 0 \rbrace$. For $x, y \in V$, we write $x \leq y$ if $y-x \in V^+.$

    Let $(V,V^+)$ be an ordered *-vector space. We say that $e \in V^+$ is an {\it order unit} for $V$ 
    if for every $x \in V_h,$ there exists $r > 0$ such that $x \leq re$. 
    We call the triple $(V,V^+,e)$ an {\it order unit space}.
    An order unit $e \in V^+$ is called {\it Archimedean} if
    $$V^+ = \{x\in V_h : re+x \in V^+ \mbox{ for all } r > 0\}.$$ 
    A triple $(V,V^+, e)$, where $(V,V^+)$ is an ordered *-vector space for which $e$ is an Archimedean order unit,
is called an {\it Archimedean order unit space} (or an {\it AOU~space}, for short). We will often denote by $e_V$ 
the order unit with which an ordered *-vector space $(V,V^+)$ is equipped. 

    Let $(V,V^+)$ and $(W,W^+)$ be order unit spaces. 
    A linear map $\phi: V \rightarrow W$ is called {\it positive} if $\phi(V^+) \subseteq W^+$ and {\it unital} if $\phi(e_V)=e_W.$ 
    The map $\phi: V \rightarrow W$ is called an {\it order isomorphism} if it is bijective and $v \in V^+$ if and only if $\phi(v) \in W^+.$
    The complex field $\mathbb{C}$ will henceforth be equipped with its standard AOU space structure, 
    and linear maps $f : V\to \bb{C}$ will as usual be referred to as {\it functionals}. A \emph{state} 
    on $V$ is a unital positive functional. 
We write $S(V)$ for the set of all states on $V$ and call it the {\it state space}; note that $S(V)$ is a cone.

    Let $(V, V^+)$ be an ordered *-vector space with order unit $e$. 
    We introduce a seminorm on $V_h$, letting
    \[
    \norm{x}_h = \inf \lbrace r \in \mathbb{R} : -re \leq x \leq re \rbrace, \ \ \ x \in V_h.
    \]
    We call $\norm{\cdot}_h$ the {\it order seminorm} on $V_h$ determined by $e$. 
    We note that $\norm{\cdot}_h$ is a norm if $e$ is Archimedean \cite[Proposition 2.23]{PaulsenTomforde}.
    An {\it order seminorm} $\vertiii{\cdot}$ on $V$ is a seminorm such that $\vertiii{x^*} = \vertiii{x}$ for all $x \in V$ and $\vertiii{x} = \norm{x}_h$ for all $x \in V_h$. The set of order 
    seminorms on $V$ has a maximal and a minimal (with respect to point-wise ordering) element. 
    The minimal seminorm is given by letting 
\begin{equation}\label{eq_normx}
    \norm{x} = \sup \lbrace |f(x)|: f \in S(V) \rbrace,
\end{equation}
    and all order seminorms are equivalent to it. 
    If $(V, V^+, e)$ is an AOU space, $\norm{\cdot}$ is in fact a norm.

    By (\ref{eq_normx}), 
    the states of $V$ are contractive with respect to the minimal order seminorm. We denote by $V'$ the
    space of all functionals continuous in the topology defined by any of the order seminorms. If $V_1$ and $V_2$ are ordered 
    *-vector spaces with order units and $\phi : V_1 \rightarrow  V_2$ is a unital positive map then we let
    $\phi' : V_2' \rightarrow V_1'$ be the dual of $\phi$. 
    If $(V, V^+, e)$ is an AOU space then
    $S(V)$ is a compact topological space 
    with respect to the weak* topology inherited from the topology generated by any order norm on $V$. 
    Thus, the map $\phi: V \rightarrow C(S(V))$, given by $\angles{\phi(x)}{f} = \angles{f}{x}$, 
    is a unital injective map that is an order isomorphism onto its image. 
    Furthermore, $\phi$ is an isometry with respect to the minimal order norm on $V$ and the uniform norm on $C(S(V))$. 
The latter statement can be viewed as a complex version of Kadison's representation theorem (see \cite{Kadison} or \cite[Theorem~II.1.8]{AlfsenBook}); for a proof, we refer the reader to \cite[Theorem~5.2]{PaulsenTomforde}.

    The proof of the next remark is straightforward and is omited.

    \begin{remark} \label{rem:normouspace0}
      {\rm  Let $V$ be an ordered *-vector space with order unit and let $\norm{\cdot}'$ 
      be an order seminorm on $V$. 
      If $x\in V$, we have that $\norm{x}' = 0$ if and only if 
      $\norm{{\rm Re} (x)}_{h}=0$ and $\norm{{\rm Im} (x)}_{h}=0$.
%
            }
    \end{remark}

    In order to avoid excessive notation, we will sometimes denote the ordered *-vector space $(V, V^+, e)$ simply by $V$.

    Let us denote by $\catou$ the category whose objects are ordered *-vector spaces with order units and whose morphisms are unital positive maps, and by $\cataou$ the category whose objects are AOU spaces and whose morphisms are unital positive maps. 
Clearly, we have 
a forgetful functor $\bf{F}: \cataou \rightarrow \catou$. 
In~\cite[Section~3.2]{PaulsenTomforde}, the process of Archimedeanisation is discussed which provides us with a left adjoint to this functor. 
Let $(V, V^+, e)$ be an ordered *-vector space with order unit. Define
    \[
        D = \{ v \in V_h : re+v \in V^+ \text{~for every~} r>0\}
    \]
    and
    \begin{equation}\label{eq:N1}
        N = \lbrace v \in V : f(v)=0 \text{~for all~} f \in S(V) \rbrace.
    \end{equation}
Clearly, $D$ is a cone, while $N$ is a linear subspace of $V$. 
    Equip $V/N$ with the involution given by $(v+N)^* = v^*+N$, and
    set
    \[
        (V/N)^+ = \{v+N : v \in D\}.
    \]
    It was shown in \cite[Theorem 3.16]{PaulsenTomforde} that $(V/N, (V/N)^+, e+N)$ is an AOU space, 
    which was called therein the {\it Archimedeanisation} of $(V, V^+, e)$ and denoted by $V_{\text{Arch}}$. 
It satisfies the following universal property.

    \begin{theorem} \label{lem:archimedeanisation of positive map}
        Let $V$ be an ordered *-vector space with order unit. 
        The quotient map $\varphi: V \rightarrow V_{\rm Arch}$ is the unique 
        unital surjective positive linear map from $V$ onto $V_{\rm Arch}$
        such that, whenever $W$ is an Archimedean order unit space and $\phi: V \rightarrow W$ is a unital positive linear map, then there exists a unique positive linear map $\phi_{\rm Arch}: V_{\rm Arch} \rightarrow W$ such that $\phi = \phi_{\rm Arch} \circ \varphi$.

        Furthermore, if $(\tilde{V}, \tilde{\varphi})$ is a pair consisting of an AOU 
        space $\tilde{V}$ and a unital surjective positive linear map $\tilde{\varphi} : V\to \tilde{V}$ 
        such that, 
        whenever 
        $W$ is an Archimedean order unit space and $\phi: V \rightarrow W$ is a unital positive linear map
        there exists a unique positive linear map $\tilde{\phi} : \tilde{V} \rightarrow W$ with  
        $\phi = \tilde{\phi} \circ \tilde{\varphi}$,
        then there exists a unital order isomorphism $\psi : V_{\rm Arch} \rightarrow \tilde{V}$ such that $\psi \circ \varphi = \tilde{\varphi}.$
    \end{theorem}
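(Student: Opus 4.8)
The plan is to read both assertions off the explicit construction $V_{\rm Arch} = V/N$, proving the universal property by hand and then invoking the usual categorical argument for the uniqueness of the pair. As a preliminary observation, note that $\varphi$ is unital and surjective by construction, and that it is positive because $V^+ \subseteq D$: indeed, if $v \in V^+$ then $re + v \in V^+$ for every $r > 0$ (as $e \in V^+$ and $V^+$ is a cone), so $v \in D$ and hence $\varphi(v) = v + N \in (V/N)^+$.

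First I would construct the factorisation. Fix an AOU space $W$ and a unital positive map $\phi \colon V \to W$. The first step is to show that $\phi$ annihilates $N$, so that it descends to $V/N$. If $g \in S(W)$ then $g \circ \phi$ is a state on $V$, being unital and positive; hence for $v \in N$ we get $g(\phi(v)) = (g\circ\phi)(v) = 0$ for every $g \in S(W)$. Since $W$ is an AOU space, the expression $\norm{w} = \sup\{|g(w)| : g \in S(W)\}$ is a genuine norm on $W$, so the states separate the points of $W$ and therefore $\phi(v) = 0$. Consequently $\phi_{\rm Arch}(v + N) := \phi(v)$ is a well-defined linear map $V_{\rm Arch} \to W$ satisfying $\phi = \phi_{\rm Arch}\circ\varphi$, and it is unital since $\phi_{\rm Arch}(e + N) = \phi(e) = e_W$. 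Its uniqueness is immediate because $\varphi$ is surjective.

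The crux is the positivity of $\phi_{\rm Arch}$, and this is the step where the hypotheses are genuinely used. Let $v + N \in (V/N)^+$, that is $v \in D$, so that $re + v \in V^+$ for all $r > 0$. Applying the positive map $\phi$ gives $re_W + \phi(v) = \phi(re + v) \in W^+$ for every $r > 0$; the Archimedean property of the order unit $e_W$ then forces $\phi(v) \in W^+$, i.e. $\phi_{\rm Arch}(v + N) \in W^+$. I expect this to be the main obstacle: it is precisely the Archimedeanness of the \emph{target} $W$ that allows one to pass from the one-parameter family of inequalities to honest positivity, and it is the reason the cone of $V_{\rm Arch}$ must be defined through $D$ rather than as the image of $V^+$.

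Finally I would settle the uniqueness of the pair $(V_{\rm Arch}, \varphi)$ by the standard argument. Given a pair $(\tilde V, \tilde\varphi)$ with the same property, I would apply the universal property of $\varphi$ to $\tilde\varphi$ to obtain a positive map $\psi \colon V_{\rm Arch} \to \tilde V$ with $\tilde\varphi = \psi\circ\varphi$, and the universal property of $\tilde\varphi$ to $\varphi$ to obtain a positive map $\chi \colon \tilde V \to V_{\rm Arch}$ with $\varphi = \chi\circ\tilde\varphi$. Then $\varphi = (\chi\circ\psi)\circ\varphi = \mathrm{id}\circ\varphi$, so the uniqueness clause of the universal property of $\varphi$ (applied to $\phi = \varphi$) yields $\chi\circ\psi = \mathrm{id}_{V_{\rm Arch}}$; symmetrically $\psi\circ\chi = \mathrm{id}_{\tilde V}$. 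Thus $\psi$ is a bijection with positive inverse $\chi$, hence a unital order isomorphism (unitality follows from $\psi(e + N) = \psi(\varphi(e)) = \tilde\varphi(e) = e_{\tilde V}$), and $\psi\circ\varphi = \tilde\varphi$ by construction. The very same computation, carried out for any second unital surjective positive map $V \to V_{\rm Arch}$ enjoying the property, shows that it coincides with $\varphi$ up to a unital order automorphism of $V_{\rm Arch}$, which accounts for the uniqueness stated in the first assertion.
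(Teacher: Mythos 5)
The paper does not actually prove this statement: it is imported verbatim from \cite{PaulsenTomforde} (Theorem 3.16 there), so there is no in-house argument to compare against. Your proof is correct and is essentially the standard one: $\varphi$ is positive because $V^+\subseteq D$; the factorisation exists because each state $g$ of $W$ pulls back along $\phi$ to a state of $V$, states vanish on $N$ by definition, and they separate the points of the AOU space $W$ since $\norm{w}=\sup\{|g(w)|:g\in S(W)\}$ is a norm there; positivity of $\phi_{\rm Arch}$ is exactly where the Archimedean property of $e_W$ enters; and the uniqueness of the pair is the usual two-sided-inverse argument. Two small points of precision. First, an element of $(V/N)^+$ is a coset that admits \emph{some} representative in $D$, so in the positivity step you should pass to such a representative rather than assert that the given $v$ lies in $D$; since $\phi_{\rm Arch}$ is already well defined this is harmless. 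Second, your closing remark is the honest reading of the first assertion: the categorical argument identifies a second map $\varphi'$ with $\varphi$ only up to a unital order automorphism of $V_{\rm Arch}$ (and literal uniqueness can fail, e.g.\ for $V=V_{\rm Arch}=\bb{C}^2$ with the coordinate flip), so the ``uniqueness'' in the statement must be understood in that sense, exactly as you interpret it.
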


    \subsection{Operator systems, operator spaces and tensor products}
    \subsubsection{Basic concepts}

    For a vector space $\cl S$, we let $M_{n,m}(\cl S)$ be the vector space of all $n$ by $m$ matrices with entries in $\cl S$. 
    We set $M_{n,m} = M_{n,m}(\bb{C})$, $M_{n}(\cl S) = M_{n,n}(\cl S)$ and $M_n = M_{n,n}$. 
    
    Let $\cl S$ be a *-vector space. We equip 
    $M_n(\cl S)$ with the involution $(s_{i,j})_{i,j}^*=(s_{j,i}^*)_{i,j}$; it turns $M_n(\cl S)$ into a *-vector space.
    A family $\lbrace C_n\rbrace_{n \in\bb{N}}$, where $C_n\subseteq M_n(V)$, 
is called a {\it matrix ordering} on $\cl S$ if
    \begin{enumerate}[(i)]
    \item $C_n$ is a cone in $M_n(\cl S)_h$ for each $n \in \mathbb{N}$,
    \item $C_n \cap (- C_n) = \lbrace 0 \rbrace$ for each $n \in \mathbb{N}$, and
    \item for each $n,m \in \mathbb{N}$ and each $\alpha \in M_{n,m}$ we have that $\alpha^* C_n \alpha \subseteq C_m$.
    \end{enumerate}
    A {\it matrix ordered} *-vector space  is a pair 
    $(\cl S, \lbrace C_n \rbrace_{n \in \bb{N}})$ where $\cl S$ is a *-vector space and $\lbrace C_n \rbrace_{n \in\bb{N}}$ is a matrix ordering. 
    We refer to condition (iii) as the {\it compatibility} of the family $\lbrace C_n \rbrace_{n \in\bb{N}}$
    and often write $M_n(\cl S)^+$ for $C_n$. 

    Let $(\cl S, \lbrace C_n \rbrace_{n \in \bb{N}})$ be a matrix ordered *-vector space. For each $e \in \cl S$ and $n \in \mathbb{N}$, let
    \[
    e^{(n)} :=
    \begin{pmatrix}
        e & & & & \\
        & &\ddots& & \\
        & &&& e\\
    \end{pmatrix} \in M_n(\cl S),
    \]
    where the off-diagonal entries are zero.
    We say that $e \in C_1$ is a {\it matrix order unit} if $e^{(n)}$ is an order unit for $(M_n(\cl S), C_n)$ for all $n \in \mathbb{N}$. 
    Similarly, we say that $e$ is an {\it Archimedean matrix order unit}
    if $e^{(n)}$ is an Archimedean order unit for $(M_n(\cl S), C_n)$ for each $n \in \mathbb{N}$.
    An {\it operator system} is a matrix ordered *-vector space with an Archimedean matrix order unit.
        In order to avoid excessive notation, we will sometimes denote the 
        triple $(\cl S, \{C_n\}_{n \in \bb{N}}, e)$ simply by $\cl S$; the unit is denoted by 
$e_{\cl S}$ if there is risk of confusion.

Let $\cl S$ and $\cl T$ be matrix ordered *-vector spaces with matrix order units and $\phi: \cl S \rightarrow \cl T$ be a linear map. 
We define $\phi^{(n)}: M_n(\cl S)\rightarrow M_n(\cl T)$ by letting
    \[
        \phi^{(n)}
        \left(
        \begin{pmatrix}
            s_{1,1} & \cdots & s_{1,n} \\
            \vdots & & \vdots\\
            s_{n,1} &\cdots & s_{n,n}   \\
        \end{pmatrix}
        \right)
        =
        \begin{pmatrix}
            \phi(s_{1,1}) & \cdots & \phi(s_{1,n}) \\
            \vdots & & \vdots\\
            \phi(s_{n,1}) & \cdots& \phi(s_{n,n})   \\
        \end{pmatrix},
    \]
$n \in \mathbb{N}$. We say that $\phi$ is {\it $n$-positive} if $\phi^{(n)}$ is positive and that $\phi$ is {\it completely positive}
    if $\phi$ is $n$-positive for all $n \in \mathbb{N}$. Furthermore, we say that $\phi$ is a {\it complete order isomorphism}
    if $\phi$ is a bijection and both $\phi$ and $\phi^{-1}$ are completely positive. We say that $\phi$ is a 
    {\it unital complete order embedding} if $\phi$ is a unital complete order isomorphism onto its image.

       Let $\BH$ denote the space of all bounded linear operators on a Hilbert space $\cl H$. 
       If $\cl S$ is a subset of $\BH$, we set $\cl S^*  = \{s \in \cl S: s^* \in \cl S\}$ and call $\cl S$ {\it selfadjoint} 
       if $\cl S = \cl S^*$. 
       A {\it concrete operator system} is a selfadjoint subspace of $\BH$ which contains the identity operator $I$. 
       If $\cl S\subseteq \cl B(\cl H)$ is a concrete operator system then
       $M_n(\cl S) \subseteq M_n(\BH) \cong \BHn$ and therefore $M_n(\cl S)$ inherits an order structure from 
       $\BHn$.
       Note that $I$ is an Archimedean matrix order unit for the matrix order structure thus defined. 
       Hence, a concrete operator system is an operator system. 
       The following fundamental result \cite[Theorem~4.4]{ChoiEffros} establishes the converse.

    \begin{theorem}[Choi--Effros \cite{ChoiEffros}]\label{choieffros}
        Let $(V, \{C_n\}_{n \in \bb{N}}, e)$ be an operator system.
        Then there exists a Hilbert space $\hilb$, a concrete operator system $\cl S \subseteq \BH$ and a complete order isomorphism $\Phi: V \rightarrow \cl S$ such that $\Phi(e) = I$.
    \end{theorem}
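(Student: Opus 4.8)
The plan is to realise $V$ concretely as an operator subsystem of some $\BH$ by assembling enough unital completely positive (ucp) maps into matrix algebras and forming their direct sum. First I would record the two structural facts that make matrix algebras a sufficient testing ground: since $e$ is an Archimedean matrix order unit, each $e^{(n)}$ is an Archimedean order unit for $(M_n(V), C_n)$, so $(M_n(V), C_n, e^{(n)})$ is an AOU space and, in particular, $C_n$ coincides with the intersection of the positive half-spaces cut out by the states of $M_n(V)$ (this is the Archimedean closedness of $C_n$; compare \eqref{eq_normx}).

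The key lemma I would isolate is a detection principle: if $v = v^* \in M_n(V)$ and $v \notin C_n$, then there is a ucp map $\phi: V \to M_n$ with $\phi^{(n)}(v) \notin M_n(M_n)^+$. To prove it, separate $v$ from the closed cone $C_n$ by a state $g$ on $(M_n(V), C_n, e^{(n)})$ with $g(v) < 0$, which exists by the previous paragraph (Hahn--Banach). Then pass from the functional $g$ on $M_n(V)$ to a map $\phi_g: V \to M_n$ through the canonical correspondence $g \leftrightarrow \phi_g$, the functional being recovered by compressing $\phi_g^{(n)}$ against the maximally entangled vector $\sum_i e_i \otimes e_i$. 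The crucial point is that positivity of $g$ on the operator system $M_n(V)$ forces $\phi_g$ to be completely positive, not merely positive, and this is exactly where the compatibility axiom (iii) of the matrix ordering is used: conjugation by rectangular scalar matrices transports positivity between matrix levels and upgrades $n$-positivity of a map into $M_n$ to full complete positivity. With $\phi_g$ in hand, $g(v) < 0$ translates into $\phi_g^{(n)}(v)$ failing to be positive, which proves the lemma.

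With the detection principle established, I would let $\{\phi_i\}_{i \in I}$ be the family of all ucp maps from $V$ into matrix algebras $M_{k_i}$, set $\hilb = \bigoplus_{i} \bb{C}^{k_i}$, and define $\Phi = \bigoplus_i \phi_i : V \to \BH$. Each $\phi_i$ is unital and completely positive, so $\Phi$ is unital (sending $e$ to $I$) and completely positive, and its range $\cl S := \Phi(V)$ is a selfadjoint unital subspace of $\BH$, hence a concrete operator system. Injectivity of $\Phi$ follows because the states of $V$ (the ucp maps into $M_1 = \bb{C}$) already separate the points of $V$, by the Kadison-type embedding of $V$ into $C(S(V))$ recalled above. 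Finally, $\Phi^{-1} : \cl S \to V$ is completely positive: if $v \in M_n(V)_h$ satisfies $\Phi^{(n)}(v) \geq 0$ in $M_n(\BH)$, then $\phi_i^{(n)}(v) \geq 0$ for every $i$, and the detection principle forces $v \in C_n$. Thus $\Phi$ is a unital complete order embedding onto $\cl S$, which is the assertion.

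The main obstacle is the key lemma, and within it the step that promotes a positive functional on $M_n(V)$ to a genuinely completely positive map $V \to M_n$. Establishing this cleanly requires the correspondence between functionals on $M_n(V)$ and maps $V \to M_n$ together with the fact, special to matrix targets, that a map into $M_n$ is completely positive as soon as it is $n$-positive; both hinge on the compatibility condition (iii) and on the Archimedean closedness of the cones, so the bulk of the technical work is concentrated there rather than in the routine direct-sum bookkeeping.
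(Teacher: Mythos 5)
The paper does not prove this statement: it is quoted as background, with an explicit citation to \cite[Theorem~4.4]{ChoiEffros}, so there is no internal proof to compare against. Your argument is the standard Choi--Effros proof and is sound: the detection lemma (separate a hermitian $v\notin C_n$ from $C_n$ by a positive functional on the AOU space $(M_n(V),C_n,e^{(n)})$, then convert that functional into a completely positive map $V\to M_n$ via the compression against $\sum_i e_i\otimes e_i$, with the compatibility axiom (iii) supplying complete positivity) is exactly the crux, and the direct sum over all ucp maps into matrix algebras then yields the embedding. Two small points you wave at but should spell out if writing this in full: boundedness of $\bigoplus_i\phi_i(x)$ (which follows from $-re^{(1)}\le {\rm Re}(x),{\rm Im}(x)\le re^{(1)}$ forcing $\|\phi_i(x)\|\le\|{\rm Re}(x)\|_h+\|{\rm Im}(x)\|_h$ uniformly in $i$), and the fact that in the last step $\Phi^{(n)}(v)\ge 0$ already forces $v=v^*$ (by injectivity of $\Phi^{(n)}$ and $*$-preservation), so the detection lemma applies.
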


    \subsubsection{Operator spaces}\label{sss_os}
  Let $\cl X$ be a Banach space and $\norm{\cdot}_{n}$ be a norm on $M_n(\cl X)$, $n\in \bb{N}$. 
  We call 
  $(\cl X, \{\norm{\cdot}_{n}\}_{n \in \bb{N}})$ an {\it operator space} if the following are satisfied: 
  \begin{enumerate}[\rm(i)]
      \item
      $\left \lVert
      \begin{pmatrix}
      X & 0 \\
      0 & Y \\
    \end{pmatrix}
      \right \rVert_{n+m} = \max\{\norm{X}_{n}, \norm{Y}_{m}\}$ and
      \item $\norm{\alpha X \beta}_{n} \leq \norm{\alpha} \norm{X}_{n} \norm{\beta}$
  \end{enumerate}
  for all $X \in M_n(\cl X), Y \in M_m(\cl X)$ and $\alpha, \beta \in M_n$. 
 
   Let $(\cl X, \{\norm{\cdot}_{n}\}_{n \in \bb{N}})$ and $(\cl Y, \{\norm{\cdot}_{n}\}_{n \in \bb{N}})$
    be operator spaces and let $\phi: \cl X \rightarrow \cl Y$ be a linear map. 
    We let $\norm{\phi}_{\rm{cb}} = \sup \{\norm{\phi^{(n)}} : n \in \bb{N}\}$ and 
    say that $\phi$ is {\it completely bounded} 
    if $\norm{\phi}_{\rm cb}$ is finite, $\phi$ is {\it completely contractive} if $\norm{\phi}_{\rm cb} \leq 1$, and 
    a {\it complete isometry} if $\phi^{(n)}$ is an isometry for every $n$. 



  Let us denote by $\catosp$ the category whose objects are operator spaces and whose morphisms are completely bounded maps.
If $\cl X$ is an operator space and $\cl X'$ is the Banach space dual of $\cl X$
  then there is a natural way to induce an operator space structure on $\cl X'$ as follows 
  \cite{Blecher} (for more details see \cite[Section~3.2]{EffrosRuanBook}).
  If $S = (s_{i,j})_{i,j} \in M_n(\cl X')$ then~$S$ determines a linear mapping $\widetilde{S} : \cl X \rightarrow M_n$,
  given by $\widetilde{S}(x) = (\langle s_{i,j},x\rangle)_{i,j}$; 
  we set $\|S\|_n = \|\widetilde{S}\|_{\rm cb}$. 




It follows from the Choi-Effros Theorem that every operator system is, canonically, an operator space. 
The following result \cite[Lemma 3.1]{PaulsenBook} provides a characterisation of the 
norm in operator systems in terms of the matrix order structure. 

    \begin{lemma} \label{rem:norm of an operator system}
    Let $\cl S$ be an operator system and $x \in M_n(\cl S)$. Then
$        \norm{x} \leq 1 \text{~if and only if~}
        \begin{pmatrix}
            1_n & x \\
            x^* & 1_n \\
        \end{pmatrix} \in M_{2n}(\cl S)^+.$
    \end{lemma}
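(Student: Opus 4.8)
The plan is to reduce this abstract statement to an elementary computation on a Hilbert space via the Choi--Effros representation. By Theorem~\ref{choieffros} there is a Hilbert space $\hilb$, a concrete operator system $\cl R \subseteq \BH$, and a unital complete order isomorphism $\Phi : \cl S \to \cl R$. The operator-space norm that $M_n(\cl S)$ carries coincides with the operator norm obtained from $\Phi^{(n)}$, so $\Phi$ is a complete isometry, while $\Phi^{(2n)}$ carries $M_{2n}(\cl S)^+$ bijectively onto the positive operators in its range. Writing $X = \Phi^{(n)}(x) \in \cl B(\hilb^{(n)})$, letting $I$ denote the identity operator, and using the canonical shuffle identifying $M_2(M_n(\cl S))$ with $M_{2n}(\cl S)$, the assertion becomes the Hilbert space statement
\[
\norm{X} \leq 1 \quad \Longleftrightarrow \quad T := \begin{pmatrix} I & X \\ X^* & I \end{pmatrix} \geq 0 \ \text{ in } \ \cl B(\hilb^{(n)} \oplus \hilb^{(n)}).
\]

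To analyse $T$, I would expand its quadratic form: for $\xi, \eta \in \hilb^{(n)}$ one computes, using $\angles{X^*\xi}{\eta} = \overline{\angles{X\eta}{\xi}}$,
\[
\bigangles{T(\xi \oplus \eta)}{\xi \oplus \eta} = \norm{\xi}^2 + \norm{\eta}^2 + 2\,{\rm Re}\,\angles{X\eta}{\xi}.
\]
Replacing $\xi$ by a suitable unimodular multiple shows that $T \geq 0$ if and only if $2\,|\angles{X\eta}{\xi}| \leq \norm{\xi}^2 + \norm{\eta}^2$ for all $\xi, \eta \in \hilb^{(n)}$.

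For the forward implication, if $\norm{X} \leq 1$ then the Cauchy--Schwarz and arithmetic--geometric mean inequalities give $|\angles{X\eta}{\xi}| \leq \norm{X\eta}\,\norm{\xi} \leq \norm{\eta}\,\norm{\xi} \leq \tfrac{1}{2}(\norm{\xi}^2 + \norm{\eta}^2)$, whence $T \geq 0$. Conversely, assuming $T \geq 0$, fix $\eta$ with $X\eta \neq 0$ and apply the inequality to $\xi = t\,X\eta / \norm{X\eta}$ with $t > 0$; this yields $t^2 - 2t\,\norm{X\eta} + \norm{\eta}^2 \geq 0$ for every $t > 0$, and evaluating at the minimiser $t = \norm{X\eta}$ forces $\norm{X\eta} \leq \norm{\eta}$. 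As $\eta$ is arbitrary, $\norm{X} \leq 1$.

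I expect the reduction step, rather than the computation, to be the only delicate part: one must confirm that the norm appearing in the statement is exactly the operator norm furnished by the Choi--Effros representation and track the block element $\left(\begin{smallmatrix} 1_n & x \\ x^* & 1_n \end{smallmatrix}\right)$ correctly through the identification of $M_2(M_n(\cl S))$ with $M_{2n}(\cl S)$. Once this bookkeeping is settled, the equivalence follows from the two elementary estimates above.
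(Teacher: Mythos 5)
Your argument is correct: the reduction via Choi--Effros to a concrete operator system is legitimate (a unital complete order isomorphism is a complete isometry, and the canonical shuffle identifying $M_2(M_n(\cl S))$ with $M_{2n}(\cl S)$ preserves both norms and positivity), and the quadratic-form computation establishing that $\norm{X}\leq 1$ is equivalent to positivity of the $2\times 2$ block operator is carried out without error. The paper does not prove this lemma but cites it as Lemma 3.1 of Paulsen's book, and the standard proof given there is essentially the one you have reconstructed, so there is nothing further to compare.
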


    If $\phi$ is a unital map between operator systems 
    then $\phi$ is completely contractive if and only if $\phi$ is completely positive (see \cite[Proposition 3.6]{PaulsenBook}).
    Thus, a unital linear map between operator systems is a unital complete isometry if and only if it is a unital complete order embedding.

    It is proved in \cite{PaulsenTomforde} that if $\cl A$ is a unital C*-algebra, then its C*-norm is an order norm. 
    Therefore, if $\cl S$ is an operator system, the operator system norm on $\cl S$ is an order norm. Indeed, if we choose a unital 
    C*-algebra $\cl A$ with unit $e_{\cl A}$ such that $\phi: \cl S \rightarrow \cl A$ is a unital complete order 
    embedding (see Theorem~\ref{choieffros} for the existence of $\cl A$ and $\phi$), then for any $r \in \bb{R}$ and $s \in \cl S$,
    \[
        -re_{\cl S} \leq s \leq re_{\cl S} \text{~~~~if and only if~~~~} -re_{\cl A} = \phi(-re_{\cl S}) \leq \phi(s) \leq \phi(re_{\cl S}) = re_{\cl A}.
    \]
    Therefore for any $s \in \cl S_h$, 
    \begin{eqnarray*}
    \norm{s}_{\cl S} = 
    \norm{\phi(s)}_{\cl A} & = &  \inf\{r \in \bb{R} : -re_{\cl A} \leq \phi(s) \leq re_{\cl A}\}\\
    & = & \inf\{r \in \bb{R} : -re_{\cl S} \leq s \leq re_{\cl S}\}.
\end{eqnarray*}

\subsubsection{Operator system tensor products}

    Suppose that $(\cl S, \{C_n\}_{n \in \bb{N}}, e_{\cl S})$ and $(\cl T,$  $\{D_n\}_{n \in \bb{N}}, e_{\cl T})$ are operator systems. 
    We denote by $\cl S \odot \cl T$ their algebraic tensor product. 
    We call a family $\mu = \{P_n\}_{n \in \bb{N}}$ of cones, where $P_n \subseteq M_n(\cl S \odot \cl T)$, 
    an {\it operator system structure} on $\cl S \odot \cl T$ if it satisfies the following properties: \index{operator system tensor product}
    \begin{enumerate}[\rm(i)]
    \item $(\cl S \odot \cl T, \{P_n\}_{n \in \bb{N}}, e_{\cl S} \otimes e_{\cl T})$ is an operator system, denoted $\cl S \otimes_{\mu} \cl T$,
    \item $C_n \otimes D_m \subseteq P_{nm}$ for all $n,m \in \bb{N}$, and
    \item if $m,n \in \bb{N}$ and $\phi: \cl S \rightarrow M_n$, $\psi: \cl T \rightarrow M_m$ are unital completely positive maps
     then $\phi \otimes \psi: \cl S \otimes_{\mu} \cl T \rightarrow M_{nm}$ is a completely positive map.
    \end{enumerate}

    An {\it operator system tensor product} \cite{KavrukPaulsenTodorovTomforde2} is a mapping 
    $\mu: \catos \times \catos \rightarrow \catos$ such that $\mu(\cl S, \cl T)$ is an operator system 
    with an underlying space $\cl S\odot \cl T$ for every $\cl S, \cl T \in \catos$. 
    We call an operator system tensor product {\it functorial} if for any four operator systems $\cl S_1, \cl S_2, \cl T_1$ and $\cl T_2$ we have that if $\phi_1: \cl S_1 \rightarrow \cl T_1$ and $\phi_2: \cl S_2 \rightarrow \cl T_2$ are unital completely positive maps then $\phi_1 \otimes \phi_2: \cl S_1 \otimes_{\mu} \cl S_2 \rightarrow \cl T_1 \otimes_{\mu} \cl T_2$ is a unital completely positive map. 
    An operator system tensor product is {\it injective} if whenever $\phi_1$ and $\phi_2$ are unital complete order 
    embeddings then $\phi_1 \otimes \phi_2$ is a unital complete order embedding.     
Let $\cl T$ be an operator system and $\mu$ be 
an operator system tensor product. We say that~$\cl T$ is {\it $\mu$-injective} if for any pair of operator systems $\cl S_1$ and $\cl S_2$ we have that if $\phi: \cl S_1 \rightarrow \cl S_2$ is a unital complete order embedding 
then $\phi \otimes {\id}_{\cl T} : \cl S_1 \otimes_{\mu} \cl T \rightarrow \cl S_2 \otimes_{\mu} \cl T$ is a unital complete order embedding.

    Let $(\cl S, \{C_n\}_{n \in \bb{N}}, e_{\cl S})$ and $(\cl T, \{D_n\}_{n \in \bb{N}}, e_{\cl T})$ 
    be operator systems and let $\iota_{\cl S}: \cl S \rightarrow \BH$ and 
    $\iota_{\cl T}: \cl T \rightarrow \BK$ be unital complete order embeddings. 
    The {\it minimal operator system tensor product} $\cl S \mitp \cl T$ of $\cl S$ and $\cl T$
    has operator system structure arising from the 
embedding $\iota_{\cl S} \otimes \iota_{\cl T}: \cl S \odot \cl T \rightarrow \mathcal{B}(\cl H \otimes \cl K)$. 
    It is proved in \cite[Theorem~4.4]{KavrukPaulsenTodorovTomforde2} that the minimal operator system tensor product is injective and functorial.
    
    Let
    \[
        P_n^{\max}(\cl S, \cl T) := \{\alpha(C \otimes D) \alpha^*: C \in C_k, D \in D_m, \alpha \in M_{n,km},  k,m \in \bb{N}\}.
    \]
    The {\it maximal operator system tensor product} of $\cl S$ and $\cl T$, denoted 
    $\cl S \mtp \cl T$, is the Archimedeanisation of $(\cl S \otimes \cl T, \{P_n^{\max}(\cl S, \cl T)\}_{n \in \bb{N}} , e_{\cl S} \otimes e_{\cl T})$. 
    Let $\hilb$ be a Hilbert space. A bilinear map $\phi: \cl S \times \cl T \rightarrow \BH$ is called 
    {\it jointly completely positive} if, for all $P = (x_{i,j})\in C_n$ and $Q = (y_{k,l})\in D_m$, the matrix
    $\phi^{(n,m)}(P,Q) := (\phi(x_{i,j},y_{k,l}))$ is a positive element of $M_{nm}(\BH)$ .

    \begin{theorem}\label{thm:universal property of mtp}
        Let $\cl S$ and $\cl T$ be operator systems. If $\phi: \cl S \times \cl T \rightarrow \BH$ is a jointly completely positive map, then its linearisation $\phi_{L}: \cl S \mtp \cl T \rightarrow \BH$ is completely positive.

        Furthermore if $\mu$ is an operator system structure on $\cl S \odot \cl T$ with the property that the linearisation of every jointly completely positive map $\phi: \cl S \times \cl T \rightarrow \BH$ is completely positive on $\cl S \otimes_{\mu} \cl T$ then $\cl S \otimes_{\mu} \cl T = \cl S \mtp \cl T$. \index{jointly completely positive}
    \end{theorem}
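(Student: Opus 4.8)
The plan is to treat the two assertions in turn: the first reduces, via the ampliation identity for matrix maps, to positivity on the defining generators of $P_n^{\max}(\cl S,\cl T)$ followed by an Archimedean squeeze, while the second is extracted from the Choi--Effros theorem by recognising a concrete embedding of $\cl S \mtp \cl T$ as the linearisation of a jointly completely positive map. First I would record the identity $\phi_L^{(n)}(\alpha X \alpha^*) = \alpha\, \phi_L^{(km)}(X)\, \alpha^*$, valid for $\alpha \in M_{n,km}$ and $X \in M_{km}(\cl S \odot \cl T)$; it holds because $\phi_L^{(N)}$ is entrywise application of $\phi_L$ and therefore commutes with multiplication by scalar matrices. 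Since a general element of $P_n^{\max}(\cl S,\cl T)$ is (a finite sum of terms) $\alpha(C \otimes D)\alpha^*$ with $C=(x_{i,j}) \in C_k$, $D=(y_{p,q}) \in D_m$ and $\alpha \in M_{n,km}$, it suffices to check positivity on the generators $C \otimes D$. Unwinding definitions, $\phi_L^{(km)}(C \otimes D)$ is exactly the matrix $\phi^{(k,m)}(C,D) = (\phi(x_{i,j},y_{p,q}))$, which is positive by the joint complete positivity of $\phi$. Hence $\phi_L^{(n)}(\alpha(C\otimes D)\alpha^*) = \alpha\, \phi^{(k,m)}(C,D)\, \alpha^* \geq 0$, so $\phi_L^{(n)}\big(P_n^{\max}(\cl S,\cl T)\big) \subseteq M_n(\BH)^+$ for every $n$.

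Next I would pass from the raw cones $P_n^{\max}(\cl S,\cl T)$ to their Archimedeanisation, whose positive elements are the $u \in M_n(\cl S \odot \cl T)_h$ with $r\,(e_{\cl S}\otimes e_{\cl T})^{(n)} + u \in P_n^{\max}(\cl S,\cl T)$ for all $r>0$. Writing $A := \phi(e_{\cl S},e_{\cl T})$, which is positive as $\phi$ is jointly completely positive, the previous step gives $r A^{(n)} + \phi_L^{(n)}(u) = \phi_L^{(n)}\big(r\,(e_{\cl S}\otimes e_{\cl T})^{(n)} + u\big) \geq 0$ for all $r > 0$. As $M_n(\BH)^+$ is norm-closed, letting $r \to 0^+$ yields $\phi_L^{(n)}(u) \geq 0$; applying the same squeeze to $u$ and $-u$ shows that $\phi_L$ annihilates the null directions collapsed in forming $\cl S \mtp \cl T$, so $\phi_L$ is well defined and completely positive on $\cl S \mtp \cl T$.

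For the second assertion I would first note the automatic inclusion: any operator system structure $\mu$ satisfies $C_k \otimes D_m \subseteq P^\mu_{km}$, and by compatibility of the $\mu$-cones under conjugation by scalar matrices, $P_n^{\max}(\cl S,\cl T) \subseteq P_n^\mu$; since the $\mu$-cones are already Archimedean this persists after Archimedeanisation, so $\id : \cl S \mtp \cl T \to \cl S \otimes_\mu \cl T$ is completely positive. For the reverse direction I would invoke the hypothesis. By Theorem~\ref{choieffros} choose a unital complete order embedding $\iota : \cl S \mtp \cl T \to \BH$ and define $\Psi : \cl S \times \cl T \to \BH$ by $\Psi(s,t) = \iota(s \otimes t)$. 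Running the computation of the first part in reverse, for $P \in C_n$, $Q \in D_m$ one has $\Psi^{(n,m)}(P,Q) = \iota^{(nm)}(P \otimes Q)$, and $P \otimes Q \in C_n \otimes D_m \subseteq P_{nm}^{\max}(\cl S,\cl T)$ is positive in $\cl S \mtp \cl T$, so its image under the completely positive $\iota$ is positive; thus $\Psi$ is jointly completely positive. By the assumed property of $\mu$, the linearisation $\Psi_L = \iota$ is completely positive as a map $\cl S \otimes_\mu \cl T \to \BH$. Since $\iota$ is a complete order embedding of $\cl S \mtp \cl T$, the map $\iota^{-1}$ on its range is completely positive into $\cl S \mtp \cl T$, and composing gives that $\id : \cl S \otimes_\mu \cl T \to \cl S \mtp \cl T$ is completely positive. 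Combined with the automatic direction, $\id$ is a unital complete order isomorphism, whence $\cl S \otimes_\mu \cl T = \cl S \mtp \cl T$.

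The routine identities aside, the one point demanding care is the interaction with the Archimedeanisation in the first part: complete positivity must be verified against the Archimedeanised cones and not merely the generators $P_n^{\max}(\cl S,\cl T)$, which is precisely where the closedness of $M_n(\BH)^+$ and the squeeze $r \to 0^+$ enter, together with the well-definedness of $\phi_L$ on the quotient. In the second part the conceptual key, and the only genuinely non-formal step, is recognising the Choi--Effros embedding of $\cl S \mtp \cl T$ as the linearisation of a jointly completely positive bilinear map, which is exactly what lets the hypothesis on $\mu$ be brought to bear.
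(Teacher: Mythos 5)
Your proof is correct, and it is essentially the standard argument: the paper does not actually prove this statement but quotes it as background (it is Theorem 5.5/5.8 of Kavruk--Paulsen--Todorov--Tomforde, \emph{Tensor products of operator systems}), and both halves of your proposal --- positivity of $\phi_L$ on the generating cones $P_n^{\max}$ followed by the Archimedean squeeze, and the recognition of a Choi--Effros embedding of $\cl S \mtp \cl T$ as the linearisation of a jointly completely positive bilinear map --- are exactly the route taken there. The only point worth stating explicitly is that the null space of the Archimedeanisation defining $\cl S \mtp \cl T$ is trivial (product states separate points of $\cl S \odot \cl T$), which is what makes the identity map in your second half a genuine map on the common underlying space; your squeeze on $u$ and $-u$ handles the analogous issue in the first half.
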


    The {\it commuting operator system tensor product} is the operator system arising from the inclusion of 
    $\cl S \odot \cl T$ into $C_u^*(\cl S) \mtp C_u^*(\cl T)$, denoted $\cl S \ctp \cl T$
    (see Subsection \ref{sect:C*extensions} for the definition of the universal 
    C*-algebra $C^*_u(\cl R)$ of an operator system $\cl R$). 
    It is proved 
    in \cite[Theorem~5.5~and~Theorem~6.3]{KavrukPaulsenTodorovTomforde2} that the maximal operator system tensor product and the commuting operator system tensor product are both functorial.

    \subsubsection{The Archimedeanisation of matrix ordered *-vector spaces}

    The process of Archimedeanisation for matrix ordered *-vector spaces was described in 
    \cite[Section 3.1]{PaulsenTodorovTomforde}. 
    Let $(\cl S, \{ C_n\}_{n \in \bb{N}},e)$ be a matrix ordered *-vector space with matrix order unit. 
    For each $n \in \bb{N}$, set
    \[
        N_n = \big\lbrace S \in M_n(\cl S) : f(S)=0 \text{~for all~} f \in S(M_n(\cl S))\big \rbrace.
    \]
    Recall the notation from (\ref{eq:N1}); it is proved in \cite[Lemma~3.14]{PaulsenTodorovTomforde} 
    that $M_n(N) = N_n$, $n \in \bb{N}$. Define
    \[
    \begin{split}
        C_n^{\text{Arch}} = \Big\lbrace S + M_n(N) \in &(M_n(\cl S)/M_n(N))_h : \\
        &r e^{(n)}+S + M_n(N) \in  C_n + M_n(N) \text{~for all~} r>0 \Big\rbrace.
      \end{split}
    \]
    Then $(\cl S/N, \{ C_n^{\rm{Arch}}\}_{n \in \bb{N}}, e+N)$ is an operator system. 
    We call this the {\it Archimedeanisation} of $\cl S$ and denote it by $\cl S_{\text{Arch}}$. 
    It has the following universal property.

    \begin{theorem}[\cite{PaulsenTodorovTomforde}]\label{prop: Archimedeanisation of mou space}
        Let $\cl S$ be an matrix ordered *-vector space with matrix order unit.
        The quotient map $\varphi: \cl S \rightarrow \cl S_{\rm Arch}$ is the unique
        unital surjective completely positive map 
        such that, 
        whenever $\cl T$ is an operator system and 
        $\phi: \cl S \rightarrow \cl T$ is a unital completely positive map, there exists 
        a unique completely positive map $\phi_{\rm Arch}: \cl S_{\rm Arch} \rightarrow \cl T$ 
        such that $\phi = \phi_{\rm Arch} \circ \varphi$.
        
        Furthermore, if $(\tilde{\cl S}, \tilde{\varphi})$ is a pair consisting of an operator system and unital surjective 
        completely positive map $\tilde{\varphi} : \cl S\to \tilde{\cl S}$ 
        with the property that, 
whenever $\cl T$ is an operator system and 
        $\phi: \cl S \rightarrow \cl T$ is a unital completely positive map, there exists 
        a unique completely positive map $\tilde{\phi} : \tilde{\cl S} \rightarrow \cl T$ 
        such that $\phi = \tilde{\phi} \circ \tilde{\varphi}$,        
      then there exists a unital complete order isomorphism 
        $\psi : \cl S_{\rm Arch} \rightarrow \tilde{\cl S}$ such that $\psi \circ \varphi = \tilde{\varphi}.$
    \end{theorem}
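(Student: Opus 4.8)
The plan is to treat the two assertions in turn: first establish the universal factorisation property of the quotient map $\varphi$, and then deduce the uniqueness-up-to-isomorphism of the second paragraph by the standard categorical argument. Throughout I would mimic the AOU-space proof of Theorem~\ref{lem:archimedeanisation of positive map}, replacing the role of states on $V$ by states on each matrix level $M_n(\cl S)$.

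For the factorisation, fix a unital completely positive map $\phi : \cl S \to \cl T$ with $\cl T$ an operator system. The first step is to show that $\phi^{(n)}$ annihilates $M_n(N)$ for every $n$. Here I would invoke the identity $M_n(N) = N_n$ recorded before the statement: if $T \in N_n$ and $g$ is any state on $M_n(\cl T)$, then $g \circ \phi^{(n)}$ is a state on $M_n(\cl S)$, whence $g(\phi^{(n)}(T)) = 0$; since $\cl T$ is an operator system, $(M_n(\cl T), M_n(\cl T)^+, e_{\cl T}^{(n)})$ is an AOU space and its states separate points, so $\phi^{(n)}(T) = 0$. Consequently $\phi_{\rm Arch}(S + N) := \phi(S)$ is a well-defined unital linear map $\cl S_{\rm Arch} \to \cl T$, and its uniqueness is immediate from the surjectivity of $\varphi$, since any factorisation $\phi = \phi_{\rm Arch} \circ \varphi$ determines $\phi_{\rm Arch}$ on all of $\cl S_{\rm Arch}$.

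The step I expect to be the main obstacle is the complete positivity of $\phi_{\rm Arch}$, where the Archimedean matrix order unit of $\cl T$ does the essential work. Let $S + M_n(N) \in C_n^{\rm Arch}$; by definition, for each $r > 0$ there are $P_r \in C_n$ and $M_r \in M_n(N)$ with $r e^{(n)} + S = P_r + M_r$. Applying $\phi^{(n)}$ and using $\phi^{(n)}(M_r) = 0$, unitality $\phi^{(n)}(e^{(n)}) = e_{\cl T}^{(n)}$, and positivity of $\phi^{(n)}$, I obtain $r\, e_{\cl T}^{(n)} + \phi^{(n)}(S) = \phi^{(n)}(P_r) \in M_n(\cl T)^+$ for every $r > 0$. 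Since $e_{\cl T}$ is an Archimedean matrix order unit, the defining property of the Archimedean cone then yields $\phi^{(n)}(S) \in M_n(\cl T)^+$, that is, $\phi_{\rm Arch}^{(n)}(S + M_n(N)) \in M_n(\cl T)^+$ (hermiticity of $\phi^{(n)}(S)$ following from that of $S + M_n(N)$ together with $\phi^{(n)}(M_n(N)) = 0$). It remains to verify that $\varphi$ itself is unital, surjective and completely positive: the first two are clear for a quotient map, while complete positivity follows because $e^{(n)} \in C_n$ forces $r e^{(n)} + S \in C_n$ whenever $S \in C_n$, so that $\varphi^{(n)}(C_n) \subseteq C_n^{\rm Arch}$.

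For the second assertion, suppose $(\tilde{\cl S}, \tilde\varphi)$ is another pair with the stated universal property. Applying the universal property of $\varphi$ to $\tilde\varphi$ gives a completely positive $\psi : \cl S_{\rm Arch} \to \tilde{\cl S}$ with $\tilde\varphi = \psi \circ \varphi$, and symmetrically the universal property of $\tilde\varphi$ gives a completely positive $\tilde\psi : \tilde{\cl S} \to \cl S_{\rm Arch}$ with $\varphi = \tilde\psi \circ \tilde\varphi$. Then $\varphi = (\tilde\psi \circ \psi) \circ \varphi$, and the uniqueness clause of the universal property of $\varphi$ (applied to $\varphi = \id \circ \varphi$) forces $\tilde\psi \circ \psi = \id_{\cl S_{\rm Arch}}$; the symmetric computation gives $\psi \circ \tilde\psi = \id_{\tilde{\cl S}}$. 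Hence $\psi$ is a bijection with completely positive inverse $\tilde\psi$, i.e. a complete order isomorphism; it is unital since $\psi(e+N) = \psi(\varphi(e)) = \tilde\varphi(e) = e_{\tilde{\cl S}}$, and $\psi \circ \varphi = \tilde\varphi$ by construction. Apart from the complete-positivity verification above, every step is a routine matricial adaptation of the argument for AOU spaces.
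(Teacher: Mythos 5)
Your argument is correct. Note that the paper itself gives no proof of this statement: it is quoted verbatim from \cite{PaulsenTodorovTomforde} as background, so there is nothing internal to compare against; your proof is the standard one (and essentially the one in that reference), with the two key points — that $\phi^{(n)}$ annihilates $M_n(N)=N_n$ because states separate points of the AOU space $M_n(\cl T)$, and that the Archimedean property of $e_{\cl T}^{(n)}$ absorbs the $r e^{(n)}$ perturbations — both handled correctly, and the second paragraph following by the usual categorical uniqueness argument.
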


    \begin{remark} \label{rem:Arch is Arch at every matrix level}
  {\rm  It is shown in \cite[Remark 3.17]{PaulsenTodorovTomforde} that the Archimedeanisation of a matrix ordered *-vector space 
    with matrix order unit is precisely the operator system formed by taking the Archimedeanisation at every matrix level.}
  \end{remark}

    We will make use of the following facts in the sequel.

    \begin{lemma} \label{rem:linear map maintain operator system structure}
        Let $(\cl S, \{C_n\}_{n \in \bb{N}}, e)$ be an operator system, $V$ be a vector space and $\phi: \cl S \rightarrow V$ be an
        injective linear map. 
        Equip $\phi(\cl S)$ with the involution given by $\phi(x)^*\stackrel{def}{=} \phi(x^*)$. 
         Then $(\phi(\cl S), \{\phi^{(n)}(C_n)\}_{n \in \bb{N}}, \phi(e))$ is an operator system.
    \end{lemma}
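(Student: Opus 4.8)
The plan is to verify the operator system axioms for $(\phi(\cl S), \{\phi^{(n)}(C_n)\}_{n \in \bb{N}}, \phi(e))$ one by one, exploiting throughout the fact that injectivity of $\phi$ forces each amplification $\phi^{(n)} : M_n(\cl S) \to M_n(\phi(\cl S))$ to be a linear bijection (if $(\phi(s_{i,j}))_{i,j} = 0$ then every $s_{i,j} = 0$). I would first record two compatibility identities that follow directly from the linearity of $\phi$ and the definition of the involution: for every $S \in M_n(\cl S)$ one has $\phi^{(n)}(S)^* = \phi^{(n)}(S^*)$, and for every $\alpha \in M_{n,m}$ one has $\alpha^* \phi^{(n)}(S) \alpha = \phi^{(m)}(\alpha^* S \alpha)$. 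The first is immediate from $\phi(x)^* = \phi(x^*)$ applied entrywise (together with the transpose in the definition of the involution on matrices), while the second is the observation that the compression $\alpha^*(\cdot)\alpha$ is a fixed scalar-linear combination of matrix entries, which $\phi^{(n)}$ respects since $\phi$ is linear. Using the first identity I would check that $\phi(\cl S)$ is a $\ast$-vector space and that $\phi^{(n)}(C_n) \subseteq M_n(\phi(\cl S))_h$, so that both the involution and the proposed cones are well defined.

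Next I would confirm that $\{\phi^{(n)}(C_n)\}_{n}$ is a matrix ordering. Condition (i) is clear, since $\phi^{(n)}$ is linear and $C_n$ is a cone. For condition (ii), if $\phi^{(n)}(A) = -\phi^{(n)}(B)$ with $A, B \in C_n$, then injectivity of $\phi^{(n)}$ gives $A + B = 0$, whence $A \in C_n \cap (-C_n) = \{0\}$ and the intersection collapses to $\{0\}$. Condition (iii) follows at once from the second compatibility identity together with the compatibility of $\{C_n\}$: for $\alpha \in M_{n,m}$ and $S \in C_n$ we obtain $\alpha^* \phi^{(n)}(S) \alpha = \phi^{(m)}(\alpha^* S \alpha) \in \phi^{(m)}(C_m)$.

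It remains to show that $\phi(e)$ is an Archimedean matrix order unit, and here injectivity is decisive. Since $\phi(e)^{(n)} = \phi^{(n)}(e^{(n)})$, given a hermitian $T \in M_n(\phi(\cl S))$ I would write $T = \phi^{(n)}(S)$ where the unique preimage $S$ is hermitian (by the first identity and injectivity), choose $r > 0$ with $re^{(n)} - S \in C_n$, and apply $\phi^{(n)}$ to deduce $r\phi(e)^{(n)} - T \in \phi^{(n)}(C_n)$; thus $\phi(e)^{(n)}$ is an order unit. For the Archimedean property one inclusion is trivial, and for the converse, if $r\phi(e)^{(n)} + T \in \phi^{(n)}(C_n)$ for all $r > 0$, then pulling back through the injective map $\phi^{(n)}$ yields $re^{(n)} + S \in C_n$ for all $r > 0$, so the Archimedean property of $e^{(n)}$ in $\cl S$ gives $S \in C_n$ and hence $T \in \phi^{(n)}(C_n)$. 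This last step is the only point requiring genuine care: it is exactly the injectivity of each $\phi^{(n)}$ that permits positivity statements to be transported back to $\cl S$, and without it neither the antisymmetry of the cones nor the Archimedean property would survive. In effect the verification shows that $\phi$ is a complete order isomorphism of $\cl S$ onto the operator system thereby constructed.
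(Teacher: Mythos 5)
Your proposal is correct and follows essentially the same route as the paper: the paper likewise dismisses the compatibility of the cones and the matrix order unit property as straightforward and concentrates on the Archimedean property, which it establishes exactly as you do, by pulling the positivity statement back through the injective map $\phi^{(n)}$. Your write-up simply supplies the routine verifications the paper omits.
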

\begin{proof}
The facts that the family $\{\phi^{(n)}(C_n)\}_{n \in \bb{N}}$ is compatible and that $\phi(e)$ is matrix order unit 
are straightforward.
To show that $\phi^{(n)}(e^{(n)})$ is Archime-dean, suppose that $x\in M_n(\cl S)$ is a selfadjont element such that
$\phi^{(n)}(x) + r \phi^{(n)}(e^{(n)}) \in \phi(C_n)$ for all $r > 0$. By the injectivity of $\phi$, $x + re^{(n)} \in C_n$ for all $r > 0$, 
and hence $x\in C_n$. Thus, $\phi^{(n)}(x)\in \phi^{(n)}(C_n)$ and the proof is complete. 
\end{proof}

    \begin{lemma} \label{rem:ucio conjgation}
        Let $\cl S, \cl T$ and $\cl P$ be operator systems and let $\phi: \cl S \rightarrow \cl T$ and $\psi: \cl T \rightarrow \cl P$ be unital linear maps. If $\psi$ and $\psi \circ \phi$ are complete order embeddings then $\phi$ is a complete order embedding.
        \end{lemma}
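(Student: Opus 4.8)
The plan is to verify the defining conditions of a complete order embedding for $\phi$ directly, transporting positivity through $\psi$ and $\psi\circ\phi$ at each matrix level. Recall that a unital linear map $\theta$ between operator systems is a complete order embedding precisely when it is injective and, for every $n\in\bb{N}$, the amplification $\theta^{(n)}$ both preserves and reflects the positive cones; that is, a hermitian element $x$ of the domain lies in the positive cone if and only if $\theta^{(n)}(x)$ does. (Preservation in all degrees is complete positivity of $\theta$, while reflection in all degrees is complete positivity of the inverse of $\theta$ on its range.) I will establish these two implications for $\phi$, together with injectivity.

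First I would record the easy structural points. Injectivity of $\phi$ is immediate, since $\psi\circ\phi$ is injective. Moreover $\phi$ is self-adjoint: as $\psi$ and $\psi\circ\phi$ are completely positive they preserve the involution, and injectivity of $\psi$ then forces $\phi(s^*)=\phi(s)^*$ for all $s$; consequently $\phi(\cl S)$ is a self-adjoint unital subspace of $\cl T$ and inherits an operator system structure, so that the assertion that $\phi$ is a complete order embedding is meaningful.

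Next I would prove that $\phi^{(n)}$ preserves positivity. Fix $n$ and a positive $x\in M_n(\cl S)^+$. Since $\psi\circ\phi$ is completely positive, $\psi^{(n)}(\phi^{(n)}(x))=(\psi\circ\phi)^{(n)}(x)\in M_n(\cl P)^+$. Now $\psi$, being a complete order embedding, reflects positivity, so $\psi^{(n)}(\phi^{(n)}(x))\in M_n(\cl P)^+$ forces $\phi^{(n)}(x)\in M_n(\cl T)^+$. For the reverse implication, suppose $\phi^{(n)}(x)\in M_n(\cl T)^+$. Complete positivity of $\psi$ gives $(\psi\circ\phi)^{(n)}(x)=\psi^{(n)}(\phi^{(n)}(x))\in M_n(\cl P)^+$, and since $\psi\circ\phi$ reflects positivity we conclude $x\in M_n(\cl S)^+$. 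Thus $\phi^{(n)}$ both preserves and reflects the cones for every $n$, which is exactly what is needed.

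I do not expect a genuine obstacle: the whole argument is a matrix-level diagram chase built on the identity $(\psi\circ\phi)^{(n)}=\psi^{(n)}\circ\phi^{(n)}$. The only point demanding care is the bookkeeping of which hypothesis supplies preservation and which supplies reflection in each direction: the forward inclusion uses preservation from $\psi\circ\phi$ and reflection from $\psi$, whereas the reverse inclusion uses preservation from $\psi$ and reflection from $\psi\circ\phi$.
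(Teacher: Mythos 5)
Your argument is correct and is essentially the same matrix-level diagram chase as the paper's own proof: the forward inclusion combines complete positivity of $\psi\circ\phi$ with positivity-reflection of $\psi$, and the reverse inclusion combines complete positivity of $\psi$ with positivity-reflection of $\psi\circ\phi$. The only difference is that you additionally spell out injectivity and self-adjointness of $\phi$, which the paper leaves implicit.
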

    \begin{proof}
        Let $n \in \bb{N}$ and $S \in M_n(\cl S)^+$. 
        Then $\psi^{(n)}\circ \phi^{(n)}(S) \in M_n(\cl P)^+$ and therefore $\phi^{(n)}(S) \in M_n(\cl T)^+$. 
        Suppose that $S\in M_n(\cl S)$ and $\phi^{(n)}(S) \in M_n(\cl T)^+$.
        Then $\psi^{(n)} \circ \phi^{(n)}(S) \in M_n(\cl P)^+$ and therefore $S \in M_n(\cl S)^+$.
    \end{proof}

    We denote by $\catmou$ 
    (resp. $\catos$) the category whose objects are matrix ordered *-vector spaces with matrix order unit 
    (resp. operator systems) and whose morphisms are unital completely positive maps.

\subsection{$\omin$ and $\omax$}\label{sec:pre.omin and omax}

Let $(V,V^+,e)$ be an AOU space. An \emph{operator system structure} on 
$(V,V^+,e)$ is a family $\{P_n\}_{n\in \bb{N}}$ such that 
$(V,\{P_n\}_{n\in \bb{N}},e)$ is an operator system and $P_1 = V^+$. 
There are two extremal operator system structures \cite{PaulsenTodorovTomforde} that will play a significant role in the sequel. 
The \emph{minimal} operator system structure on $(V,V^+,e)$ is the family $\lbrace C_n^{\min}(V)\rbrace_{n \in \bb{N}}$, 
where 
    \[
        C_n^{\min}(V) = \Big\lbrace (x_{i,j})_{i,j} \in M_n(V) : \sum_{i,j=1}^{n} \overline{\lambda_i} \lambda_j x_{i,j} \in V^+ ~\text{for all}~ \lambda_1, \ldots, \lambda_n \in \mathbb{C} \Big\rbrace.
    \]
We set $\omin(V) = (V,\lbrace C_n^{\min}(V)\rbrace_{n \in \bb{N}},e)$. 

    \begin{theorem}\label{thm:characterisation of omin}
    Let $(V, V^+, e)$ be an AOU space and $n \in \bb{N}$. Then $(x_{i,j})_{i,j} \in C_n^{\min}(V)$ if and only if $(\angles{f}{x_{i,j}})_{i,j} \in M_n^+$ for each $f \in S(V)$. 
    \end{theorem}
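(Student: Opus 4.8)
My plan is to reduce this vector-valued positivity condition to the familiar scalar characterization of positive semidefinite matrices by testing against states, the bridge between the two being the complex Kadison representation theorem recorded above. Concretely, since $(V,V^+,e)$ is an AOU space, the canonical embedding $V \to C(S(V))$ is an order isomorphism onto its image, so for a hermitian element $v$ one has $v \in V^+$ exactly when $f(v) \geq 0$ for all $f \in S(V)$. I will also use that states separate the points of $V$, which holds because the minimal order seminorm is a norm on an AOU space.

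The forward direction is essentially automatic: if $(x_{i,j}) \in C_n^{\min}(V)$, then for each $\lambda \in \mathbb{C}^n$ the element $\sum_{i,j}\overline{\lambda_i}\lambda_j x_{i,j}$ is positive, and applying the positive functional $f$ gives $\sum_{i,j}\overline{\lambda_i}\lambda_j f(x_{i,j}) \geq 0$; ranging over $\lambda$ shows the scalar matrix $(f(x_{i,j}))_{i,j}$ obeys the quadratic-form criterion for membership in $M_n^+$.

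For the converse I would argue as follows. Assuming $(f(x_{i,j}))_{i,j} \in M_n^+$ for every state $f$, I first note that each such scalar matrix is in particular hermitian, whence $f(x_{i,j} - x_{j,i}^*) = 0$ for all $f$; by separation of points this yields $x_{i,j} = x_{j,i}^*$, so $(x_{i,j})$ is hermitian in $M_n(V)$. Fixing $\lambda$ and setting $y = \sum_{i,j}\overline{\lambda_i}\lambda_j x_{i,j}$, a one-line check then gives $y^* = y$, while positive semidefiniteness of $(f(x_{i,j}))_{i,j}$ gives $f(y) \geq 0$ for every state $f$. Invoking the state-characterization of $V^+$ produces $y \in V^+$, and since $\lambda$ is arbitrary we conclude $(x_{i,j}) \in C_n^{\min}(V)$.

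The routine parts are the scalar polarization dictionary and the hermiticity bookkeeping; the one genuinely substantive step is the implication from $f(y) \geq 0$ for every $f \in S(V)$ to $y \in V^+$ in the converse, which is where the Archimedean hypothesis is indispensable and which I handle via the Kadison representation theorem quoted in the preliminaries.
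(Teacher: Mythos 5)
Your proof is correct. The paper itself states this theorem without proof (it is quoted from Paulsen--Todorov--Tomforde, \emph{Operator system structures on ordered spaces}), and your argument is the standard one: the forward direction is the quadratic-form computation you give, and the converse reduces to the fact that in an AOU space a hermitian element $y$ with $f(y)\geq 0$ for every $f\in S(V)$ lies in $V^+$, which is exactly what the complex Kadison representation theorem recorded in the preliminaries provides. The only point worth making explicit is that states are automatically $*$-preserving (for $x\in V_h$ one has $-re\leq x\leq re$, so $f(x)\in\mathbb{R}$, and hence $f(x^*)=\overline{f(x)}$ in general), which is what your hermiticity bookkeeping in the converse silently uses; with that observation supplied, every step goes through.
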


\noindent It follows from Theorem \ref{thm:characterisation of omin} that 
$\omin(V)$ is the operator system induced by the canonical inclusion of $V$ into $C(S(V))$.

We define $\omax(V)$ to be the Archimedeanisation of the matrix ordered space 
$(V, \lbrace D_n^{\max}(V)\rbrace_{n \in \bb{N}}, e)$, where
    \[
    D_n^{\max}(V) = \left \lbrace \sum_{i=1}^{k} a_i \otimes x_i : x_i \in V^+, ~a_i \in M_n^+, ~i=1, \ldots, k, ~k \in \mathbb{N} \right\rbrace.
    \]

    Let $\bf{F}: \catos \rightarrow \cataou$ be the forgetful functor. It can be seen that $\omin:\cataou \rightarrow \catos$ is a right adjoint functor to $\bf{F}$ and $\omax:\cataou \rightarrow \catos$ is a left adjoint functor to $\bf{F}$ (see \cite{MacLane} for relevant background 
    in Category Theory).

    \subsection{C*-covers} \label{sect:C*extensions}

Let $\cl S$ be an operator system. 
A {\it C*-cover} is a pair $(\cl A,\nu)$ consisting of a unital C*-algebra and a unital completely isometric embedding 
$\nu:\cl S \rightarrow \cl A$ such that $\nu(\cl S)$ generates $\cl A$ as a C*-algebra. 
The {\it universal C*-cover} $(C^*_u(\cl S),\iota)$ of $\cl S$ 
was defined in \cite{KirchbergWassermann} and is characterised by the following universal property:

    \begin{proposition} \label{prop:universal prop universal calg}
    Let $\cl S$ be an operator system, 
    $\cl A$ be a C*-algebra and $\phi: \cl S \rightarrow \cl A$ be a unital completely positive map. 
    Then there exists a *-homomorphism
    \[
        \widetilde{\phi} : C^*_u(\cl S) \rightarrow \cl A
    \]
    such that $\widetilde{\phi}\circ \iota =\phi$. Moreover, 
    if $(\cl B,\mu)$ is another C*-cover of $\cl S$ 
    such that, 
    whenever $\cl A$ is a C*-algebra and $\phi: \cl S \rightarrow \cl A$ be a unital completely positive map,
    there exists a *-homomorphism
    \[
        \widetilde{\phi} : \cl B \rightarrow \cl A
    \]
    such that $\widetilde{\phi}\circ \mu =\phi$,     
    then there exists a *-isomorphism $\rho : \cl B \rightarrow C_u^*(S)$  with $\rho \circ \mu = \iota$.
    \end{proposition}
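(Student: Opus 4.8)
The plan is to first make the explicit construction of the universal C*-cover visible so that the existence of the factoring *-homomorphism becomes transparent, and then to deduce the final (uniqueness) clause by the standard argument for universal objects.

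For the existence of $\widetilde{\phi}$, I would realise $C^*_u(\cl S)$ as the enveloping C*-algebra of a universal C*-seminorm. Let $T(\cl S)$ denote the free unital *-algebra on the underlying vector space of $\cl S$, so that every unital completely positive (ucp) map $\psi:\cl S\to\BH$ extends uniquely to a unital *-homomorphism $\hat\psi:T(\cl S)\to\BH$. Set $\norm{a}_u=\sup_\psi\norm{\hat\psi(a)}$, the supremum taken over all such $\psi$. Writing a typical $a\in T(\cl S)$ as a finite sum of products $s_1\cdots s_k$ with $s_j\in\cl S$, and using that each $\psi$ is contractive on $\cl S$, one gets $\norm{\hat\psi(s_1\cdots s_k)}\leq\norm{s_1}\cdots\norm{s_k}$, so $\norm{a}_u<\infty$ and $\norm{\cdot}_u$ is a C*-seminorm. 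Then $C^*_u(\cl S)$ is the completion of the quotient of $T(\cl S)$ by the null ideal of $\norm{\cdot}_u$, and $\iota$ is the induced map on $\cl S$; that $\iota$ is a unital complete order embedding whose image generates $C^*_u(\cl S)$ follows by testing the seminorm against the Choi--Effros embedding of Theorem~\ref{choieffros}. Now given a ucp map $\phi:\cl S\to\cl A$, I fix a faithful unital embedding $\cl A\subseteq\BH$; then $\phi$, viewed as a ucp map into $\BH$, extends to $\hat\phi:T(\cl S)\to\BH$, which is $\norm{\cdot}_u$-contractive by the very definition of the seminorm, and hence descends to a *-homomorphism $\widetilde{\phi}:C^*_u(\cl S)\to\BH$ with $\widetilde{\phi}\circ\iota=\phi$. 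Since $\iota(\cl S)$ generates $C^*_u(\cl S)$ and $\widetilde{\phi}(\iota(\cl S))=\phi(\cl S)\subseteq\cl A$, the range of $\widetilde{\phi}$ lies in $\cl A$. I also record the crucial point that $\widetilde{\phi}$ is \emph{unique}: because $\iota(\cl S)$ generates $C^*_u(\cl S)$, any *-homomorphism on $C^*_u(\cl S)$ is determined by its restriction to $\iota(\cl S)$.

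For the second assertion, let $(\cl B,\mu)$ be a second pair with the stated factoring property. Since $\mu$ is a unital complete isometry, it is a unital complete order embedding and hence ucp, so I may run the usual back-and-forth. Applying the universal property of $C^*_u(\cl S)$ to $\mu:\cl S\to\cl B$ gives a *-homomorphism $\sigma:C^*_u(\cl S)\to\cl B$ with $\sigma\circ\iota=\mu$; applying the hypothesised property of $(\cl B,\mu)$ to $\iota:\cl S\to C^*_u(\cl S)$ gives a *-homomorphism $\rho:\cl B\to C^*_u(\cl S)$ with $\rho\circ\mu=\iota$. Then $\rho\circ\sigma$ is a *-endomorphism of $C^*_u(\cl S)$ with $(\rho\circ\sigma)\circ\iota=\iota$, so the uniqueness recorded above forces $\rho\circ\sigma=\id_{C^*_u(\cl S)}$; symmetrically $\sigma\circ\rho$ fixes $\mu$ on $\cl S$, and since $\mu(\cl S)$ generates $\cl B$ this forces $\sigma\circ\rho=\id_{\cl B}$. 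Hence $\rho$ is a *-isomorphism with $\rho\circ\mu=\iota$, as required.

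The main obstacle is entirely in the existence half and is foundational rather than categorical: one must verify that the universal C*-seminorm is finite (so that $C^*_u(\cl S)$ is a genuine C*-algebra), and, more delicately, that $\iota$ is completely isometric and that $\iota(\cl S)$ generates $C^*_u(\cl S)$, since generation is exactly what powers both the uniqueness of $\widetilde{\phi}$ and the back-and-forth in the second paragraph. Once the construction and these properties are in place, the uniqueness assertion is the routine ``an initial object is unique up to unique isomorphism'' argument.
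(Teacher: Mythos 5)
Your argument is correct. Note that the paper itself offers no proof of this proposition: it is stated as a known characterisation, with the construction attributed to \cite{KirchbergWassermann}, so there is no in-paper argument to compare against. What you give is essentially the standard construction from that source: realise $C^*_u(\cl S)$ as the separation--completion of the free unital *-algebra $T(\cl S)$ under the universal C*-seminorm $\sup_\psi\norm{\hat\psi(\cdot)}$, check finiteness via complete contractivity of unital completely positive maps, obtain complete isometry of $\iota$ by testing against a Choi--Effros embedding, and then run the routine back-and-forth for uniqueness, using that $\iota(\cl S)$ and $\mu(\cl S)$ generate their respective C*-algebras to upgrade the two compositions to identities. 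The only points you elide are minor and standard: that the supremum defining $\norm{\cdot}_u$ is taken over what is genuinely a set of seminorms (one restricts to Hilbert spaces of bounded density character, or observes that each element's value is a supremum of a set of real numbers), and that the unital *-homomorphic extension $\hat\psi$ requires the free unital *-algebra to be formed with the unit of $\cl S$ identified with the algebra unit. Neither affects the correctness of the proof.
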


    We call $C_u^*(\cl S)$ the {\it universal C*-algebra of $\cl S$}.
The {\it C*-envelope} of $\cl S$,
introduced in \cite{Hamana} (see also \cite[Section~4.3]{BlecherLeMerdy}) 
is, on the other hand, the C*-cover $(C^*_e(\cl S),\kappa)$,
characterised by the following universal property: if 
$(\cl A, \phi)$ is a C*-cover of $\cl S$, then there exists a *-homomorphism
    \[
        \widetilde{\phi} : \cl A \rightarrow C^*_e(\cl S)
    \]
    such that $\widetilde{\phi}\circ \phi =\kappa$. 
    Clearly, the pair $(C_e^*(\cl S), \kappa)$ is unique in the sense that if $(\cl B,\mu)$ 
    is another pair with the same property then there exists a *-isomorphism $\rho : \cl B \rightarrow C_e^*(S)$  with 
    $\rho \circ \mu = \kappa$.

    The following fact is a straightforward consequence of the universal property of C*-envelopes.

    \begin{remark} \label{cor:ucoi induces *iso in envelopes}
{\rm        Let $\cl S$ and $\cl T$ be operator systems and let $(C_e^*(\cl S), \iota_{\cl S})$ and $(C_e^*(\cl T), \iota_{\cl T})$ be the 
C*-envelopes of $\cl S$ and $\cl T$, respectively. If $\phi: \cl S \rightarrow \cl T$ is a unital complete order isomorphism, then there exists a 
unital *-isomorphism $\rho:  C_e^*(\cl S) \rightarrow C_e^*(\cl T)$ such that $\rho \circ \iota_{\cl S} = \iota_{\cl T}\circ \phi$.}
          \end{remark}

\subsection{Inductive limits}\label{subs_iil}

    We recall some basic categorical notions which will be necessary in the sequel; we refer
    the reader to \cite{MacLane} for further details.

    \begin{definition} \label{def:inductive limit}
    Let $\bf{C}$ be a category. An \emph{inductive system} in $\bf{C}$ is a 
pair $(\lbrace A_k \rbrace_{k \in \mathbb{N}}, \lbrace \alpha_k \rbrace_{k \in \mathbb{N}})$ where $A_k$ is an object in $\bf{C}$ and $\alpha_k: A_k \rightarrow A_{k+1}$ is a morphism for each $k \in \mathbb{N}$. An 
\emph{inductive limit}  
of the inductive system $(\lbrace A_k \rbrace_{k \in \mathbb{N}}, \lbrace \alpha_k \rbrace_{k \in \mathbb{N}})$ is a pair 
$(A, \lbrace \alpha_{k, \infty} \rbrace_{k \in \mathbb{N}})$ where $A$ is an object in $\bf{C}$ and $\alpha_{k,\infty}: A_k \rightarrow A$ is a morphism, 
$k \in \bb{N}$, such that

    \begin{enumerate}[(i)]
    \item $\alpha_{k+1,\infty} \circ \alpha_k = \alpha_{k,\infty}$, $k \in \bb{N}$, and
    \item if $(B, \lbrace \beta_{k} \rbrace_{k \in \mathbb{N}})$ is another pair such that $B$ is an object in $\bf{C}$,  $\beta_k: A_k \rightarrow B$ is a morphism and $\beta_{k+1} \circ \alpha_k = \beta_{k}$, $k \in \bb{N}$, then there exists a unique morphism $\mu : A \rightarrow B$ such that
    $\mu \circ \alpha_{k, \infty} = \beta_{k}$, $k \in \bb{N}$.
    \end{enumerate}
  \end{definition}
  
  Suppose that $(\lbrace A_k \rbrace_{k \in \mathbb{N}}, \lbrace \alpha_k \rbrace_{k \in \mathbb{N}})$ is an inductive system. 
If it exists, its inductive limit is unique and will be denoted by $\limcat (A_k, \alpha_k)$ or $\limcat A_k$ when the context is clear.
We will refer to $\alpha_k$, $k \in \bb{N}$, as the {\it connecting morphisms}, and set 
  $$\alpha_{k,l} = \alpha_{l-1}\circ \cdots \circ \alpha_k \mbox{ if } k<l \mbox{ and }  \alpha_{k,k} = \id\mbox{}_{A_k};$$
we thus have that $\alpha_{k,l}$ is a morphism from $A_k$ to $A_l$.
If every inductive system in the category ${\bf C}$ has an inductive limit, we say that ${\bf C}$ is a category 
{\it with inductive limits}.

  \begin{theorem} \label{thm:universal property infinitely many maps}
      Let {\bf C} be a category with inductive limits,
      and let \linebreak $(\{A_k\}_{k \in \mathbb{N}}, \{\phi_k\}_{k \in \mathbb{N}})$ 
      (resp. $(\{B_k\}_{k \in \mathbb{N}}, \{\psi_k\}_{k \in \mathbb{N}})$) be an inductive system in {\bf C} with 
      an inductive limit $(A, \{\phi_{k,\infty}\}_{k \in \bb{N}})$ (resp. $(B, \{\psi_{k,\infty}\}_{k \in \bb{N}})$). 
      Let $\{\theta_k\}_{k \in \mathbb{N}}$ be a sequence of morphisms such that the following diagram commutes:
      \begin{equation*} \label{eq:commdiagram1}
          \begin{CD}
              A_1 @>{\phi_1}>> A_2 @>{\phi_2}>> A_3 @>{\phi_3}>> A_4 @>{\phi_4}>> \cdots \\
              @V{\theta_1}VV @V{\theta_2}VV @V{\theta_3}VV @V{\theta_4}VV \\
              B_1 @>{\psi_1}>> B_2 @>{\psi_2}>> B_3 @>{\psi_3}>> B_4 @>{\psi_4}>> \cdots.
          \end{CD}
      \end{equation*}
      Then there exists a unique morphism $\theta : A \rightarrow  B $ such that $\theta \circ \phi_{k, \infty} = \psi_{k, \infty} \circ \theta_k$,
      $k \in \mathbb{N}$.
  \end{theorem}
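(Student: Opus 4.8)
The plan is to construct the morphism $\theta$ by invoking the universal property of the inductive limit $(A,\{\phi_{k,\infty}\}_{k\in\bb{N}})$, using the maps into $B$ supplied by the commuting ladder. First I would set $\beta_k := \psi_{k,\infty}\circ\theta_k : A_k \to B$ for each $k\in\bb{N}$. To apply Definition~\ref{def:inductive limit}(ii) with the target object $B$ and this family $\{\beta_k\}_{k\in\bb{N}}$, I must verify the compatibility condition $\beta_{k+1}\circ\phi_k = \beta_k$. This is exactly where the hypotheses are used: the commutativity of the ladder gives $\psi_k\circ\theta_k = \theta_{k+1}\circ\phi_k$, and the inductive-limit relation for $(B,\{\psi_{k,\infty}\}_{k\in\bb{N}})$ gives $\psi_{k+1,\infty}\circ\psi_k = \psi_{k,\infty}$. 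Chaining these,
\begin{eqnarray*}
\beta_{k+1}\circ\phi_k
&=& \psi_{k+1,\infty}\circ\theta_{k+1}\circ\phi_k\\
&=& \psi_{k+1,\infty}\circ\psi_k\circ\theta_k\\
&=& \psi_{k,\infty}\circ\theta_k \;=\; \beta_k.
\end{eqnarray*}

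With compatibility established, Definition~\ref{def:inductive limit}(ii) applied to the inductive limit $A$ yields a \emph{unique} morphism $\theta:A\to B$ satisfying $\theta\circ\phi_{k,\infty}=\beta_k=\psi_{k,\infty}\circ\theta_k$ for every $k\in\bb{N}$, which is precisely the asserted intertwining relation. Both existence and uniqueness of $\theta$ are delivered directly by the universal property, so no further argument is needed for uniqueness.

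I expect the argument to be essentially formal, since it is the standard functoriality of inductive limits in any category; the only genuine content is the compatibility verification above, and its one subtlety is keeping the two distinct relations straight — the \emph{square} relations $\psi_k\circ\theta_k=\theta_{k+1}\circ\phi_k$ coming from the given ladder, versus the \emph{telescoping} relation $\psi_{k+1,\infty}\circ\psi_k=\psi_{k,\infty}$ coming from property (i) of the limit $B$. There is no real obstacle beyond this bookkeeping; the theorem holds in any category with inductive limits precisely because it is a consequence of the defining universal property and requires nothing about the specific categories $\catou$, $\cataou$, $\catos$ or $\catmou$ under study.
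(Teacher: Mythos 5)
Your proof is correct: defining $\beta_k=\psi_{k,\infty}\circ\theta_k$, verifying $\beta_{k+1}\circ\phi_k=\beta_k$ from the square and telescoping relations, and invoking the universal property of $(A,\{\phi_{k,\infty}\})$ is exactly the standard argument. The paper states this result without proof (treating it as recalled categorical background), and your argument is precisely the one it implicitly relies on, so there is nothing to compare beyond noting that you have supplied the omitted details correctly.
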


    \begin{remark} \label{rem:indlimit subsequence}
{\rm        Let ${\bf C}$ be a category with inductive limits. 
        Let \linebreak $(\{A_k\}_{k \in \mathbb{N}}, \{\phi_k\}_{k \in \mathbb{N}})$ be an inductive system in ${\bf C}$ 
        with inductive limit $(A,$ $\{\phi_{k,\infty}\}_{k \in \bb{N}})$ and let $(n_k)_{k \in \mathbb{N}}\subseteq \mathbb{N}$ 
        be a subsequence. Then the inductive system
        $(\{A_{n_k}\}_{k \in \mathbb{N}}, \{\phi_{n_k, n_{k+1}}\}_{k \in \mathbb{N}})$
        has inductive limit $(A, \{\phi_{n_k,\infty}\}_{k \in \bb{N}})$. }
    \end{remark}

    \begin{proposition} \label{prop:iso ind limit infinitely many maps}
        Let {\bf C}, $(\{A_k\}_{k \in \mathbb{N}}, \{\phi_k\}_{k \in \mathbb{N}})$, 
        $(\{B_k\}_{k \in \mathbb{N}}, \{\psi_k\}_{k \in \mathbb{N}})$, $A$ and $B$
        be as in Theorem~\ref{thm:universal property infinitely many maps}.
        Suppose  
        $\{\theta_{2k-1}\}_{k \in \mathbb{N}}, \{\varphi_{2k}\}_{k \in \mathbb{N}}$ are sequences of morphisms 
        such that the following diagram commutes:
        \vspace{-1mm}
        \[
            \begin{CD}
                A_1 @>{\phi_1}>> A_2 @>{\phi_2}>> A_3 @>{\phi_3}>> A_4 @>{\phi_4}>> \cdots \\
                @V{\theta_1}VV @A{\varphi_2}AA @V{\theta_3}VV @A{\varphi_4}AA \\
                B_1 @>{\psi_1}>> B_2 @>{\psi_2}>> B_3 @>{\psi_3}>> B_4 @>{\psi_4}>> \cdots.
            \end{CD}
                \vspace{-2mm}
        \]
        Then $A$ is isomorphic to $B$.
    \end{proposition}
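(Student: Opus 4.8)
The plan is to weave the two systems into a single inductive system whose limit can be identified, via Remark~\ref{rem:indlimit subsequence}, with both $A$ and $B$. Concretely, I would set $C_{2j-1} = A_{2j-1}$ and $C_{2j} = B_{2j}$ for $j \in \bb{N}$, and define connecting morphisms by $\gamma_{2j-1} = \psi_{2j-1}\circ \theta_{2j-1} : A_{2j-1} \to B_{2j}$ and $\gamma_{2j} = \phi_{2j}\circ \varphi_{2j} : B_{2j} \to A_{2j+1}$, so that the merged system reads
\[
A_1 \xrightarrow{\gamma_1} B_2 \xrightarrow{\gamma_2} A_3 \xrightarrow{\gamma_3} B_4 \xrightarrow{\gamma_4} \cdots .
\]
Since $\bf{C}$ is a category with inductive limits, the system $(\{C_k\}_{k\in\bb{N}}, \{\gamma_k\}_{k\in\bb{N}})$ has an inductive limit, say $(C, \{\gamma_{k,\infty}\}_{k\in\bb{N}})$.

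The key computation is to read off the two families of commuting squares implicit in the zig-zag diagram; note that they have \emph{opposite} vertical orientations. The square between columns $2j-1$ and $2j$ yields $\varphi_{2j}\circ \psi_{2j-1}\circ \theta_{2j-1} = \phi_{2j-1}$, whence the connecting map of the merged system over the odd subsequence satisfies
\[
\gamma_{2j}\circ \gamma_{2j-1} = \phi_{2j}\circ(\varphi_{2j}\circ \psi_{2j-1}\circ \theta_{2j-1}) = \phi_{2j}\circ \phi_{2j-1} = \phi_{2j-1,2j+1}.
\]
The (oppositely oriented) square between columns $2j$ and $2j+1$ yields $\theta_{2j+1}\circ \phi_{2j}\circ \varphi_{2j} = \psi_{2j}$, whence over the even subsequence
\[
\gamma_{2j+1}\circ \gamma_{2j} = \psi_{2j+1}\circ(\theta_{2j+1}\circ \phi_{2j}\circ \varphi_{2j}) = \psi_{2j+1}\circ \psi_{2j} = \psi_{2j,2j+2}.
\]

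With these identities in hand, I would finish by applying Remark~\ref{rem:indlimit subsequence} twice. The odd subsequence $(C_{2j-1})_{j} = (A_{2j-1})_{j}$ of the merged system carries exactly the connecting maps $\phi_{2j-1,2j+1}$, by the first identity; hence on the one hand it is the odd subsequence of the $A$-system, whose limit is $A$, and on the other hand, being a subsequence of $(\{C_k\},\{\gamma_k\})$, its limit is $C$. Uniqueness of the inductive limit gives $C \cong A$. Symmetrically, the even subsequence $(C_{2j})_{j} = (B_{2j})_{j}$ carries the maps $\psi_{2j,2j+2}$ by the second identity, so it is simultaneously the even subsequence of the $B$-system (limit $B$) and a subsequence of the merged system (limit $C$), giving $C \cong B$. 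Hence $A \cong C \cong B$.

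The part needing care is not the construction but the bookkeeping of the alternating vertical arrows: the relations $\varphi_{2j}\circ \psi_{2j-1}\circ \theta_{2j-1} = \phi_{2j-1}$ and $\theta_{2j+1}\circ \phi_{2j}\circ \varphi_{2j} = \psi_{2j}$ arise from squares of opposite orientation, and one must match them correctly to the two complementary subsequences, verifying that the induced connecting maps genuinely coincide with $\phi_{2j-1,2j+1}$ and $\psi_{2j,2j+2}$ rather than with some shifted composite. Once this is checked, the conclusion is a purely formal consequence of Remark~\ref{rem:indlimit subsequence} and the uniqueness of inductive limits.
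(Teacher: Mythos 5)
Your proof is correct: the identities $\varphi_{2j}\circ\psi_{2j-1}\circ\theta_{2j-1}=\phi_{2j-1}$ and $\theta_{2j+1}\circ\phi_{2j}\circ\varphi_{2j}=\psi_{2j}$ are exactly what commutativity of the alternating squares gives, the merged system's odd and even subsequences therefore coincide with subsequences of the $A$- and $B$-systems respectively, and Remark~\ref{rem:indlimit subsequence} together with uniqueness of inductive limits yields $A\cong C\cong B$. The paper omits the proof of this proposition entirely (deferring to \cite{MacLane}), and your interleaving argument is the standard one that would be supplied.
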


    \begin{remark}
{\rm      
Let ${\bf C}$ be a category with inductive limits 
and let \linebreak $(\{A_k\}_{k \in \mathbb{N}}, \{\phi_k\}_{k \in \mathbb{N}})$
(resp. $(\{B_k\}_{k \in \mathbb{N}}, \{\psi_k\}_{k \in \mathbb{N}})$) be an inductive system in {\bf C} 
with inductive limit $(A, \{\phi_{k,\infty}\}_{k \in \bb{N}})$ (resp. $(B, \{\psi_{k,\infty}\}_{k \in \bb{N}})$).
        By Remark~\ref{rem:indlimit subsequence} and Proposition~\ref{prop:iso ind limit infinitely many maps}, in order to show that $A$ and $B$ are isomorphic it suffices to find morphisms as in Proposition~\ref{prop:iso ind limit infinitely many maps} for subsystems
        \begin{equation*}
            A_{n_1}\stackrel{\phi_{{n_1}, {n_2}}}{\longrightarrow} A_{n_2} \stackrel{\phi_{n_2, n_3}}{\longrightarrow} A_{n_3} \stackrel{\phi_{n_3, n_4}}{\longrightarrow} A_{n_4} \stackrel{\phi_{n_4, n_5}}{\longrightarrow} \cdots
        \end{equation*}
        and
        \begin{equation*}
            B_{m_1}\stackrel{\psi_{{m_1}, {m_2}}}{\longrightarrow} B_{m_2} \stackrel{\psi_{m_2, m_3}}{\longrightarrow} B_{m_3} \stackrel{\psi_{m_3, m_4}}{\longrightarrow} B_{m_4} \stackrel{\psi_{m_4, m_5}}{\longrightarrow} \cdots.
        \end{equation*}
}
    \end{remark}

\smallskip

We next recall the notion of an inverse limit in the category 
$\cattop$ whose objects are topological spaces and whose morphisms are continuous maps. 
Suppose we have the following inverse system in $\cattop$: $X_1 \stackrel{f_1}{\longleftarrow} X_2 \stackrel{f_2}{\longleftarrow} X_3 \stackrel{f_3}{\longleftarrow} X_4 \stackrel{f_4}{\longleftarrow} \cdots$; 
this means that $X_k$ is a topological space and $f_k$ is a continuous map, $k\in \bb{N}$. 
        The inverse limit of this inverse system, denoted $\limtop X_k$, is the set
        \[
                \Big \lbrace (x_k)_{k \in \mathbb{N}} \in \prod_{k \in \mathbb{N}} X_k: f_k(x_{k+1}) = x_k \text{~for all~} k \in \mathbb{N} \Big\rbrace,
        \]
        equipped with the product topology.
        We note that if each of the spaces $X_k$ is compact and Hausdorff, then $\limtop X_k$ is a compact Hausdorff space.

\medskip

        We denote by $\catcalg$ the category whose objects are unital C*-algebras and whose morphisms are unital *-homomorphisms. Let
        \begin{equation}\label{eq_ccst}
            \cl A_1\stackrel{\pi_1}{\longrightarrow} \cl A_2 \stackrel{\pi_2}{\longrightarrow} \cl A_3 \stackrel{\pi_3}{\longrightarrow} \cl A_4 \stackrel{\pi_4}{\longrightarrow} \cdots
        \end{equation}
        be an inductive system in $\catcalg.$ Let $\prod_{k \in \bb{N}} \cl A_k$ be the space of sequences
        $a = (a_k)_{k \in \bb{N}}$
        such that
        \[
            \norm{a} = \sup\{\norm{a_k}_{\cl A_k} : k \in \mathbb{N}\}
        \]
        is finite. 
                Then $\prod_{k \in \bb{N}} \cl A_k$, equipped with pointwise addition,  multiplication 
        and the norm $\norm{\cdot}$, is a C*-algebra. Define
        \[
            {\cl A}_{\infty}^{0} = \Big\lbrace (a_k)_{k \in \bb{N}} \in \prod_{k \in \bb{N}} \cl A_k :  
            \exists m \in \bb{N} \text{~such that~} \pi_k(a_{k}) = a_{k+1} \text{~for all~} k \geq m \Big\rbrace
        \]
        and
        \[
            N = \Big\lbrace(a_k)_{k\in \bb{N}}\in {\cl A}_{\infty}^{0} : \lim_{k \rightarrow \infty} \norm{a_k}_{\cl A_k} = 0\Big\rbrace.
        \]
        Set 
        ${\cl A}_{\infty} ={\cl A}_{\infty}^{0}/N$ and let 
$q : {\cl A}_{\infty}^{0}\rightarrow {\cl A}_{\infty}$ be the canonical quotient map.
        Let $\pi_{k,\infty}^{0} : \cl A_k\rightarrow {\cl A}_{\infty}^{0}$ be the (linear)
        map given by $\pi_{k,\infty}^{0} (a) = (b_i)_{i \in \bb{N}}$, where
        \[
        b_i =
        \begin{cases}
        0 &\text{~if~} i < k \\
        \pi_{k,i}(a) &\text{~if~} i \geq k, \\
        \end{cases}
        \]
        and let $\pi_{k,\infty} = q \circ \pi_{k,\infty}^{0}$.
        We note that
            $\cl A_{\infty} = \cup_{k \in \bb{N}}\pi_{k,\infty}(\cl A_k)$
        and it is possible to show that
            $\norm{\pi_{k,\infty}(a_k)}_{\cl A_{\infty}} = 
            \lim_{m \rightarrow \infty} \norm{\pi_{k,m}(a_k)}_{\cl A_m}$ for any $a_k \in \cl A_k$.
        Let $\lcalg{\cl A}$ be the completion of~${\cl A}_{\infty}$;
        then $\lcalg{\cl A}$ is an inductive limit of the inductive system (\ref{eq_ccst}) in $\catcalg$ \cite[Section II.8.2]{Blackadar}. 
        Following our general notation, we will denote it by $\limcalg \cl A_k$.

        \begin{remark} \label{rem:universal prop Calg}
{\rm            If each $\pi_{k}$ is injective then $\pi_{k,\infty}$ is injective. Indeed, 
            suppose $\pi_{k,\infty}(a_k) = 0$; then
            $\norm{a_k}_{\cl A_k} = \lim_{m \rightarrow \infty}\norm{a_k}_{\cl A_k} = \lim_{m \rightarrow \infty} \norm{\pi_{k,m}(a_k)}_{\cl A_m} =0$ and therefore $a_k = 0$. }
        \end{remark}

    \begin{remark} \label{ex:InductiveLimitC(X)}
    {\rm 
      Let $X_1 \stackrel{f_1}{\longleftarrow} X_2 \stackrel{f_2}{\longleftarrow} X_3 \stackrel{f_3}{\longleftarrow} X_4 \stackrel{f_4}{\longleftarrow} \cdots$ be an inverse system in $\cattop$ such that each $X_k$ is compact and Hausdorff.
      Let  
      $C(X_1) \stackrel{\phi_1}{\longrightarrow} C(X_2) \stackrel{\phi_2}{\longrightarrow} C(X_3) \stackrel{\phi_3}{\longrightarrow} C(X_4) \stackrel{\phi_4}{\longrightarrow} \cdots$
      be the associated inductive system in~$\catcalg$. 
      We have that $\limcalg C(X_i)$ is unitally *-isomorphic to
      the C*-algebra $C(\limtop X_k)$ (see \cite[II.8.2.2]{Blackadar}).
      }
    \end{remark}






\section{Inductive limits of AOU spaces} \label{chapt:inductive limit AOU}

    We begin this section with the construction of
    the inductive limit in the category~$\catou$. 
    In Section~\ref{sec:inverse limit of state spaces}, we 
    identify the state space of such an inductive limit as the inverse limit of the state spaces of the 
    intermediate ordered *-vector spaces. Finally, in Section~\ref{sect:inductive limits AOU spaces}, we
consider inductive limits in the category $\cataou$ of AOU spaces.

\subsection{Inductive limits in the category {\bf OU}}

    Let $(V_k,V_k^+,e_k)_{k\in \bb{N}}$, be a sequence of ordered *-vector spaces with order units and let 
    $\phi_k : V_k\to V_{k+1}$ be a 
unital positive map, $k\in \bb{N}$; thus,
    \begin{equation}\label{eq_seq}
        V_1\stackrel{\phi_1}{\longrightarrow} V_2 \stackrel{\phi_2}{\longrightarrow} V_3 \stackrel{\phi_3}{\longrightarrow} V_4 \stackrel{\phi_4}{\longrightarrow} \cdots
    \end{equation}
    is an inductive system in $\catou$.
        We let
    $$V_{\infty}^{0} = \Big\lbrace(x_k)_{k\in \bb{N}} \in \prod_{k \in \bb{N}} V_k: \ \exists \ m \mbox{ such that } \phi_k(x_k) = x_{k+1}\text{~for all~} k\geq m\Big\rbrace$$
    and
    \begin{equation}\label{eq:N0}
    N^0_{(V_k)} = \big\lbrace(x_k)_{k\in \bb{N}}\in V_{\infty}^{0} : \ \exists \ m \mbox{ such that } x_k = 0 \text{~for all~}  k\geq m\big\rbrace.
    \end{equation}
    We simplify the notation and write $N^0$ in the place of $N^0_{(V_k)}$, when the context is clear. 
Clearly, $N^0$ is a subspace of $V_{\infty}^{0}$. We set
    $$\lou{V} =V_{\infty}^{0}/N^0,$$
    let $q_0 : V_{\infty}^{0}\rightarrow \lou{V}$ be the canonical quotient map
    and let $\phi_{k,\infty}^{0} : V_k\rightarrow V_{\infty}^{0}$ be the (linear)
    map given by $\phi_{k,\infty}^{0} (x) = (y_i)_{i \in \bb{N}}$ where
    \[
        y_i =
        \begin{cases}
        0 &\text{~if~} i < k \\
        \phi_{k,i}(x) &\text{~if~} i \geq k. \\
        \end{cases}
    \]
    Let
\begin{equation}\label{eq_kinfty}
    \ddot{\phi}_{k,\infty} = q_0 \circ \phi_{k,\infty}^{0};
\end{equation}
thus,  $\ddot{\phi}_{k,\infty}$ is a linear map from $V_k$ into $\lou{V}$.
Since 
$\phi_{k,\infty}^0 = \phi_{l,\infty}^0 \circ \phi_{k,l}$, we have that 
\begin{equation}\label{eq_kl0et}
\ddot{\phi}_{k,\infty} = \ddot{\phi}_{l,\infty} \circ \phi_{k,l}, \ \ \ k < l.
\end{equation}
        Note that
        \begin{equation}\label{eq_b}
            \lou{V} = \bigcup_{k \in \mathbb{N}}\ddot{\phi}_{k, \infty}(V_k).
        \end{equation}

    \begin{remark} \label{phik,infty behaves}
    {\rm
      Let $x_k \in V_k$ and $x_l\in V_l$; 
      then $\ddot{\phi}_{k,\infty}(x_k) = \ddot{\phi}_{l,\infty}(x_l)$ if and only if there exists $m>\max \{k, l\}$ 
      such that $\phi_{k,m}(x_k) =  \phi_{l,m}(x_l)$. }
    \end{remark}

    If $x_k\in V_k$ and $x_l\in V_l$ are such that
    $\ddot{\phi}_{k,\infty}(x_k) = \ddot{\phi}_{l,\infty}(x_l)$, 
    choose $m >\max \{k, l\}$ such that $\phi_{k,m}(x_k) =  \phi_{l,m}(x_l)$. 
Then 
$$\phi_{k,m}(x_k^*) = \phi_{k,m}(x_k)^* = \phi_{l,m}(x_l)^* = \phi_{l,m}(x_l^*).$$ 
Therefore, $\ddot{\phi}_{k,\infty}(x_k^*) = \ddot{\phi}_{l,\infty}(x_l^*)$, and 
we can define an involution on 
$\lou{V}$ by letting 
$\ddot{\phi}_{k,\infty}(x_k)^* \stackrel{def}{=} \ddot{\phi}_{k,\infty}(x_k^*)$. 
It follows that $\ddot{\phi}_{k,\infty}(x_k) \in (\lou{V})_h$ if and only if there exists $m>k$ such that $\phi_{k,m}(x_k) \in (V_m)_h$.

    Let
    $${\lou{V}}^{+} = \big\lbrace\ddot{\phi}_{k,\infty}(x_k) :
    x_k\in V_k  \mbox{ and there exists } m\geq k \mbox{ with } 
    \phi_{k,m}(x_k) \in V_m^+\big\rbrace. $$
To show that ${\lou{V}}^{+}$ is well-defined, 
suppose that $x_k\in V_k$ and $x_l\in V_l$ are such that 
$\ddot{\phi}_{k,\infty}(x_k) = \ddot{\phi}_{l,\infty}(x_l)$, and that $m\geq k$ 
is such that $\phi_{k,m}(x_k) \in V_m^+$. 
Let $p$ be such that $\phi_{k,p}(x_k) = \phi_{l,p}(x_l)$ and 
$q = \max\{m,p\}$. Since $\phi_{m,q}$ is positive, we have
$$\phi_{l,q}(x_l) = \phi_{k,q}(x_k) = \phi_{m,q}\circ \phi_{k,m}(x_k)\in V_q^+.$$

    \begin{lemma} \label{lem:positive cone}
    We have that
    
    (i) \  ${\lou{V}}^{+}$ is a cone in $({\lou{V}})_h$, and
    
    (ii) ${\lou{V}}^{+}\cap (-{\lou{V}}^{+}) = \{0\}$.
    \end{lemma}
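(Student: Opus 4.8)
The plan is to verify the two cone axioms directly from the definition of $\lou{V}^+$, using the fact established just before the lemma that $\lou{V}^+$ is well-defined, and exploiting that any finite collection of elements of $\lou{V}$ can be realised as images of elements living in a single common space $V_m$.

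\medskip

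\noindent\textbf{Part (i).} First I would show $\lou{V}^+$ is a cone in $(\lou{V})_h$. The inclusion $\lou{V}^+\subseteq (\lou{V})_h$ is immediate: if $\ddot{\phi}_{k,\infty}(x_k)\in\lou{V}^+$ with $\phi_{k,m}(x_k)\in V_m^+\subseteq (V_m)_h$, then by the characterisation of hermitian elements noted before the lemma, $\ddot{\phi}_{k,\infty}(x_k)\in(\lou{V})_h$. Closure under multiplication by a non-negative scalar is clear, since each $\phi_{k,m}$ is linear and $V_m^+$ is itself a cone. The key step is closure under addition. Given two elements of $\lou{V}^+$, by \eqref{eq_b} write them as $\ddot{\phi}_{k,\infty}(x_k)$ and $\ddot{\phi}_{l,\infty}(x_l)$ with $\phi_{k,m}(x_k)\in V_m^+$ and $\phi_{l,m'}(x_l)\in V_{m'}^+$ for suitable $m,m'$. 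Passing to $s=\max\{k,l,m,m'\}$ and using that the maps $\phi_{m,s}$, $\phi_{m',s}$ are positive, I get $\phi_{k,s}(x_k),\phi_{l,s}(x_l)\in V_s^+$. Using \eqref{eq_kl0et}, both original elements equal $\ddot{\phi}_{s,\infty}\big(\phi_{k,s}(x_k)\big)$ and $\ddot{\phi}_{s,\infty}\big(\phi_{l,s}(x_l)\big)$ respectively, so their sum is $\ddot{\phi}_{s,\infty}\big(\phi_{k,s}(x_k)+\phi_{l,s}(x_l)\big)$, which lies in $\lou{V}^+$ because $V_s^+$ is closed under addition.

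\medskip

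\noindent\textbf{Part (ii).} Next I would prove $\lou{V}^+\cap(-\lou{V}^+)=\{0\}$. Suppose $\xi\in\lou{V}^+\cap(-\lou{V}^+)$. Realising $\xi$ and $-\xi$ as positive elements and using the same trick as above to push everything into a common space $V_s$, I may write $\xi=\ddot{\phi}_{s,\infty}(y)$ with $y\in V_s^+$, and $-\xi=\ddot{\phi}_{s',\infty}(z)$ with $z\in V_{s'}^+$; raising the indices further to a common $t$, I obtain $\xi=\ddot{\phi}_{t,\infty}(u)$ and $-\xi=\ddot{\phi}_{t,\infty}(w)$ with $u,w\in V_t^+$. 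Then $\ddot{\phi}_{t,\infty}(u+w)=\xi+(-\xi)=0$, so by Remark~\ref{phik,infty behaves} there is $p>t$ with $\phi_{t,p}(u+w)=0$, i.e.\ $\phi_{t,p}(u)=-\phi_{t,p}(w)$. Since $\phi_{t,p}$ is positive, both $\phi_{t,p}(u)$ and $\phi_{t,p}(w)$ lie in $V_p^+$, so $\phi_{t,p}(u)\in V_p^+\cap(-V_p^+)=\{0\}$. Hence $\phi_{t,p}(u)=0$, and therefore $\xi=\ddot{\phi}_{t,\infty}(u)=\ddot{\phi}_{p,\infty}\big(\phi_{t,p}(u)\big)=0$.

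\medskip

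\noindent The main obstacle, such as it is, is bookkeeping rather than conceptual: one must be careful to combine the ``eventual agreement'' condition defining equality in $\lou{V}$ (Remark~\ref{phik,infty behaves}) with the ``eventual positivity'' condition defining $\lou{V}^+$, since these involve potentially different thresholds. The resolution in both parts is the same: pass to a large enough common index $s$ (or $p$, $t$) where all the relevant identities and positivity statements hold simultaneously, which is possible because the index set $\bb{N}$ is directed and the connecting maps $\phi_k$ are positive. Once everything is transported into a single $V_s$, the axioms reduce immediately to the corresponding axioms for the cone $V_s^+$.
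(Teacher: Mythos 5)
Your proof is correct and follows essentially the same route as the paper: both parts are handled by pushing all representatives forward to a common index $m$ where the defining positivity and equality conditions hold simultaneously, and then invoking the cone axioms for $V_m^+$. The only cosmetic difference is in (ii), where you use separate positive representatives for $\xi$ and $-\xi$ and add them, whereas the paper works with a single representative $x_k$ and observes directly that $\phi_{k,m}(x_k)\in V_m^+\cap(-V_m^+)$; the two arguments are interchangeable.
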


    \begin{proof}
(i) 
Let $x_k\in V_k$ be such that 
$\ddot{\phi}_{k,\infty}(x_k) \in {\lou{V}}^{+}$. 
Then there exists $m>k$ such that $\phi_{k,m}(x_k) \in V_m^+ \subseteq {(V_m)}_h$,
and thus $\ddot{\phi}_{k,\infty}(x_k) \in {(\lou{V})}_{h}$. 
If $r \in [0, \infty)$ then $\phi_{k,m}(rx_k)= r\phi_{k,m}(x_k) \in V_m^+$, hence  $r\ddot{\phi}_{k,\infty}(x_k) = \ddot{\phi}_{k,\infty}(rx_k) \in {\lou{V}}^{+}$. 
If $\ddot{\phi}_{k,\infty}(x_k), \ddot{\phi}_{l,\infty}(x_l) \in {\lou{V}}^{+}$ then there exist $m_1>k$ and $m_2>l$ such that $\phi_{k,m_1}(x_k) \in V_{m_1}^+$ and $\phi_{l,m_2}(x_l) \in V_{m_2}^+$. Set $m= \max{\{m_1, m_2\}}$; then $\phi_{k,m}(x_k)+ \phi_{l,m}(x_l) \in V_m^+$. Therefore $\ddot{\phi}_{k,\infty}(x_k)+\ddot{\phi}_{l,\infty}(x_l)  = \ddot{\phi}_{m,\infty}(\phi_{k,m}(x_k)+ \phi_{l,m}(x_k) )\in {\lou{V}}^{+}$.
        
(ii)         Let 
$\ddot{\phi}_{k,\infty}(x_k) \in {\lou{V}}^{+} \cap (-{\lou{V}}^{+})$
for some $x_k\in V_k$. 
Then there exist $m_1, m_2 \geq k$ such that $\phi_{k,m_1}(x_k) \in V_{m_1}^+$ and  $-\phi_{k,m_2}(x_k) \in V_{m_2}^+$. 
Choose $m > \max{\{m_1, m_2\}}$; then 
$\phi_{k,m}(x_k) \in V_m^+\cap (-V_m^+)$, so 
$\phi_{k,m}(x_k) =0$, and 
hence $\ddot{\phi}_{k,\infty}(x_k) = \ddot{\phi}_{m,\infty} \circ \phi_{k,m}(x_k) = 0$.
    \end{proof}

    Observe that $\ddot{\phi}_{k,\infty}(x_k) \leq \ddot{\phi}_{l,\infty}(x_l)$ if and only if there exists $m>\max\{k,l\}$ such that $\phi_{k,m}(x_k) \leq \phi_{l,m}(x_l)$. 
    Furthermore, (\ref{eq_kl0et}) implies that 
    \begin{equation}\label{eq_c}
        {\lou{V}}^{+} = \bigcup_{k \in \mathbb{N}}\ddot{\phi}_{k, \infty}(V_k^+).
    \end{equation}

    By Remark~\ref{phik,infty behaves} and the unitality of 
    the connecting maps, 
    $\ddot{\phi}_{k,\infty}(e_k) = \ddot{\phi}_{l,\infty}(e_l)$ for all $k,l \in \bb{N}$. 
    Set $\ddot{e}_{\infty} = \ddot{\phi}_{k,\infty}(e_k)$ (for any $k\in \bb{N}$).
    We next show that $\ddot{e}_{\infty}$ is an order unit for $({\lou{V}},{\lou{V}}^{+})$.

    \begin{proposition}\label{prop:V_infty is ou}
        The triple $(\lou{V}, {\lou{V}}^{+}, \ddot{e}_{\infty})$ is an ordered *-vector space with order unit. 
        Furthermore, $\ddot{\phi}_{k,\infty}: V_k \rightarrow \lou{V}$ is a unital positive map such that $\ddot{\phi}_{k+1,\infty} \circ \phi_k = \ddot{\phi}_{k,\infty}$, $k \in \bb{N}$.
    \end{proposition}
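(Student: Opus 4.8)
The plan is to verify the axioms one at a time, exploiting the fact that, by (\ref{eq_b}), every element of $\lou{V}$ has the form $\ddot{\phi}_{k,\infty}(x_k)$ for some $k\in\bb{N}$ and $x_k\in V_k$, and that any finite family of such elements can be pushed, via the connecting maps, to a common level: if $x = \ddot{\phi}_{k,\infty}(x_k)$ and $y = \ddot{\phi}_{l,\infty}(y_l)$, then setting $m=\max\{k,l\}$ and using (\ref{eq_kl0et}) we may write $x = \ddot{\phi}_{m,\infty}(x_m)$ and $y = \ddot{\phi}_{m,\infty}(y_m)$ with $x_m = \phi_{k,m}(x_k)$ and $y_m = \phi_{l,m}(y_l)$. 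Since $\lou{V}$ is the quotient vector space $V_{\infty}^{0}/N^0$, it carries a canonical complex vector space structure; the involution $\ddot{\phi}_{k,\infty}(x_k)^* = \ddot{\phi}_{k,\infty}(x_k^*)$ has already been shown to be well defined, and its axioms ($x^{**}=x$, additivity, and conjugate-linearity in the scalar) follow immediately from the corresponding identities in the $V_k$ after reducing to a common level as above. That $(\lou{V},\lou{V}^{+})$ is an ordered *-vector space is exactly the content of Lemma~\ref{lem:positive cone}, so it remains only to prove that $\ddot{e}_{\infty}$ is an order unit.

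For the order-unit property, let $x\in(\lou{V})_h$ be arbitrary and write $x=\ddot{\phi}_{k,\infty}(x_k)$. By the characterisation of hermitian elements recorded just before the statement, there is $m>k$ with $y_m:=\phi_{k,m}(x_k)\in(V_m)_h$, and by (\ref{eq_kl0et}) we have $x=\ddot{\phi}_{m,\infty}(y_m)$. Since $e_m$ is an order unit for $(V_m,V_m^+)$ and $y_m$ is hermitian, there exists $r>0$ with $re_m-y_m\in V_m^+$. Applying $\ddot{\phi}_{m,\infty}$ and using (\ref{eq_c}) (equivalently, the positivity of $\ddot{\phi}_{m,\infty}$), we obtain $r\ddot{e}_{\infty} - x = \ddot{\phi}_{m,\infty}(re_m - y_m)\in\lou{V}^{+}$, that is, $x\le r\ddot{e}_{\infty}$. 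Hence $\ddot{e}_{\infty}$ is an order unit, and since $\ddot{e}_{\infty}=\ddot{\phi}_{k,\infty}(e_k)$ with $e_k\in V_k^+$, it lies in $\lou{V}^{+}$ by (\ref{eq_c}); this completes the first assertion.

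Finally, for the second assertion, the map $\ddot{\phi}_{k,\infty}$ is linear by construction and unital because $\ddot{\phi}_{k,\infty}(e_k)=\ddot{e}_{\infty}$ by the definition of $\ddot{e}_{\infty}$. Its positivity is immediate from the defining formula (\ref{eq_c}) for $\lou{V}^{+}$, which gives $\ddot{\phi}_{k,\infty}(V_k^+)\subseteq\lou{V}^{+}$. The compatibility relation $\ddot{\phi}_{k+1,\infty}\circ\phi_k=\ddot{\phi}_{k,\infty}$ is the special case $l=k+1$ of (\ref{eq_kl0et}), since $\phi_{k,k+1}=\phi_k$. The only step requiring genuine work is the order-unit property, and even there the difficulty is purely organisational: one must first lift $x$ to a level $m$ at which its representative is actually hermitian before invoking the order unit $e_m$, rather than attempting to work with the original, possibly non-hermitian, representative $x_k$.
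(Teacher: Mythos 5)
Your proof is correct and follows essentially the same route as the paper's: both reduce the first assertion to the order-unit property via Lemma~\ref{lem:positive cone}, then lift a hermitian element to a level $m$ where its representative is actually hermitian in $(V_m)_h$, invoke the order unit $e_m$ there, and push the resulting inequality forward with $\ddot{\phi}_{m,\infty}$; the second assertion is likewise read off from (\ref{eq_kl0et}) and (\ref{eq_c}) in both arguments.
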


    \begin{proof}
        To prove that $(\lou{V}, {\lou{V}}^{+}, \ddot{e}_{\infty})$ is an ordered 
        *-vector space with order unit, it suffices, by Lemma \ref{lem:positive cone}, 
        to show that $\ddot{e}_{\infty}$ is an order unit. Suppose 
        that $x_k\in V_k$ is such that $\ddot{\phi}_{k,\infty}(x_k) \in (\lou{V})_h$; 
        then there exists $m>k$ such that $\phi_{k,m}(x_k) \in (V_{m})_h$. Since $e_m$ is an order unit for $V_m$, there exists $r_m>0$ such that $\phi_{k,m}(x_k) \leq r_me_m = \phi_{k,m}(r_me_k)$. 
        By (\ref{eq_c}), 
        \[
            \ddot{\phi}_{k,\infty}(x_k) = \ddot{\phi}_{m,\infty}\circ \phi_{k,m}(x_k) \leq \ddot{\phi}_{m,\infty}(r_m e_m) = r_m \ddot{\phi}_{m,\infty}(e_m) = 
            r_m \ddot{e}_{\infty}.
        \]
The identity $\ddot{\phi}_{k+1,\infty} \circ \phi_k = \ddot{\phi}_{k,\infty}$, $k \in \bb{N}$,
is a special case of (\ref{eq_kl0et}). 
    \end{proof}

        So far we have ascertained that $(\ddot{V}_{\infty}, \{\ddot{\phi}_{k,\infty}\}_{k \in \bb{N}})$ is a suitable candidate for the inductive limit in $\catou$ of the inductive system (\ref{eq_seq}). 
        Theorem~\ref{thm:universal property *vectorspace2} will verify that this pair does indeed satisfy the universal property of the inductive limit. 
        First we take note of the special case when the maps in the inductive system are unital order isomorphisms.

    \begin{remark} \label{rem:Eachphi_kOIThenphi_k,inftyOI}
       Let
            $V_1\stackrel{\phi_1}{\longrightarrow} V_2 \stackrel{\phi_2}{\longrightarrow} V_3 \stackrel{\phi_3}{\longrightarrow} V_4 \stackrel{\phi_4}{\longrightarrow} \cdots$
        be an inductive system in $\catou$ such that $\phi_k$ is an order isomorphism onto its image for all $k \in \bb{N}$. Then $\ddot{\phi}_{k,\infty}$ is an order isomorphism onto its image for all $k \in \bb{N}$.
            \end{remark}
            
\begin{proof}
        Let $k \in \bb{N}$ and suppose $\ddot{\phi}_{k,\infty}(x_k) = 0$ for some $x_k \in V_k$. 
        Then there exists $m>k$ such that $\phi_{k,m}(x_k) = 0$. 
        By the assumption, 
        $\phi_{k,m}$ is injective and it follows that $x_k = 0.$ Thus, $\ddot{\phi}_{k,\infty}$ is injective. 
        
Suppose $\ddot{\phi}_{k,\infty}(x_k) \in {\lou{V}}^{+}$; then there exists $m > k$ such that $\phi_{k,m}(x_k) \in V_m^+.$ Since $\phi_{k, m}$ is 
an order isomorphism onto its image, $x_k \in V_k^+$. 
\end{proof}

    \begin{theorem} \label{thm:universal property *vectorspace2}
    The triple $(\lou{V}, \{\ddot{\phi}_{k,\infty}\}_{k \in \bb{N}}, \ddot{e}_{\infty})$ is an inductive limit of the 
    inductive system
    $V_1\stackrel{\phi_1}{\longrightarrow} V_2 \stackrel{\phi_2}{\longrightarrow} V_3 \stackrel{\phi_3}{\longrightarrow} V_4 \stackrel{\phi_4}{\longrightarrow} \cdots$ in $\catou$. 
    \end{theorem}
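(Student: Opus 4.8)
The plan is to verify the two defining properties of an inductive limit (Definition~\ref{def:inductive limit}) for the pair $(\lou{V}, \{\ddot{\phi}_{k,\infty}\}_{k \in \bb{N}})$ in the category $\catou$. Property~(i), namely $\ddot{\phi}_{k+1,\infty} \circ \phi_k = \ddot{\phi}_{k,\infty}$, has already been established in Proposition~\ref{prop:V_infty is ou} as a special case of (\ref{eq_kl0et}). It therefore remains to prove the universal property~(ii): given any ordered *-vector space with order unit $B$ and unital positive maps $\beta_k : V_k \to B$ satisfying $\beta_{k+1}\circ\phi_k = \beta_k$ for all $k$, I must produce a unique morphism $\mu : \lou{V} \to B$ in $\catou$ (that is, a unital positive map) with $\mu \circ \ddot{\phi}_{k,\infty} = \beta_k$ for every $k$.

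First I would \emph{define} $\mu$. By (\ref{eq_b}), every element of $\lou{V}$ has the form $\ddot{\phi}_{k,\infty}(x_k)$ for some $k$ and some $x_k \in V_k$, so the constraint $\mu \circ \ddot{\phi}_{k,\infty} = \beta_k$ forces
\[
\mu\bigl(\ddot{\phi}_{k,\infty}(x_k)\bigr) = \beta_k(x_k).
\]
This simultaneously pins down $\mu$ uniquely (giving the uniqueness half of~(ii)) and dictates the definition; the work is to check it is \emph{well-defined}. Suppose $\ddot{\phi}_{k,\infty}(x_k) = \ddot{\phi}_{l,\infty}(x_l)$. By Remark~\ref{phik,infty behaves} there exists $m > \max\{k,l\}$ with $\phi_{k,m}(x_k) = \phi_{l,m}(x_l)$. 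Applying $\beta_m$ and using the compatibility $\beta_m \circ \phi_{k,m} = \beta_k$ (obtained by iterating $\beta_{j+1}\circ\phi_j = \beta_j$), I get $\beta_k(x_k) = \beta_m(\phi_{k,m}(x_k)) = \beta_m(\phi_{l,m}(x_l)) = \beta_l(x_l)$, so the value is independent of the chosen representative.

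Next I would check $\mu$ is a morphism in $\catou$. Linearity and the $*$-preservation follow by picking, for any two elements, a common index $m$ via Remark~\ref{phik,infty behaves} (as was done in Lemma~\ref{lem:positive cone}) and reducing to the linearity of $\beta_m$ together with the involution formula $\ddot{\phi}_{k,\infty}(x_k)^* = \ddot{\phi}_{k,\infty}(x_k^*)$. Unitality is immediate: $\mu(\ddot{e}_{\infty}) = \mu(\ddot{\phi}_{k,\infty}(e_k)) = \beta_k(e_k) = e_B$. For positivity, take $\ddot{\phi}_{k,\infty}(x_k) \in \lou{V}^{+}$; by (\ref{eq_c}) (equivalently by the definition of $\lou{V}^{+}$) there is $m \geq k$ with $\phi_{k,m}(x_k) \in V_m^+$, whence $\mu(\ddot{\phi}_{k,\infty}(x_k)) = \beta_k(x_k) = \beta_m(\phi_{k,m}(x_k)) \in B^+$ since $\beta_m$ is positive.

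\textbf{The main obstacle} is purely bookkeeping rather than conceptual: the same element of $\lou{V}$ admits many representatives $\ddot{\phi}_{k,\infty}(x_k)$ at different indices $k$, so every single verification---well-definedness, linearity, $*$-preservation, and positivity---must begin by passing to a common sufficiently large index $m$ using Remark~\ref{phik,infty behaves}, and then invoke the compatibility $\beta_m \circ \phi_{k,m} = \beta_k$. Once the habit of reducing to a common index is in place, each property drops out of the corresponding property of the single map $\beta_m$, and no genuine difficulty remains.
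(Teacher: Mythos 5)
Your proposal is correct and follows essentially the same route as the paper's proof: define the map by the forced formula $\mu(\ddot{\phi}_{k,\infty}(x_k)) = \beta_k(x_k)$, use Remark~\ref{phik,infty behaves} together with the compatibility $\beta_m \circ \phi_{k,m} = \beta_k$ to check well-definedness, and verify unitality and positivity by passing to a common index. You are slightly more explicit than the paper about uniqueness, linearity and $*$-preservation, but these are the same routine verifications the paper leaves implicit.
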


        \begin{proof}
        We check that $(\lou{V}, \{\ddot{\phi}_{k,\infty}\}_{k \in \bb{N}})$ satisfies the universal property of the inductive limit. 
      Suppose $(W, \lbrace \psi_k \rbrace_{k \in \mathbb{N}})$ is a pair consisting of an ordered 
      *-vector space and a family of unital positive maps $\psi_k:V_k \rightarrow W$ such that $\psi_{k+1} \circ \phi_{k} = \psi_k$ for all $k \in \mathbb{N}$. Let $k,l \in \mathbb{N}$, $x_k \in V_k$, $x_l \in V_l$ and suppose that $\ddot{\phi}_{k,\infty}(x_k) = \ddot{\phi}_{l,\infty}(x_l)$. 
      By Remark \ref{phik,infty behaves}, there 
      exists $m > \max \{k,l\}$ such that $\phi_{k,m}(x_k) = \phi_{l,m}(x_l)$. Consequently $\psi_k (x_k) = \psi_m \circ \phi_{k,m}(x_k) = \psi_m \circ \phi_{l,m}(x_l) = \psi_l (x_l).$ 
      Let $\ddot{\psi} : \lou{V} \rightarrow W$ be given by 
      $\ddot{\psi} \circ \ddot{\phi}_{k,\infty}  = \psi_k$;
      since $\lou{V} = \cup_{k \in \mathbb{N}}\ddot{\phi}_{k,\infty}(V_k)$, the map 
      $\ddot{\psi}$ is well-defined.
        Since $\psi_k$ is unital and  $\ddot{\psi} \circ \ddot{\phi}_{k,\infty}(e_k) = \psi_k (e_k)$, 
        the map $\ddot{\psi}$ is unital. Suppose that $\ddot{\phi}_{k,\infty}(x_k) \in \ddot{V}_{\infty}^+$; 
        then there exists $m>k$ such that $\phi_{k,m}(x_k) \in V_m^+$. Since $\psi_m$ is positive and
        $\ddot{\psi} \circ \ddot{\phi}_{k,\infty}(x_k) = \psi_k (x_k) = \psi_m \circ \phi_{k,m}(x_k)$, 
        we have that $\ddot{\psi} (\ddot{\phi}_{k,\infty}(x_k))\in W^+$ and hence $\ddot{\psi}$ is positive. 
            \end{proof}

According to our general notation, denote by $\limou V_k$ the inductive limit 
$(\lou{V}, \{\ddot{\phi}_{k,\infty}\}_{k \in \bb{N}})$. 

    \begin{remark} \label{rem:universal property *vectorspace}
{\rm      
Let $(\{V_k\}_{k \in \mathbb{N}}, \{\phi_k\}_{k \in \bb{N}})$ and $(\{W_k\}_{k \in \mathbb{N}}, \{\psi_k\}_{k \in \bb{N}})$ be 
inductive systems in $\catou$ and let $\{\theta_k\}_{k \in \mathbb{N}}$ be a sequence of unital positive maps such that the following diagram commutes:
      \begin{equation}\label{eq:limit in ou square diagram}
    \begin{CD}
        V_1 @>{\phi_1}>> V_2 @>{\phi_2}>> V_3 @>{\phi_3}>> V_4 @>{\phi_4}>> \cdots \\
        @V{\theta_1}VV @V{\theta_2}VV @V{\theta_3}VV @V{\theta_4}VV \\
        W_1 @>{\psi_1}>> W_2 @>{\psi_2}>> W_3 @>{\psi_3}>> W_4 @>{\psi_4}>> \cdots.
    \end{CD}
  \end{equation}
    It follows from Theorem~\ref{thm:universal property *vectorspace2} and Theorem~\ref{thm:universal property infinitely many maps} that there exists a unique unital positive map $\ddot{\theta} : \limou V_k \rightarrow \limou W_k$ such that $\ddot{\theta} \circ \ddot{\phi}_{k,\infty} = \ddot{\psi}_{k,\infty} \circ \theta_k$ for all $k \in \bb{N}$. 
        \begin{enumerate}[\rm(i)]
            \item If $\theta_k$ is injective for every $k \in \mathbb{N}$ then $\ddot{\theta}$ is injective. Indeed, if 
            $x_k\in V_k$ and 
            $\ddot{\theta} \circ \ddot{\phi}_{k,\infty}(x_k) = 0$,  then
            $\ddot{\psi}_{k,\infty} \circ \theta_k (x_k) = 0$. Therefore there exists $m>\max\{k,l\}$ such that $\psi_{k,m} \circ \theta_k (x_k) = 0$. 
            Since (\ref{eq:limit in ou square diagram}) commutes, $\theta_m \circ \phi_{k,m} (x_k) = 0$. 
            Since $\theta_m$ is injective, $\phi_{k,m} (x_k) =  0$ and hence 
            $\ddot{\phi}_{k,\infty}(x_k) = 0$.

            \item If $\theta_k$ is an order isomorphism onto its image for every $k \in \mathbb{N}$ then $\ddot{\theta}$ is an order isomorphism onto its image. 
            Indeed, suppose that $\ddot{\theta} \circ \ddot{\phi}_{k,\infty}(x_k) \in (\limou W_k)^+$ for some $x_k \in V_k$. Then $\ddot{\psi}_{k,\infty} \circ \theta_k(x_k) \in (\limou W_k)^+$ and it follows that there exists $m >k$ such that $\psi_{k,m} \circ \theta_k(x_k) \in W_m^+$. Since (\ref{eq:limit in ou square diagram}) commutes, this implies that $\theta_m \circ \phi_{k,m}(x_k) \in W_m^+$. Since $\theta_m$ is an order isomorphism, it follows that $\phi_{k,m}(x_k) \in V_m^+$, and hence $\ddot{\phi}_{k,\infty}(x_k) \in (\limou V_k)^+$.
        \end{enumerate}
        }
    \end{remark}

\subsection{The state space of the inductive limit in ${\bf OU}$}\label{sec:inverse limit of state spaces}

    Given the inductive system (\ref{eq_seq}), one can \lq\lq reverse the arrows'' to obtain a sequence
    \begin{equation*}\label{eq_seqstate} \index{state space}
        V_1'\stackrel{\phi_1'}{\longleftarrow} V_2' \stackrel{\phi_2'}{\longleftarrow} V_3'
        \stackrel{\phi_3'}{\longleftarrow} V_4' \stackrel{\phi_4'}{\longleftarrow} \cdots
    \end{equation*}
    of dual spaces and continuous maps
    (here we use the fact that unital positive maps between AOU spaces 
    are automatically continuous in the order norm \cite[Theorem 4.22]{PaulsenTomforde}).
    Since the maps $\phi_k$ are unital, we have that $\phi_k'(S(V_{k+1}))\subseteq S(V_k)$
    for all $k \in \bb{N}$,
    and thus we obtain the following inverse system in~$\cattop$:
    \begin{equation}\label{eq_seqs45}
        S(V_1)\stackrel{\phi_1'}{\longleftarrow} S(V_2) \stackrel{\phi_2'}{\longleftarrow} S(V_3)
        \stackrel{\phi_3'}{\longleftarrow} S(V_4) \stackrel{\phi_4'}{\longleftarrow} \cdots.
    \end{equation}

    \begin{proposition}\label{l_si}
        Let $V_1\stackrel{\phi_1}{\longrightarrow} V_2 \stackrel{\phi_2}{\longrightarrow} V_3 \stackrel{\phi_3}{\longrightarrow} V_4 \stackrel{\phi_4}{\longrightarrow} \cdots$
        be an inductive system in $\catou$.
        The state space $S(\limou V_k)$ is topologically homeomorphic to the inverse limit
        $\limtop S(V_k)$.
    \end{proposition}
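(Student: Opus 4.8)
The plan is to exhibit an explicit homeomorphism by dualising the canonical maps $\ddot{\phi}_{k,\infty}$. Define $\Lambda : S(\lou{V}) \to \limtop S(V_k)$ by $\Lambda(f) = (f\circ\ddot{\phi}_{k,\infty})_{k\in\bb{N}}$. Each coordinate $f\circ\ddot{\phi}_{k,\infty}$ is unital (because $\ddot{\phi}_{k,\infty}(e_k)=\ddot{e}_{\infty}$ and $f(\ddot{e}_{\infty})=1$) and positive, hence a state on $V_k$; moreover the relation $\ddot{\phi}_{k+1,\infty}\circ\phi_k=\ddot{\phi}_{k,\infty}$ from (\ref{eq_kl0et}) gives $\phi_k'(f\circ\ddot{\phi}_{k+1,\infty})=f\circ\ddot{\phi}_{k,\infty}$, so $\Lambda(f)$ indeed lies in the inverse limit (\ref{eq_seqs45}). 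Before analysing $\Lambda$, I would record that $S(V_k)$ and $S(\lou{V})$ are compact Hausdorff even though these spaces need not be Archimedean: for any $V\in\catou$, the universal property of the Archimedeanisation (Theorem~\ref{lem:archimedeanisation of positive map}) shows that every state factors uniquely through the quotient $\varphi\colon V\to V_{\rm Arch}$, so the dual $\varphi'$ is a continuous bijection $S(V_{\rm Arch})\to S(V)$; since $S(V_{\rm Arch})$ is weak*-compact (the AOU case recalled in Section~\ref{section:Prelimiaries:AOU}) and $S(V)$ is weak*-Hausdorff, $\varphi'$ is a homeomorphism and $S(V)$ is compact Hausdorff. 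In particular each $S(V_k)$ is compact Hausdorff, and hence so is $\limtop S(V_k)$.

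Next I would verify that $\Lambda$ is a bijection. Injectivity is immediate from (\ref{eq_b}): if $f\circ\ddot{\phi}_{k,\infty}=g\circ\ddot{\phi}_{k,\infty}$ for all $k$, then $f$ and $g$ agree on $\bigcup_{k}\ddot{\phi}_{k,\infty}(V_k)=\lou{V}$. For surjectivity, take $(g_k)_{k}\in\limtop S(V_k)$; the defining relation $\phi_k'(g_{k+1})=g_k$ reads $g_{k+1}\circ\phi_k=g_k$, which is exactly the compatibility hypothesis needed to apply the universal property of the inductive limit (Theorem~\ref{thm:universal property *vectorspace2}) with $W=\bb{C}$ and $\psi_k=g_k$. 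This produces a unique unital positive map $\ddot{\psi}\colon\lou{V}\to\bb{C}$, that is, a state, satisfying $\ddot{\psi}\circ\ddot{\phi}_{k,\infty}=g_k$ for every $k$; thus $\Lambda(\ddot{\psi})=(g_k)_{k}$.

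Finally, $\Lambda$ is continuous for the weak* topologies: if $f_\alpha\to f$ pointwise on $\lou{V}$, then for each fixed $k$ and each $x\in V_k$ we have $f_\alpha(\ddot{\phi}_{k,\infty}(x))\to f(\ddot{\phi}_{k,\infty}(x))$, so every coordinate of $\Lambda(f_\alpha)$ converges and $\Lambda(f_\alpha)\to\Lambda(f)$ in the product, hence subspace, topology. A continuous bijection from the compact space $S(\lou{V})$ onto the Hausdorff space $\limtop S(V_k)$ is automatically a homeomorphism, which finishes the proof. I expect the only genuine subtlety to be the compactness bookkeeping in the first paragraph---ensuring that the state spaces of the possibly non-Archimedean spaces $V_k$ and $\lou{V}$ are compact, so that the soft ``continuous bijection on compacta'' principle applies; the algebraic heart of the argument, namely the bijectivity of $\Lambda$, is a direct transcription of the two universal properties already established.
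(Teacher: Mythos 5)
Your proof is correct, and it diverges from the paper's argument in two places worth noting. For surjectivity of $\Lambda$ you invoke the universal property of $\limou V_k$ (Theorem~\ref{thm:universal property *vectorspace2}) with $W=\bb{C}$, whereas the paper constructs the state $f$ from $(f_k)_k$ by hand and re-verifies well-definedness, positivity and unitality; these are the same computation, merely packaged differently, and your version is the cleaner one given that the universal property has already been established. The genuine difference is in the topological step: the paper proves directly that both $\theta$ and $\theta^{-1}$ are continuous by chasing nets, while you prove only continuity of $\Lambda$ and then appeal to the compact-to-Hausdorff principle. That shortcut requires knowing that $S(\lou{V})$ is compact even though $\lou{V}$ need not be Archimedean — a point the paper's preliminaries only cover for AOU spaces — and your reduction via the Archimedeanisation (every state factors through $\varphi\colon V\to V_{\rm Arch}$ by Theorem~\ref{lem:archimedeanisation of positive map}, the induced map $S(V_{\rm Arch})\to S(V)$ is a continuous bijection between a compact space and a Hausdorff one, hence a homeomorphism) correctly fills that gap. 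What your approach buys is a shorter continuity argument at the cost of an auxiliary compactness lemma; what the paper's buys is self-containedness, never needing compactness of the state space of a non-Archimedean order unit space.
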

    \begin{proof}
        Let $f \in S(\limou V_k)$ and define 
        $f_k :V_k \rightarrow \mathbb{C}$ by letting 
        $\angles{f_k}{x_k}= \angles{f}{\ddot{\phi}_{k,\infty}(x_k)}$, $x_k \in V_k$. For $x_k\in V_k$, 
        we have
        \begin{eqnarray*}
      \angles{f_{k+1} \circ \phi_k}{x_k} 
      & = & 
      \angles{f_{k+1}}{\phi_k(x_k)} = \angles{f}{\ddot{\phi}_{k+1,\infty} \circ \phi_k(x_k)}\\
      & = & \angles{f}{\ddot{\phi}_{k,\infty}(x_k)} = \angles{f_k}{x_k}.
         \end{eqnarray*}
        Therefore $\phi_{k}'(f_{k+1}) = f_k$ and so $(f_k)_{k \in \mathbb{N}} \in \limtop S(V_k)$. 
        Define a map $\theta : S(\limou V_k) \rightarrow \limtop S(V_k)$ by letting $\theta(f)=(f_k)_{k \in \bb{N}}$. 
        
        Suppose $f, g \in S(\limou V_k)$ are 
        such that $\theta(f)=\theta(g)$; that is, $f_k = g_k$ for all $k \in \bb{N}$. If $x_k\in V_k$ then
        \[
                \angles{f}{\ddot{\phi}_{k,\infty}(x_k)}  = \angles{f_k}{x_k} =  \angles{g_k}{x_k} = \angles{g}{\ddot{\phi}_{k,\infty}(x_k)}.
        \]
        By (\ref{eq_b}), $f=g$ and hence $\theta$ is injective.  
        
        Given a sequence $(f_k)_{k \in \bb{N}} \in \limtop S(V_k)$, define an element $f : {\lou{V}} \to \bb{C}$
        by setting $\angles{f}{\ddot{\phi}_{k,\infty}(x_k)} = \angles{f_k}{x_k}$, $x_k\in V_k$. Observe that $f$ is well-defined, 
        for if $\ddot{\phi}_{k,\infty}(x_k) = \ddot{\phi}_{l,\infty}(x_l)$ for some $x_k \in V_k$ and $x_l \in V_l$ then,
        by Remark \ref{phik,infty behaves}, 
        there exists $m>\max\{k,l\}$ such that $\phi_{k,m}(x_k) = \phi_{l,m}(x_l)$.
        Hence
        \[
        \begin{split}
        \angles{f_k}{x_k} = \angles{f_m \circ \phi_{k, m}}{x_k} &= \angles{f_m} {\phi_{k, m}(x_k)} \\& = \angles{f_m}{\phi_{l, m}(x_l)}= \angles{f_m \circ \phi_{l, m}}{x_l} = \angles{f_l}{x_l}.\\
      \end{split}
        \]
        Suppose that $x\in (\limou V_k)^+$. By (\ref{eq_c}), there exist $k\in \bb{N}$ and $x_k\in V_k^+$ such that 
        $x = \ddot{\phi}_{k,\infty}(x_k)$, and hence 
        $$\angles{f}{\ddot{\phi}_{k,\infty}(x_k)} = \angles{f_k}{x_k} \geq 0,$$ 
 showing that $f$ is positive. 
 Furthermore, $\angles{f}{\ddot{e}_{\infty}} = \angles{f_k}{e_k} = 1$ and thus $f \in S(\limou V_k)$. 
 Since $\theta(f) = (f_k)_{k\in \bb{N}}$, we conclude that $\theta$ is surjective.

        Finally, we prove that $\theta$ a homeomorphism. 
        Suppose that $(f^\lambda)_{\lambda \in \Lambda} \in S (\limou V_k)$ is a net such that 
        $f^\lambda \rightarrow_{\lambda \in \Lambda} f$ for some $f\in S (\limou V_k)$.
        Write
        \[
\theta(f) = (f_k)_{k \in \bb{N}}  \text{~and~}     \theta(f^\lambda) = (f^\lambda_k)_{k \in \bb{N}}, \ \ \ {\lambda \in \Lambda}.
        \]
        Since $\limtop S(V_k)$ is equipped with the product topology,
        \[
            ((f^\lambda_k)_{k \in \bb{N}})_{\lambda \in \Lambda} \rightarrow\mbox{}_{\lambda \in \Lambda} (f_k)_{k \in \bb{N}}
        \text{~if and only if~}
        (f^\lambda_k)_{\lambda \in \Lambda} \rightarrow\mbox{}_{\lambda \in \Lambda} f_k
        \text{~for all~}
        k \in \bb{N}.
        \]
        If $k \in \bb{N}$ and $x_k \in V_k$ then
        \[
            \angles{{f^{\lambda}_k}}{x_k} = \angles{{f_\lambda}}{\ddot{\phi}_{k,\infty}(x_k)} 
            \rightarrow\mbox{}_{\lambda \in \Lambda} \angles{f}{\ddot{\phi}_{k,\infty}(x_k)} = \angles{f_k}{x_k}.
        \]
        It follows that $\theta(f_\lambda) \rightarrow_{\lambda \in \Lambda} \theta(f)$ and so $\theta$ is continuous.

        Suppose that 
        $((f^\lambda_k)_{k \in \bb{N}})_{\lambda \in \Lambda} \in \limtop S(V_k)$ 
        is such that $(f^\lambda_k)_{k \in \bb{N}} \rightarrow_{\lambda \in \Lambda} (f_k)_{k \in \bb{N}}$. 
        For each $k \in \bb{N}$, $(f^\lambda_k)_{\lambda \in \Lambda} \rightarrow_{\lambda \in \Lambda} f_k$. Now,
        \[
            \theta^{-1}((f^\lambda_k)_{k \in \bb{N}}) = f_\lambda,  ~\text{~where~}~\angles{f_\lambda}{\ddot{\phi}_{k,\infty}(x_k)} = \angles{f^\lambda_k}{x_k}.
        \]
        If $x_k\in V_k$ then
        \[
             \angles{f_\lambda}{\ddot{\phi}_{k,\infty}(x_k)} = \angles{f^\lambda_k}{x_k} 
             \rightarrow\mbox{}_{\lambda \in \Lambda} \angles{f_k}{x_k} = \angles{f}{\ddot{\phi}_{k,\infty}(x_k)}.
        \]
By (\ref{eq_b}), $\theta^{-1}((f^\lambda_k)_{k \in \bb{N}}) \rightarrow_{\lambda \in \Lambda} \theta^{-1}((f_k)_{k \in \bb{N}})$ 
and therefore $\theta$ is a homeomorphism.
    \end{proof}

\subsection{Inductive limits in the category {\bf AOU}} \label{sect:inductive limits AOU spaces}

Let $(V_k,V_k^+,e_k)_{k \in \bb{N}}$ be a sequence of 
Archimedean order unit spaces and 
    \begin{equation}\label{eq:seqAOU}
        V_1\stackrel{\phi_1}{\longrightarrow} V_2 \stackrel{\phi_2}{\longrightarrow} V_3 \stackrel{\phi_3}{\longrightarrow} V_4 \stackrel{\phi_4}{\longrightarrow} \cdots
    \end{equation}
be an inductive system in the category $\cataou$. 
Recall that this means that $\phi_k : V_k\to V_{k+1}$ is a unital positive map, $k\in \bb{N}$. 
We may apply the forgetful functor $\bf F: \cataou \rightarrow \catou$ and consider 
the inductive limit $\limou {{\bf F}(V_k)}$. This is not necessarily an AOU space; we shall see in this subsection that 
its Archimedeanisation is an inductive limit in $\cataou$.

The proof of the following remark is straightforward and we omit it.

    \begin{remark} \label{rem:norm_on_N}
Let $\norm{\cdot}^k$ be an order norm on $V_k$, $k\in \bb{N}$.
For $x_k \in V_k$, we have that 
$\lim_{m \rightarrow \infty}\norm{\phi_{k,m}(x_k)}^m = 0$ if and only if
        \[
        \lim_{m \rightarrow \infty}\norm{{\rm Re} (\phi_{k,m}(x_k))}_h =0 \text{ and } \lim_{m \rightarrow \infty}\norm{{\rm Im} (\phi_{k,m}(x_k))}_h =0.\]
    \end{remark}

Let $\norm{\cdot}^k$ be any order norm on $V_k$ and 
$\norm{\cdot}^{\infty}$ be any order seminorm on $\limou V_k$.
Let 
\begin{equation}\label{eqNagain}
N = \{x\in \limou V_k : \norm{x}^{\infty} = 0\}
\end{equation}
be the kernel of $\norm{\cdot}^{\infty}$.

 \begin{proposition} \label{thm:characterisations of null space}
        Let $x_k\in V_k$ and $x = \ddot{\phi}_{k,\infty}(x_k) \in \limou V_k$.
        The following are equivalent:
        \begin{enumerate}[\rm(i)]
\item $x\in N$;
            \item $\lim_{m \rightarrow \infty} \norm{\phi_{k,m}(x_k)}^m = 0$.
        \end{enumerate}
\end{proposition}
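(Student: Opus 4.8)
The plan is to reduce the statement to hermitian elements and then to identify the value of any order seminorm of the limit at such an element with the (decreasing) limit of the order norms at the finite levels.

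First I would reduce to the hermitian case. Since $\ddot{\phi}_{k,\infty}$ is $*$-preserving by construction, we have ${\rm Re}(x) = \ddot{\phi}_{k,\infty}({\rm Re}(x_k))$ and ${\rm Im}(x) = \ddot{\phi}_{k,\infty}({\rm Im}(x_k))$, and likewise each $\phi_{k,m}$, being positive and hence $*$-preserving, commutes with ${\rm Re}$ and ${\rm Im}$. Applying Remark~\ref{rem:normouspace0} to the order seminorm $\norm{\cdot}^{\infty}$, condition (i) is equivalent to $\norm{{\rm Re}(x)}_h = 0$ and $\norm{{\rm Im}(x)}_h = 0$ (order seminorm of $\limou V_k$ determined by $\ddot{e}_\infty$), while by Remark~\ref{rem:norm_on_N} condition (ii) is equivalent to $\lim_m \norm{{\rm Re}(\phi_{k,m}(x_k))}_h = 0$ and $\lim_m \norm{{\rm Im}(\phi_{k,m}(x_k))}_h = 0$. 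Hence it suffices to prove, for a hermitian $y_k \in V_k$ with $y = \ddot{\phi}_{k,\infty}(y_k)$, the identity
$$\norm{y}_h = \lim_{m\to\infty}\norm{\phi_{k,m}(y_k)}_h .$$

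Set $a_m = \norm{\phi_{k,m}(y_k)}_h$ for $m \geq k$. Since each $\phi_{m,m+1}$ is unital and positive, it is order-norm contractive, so $(a_m)_{m\ge k}$ is non-increasing and bounded below by $0$; let $a = \lim_m a_m = \inf_m a_m$. The heart of the argument is the set equality
$$\{\, r>0 : -r\ddot{e}_\infty \le y \le r\ddot{e}_\infty \,\} = \{\, r>0 : a_m \le r \text{ for some } m\ge k \,\},$$
from which $\norm{y}_h = a$ follows by taking infima and using the monotonicity of $(a_m)$: for $r>a$ some $a_m\le r$, and for $r<a$ no $a_m\le r$.

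For the set equality I would argue as follows. If $a_m \le r$ for some $m$, then $-re_m \le \phi_{k,m}(y_k) \le re_m$ in the AOU space $V_m$; applying the positive map $\ddot{\phi}_{m,\infty}$ and using $\ddot{\phi}_{m,\infty}(\phi_{k,m}(y_k)) = y$ together with $\ddot{\phi}_{m,\infty}(e_m) = \ddot{e}_\infty$ yields $-r\ddot{e}_\infty \le y \le r\ddot{e}_\infty$. Conversely, if $-r\ddot{e}_\infty \le y \le r\ddot{e}_\infty$, then by the description of the order in $\limou V_k$ each one-sided inequality is witnessed at some finite level; moving both to a common level $m$ through the positive connecting maps gives $-se_m \le \phi_{k,m}(y_k) \le se_m$ for every $s>r$, and the Archimedean property of the order unit $e_m$ in $V_m$ upgrades this to $-re_m \le \phi_{k,m}(y_k) \le re_m$, i.e. $a_m \le r$. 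The step I expect to be the main obstacle is this last (forward) inclusion: the order relations in $\limou V_k$ are witnessed only at unspecified finite levels, so I must carefully combine the two one-sided inequalities at a single level and invoke the Archimedean property of the \emph{intermediate} space $V_m$ — crucially not of the limit, which need not be Archimedean — to pass from the family $\{s>r\}$ to the value $r$ itself. The reduction to hermitian elements and the infimum computation are then routine consequences of Remarks~\ref{rem:normouspace0} and~\ref{rem:norm_on_N} and the monotonicity of $(a_m)$.
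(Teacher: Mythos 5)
Your proposal is correct and follows essentially the same route as the paper: the same reduction to hermitian elements via Remarks~\ref{rem:normouspace0} and~\ref{rem:norm_on_N}, followed by the same interplay between the defining infimum of the order seminorm, the finite-level witnesses for positivity in $\limou V_k$, and the positivity and unitality of the connecting maps (your two set inclusions are exactly the paper's two implications, with the second done directly rather than by contradiction). The only cosmetic differences are that you package the argument as the exact identity $\norm{y}_h = \lim_m \norm{\phi_{k,m}(y_k)}_h$ — which the paper only records later, in the operator system setting — and that the Archimedean step you flag as the main obstacle is actually not needed in the forward inclusion, since the order relations in $\limou V_k$ are witnessed at a finite level for the value $r$ itself.
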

      
\begin{proof}
By Remarks \ref{rem:normouspace0} and \ref{rem:norm_on_N},
we may assume that $x \in (\limou V_k)_h$. 

(i)$\Rightarrow$(ii)
We have that 
        \[
            \inf \lbrace \lambda \geq 0: -\lambda \ddot{e}_{\infty} \leq \ddot{\phi}_{k,\infty}(x_k) \leq \lambda \ddot{e}_{\infty} \rbrace = 0.
        \]
        Let $r > 0$; then there exists $m \in \bb{N}$ such that $-re_l \leq \phi_{k,l}(x_l) \leq re_l$ for all $l \geq m$. Therefore $\norm{\phi_{k,l}(x_k)}^l \leq r$ for all $l \geq m$. 
        Thus, $\lim_{m \rightarrow \infty}\norm{\phi_{k,m}(x_k)}^m = 0$.

(ii)$\Rightarrow$(i)
Assume, 
towards a contradiction, that $\norm{x}^{\infty}=\mu > 0$. 
There exists $m>k$ such that 
$$\inf \lbrace \lambda_l : -\lambda_l e_l \leq \phi_{k,l}(x_k) \leq \lambda_l e_l\rbrace < \frac{\mu}{2}, \ \ \ l \geq m.$$
Therefore, $-\frac{\mu}{2}e_l \leq \phi_{k,l}(x_k)\leq \frac{\mu}{2}e_l$ for all $l \geq m$ and so 
$-\frac{\mu}{2}\ddot{\phi}_{k,\infty}(e_k) \leq \ddot{\phi}_{k,\infty}(x_k)\leq \frac{\mu}{2}\ddot{\phi}_{k,\infty}(e_k)$. 
Thus $\norm{x}^{\infty} \leq \frac{\mu}{2} < \mu$, a contradiction.
    \end{proof}

In view of Proposition \ref{thm:characterisations of null space}, we will refer to 
$N$ defined by (\ref{eqNagain}) 
as the {\it null space of the sequence} $(V_k, V_k^+, e_k)_{k \in \mathbb{N}}$.

Let $(\laou{V},\laou{V}^+,e_{\infty})$ be the Archimedeanisation of $\limou V_k$; 
thus, 
    \[
    \laou{V} = (\limou V_k)/N,
    \]
the involution on $\laou{V}$ is given by 
$(\ddot{\phi}_{k,\infty}(x_k)+N)^* = \ddot{\phi}_{k,\infty}(x_k)^*+N$ 
(for $x_k\in V_k$), 
$$\laou{V}^+ = \{\ddot{\phi}_{k,\infty}(x_k)+N : x_k\in (V_k)_h, k\in \bb{N}, \mbox{ and } $$
$$\ddot{\phi}_{k,\infty}(x_k) + r\ddot{\phi}_{k,\infty}(e_k)  \in \ddot{V}_{\infty}^+, 
\mbox{ for all }  r > 0\},$$
and $e_{\infty} = \ddot{e}_{\infty} + N$.

  \begin{lemma}\label{lem:self adjoint element archimedeanisation}
    Let $x_k\in V_k$. The following are equivalent:
    \begin{enumerate}[\rm(i)]
    \item $\ddot{\phi}_{k,\infty}(x_k) + N \in (V_{\infty})_{h}$;
    \item $\ddot{\phi}_{k,\infty}(x_k) + N = \ddot{\phi}_{k,\infty}({\rm Re}(x_k)) + N$;
    \item $\ddot{\phi}_{k,\infty}(x_k) + N = \ddot{\phi}_{l,\infty}(x_l) + N$ for some $l \in \bb{N}$ and some $x_l \in (V_l)_h$.

    \end{enumerate}
     \end{lemma}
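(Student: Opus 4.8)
The plan is to first reformulate condition (i) as a single membership statement in $N$, and then to see that the three conditions are near-tautological restatements of one another. Throughout I would use two facts established earlier: the involution on $\limou V_k$ satisfies $\ddot{\phi}_{k,\infty}(x_k)^* = \ddot{\phi}_{k,\infty}(x_k^*)$, and $N$ is selfadjoint. The latter is worth isolating, since it is the one point that needs a moment's care: $N$ is the kernel of an order seminorm $\norm{\cdot}^{\infty}$ on $\limou V_k$, and every order seminorm $\vertiii{\cdot}$ satisfies $\vertiii{z^*} = \vertiii{z}$, so $z \in N$ forces $z^* \in N$. This selfadjointness is precisely what makes the quotient involution on $V_{\infty}$ well defined, and hence what makes the statement meaningful in the first place.

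With this in hand, I would record the key reformulation: saying $\ddot{\phi}_{k,\infty}(x_k) + N \in (V_{\infty})_h$ means $(\ddot{\phi}_{k,\infty}(x_k) + N)^* = \ddot{\phi}_{k,\infty}(x_k) + N$, i.e. $\ddot{\phi}_{k,\infty}(x_k^*) + N = \ddot{\phi}_{k,\infty}(x_k) + N$, which is equivalent to $\ddot{\phi}_{k,\infty}(x_k - x_k^*) \in N$. This equivalence is the engine driving every implication below.

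For (i) $\Rightarrow$ (ii) I would invoke the identity $x_k - {\rm Re}(x_k) = \tfrac{1}{2}(x_k - x_k^*)$; applying the linear map $\ddot{\phi}_{k,\infty}$ and using (i) yields $\ddot{\phi}_{k,\infty}(x_k) - \ddot{\phi}_{k,\infty}({\rm Re}(x_k)) = \tfrac{1}{2}\,\ddot{\phi}_{k,\infty}(x_k - x_k^*) \in N$, which is exactly (ii). The implication (ii) $\Rightarrow$ (iii) is then immediate by taking $l = k$ and $x_l = {\rm Re}(x_k)$, which is hermitian by construction.

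Finally, for (iii) $\Rightarrow$ (i), I would exploit that hermiticity in the base space passes to the quotient: if $x_l \in (V_l)_h$ then $\ddot{\phi}_{l,\infty}(x_l)^* = \ddot{\phi}_{l,\infty}(x_l^*) = \ddot{\phi}_{l,\infty}(x_l)$, so $\ddot{\phi}_{l,\infty}(x_l)$ is hermitian in $\limou V_k$ and therefore $\ddot{\phi}_{l,\infty}(x_l) + N \in (V_{\infty})_h$. The hypothesised equality in (iii) then transports this hermiticity to $\ddot{\phi}_{k,\infty}(x_k) + N$, giving (i). I do not expect a genuine obstacle here; the only delicate ingredient, as flagged above, is the selfadjointness of $N$, and everything else is a direct computation with the quotient involution.
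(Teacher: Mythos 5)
Your proof is correct and follows essentially the same route as the paper's: the same computation with the quotient involution for (i)$\Rightarrow$(ii), the same trivial specialisation for (ii)$\Rightarrow$(iii), and the same transport of hermiticity through the involution for (iii)$\Rightarrow$(i). Your explicit remark that the selfadjointness of $N$ (coming from the property $\vertiii{z^*}=\vertiii{z}$ of order seminorms) is what makes the quotient involution well defined is a point the paper leaves implicit in the surrounding setup, but it is not a difference in substance.
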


      \begin{proof}
(i)$\Rightarrow$ (ii) Suppose $\ddot{\phi}_{k,\infty}(x_k) + N \in (V_{\infty})_{h}$.
Then $\ddot{\phi}_{k,\infty}(x_k) + N = \ddot{\phi}_{k,\infty}(x_k)^* + N = \ddot{\phi}_{k,\infty}(x_k^*) +N$ and therefore
    \[
    \begin{split}
        \ddot{\phi}_{k,\infty}(x_k) + N &=  \frac{\ddot{\phi}_{k,\infty}(x_k) +\ddot{\phi}_{k,\infty}(x_k^*)}{2} + N \\ &= \ddot{\phi}_{k,\infty}\left(\frac{x_k + x_k^*}{2}\right) + N = \ddot{\phi}_{k,\infty}({\rm Re}(x_k)) + N.
      \end{split}
    \]

(ii)$\Rightarrow$ (iii) is trivial.

(iii)$\Rightarrow$ (i)  
       Suppose $\ddot{\phi}_{k,\infty}(x_k) + N = \ddot{\phi}_{l,\infty}(x_l) + N$ for some $x_l \in (V_l)_h$.
       Then
        \[
        \begin{split}
          (\ddot{\phi}_{k,\infty}(x_k) + N)^* &= (\ddot{\phi}_{l,\infty}(x_l) + N)^* = \ddot{\phi}_{l,\infty}(x_l)^* + N \\ &= \ddot{\phi}_{l,\infty}(x_l^*) + N =\ddot{\phi}_{l,\infty}(x_l) + N = \ddot{\phi}_{k,\infty}(x_k) + N.
        \end{split}
        \]
  \end{proof}

\begin{remark} \label{rem:positive cone V_infty}
{\rm  
We have that 
$$\laou{V}^+ = \{\ddot{\phi}_{k,\infty}(x_k)+N : x_k\in (V_k)_h, k\in \bb{N}, \mbox{ and } $$
$$ \mbox{ for every } r > 0 \mbox{ there exists } m \geq k \mbox{ such that }
\phi_{k,m}(x_k) + r e_m \in V_m^+\}.$$
An element $\ddot{\phi}_{k,\infty}(x_k)+N \in (V_{\infty})_h$
(where $x_k\in (V_k)_h$)  belongs to $V_{\infty}^+$ if and only if for every $r > 0$
there exist $l\in \bb{N}$ and $y_l\in V_l$  such that $\ddot{\phi}_{\l, \infty}(y_l) \in N$ and 
$\ddot{\phi}_{k,\infty}(r e_k + x_k) + \ddot{\phi}_{l,\infty}(y_l) \in \ddot{V}_{\infty}^+$.
Thus, $\ddot{\phi}_{k,\infty}(x_k)+N \in V_{\infty}^+$ if and only if for every $r > 0$
there exist $l\in \bb{N}$ and $y_l\in V_l$ such that $\ddot{\phi}_{l,\infty}(y_l) \in N$, and there exists 
$m > \max\{k,l\}$ with $re_m + \phi_{k,m}(x_k) + \phi_{l,m}(y_l)\in V_m^+$. 
We may assume without loss of generality that $l>k$ and that $y_l \in (V_l)_h$.
}
\end{remark}

    Let $q_V : \lou{V}\rightarrow \laou{V}$ be the canonical quotient map,
    and set
    \[
    \phi_{k,\infty} = q_V\circ \ddot{\phi}_{k,\infty};
    \]
    we have that $\phi_{k,\infty}$ is a unital positive map and 
$$\phi_{k+1,\infty} \circ \phi_k = \phi_{k,\infty}, \ \ \ k\in \bb{N}.$$
        Since $\ddot{V}_{\infty} = \cup_{k\in \bb{N}}\ddot{\phi}_{k,\infty}(V_k)$, we have that
    \[\laou{V} = \cup_{k\in \bb{N}}\phi_{k,\infty}(V_k).\]

The following lemma is certainly well-known; we record it since we were not able to find a precise reference.

\begin{lemma}\label{l_exten}
Let $(V,V^+,e)$ be an AOU space and $W\subseteq V$ be a linear subspace containing $e$. Set $W^+ = W\cap V^+$. 
Then $(W,W^+,e)$ is an AOU space and for every $f\in S(W)$ there exists $g\in S(V)$ such that $g|_{W} = f$. 
\end{lemma}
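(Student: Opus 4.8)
The plan is to prove the two assertions in turn: first that $(W,W^+,e)$ is an AOU space, which is a routine transfer of the order structure of $V$ to the subspace $W$, and then the extension statement, which carries the real content. For the first part I would check the AOU axioms directly, using that $W$ is closed under the involution (so that $W_h = W\cap V_h$ and $(W,W^+,e)$ is a $*$-vector space at all). Since $W^+ = W\cap V^+$ is the intersection of a subspace with a cone, it is itself a cone in $W_h$, and $W^+\cap(-W^+)\subseteq V^+\cap(-V^+)=\{0\}$. For the order unit property, given $x\in W_h$ I would pick $r>0$ with $re-x\in V^+$ (possible since $e$ is an order unit in $V$); as $re-x\in W$, it lies in $W\cap V^+=W^+$, so $x\leq re$ in $W$. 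The Archimedean property transfers identically: if $re+x\in W^+$ for all $r>0$ then $re+x\in V^+$ for all $r>0$, whence $x\in V^+$ and thus $x\in W^+$. A by-product of the same computation is that the order seminorm of $W$ determined by $e$ is exactly the restriction of that of $V$.

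For the extension statement I would first reduce to a real problem. A state $f\in S(W)$ is real-valued on $W_h$: if $-re\leq w\leq re$ then $r-f(w)=f(re-w)\geq 0$ is a nonnegative real, forcing $f(w)\in\mathbb{R}$. Hence $f$ restricts to a positive $\mathbb{R}$-linear functional $f_0:=f|_{W_h}$ with $f_0(e)=1$, and $f$ is recovered from $f_0$ through $f(x)=f_0(\mathrm{Re}\,x)+i f_0(\mathrm{Im}\,x)$. It therefore suffices to extend $f_0$ to a positive $\mathbb{R}$-linear functional $g_0$ on $V_h$ with $g_0(e)=1$, and then complexify.

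The heart of the argument is a Hahn--Banach extension on the real space $V_h$, which I expect to be the main obstacle. I would introduce the sublinear gauge $p(x)=\inf\{r\in\mathbb{R}: x\leq re\}$ on $V_h$; it is real-valued and sublinear precisely because $e$ is an order unit and $V^+$ is proper. The key estimate is $f_0\leq p$ on $W_h$, and this is exactly where the hypothesis $W^+=W\cap V^+$ is used: if $x\in W_h$ and $x\leq re$ in $V$, then $re-x\in W\cap V^+=W^+$, so $f_0(x)\leq r$, and taking the infimum gives $f_0(x)\leq p(x)$. Hahn--Banach then yields an $\mathbb{R}$-linear $g_0$ on $V_h$ with $g_0|_{W_h}=f_0$ and $g_0\leq p$. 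Positivity of $g_0$ follows since $p(-x)\leq 0$ for $x\in V^+$, so $g_0(x)=-g_0(-x)\geq -p(-x)\geq 0$; and $g_0(e)=f_0(e)=1$ because $e\in W_h$.

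Finally I would set $g(x)=g_0(\mathrm{Re}\,x)+i g_0(\mathrm{Im}\,x)$ for $x\in V$. Using $\mathrm{Re}(ix)=-\mathrm{Im}(x)$ and $\mathrm{Im}(ix)=\mathrm{Re}(x)$ one checks that $g$ is $\mathbb{C}$-linear; it is positive since $g(x)=g_0(x)\geq 0$ for $x\in V^+$, unital since $g(e)=g_0(e)=1$, and satisfies $g|_W=f$ because $g_0$ extends $f_0$ on $W_h$. Thus $g\in S(V)$ with $g|_W=f$. Everything beyond the Hahn--Banach step is bookkeeping; the genuine difficulty is the positive (Krein-type) extension, namely the correct choice of the gauge $p$ and the domination $f_0\leq p$.
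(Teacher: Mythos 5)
Your proof is correct and follows the same route as the paper's: verify the AOU axioms on $W$ directly, reduce the extension problem to real functionals on $W_h$ via $f(x)=f_0(\mathrm{Re}\,x)+i f_0(\mathrm{Im}\,x)$, extend positively, and complexify. The only difference is that where the paper cites \cite[Corollary~2.15]{PaulsenTomforde} for the positive extension of real functionals, you inline its standard proof (Hahn--Banach against the order-unit gauge $p(x)=\inf\{r: x\le re\}$, with the domination $f_0\le p$ coming from $W^+=W\cap V^+$), which is exactly the argument behind that cited result.
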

\begin{proof}
It is straightforward to check that $(W,W^+,e)$ is an AOU space. 
Recall the correspondence between complex functionals on $V$ and real functionals on $V_h$: given 
a real functional $\omega$ on $V_h$, one defines a functional $\tilde{\omega} : V \to \bb{C}$ by letting 
$\tilde{\omega}(x) = \omega({\rm Re}(x)) + i \omega({\rm Im}(x))$, $x\in V$. 
The second statement now follows from the fact that, by
\cite[Proposition~3.11]{PaulsenTomforde}, $\omega$ is positive if and only if $\tilde{\omega}$ is positive, 
and by \cite[Corollary~2.15]{PaulsenTomforde}, every positive real functional on a real ordered vector 
space can be extended to a positive real functional on a larger space. 
\end{proof}

    \begin{proposition} \label{prop:Eachphi_kOIthenN=0}
        Let
            $V_1\stackrel{\phi_1}{\longrightarrow} V_2 \stackrel{\phi_2}{\longrightarrow} V_3 \stackrel{\phi_3}{\longrightarrow} V_4 \stackrel{\phi_4}{\longrightarrow} \cdots$
        be an inductive system in $\cataou$ such that $\phi_k$ is an order isomorphism onto its image for each $k\in \bb{N}$.
        Then $N = \{0\}$
        and~$\phi_{k,\infty}$ is a unital order isomorphism onto its image for all $k \in \bb{N}$.
            \end{proposition}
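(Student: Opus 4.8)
The plan is to prove that $N=\{0\}$ first, and then to read off from this that each $\phi_{k,\infty}$ is a unital order isomorphism onto its image. The key preliminary observation is that, since each connecting map $\phi_j$ is an order isomorphism onto its image, so is every composition $\phi_{k,m}=\phi_{m-1}\circ\cdots\circ\phi_k$; this follows by a straightforward induction on the reflecting property. In particular, each $\phi_{k,m}$ is \emph{isometric} on hermitian elements for the order norm: for $y\in(V_k)_h$ the relation $-re_k\leq y\leq re_k$ is equivalent, using unitality together with the order-reflecting property of $\phi_{k,m}$, to $-re_m\leq \phi_{k,m}(y)\leq re_m$, so that $\norm{\phi_{k,m}(y)}_h=\norm{y}_h$ for every $m$.

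Granting this, I would take $x=\ddot{\phi}_{k,\infty}(x_k)\in N$. By Proposition~\ref{thm:characterisations of null space} we have $\lim_{m\to\infty}\norm{\phi_{k,m}(x_k)}^m=0$, and Remark~\ref{rem:norm_on_N} then forces $\lim_{m\to\infty}\norm{\phi_{k,m}({\rm Re}(x_k))}_h=0$ and $\lim_{m\to\infty}\norm{\phi_{k,m}({\rm Im}(x_k))}_h=0$ (here I use that $\phi_{k,m}$ is $*$-linear, so it commutes with ${\rm Re}$ and ${\rm Im}$). By the isometry just established these two limits equal $\norm{{\rm Re}(x_k)}_h$ and $\norm{{\rm Im}(x_k)}_h$ respectively; since $V_k$ is an AOU space, $\norm{\cdot}_h$ is a genuine norm, whence ${\rm Re}(x_k)={\rm Im}(x_k)=0$ and so $x=\ddot{\phi}_{k,\infty}(x_k)=0$. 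Thus $N=\{0\}$.

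For the second assertion, injectivity of $\phi_{k,\infty}=q_V\circ\ddot{\phi}_{k,\infty}$ follows at once: if $\phi_{k,\infty}(x_k)=0$ then $\ddot{\phi}_{k,\infty}(x_k)\in N=\{0\}$, and $\ddot{\phi}_{k,\infty}$ is injective by Remark~\ref{rem:Eachphi_kOIThenphi_k,inftyOI}. Unitality is clear, since $\phi_{k,\infty}(e_k)=e_\infty$. It then remains to check that $\phi_{k,\infty}$ reflects positivity. Using Lemma~\ref{lem:self adjoint element archimedeanisation} I may assume $x_k\in(V_k)_h$ with $\phi_{k,\infty}(x_k)\in \laou{V}^+$. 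By the description of the cone in Remark~\ref{rem:positive cone V_infty}, for each $r>0$ there is $m\geq k$ with $\phi_{k,m}(x_k)+re_m\in V_m^+$; by unitality this reads $\phi_{k,m}(x_k+re_k)\in V_m^+$, and the order-reflecting property of $\phi_{k,m}$ gives $x_k+re_k\in V_k^+$. As this holds for all $r>0$ and $e_k$ is an Archimedean order unit, I conclude $x_k\in V_k^+$, completing the proof that $\phi_{k,\infty}$ is an order isomorphism onto its image.

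The step I expect to be most delicate is the order-reflecting direction of the last paragraph: positivity in $\laou{V}$ is governed by the Archimedean-closure condition of Remark~\ref{rem:positive cone V_infty}, so one must first transport this condition back to a fixed stage $V_m$ through the reflecting property of $\phi_{k,m}$, and only then invoke the Archimedean order unit axiom of $V_k$ itself to upgrade ``$x_k+re_k\in V_k^+$ for all $r>0$'' to ``$x_k\in V_k^+$''. It is exactly the AOU hypothesis on each $V_k$ that makes this final upgrade legitimate; dropping it would leave $x_k$ only in the Archimedeanisation of $V_k^+$.
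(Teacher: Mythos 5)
Your proof is correct, and in its first half it takes a genuinely different route from the paper's. The paper establishes $N=\{0\}$ by working with the \emph{minimal} order norm (\ref{eq_normx}), invoking Lemma~\ref{l_exten} to extend states from $\phi_{k,m}(V_k)$ to $V_m$ and thereby deduce $\norm{\phi_{k,m}(x_k)}=\norm{x_k}$; you instead split $x_k$ into real and imaginary parts via Remark~\ref{rem:norm_on_N} and prove that each $\phi_{k,m}$ is isometric for $\norm{\cdot}_h$ on hermitians directly from unitality and the order-reflecting property, with no state extension at all. Your variant is more elementary and self-contained, at the cost of handling the two hermitian components separately. For the second assertion the paper simply notes that $N=\{0\}$ forces $\phi_{k,\infty}=\ddot{\phi}_{k,\infty}$ and cites Remark~\ref{rem:Eachphi_kOIThenphi_k,inftyOI}; this is slightly terse, since that remark concerns the cone $\lou{V}^{+}$ while the codomain of $\phi_{k,\infty}$ carries the Archimedeanised cone $\laou{V}^{+}$, which can a priori be strictly larger even when $N=\{0\}$. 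Your direct verification — pulling the condition of Remark~\ref{rem:positive cone V_infty} back to $V_m$, reflecting it to $V_k$ through $\phi_{k,m}$, and only then invoking the Archimedean property of $e_k$ — closes exactly this gap, so your treatment of the order-reflecting direction is in fact the more complete of the two.
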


    \begin{proof}
        Suppose that $x_k\in V_k$ and 
        $\ddot{\phi}_{k,\infty}(x_k) \in N$. 
        By Proposition \ref{thm:characterisations of null space}, 
        $\lim_{m \rightarrow \infty}\norm{\phi_{k,m}(x_k)}=0$. Since each $\phi_k$ is an order isomorphism onto $\phi_{k}(V_k)$, 
        using Lemma \ref{l_exten} we obtain that
        $\norm{\phi_{k,m}(x_k)} = \norm{x_k}$ for all $m \geq k$ and so $x_k=0$. 
        Thus, $\ddot{\phi}_{k,\infty}(x_k) = 0$. 
        It now follows that 
        $\phi_{k,\infty} = \ddot{\phi}_{k,\infty}$ and therefore, by Remark~\ref{rem:Eachphi_kOIThenphi_k,inftyOI}, 
        $\phi_{k,\infty}$ is a unital order isomorphism onto its image, $k \in \bb{N}$.
    \end{proof}

    \begin{theorem} \label{thm:universal property AOU 2}
    The triple $(V_{\infty}, \{\phi_{k,\infty}\}_{k \in \bb{N}},e_{\infty})$ is the inductive limit
of the inductive system
            $V_1\stackrel{\phi_1}{\longrightarrow} V_2 \stackrel{\phi_2}{\longrightarrow} V_3 \stackrel{\phi_3}{\longrightarrow} V_4 \stackrel{\phi_4}{\longrightarrow} \cdots$
            in the category $\cataou$. 
\end{theorem}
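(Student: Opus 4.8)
The plan is to realise the inductive limit in $\cataou$ as the Archimedeanisation of the inductive limit in $\catou$, exploiting that the process of Archimedeanisation is a left adjoint to the forgetful functor $\mathbf{F}:\cataou \to \catou$ and hence preserves inductive limits. The conditions (i) of Definition~\ref{def:inductive limit} have already been recorded before the statement: each $\phi_{k,\infty} = q_V \circ \ddot{\phi}_{k,\infty}$ is a unital positive map, and $\phi_{k+1,\infty} \circ \phi_k = \phi_{k,\infty}$ for all $k \in \bb{N}$. Thus only the universal property (ii) remains to be verified.

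So let $W$ be an AOU space and let $\{\psi_k\}_{k \in \bb{N}}$ be a family of unital positive maps $\psi_k : V_k \to W$ satisfying $\psi_{k+1} \circ \phi_k = \psi_k$ for all $k$. Regarding $W$ as an object of $\catou$ and applying the universal property of $\limou V_k$ from Theorem~\ref{thm:universal property *vectorspace2}, I would first obtain a unique unital positive map $\ddot{\psi} : \limou V_k \to W$ with $\ddot{\psi} \circ \ddot{\phi}_{k,\infty} = \psi_k$ for every $k$. Since $W$ is in particular Archimedean, the universal property of the Archimedeanisation (Theorem~\ref{lem:archimedeanisation of positive map}), applied to the unital positive map $\ddot{\psi}$ out of $\limou V_k$ with quotient map $q_V : \limou V_k \to V_\infty$, then yields a unique positive map $\psi_\infty : V_\infty \to W$ with $\psi_\infty \circ q_V = \ddot{\psi}$. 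One checks $\psi_\infty \circ \phi_{k,\infty} = \psi_\infty \circ q_V \circ \ddot{\phi}_{k,\infty} = \ddot{\psi} \circ \ddot{\phi}_{k,\infty} = \psi_k$, and $\psi_\infty(e_\infty) = \psi_\infty(q_V(\ddot{e}_\infty)) = \ddot{\psi}(\ddot{e}_\infty) = e_W$, so that $\psi_\infty$ is the desired unital positive morphism in $\cataou$.

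For uniqueness, I would suppose $\chi : V_\infty \to W$ is any unital positive map with $\chi \circ \phi_{k,\infty} = \psi_k$ for all $k$. As $q_V$ is unital positive, so is $\chi \circ q_V$, and $(\chi \circ q_V) \circ \ddot{\phi}_{k,\infty} = \chi \circ \phi_{k,\infty} = \psi_k$; by the uniqueness clause of Theorem~\ref{thm:universal property *vectorspace2} this forces $\chi \circ q_V = \ddot{\psi}$, and then the uniqueness clause of Theorem~\ref{lem:archimedeanisation of positive map} forces $\chi = \psi_\infty$. (Here one uses implicitly the identity $\limou V_k = \bigcup_{k \in \bb{N}} \ddot{\phi}_{k,\infty}(V_k)$ from~\eqref{eq_b} to pin down maps out of $\limou V_k$ by their values on the images of the $\ddot{\phi}_{k,\infty}$.) The whole argument is bookkeeping across two universal properties, and I do not anticipate a genuine obstacle; the only point requiring care is that the second step needs the target $W$ to be Archimedean, which is precisely the hypothesis that $W$ is an AOU space and is what permits the factoring through $V_\infty = (\limou V_k)_{\mathrm{Arch}}$. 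The remaining work is simply to thread the two uniqueness statements together.
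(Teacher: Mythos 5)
Your proposal is correct and follows essentially the same route as the paper's proof: first invoke the universal property of $\limou V_k$ in $\catou$ (Theorem~\ref{thm:universal property *vectorspace2}) to get $\ddot{\psi}$, then factor through the Archimedeanisation via Theorem~\ref{lem:archimedeanisation of positive map} to obtain $\psi_\infty$ with $\psi_\infty \circ q_V = \ddot{\psi}$. Your explicit treatment of uniqueness and unitality merely spells out details the paper leaves implicit.
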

                
\begin{proof}
        Suppose $(W, \lbrace \psi_k \rbrace_{k \in \mathbb{N}})$ is a pair consisting of an AOU space and a family of unital positive maps $\psi_k : V_k \rightarrow W$ such that $\psi_{k+1} \circ \phi_{k} = \psi_k$ for all $k \in \mathbb{N}$.
        By Theorem \ref{thm:universal property *vectorspace2}, there exists a unique unital positive map $\ddot{\psi} : \limou V_k \rightarrow W$ such that
            $\ddot{\psi} \circ \ddot{\phi}_{k,\infty} = \psi_k$
        for all $k \in \bb{N}$. By Theorem~\ref{lem:archimedeanisation of positive map}, 
        there exists a unique unital positive map $\psi: \laou{V} \rightarrow W$
        such that $\psi \circ q_V = \ddot{\psi}.$
        Therefore $\psi \circ \phi_{k,\infty} = \psi \circ q_V \circ \ddot{\phi}_{k,\infty} = \ddot{\psi} \circ \ddot{\phi}_{k,\infty} = \psi_k$
        for all $k \in \bb{N}$
        and the proof is complete.
\end{proof}

We recall that, according to our general notation for 
inductive limits, $\limaou V_k$ will henceforth stand for the AOU space
$(V_{\infty},$ $\{\phi_{k,\infty}\}_{k \in \bb{N}},e_{\infty})$.

        \begin{remark} \label{rem:universal property AOU}
{\rm          For each $k \in \bb{N}$, let $(V_k, V_k^+, e_k)$ and  $(W_k, W_k^+, f_k)$ be AOU spaces such that $(\{V_k\}_{k\in \bb{N}},\{\phi_k\}_{k \in \bb{N}})$ and $(\{W_k\}_{k\in \bb{N}},\{\psi_k\}_{k \in \bb{N}})$ are inductive systems and let $\{\theta_k\}_{k \in \mathbb{N}}$ be a sequence of unital positive maps such that the following diagram commutes:
          \begin{equation} \label{eq:chtp1a}
        \begin{CD}
            V_1 @>{\phi_1}>> V_2 @>{\phi_2}>> V_3 @>{\phi_3}>> V_4 @>{\phi_4}>> \cdots \\
            @V{\theta_1}VV @V{\theta_2}VV @V{\theta_3}VV @V{\theta_4}VV \\
            W_1 @>{\psi_1}>> W_2 @>{\psi_2}>> W_3 @>{\psi_3}>> W_4 @>{\psi_4}>> \cdots.
        \end{CD}
        \end{equation}
        It follows from Theorem~\ref{thm:universal property AOU 2} and Theorem~\ref{thm:universal property infinitely many maps} that there exists a unique unital positive map $\theta : \limaou V_k \rightarrow \limaou W_k$ such that $\theta \circ \phi_{k,\infty} = \psi_{k,\infty} \circ \theta_k$ for all $k \in \bb{N}$. 
            If $\theta_k$ is an order isomorphism onto its image for each $k\in \bb{N}$
            then $\theta$ is injective. 
            Indeed, if $x_k\in V_k$ and 
            $\theta\circ \phi_{k,\infty}(x_k) = 0$ 
            then
            $\psi_{k,\infty} \circ \theta_k (x_k) = 0$. 
            By Proposition \ref{thm:characterisations of null space},             
              $\lim_{m \rightarrow \infty} \norm{\psi_{k,m} \circ \theta_k(x_k)} =0$ and,
            since (\ref{eq:chtp1a}) commutes,
            $\lim_{m \rightarrow \infty} \norm{\theta_m ({\phi_{k,m}}(x_k)}  = 0.$
        Since $\theta_m$ is a unital order isomorphism onto its image, 
        it follows, using Lemma  \ref{l_exten},  that
            $\lim_{m \rightarrow \infty} \norm{\phi_{k,m}(x_k)} =0$ and, 
by Proposition \ref{thm:characterisations of null space}, $\phi_{k,\infty}(x_k) = 0$.
            }
    \end{remark}

    \begin{proposition}\label{rem:state space of AOU space}
        Let $(\{V_k\}_{k \in \bb{N}}, \{\phi_k\}_{k \in \bb{N}})$ be an inductive system in $\cataou$. 
        Then $S(\limaou V_k)$ is homeomorphic to $\limtop S(V_k)$. 
    \end{proposition}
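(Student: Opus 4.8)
The plan is to derive this from Proposition~\ref{l_si} by showing that passing from $\limou V_k$ to its Archimedeanisation $\laou{V} = \limaou V_k$ leaves the state space unchanged. Since $\bb{C}$, with its standard structure, is an AOU space, the universal property recorded in Theorem~\ref{lem:archimedeanisation of positive map} applies directly to functionals. Writing $q_V : \lou{V} \rightarrow \laou{V}$ for the canonical quotient map, I would first verify that the assignment
\[
q_V' : S(\limaou V_k) \to S(\limou V_k), \qquad q_V'(g) = g\circ q_V,
\]
is a bijection. It is well-defined because $g\circ q_V$ is unital and positive whenever $g$ is, and injective because $q_V$ is surjective. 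For surjectivity, given $f\in S(\limou V_k)$, Theorem~\ref{lem:archimedeanisation of positive map} applied with $W=\bb{C}$ supplies a unique unital positive functional $f_{\mathrm{Arch}}$ on $\laou{V}$ with $f = f_{\mathrm{Arch}}\circ q_V$; thus $f_{\mathrm{Arch}}\in S(\limaou V_k)$ and $q_V'(f_{\mathrm{Arch}}) = f$.

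Next I would promote this bijection to a homeomorphism. Continuity of $q_V'$ is immediate from the definition of the weak* topologies: if $g_\lambda \to g$ in $S(\limaou V_k)$, then $\angles{q_V'(g_\lambda)}{x} = \angles{g_\lambda}{q_V(x)} \to \angles{g}{q_V(x)} = \angles{q_V'(g)}{x}$ for every $x\in \lou{V}$. Since $\laou{V}$ is an AOU space, $S(\limaou V_k)$ is weak*-compact, while $S(\limou V_k)$ is Hausdorff (being equipped with the weak* topology); a continuous bijection from a compact space onto a Hausdorff space is a homeomorphism, so $q_V'$ is a homeomorphism. Composing $q_V'$ with the homeomorphism $S(\limou V_k) \to \limtop S(V_k)$ furnished by Proposition~\ref{l_si} then yields the desired homeomorphism between $S(\limaou V_k)$ and $\limtop S(V_k)$.

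The substance of the statement is thus carried almost entirely by the two ingredients already in place, and the only genuine point to check is the invariance of the state space under Archimedeanisation. The main obstacle, such as it is, lies in ensuring that the descended functional $f_{\mathrm{Arch}}$ is positive with respect to the \emph{Archimedean} cone $\laou{V}^+$ rather than merely on the image of $\lou{V}^+$; invoking the universal property in Theorem~\ref{lem:archimedeanisation of positive map} settles this cleanly. Alternatively, one could argue directly: every $v$ in the defining cone of the Archimedeanisation satisfies $r\ddot{e}_{\infty} + v \in \lou{V}^+$ for all $r>0$, whence $r + \angles{f}{v} \ge 0$ for all $r>0$, and letting $r\to 0^+$ gives $\angles{f}{v}\ge 0$.
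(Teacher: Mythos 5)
Your proposal is correct and follows essentially the same route as the paper: the paper also identifies $S(\limou V_k)$ with $S(\limaou V_k)$ via the quotient map $q_V$ using the universal property of the Archimedeanisation (Theorem~\ref{lem:archimedeanisation of positive map} with $W=\bb{C}$), and then composes with the homeomorphism of Proposition~\ref{l_si}. The paper simply asserts that the resulting bijection is a homeomorphism, whereas you supply the details via the compact-to-Hausdorff argument; this fills in exactly what the paper leaves as ``straightforward to check.''
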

    
    \begin{proof}      
If $f \in S(\limou V_k)$ then, by Theorem~\ref{lem:archimedeanisation of positive map}, 
there exists a unique unital positive map 
$\widetilde{f} \in S(\limaou V_k)$ such that $f = \widetilde{f} \circ q_{V}$. Define $\theta: S(\limou V_k) \rightarrow S(\limaou V_k)$ by letting 
$\theta(f) = \widetilde{f}$; it is straightforward to check that $\theta$ is a homeomorphism
(recall that the state space is equipped with the weak* topology). 
By  Proposition~\ref{l_si}, $S(\limou V_k)$ is homeomorphic to $\limtop S(V_k)$, and 
the claim follows. 
\end{proof}


\section{Inductive limits of operator systems} \label{chpt:Inductive limit of operator systems}

    We begin this section with the construction of the inductive limit in the category~$\catmou$
    of matrix ordered spaces, and 
    in Section~\ref{sect:Inductive limit of operator systems} we consider 
the inductive limit in the category $\catos$ of operator systems. 
We devote the remainder of the chapter to proving various ``commutation theorems'' for the 
inductive limit in $\catos$. 
In particular, we prove that the inductive limit 
intertwines $\omax$ and commutes with the maximal operator system tensor product. Analogous results hold 
for $\omin$ and 
the minimal operator system tensor product, provided the connecting morphisms are complete order embeddings. 
We note that the commutation with the 
minimal tensor product in the case of complete operator systems was 
recently proved in \cite{LuthraKumar}. 
We also establish, under certain natural conditions, the commutation of the
inductive limit with the quotient construction.

\subsection{Inductive limits of matrix ordered *-vector spaces} 
\label{sect:Inductive limit of matrix ordered vector spaces}

In this subsection, let 
$(\cl S_k, \lbrace C_n^k\rbrace_{n \in \bb{N}}, e_k)_{k\in \bb{N}}$ be a sequence of matrix ordered *-vector 
spaces with matrix order unit and 
$\phi_k : \cl S_k\to \cl S_{k+1}$ be a unital completely positive map, $k\in \bb{N}$; 
thus,
    \begin{equation}\label{eq_seqmou2}
        \cl S_1\stackrel{\phi_1}{\longrightarrow} \cl S_2 \stackrel{\phi_2}{\longrightarrow} \cl S_3 \stackrel{\phi_3}{\longrightarrow} \cl S_4 \stackrel{\phi_4}{\longrightarrow} \cdots
    \end{equation}
    is an inductive system in $\catmou$. 
For each $n\in \bb{N}$, consider the induced inductive system in $\catou$:
    \begin{equation*}\label{eq_seqmou3}
        M_n(\cl S_1)\stackrel{\phi_1^{(n)}}{\longrightarrow} M_n(\cl S_2) \stackrel{\phi_2^{(n)}}{\longrightarrow}
        M_n(\cl S_3) \stackrel{\phi_3^{(n)}}{\longrightarrow} M_n(\cl S_4) \stackrel{\phi_4^{(n)}}{\longrightarrow} \cdots.
    \end{equation*}
    Denote by $\ddot{\phi}_{k,\infty}^n$ the unital positive map associated to 
    $\limou M_n(\cl S_k)$ through (\ref{eq_kinfty}),
    so that $\ddot{\phi}_{k,\infty}^n : M_n (\cl S_k) \rightarrow \limou M_n(\cl S_k)$ and 
    $\ddot{\phi}_{k+1,\infty}^n \circ \phi_{k}^{(n)} = \ddot{\phi}_{k,\infty}^n$ for all $k \in \bb{N}$.
    Note that $\ddot{\phi}_{k,\infty}^1 = \ddot{\phi}_{k,\infty}$. 
    We caution the reader about the difference between the maps $\ddot{\phi}_{k,\infty}^n$ and 
    $\ddot{\phi}_{k,\infty}^{(n)}$: 
    while their domains are both equal to $M_n(\cl S)$, their ranges are within $\limou M_n(\cl S_k)$ 
    and  $M_n(\limou \cl S_k)$, respectively.

    \begin{lemma} \label{lem:matrix over inductive limit}
We have that 
$M_n(\limou \cl S_k) = \bigcup_{k \in \bb{N}} {\ddot{\phi}_{k,\infty}}^{(n)} M_n(\cl S_k)$, $n \in \bb{N}$.
    \end{lemma}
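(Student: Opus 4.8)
The plan is to reduce the statement to the scalar identity (\ref{eq_b}) applied entrywise, the only subtlety being that the entries of a given matrix may \emph{a priori} be represented at different levels of the inductive system, so that the representing indices must be reconciled to a common value.

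First I would dispose of the inclusion $\supseteq$. Since each $\ddot{\phi}_{k,\infty}$ maps $\cl S_k$ into $\limou \cl S_k$, its amplification $\ddot{\phi}_{k,\infty}^{(n)}$ maps $M_n(\cl S_k)$ into $M_n(\limou \cl S_k)$; hence $\bigcup_{k} \ddot{\phi}_{k,\infty}^{(n)}(M_n(\cl S_k)) \subseteq M_n(\limou \cl S_k)$ holds trivially.

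For the reverse inclusion, I would fix an element $Y = (y_{i,j})_{i,j} \in M_n(\limou \cl S_k)$. Each entry $y_{i,j}$ lies in $\limou \cl S_k$, so by (\ref{eq_b}) there are an index $k_{i,j} \in \bb{N}$ and an element $s_{i,j} \in \cl S_{k_{i,j}}$ with $y_{i,j} = \ddot{\phi}_{k_{i,j},\infty}(s_{i,j})$. As there are only finitely many entries, set $k = \max_{i,j} k_{i,j}$. Invoking the compatibility relation (\ref{eq_kl0et}), I rewrite every entry at the common level $k$, namely $y_{i,j} = \ddot{\phi}_{k,\infty}\big(\phi_{k_{i,j},k}(s_{i,j})\big)$, where $\phi_{k_{i,j},k}$ is the composite connecting map. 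Putting $x_{i,j} = \phi_{k_{i,j},k}(s_{i,j}) \in \cl S_k$ and assembling these into $X = (x_{i,j})_{i,j} \in M_n(\cl S_k)$, the definition of the amplification yields $\ddot{\phi}_{k,\infty}^{(n)}(X) = (\ddot{\phi}_{k,\infty}(x_{i,j}))_{i,j} = Y$, so $Y \in \ddot{\phi}_{k,\infty}^{(n)}(M_n(\cl S_k))$.

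The lone point at which the argument goes beyond a direct entrywise application of (\ref{eq_b}) is the passage to the common level $k$; this is precisely where the finiteness of the index set $\{(i,j)\}$ and the transitivity of the connecting maps recorded in (\ref{eq_kl0et}) are needed, and I do not anticipate any genuine obstacle there.
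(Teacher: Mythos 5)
Your proposal is correct and follows essentially the same argument as the paper: represent each entry at some level via (\ref{eq_b}), pass to the maximum of the finitely many indices using the compatibility relation (\ref{eq_kl0et}), and assemble the resulting matrix at that common level. No issues.
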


    \begin{proof}
        Fix $n \in \bb{N}$. 
        It is clear that ${\ddot{\phi}_{k,\infty}}^{(n)} (M_n(\cl S_k))\subseteq M_n(\limou \cl S_k)$
        for all $k$. To show the reverse inclusion,        
let $(s_{i,j})_{i,j} \in M_n(\limou \cl S_k)$. 
For all $1 \leq i,j \leq n$, we have that 
$s_{i,j} = \ddot{\phi}_{k_{i,j}}(s_{k_{i,j}})$ for some $k_{i,j} \in \bb{N}$ and $s_{k_{i,j}} \in \cl S_{k_{i,j}}$. Let $k = \max \{k_{i,j}: 1 \leq i,j \leq n\}$ and $s_{i,j}^k = \phi_{k_{i,j}, k}(s_{k_{i,j}})$.
We have that
        $s_{i,j} = \ddot{\phi}_{k,\infty}(s_{i,j}^k)$
         for all $1 \leq i,j \leq n$ and hence $(s_{i,j})_{i,j} \in \ddot{\phi}_{k,\infty}^{(n)}(M_n(\cl S_k))$.
    \end{proof}

In the next lemma, $M_n(\limou \cl S_k)$ is equipped with its
canonical involution arising from the involution of $\limou \cl S_k$.

    \begin{lemma}\label{l_pin}
    The mapping 
    $\pi_n: M_n(\limou \cl S_k) \rightarrow \limou{M_n(\cl S_k)}$ given by 
    \[
            \pi_n \circ \ddot{\phi}_{k,\infty}^{(n)} = \ddot{\phi}_{k,\infty}^n, \ \ \ k \in \bb{N},
        \]
        is well-defined, bijective and involutive. 
    \end{lemma}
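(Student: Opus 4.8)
The plan is to define $\pi_n$ on each image $\ddot{\phi}_{k,\infty}^{(n)}(M_n(\cl S_k))$ and then invoke Lemma~\ref{lem:matrix over inductive limit} to conclude that this prescribes $\pi_n$ on all of $M_n(\limou \cl S_k)$. The single observation driving every step is that $\ddot{\phi}_{k,\infty}^{(n)}$ and the connecting maps $\phi_k^{(n)}$ of the amplified system $\{M_n(\cl S_k)\}$ all act entrywise, through $\ddot{\phi}_{k,\infty}$ and $\phi_k$ respectively; thus any equality of matrices is equivalent to the $n^2$ equalities of their entries, and information can be transported between the two inductive-limit descriptions via the coincidence criterion of Remark~\ref{phik,infty behaves}.

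First I would establish well-definedness. Given $\ddot{\phi}_{k,\infty}^{(n)}(S_k) = \ddot{\phi}_{l,\infty}^{(n)}(S_l)$, I read this off entrywise and apply Remark~\ref{phik,infty behaves} to each of the $n^2$ entries, obtaining indices $m_{i,j}$ at which the respective entries of $S_k$ and $S_l$ agree under the connecting maps. Taking $m$ to be their maximum and pushing forward yields $\phi_{k,m}^{(n)}(S_k) = \phi_{l,m}^{(n)}(S_l)$ in $M_n(\cl S_m)$; a second application of Remark~\ref{phik,infty behaves}, this time for the system $\{M_n(\cl S_k)\}$, gives $\ddot{\phi}_{k,\infty}^n(S_k) = \ddot{\phi}_{l,\infty}^n(S_l)$, so the rule for $\pi_n$ is unambiguous. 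Running exactly this equivalence backwards proves injectivity, while surjectivity is immediate from the identity $\limou M_n(\cl S_k) = \cup_k \ddot{\phi}_{k,\infty}^n(M_n(\cl S_k))$ (equation~(\ref{eq_b}) for the amplified system), since $\pi_n$ hits every generator $\ddot{\phi}_{k,\infty}^n(S_k)$.

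Finally I would verify that $\pi_n$ is involutive. Writing $S = \ddot{\phi}_{k,\infty}^{(n)}(S_k)$, I would use that $\ddot{\phi}_{k,\infty}$ intertwines the involutions (established when $\limou \cl S_k$ was constructed) to check entrywise that $S^* = \ddot{\phi}_{k,\infty}^{(n)}(S_k^*)$, where $S_k^*$ denotes the involution of $M_n(\cl S_k)$; then $\pi_n(S^*) = \ddot{\phi}_{k,\infty}^n(S_k^*) = \ddot{\phi}_{k,\infty}^n(S_k)^* = \pi_n(S)^*$, the middle equality being the definition of the involution on $\limou M_n(\cl S_k)$.

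The arguments are essentially bookkeeping, and the only genuinely delicate point is the passage from entrywise agreement at the varying stages $m_{i,j}$ to agreement of the whole matrix at the single stage $m$; this is precisely where I use that there are only finitely many entries, so a maximum exists, together with the functoriality of the amplification $\phi^{(n)}$. Keeping the notation $\ddot{\phi}_{k,\infty}^n$ (range in $\limou M_n(\cl S_k)$) cleanly separated from $\ddot{\phi}_{k,\infty}^{(n)}$ (range in $M_n(\limou \cl S_k)$), as the authors already caution, will be the main source of potential confusion.
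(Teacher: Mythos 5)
Your proposal is correct and follows essentially the same route as the paper: well-definedness via the entrywise coincidence criterion of Remark~\ref{phik,infty behaves} and taking the maximum of the finitely many stages $m_{i,j}$, surjectivity from the union decomposition of the amplified limit, and the involution by transporting $S_k \mapsto S_k^*$ through both descriptions. The only cosmetic difference is that you obtain injectivity by running the coincidence equivalence in both directions, whereas the paper checks that the kernel is trivial and uses linearity; the two are interchangeable.
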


    \begin{proof}
        Fix $n \in \bb{N}$ and
       let $S \in M_n(\limou \cl S_k)$.
       By Lemma~\ref{lem:matrix over inductive limit}, 
       $S = \ddot{\phi}_{k,\infty}^{(n)}(S_k)$ for some $k \in \bb{N}$ and some $S_k \in M_n(\cl S_k)$. 
       Suppose that $S_k = (s_{i,j}^k)_{i,j} \in M_n(\cl S_k)$ and $S_l = (s_{i,j}^l)_{i,j}\in M_n(\cl S_l)$ are 
       such that $\ddot{\phi}_{k,\infty}^{(n)}(S_k) = \ddot{\phi}_{l,\infty}^{(n)}(S_l)$; 
       then, for all $1 \leq i,j \leq n$, there exists~$m_{i,j}$ such that ${\phi}_{k,m_{i,j}}(s_{i,j}^k) = {\phi}_{l,m_{i,j}}(s_{i,j}^l)$. 
       Let $m=\max \{m_{i,j} : 1 \leq i,j \leq n\}$; we have $\phi_{k,m}^{(n)}(S_k) = \phi_{l,m}^{(n)}(S_l)$. 
       Therefore $\ddot{\phi}_{k,\infty}^n(S_k) = \ddot{\phi}_{l,\infty}^n(S_l)$.
It follows that the mapping $\pi_n$ is well-defined. 
Since the mappings $\ddot{\phi}_{k,\infty}^{(n)}$ and $\ddot{\phi}_{k,\infty}^{n}$ are linear, 
we have that $\pi_n$ is linear. 

Suppose $S_k \in M_n(\cl S_k)$
is such that $\ddot{\phi}_{k,\infty}^n(S_k) = 0$. 
Then there exists $m > k$ such that $\phi_{k,m}^{(n)}(S_k) = 0$ and therefore
        $\ddot{\phi}_{k,\infty}^{(n)}(S_k) = \ddot{\phi}_{m,\infty}^{(n)} \circ \phi_{k,m}^{(n)}(S_k) = 0$.
This shows that $\pi_n$ is injective. 

Finally, let $S_k = (s_{i,j}^k)_{i,j}  \in M_n(\cl S_k)$. Then 
\begin{eqnarray*}
& & 
\ddot{\phi}_{k,\infty}^{n}(S_k)^*\\
& = & 
[(0,\dots,0,S_k,\phi_k^{(n)}(S_k),\dots)]^*
= 
[(0,\dots,0,S_k^*,\phi_k^{(n)}(S_k)^*,\dots)]\\
& = & 
[(0,\dots,0,S_k^*,\phi_k^{(n)}(S_k^*),\dots)] 
= 
\ddot{\phi}_{k,\infty}^{n}(S_k^*),
\end{eqnarray*}
and the proof is complete. 
    \end{proof}

We denote $\limou \cl S_k$ by $\ddot{\cl S}_{\infty}$ and let, as before, $\ddot{e}_{\infty} = \ddot{\phi}_{k,\infty}(e_k)$
for any $k\in \bb{N}$ (note that $\ddot{e}_{\infty}$ is thus well-defined).
    For each $n \in \bb{N}$, let $C_n \subseteq M_n(\ddot{\cl S}_{\infty})_h$ be given by
    \[
    C_n = \pi_n^{-1} \big((\limou M_n(\cl S_k))^+\big).
    \]

    \begin{proposition} \label{prop:S_infty is matrix ordered space}
        The triple 
       $(\ddot{\cl S}_{\infty}, \lbrace C_n\rbrace_{n \in \mathbb{N}},\ddot{e}_{\infty})$  
       is a matrix ordered *-vector space with matrix order unit.
     \end{proposition}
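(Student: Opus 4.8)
The plan is to transport the ordered $*$-vector space structure of each $\limou M_n(\cl S_k)$ — which is available from Proposition~\ref{prop:V_infty is ou} applied to the induced system $(M_n(\cl S_k))_{k \in \bb{N}}$ — back to $M_n(\ddot{\cl S}_{\infty})$ along the linear bijection $\pi_n$ of Lemma~\ref{l_pin}, using that $\pi_n$ is injective, linear and involutive. Conditions (i) and (ii) of a matrix ordering then follow quickly. For (i), involutivity forces any $S$ with $\pi_n(S) \in (\limou M_n(\cl S_k))^+ \subseteq (\limou M_n(\cl S_k))_h$ to satisfy $S^* = S$, so $C_n \subseteq M_n(\ddot{\cl S}_{\infty})_h$, and linearity of $\pi_n$ together with Lemma~\ref{lem:positive cone}(i) shows $C_n$ is a cone. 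For (ii), if $S \in C_n \cap (-C_n)$ then $\pi_n(S) \in (\limou M_n(\cl S_k))^+ \cap (-(\limou M_n(\cl S_k))^+) = \{0\}$ by Lemma~\ref{lem:positive cone}(ii), and injectivity of $\pi_n$ gives $S = 0$.

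The compatibility condition (iii) is the main obstacle, and the crux is an elementary observation that I would isolate first: conjugation by a scalar matrix commutes with the entrywise application of a linear map, that is, for linear $\psi$, for $T \in M_n(\cl S)$ and $\alpha \in M_{n,m}$ one has $\psi^{(m)}(\alpha^* T \alpha) = \alpha^* \psi^{(n)}(T) \alpha$, which is immediate from the entrywise formula defining $\psi^{(k)}$. Fixing $\alpha \in M_{n,m}$ and $S \in C_n$, I would use Lemma~\ref{lem:matrix over inductive limit} to write $S = \ddot{\phi}_{k,\infty}^{(n)}(S_k)$; then $\pi_n(S) = \ddot{\phi}_{k,\infty}^{n}(S_k) \in (\limou M_n(\cl S_k))^+$, so by (\ref{eq_c}) there is $m_0 \geq k$ with $\phi_{k,m_0}^{(n)}(S_k) \in C_n^{m_0}$. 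Complete positivity of $\phi_{m_0}$ makes $\{C_p^{m_0}\}_{p}$ a matrix ordering on $\cl S_{m_0}$, whence $\alpha^* \phi_{k,m_0}^{(n)}(S_k) \alpha \in C_m^{m_0}$. Applying the commutation observation to the linear maps $\phi_{k,m_0}$ and $\ddot{\phi}_{k,\infty}$ yields $\phi_{k,m_0}^{(m)}(\alpha^* S_k \alpha) = \alpha^* \phi_{k,m_0}^{(n)}(S_k) \alpha \in C_m^{m_0}$ and $\alpha^* S \alpha = \alpha^* \ddot{\phi}_{k,\infty}^{(n)}(S_k) \alpha = \ddot{\phi}_{k,\infty}^{(m)}(\alpha^* S_k \alpha)$. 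Therefore $\pi_m(\alpha^* S \alpha) = \ddot{\phi}_{k,\infty}^{m}(\alpha^* S_k \alpha) \in (\limou M_m(\cl S_k))^+$ by (\ref{eq_c}), i.e. $\alpha^* S \alpha \in C_m$, as required.

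It remains to check that $\ddot{e}_{\infty}$ is a matrix order unit. Since $\pi_1 = \id$, we have $C_1 = (\ddot{\cl S}_{\infty})^+$ and $\ddot{e}_{\infty} = \ddot{\phi}_{k,\infty}(e_k) \in C_1$. For the order-unit property at level $n$, I would first identify $\pi_n(\ddot{e}_{\infty}^{(n)})$ with the order unit of $\limou M_n(\cl S_k)$: writing the diagonal matrix $\ddot{e}_{\infty}^{(n)} = \ddot{\phi}_{k,\infty}^{(n)}(e_k^{(n)})$, Lemma~\ref{l_pin} gives $\pi_n(\ddot{e}_{\infty}^{(n)}) = \ddot{\phi}_{k,\infty}^{n}(e_k^{(n)})$, which is precisely the order unit provided by Proposition~\ref{prop:V_infty is ou} for the system $(M_n(\cl S_k))_{k \in \bb{N}}$. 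Given hermitian $S \in M_n(\ddot{\cl S}_{\infty})_h$, the element $\pi_n(S)$ is hermitian, so there is $r > 0$ with $r\,\pi_n(\ddot{e}_{\infty}^{(n)}) - \pi_n(S) \in (\limou M_n(\cl S_k))^+$; by linearity of $\pi_n$ this reads $\pi_n(r\ddot{e}_{\infty}^{(n)} - S) \in (\limou M_n(\cl S_k))^+$, i.e. $S \leq r\,\ddot{e}_{\infty}^{(n)}$ in $C_n$. Thus $\ddot{e}_{\infty}^{(n)}$ is an order unit for each $n$, and the proof is complete.
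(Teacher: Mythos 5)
Your proof is correct and follows essentially the same route as the paper's: the cone properties of $C_n$ are read off from Lemma~\ref{lem:positive cone} through the injective involutive bijection $\pi_n$, compatibility is checked by descending to a finite level where positivity is witnessed and conjugating there, and the order unit property comes from the level-wise order units (the paper redoes the finite-level argument where you instead cite Proposition~\ref{prop:V_infty is ou} for the system $(M_n(\cl S_k))_k$, which amounts to the same thing). One cosmetic slip: the fact that $\{C^{m_0}_p\}_p$ is a matrix ordering on $\cl S_{m_0}$ is part of the hypothesis that each $\cl S_k$ is a matrix ordered *-vector space, not a consequence of the complete positivity of $\phi_{m_0}$.
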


    \begin{proof}
Since $C_n$ is the inverse image of a proper cone under the injective mapping $\pi_n$ (Lemma \ref{l_pin}), 
we have that $C_n$ is a proper cone itself. 
We show that the family $\{C_n\}_{n \in \bb{N}}$ is compatible. 
Let $n,m \in \bb{N}$, $\alpha \in M_{n,m}$ and $\ddot{\phi}_{k,\infty}^{(n)}(S_k)\in C_n$,
where $S_k\in M_n(\cl S_k)$. 
        There exists $p \in \bb{N}$ such that 
        $\phi_{k,p}^{(n)}(S_k)\in M_n(\cl S_p)^+$. 
        We conclude that $\alpha^* \phi_{k,p}^{(n)}(S_k) \alpha \in M_m(\cl S_p)^+$ 
        and so $\ddot{\phi}_{p,\infty}^m(\alpha^* \phi_{k,p}^{(n)}(S_k) \alpha) \in (\limou M_m(\cl S_k))^+.$ 
        Therefore
        \[
            \alpha^* \ddot{\phi}_{k,\infty}^{(n)}(S_k)\alpha 
            = \ddot{\phi}_{p,\infty}^{(m)}(\alpha^* \phi_{k,p}^{(n)}(S_k) \alpha)\in C_m.
        \]
        Thus, $\{C_n\}_{n \in \bb{N}}$ is a matrix ordering for $\ddot{\cl S}_{\infty}$.

        Finally we show that $\ddot{e}_{\infty}$ is a matrix order unit. Observe that
        $
           \ddot{e}_{\infty}^{(n)} = \ddot{\phi}_{k,\infty}^{(n)}\big(e_k^{(n)}\big).
        $
       Suppose that 
       $\ddot{\phi}_{k,\infty}^{(n)}(S_k)\in ({M_n(\ddot{\cl S}_{\infty})})_h$. Then there exists $m >k$ such that
        \[
             \phi_{k,m}^{(n)}(S_k)\in (M_n(\cl S_m))_h.
        \]
        Since $e_m$ is a matrix order unit for $\cl S_m$, there exists $r > 0$ such that
        \[
            \phi_{k,m}^{(n)}(S_k) \leq r e_m^{(n)} = \phi_{k,m}^{(n)}\big(r e_k^{(n)}\big).
        \]
        Therefore
$\ddot{\phi}_{k,\infty}^n(S_k) \leq \ddot{\phi}_{k,\infty}^n\big(r e_k^{(n)}\big)$
        and thus
        \[
            \ddot{\phi}_{k,\infty}^{(n)}(S_k) 
            \leq \ddot{\phi}_{k,\infty}^{(n)}\big(r e_k^{(n)}\big) 
            = r \ddot{e}_{\infty}^{(n)}.
        \]
    \end{proof}

        For the remainder of this section, we denote by
        $\ddot{\cl S}_{\infty}$
        the matrix ordered *-vector space with matrix order unit $(\ddot{\cl S}_{\infty}, \{C_n\}_{n \in \bb{N}}, \ddot{e}_{\infty})$.

            \begin{remark} \label{rem:phi infinity cp1}
{\rm        
The map $\ddot{\phi}_{k,\infty} : \cl S_k \rightarrow \ddot{\cl S}_{\infty}$ is unital and completely positive. Indeed, suppose $S_k \in M_n(\cl S_k)^+$. Since $\ddot{\phi}_{k,\infty}^n$ is a unital positive map, 
$\ddot{\phi}_{k,\infty}^n(S_k) \in \big(\limou M_n(\cl S_k)\big)^+$ and therefore 
$\ddot{\phi}_{k,\infty}^{(n)}(S_k) \in C_n$.
}
    \end{remark}

    \begin{proposition}\label{rem:Eachphi_kCOIThenphi_k,inftyCOI}
        Let $\cl S_1\stackrel{\phi_1}{\longrightarrow} \cl S_2 \stackrel{\phi_2}{\longrightarrow} \cl S_3 \stackrel{\phi_3}{\longrightarrow} \cl S_4 \stackrel{\phi_4}{\longrightarrow} \cdots$
        be an inductive system in $\catmou$ such that $\phi_k$ is a complete order isomorphism onto its image for each $k \in \mathbb{N}$. Then $\ddot{\phi}_{k, \infty}$ is a complete order isomorphism onto its image for each $k \in \mathbb{N}$. 
    \end{proposition}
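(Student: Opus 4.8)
The plan is to reduce the statement to the already-established order-theoretic behaviour of inductive limits in $\catou$ at each fixed matrix level, and then to transport the conclusion back to $\ddot{\cl S}_{\infty}$ through the identification maps $\pi_n$ of Lemma~\ref{l_pin}. Recall that $\ddot{\phi}_{k,\infty}$ is automatically unital and completely positive by Remark~\ref{rem:phi infinity cp1}, so the content of the assertion is that, for each $n \in \bb{N}$, the map $\ddot{\phi}_{k,\infty}^{(n)} : M_n(\cl S_k) \to M_n(\ddot{\cl S}_{\infty})$ is injective and satisfies $\ddot{\phi}_{k,\infty}^{(n)}(S_k) \in C_n$ if and only if $S_k \in M_n(\cl S_k)^+$.

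First I would fix $n \in \bb{N}$ and consider the level-$n$ inductive system $M_n(\cl S_1) \stackrel{\phi_1^{(n)}}{\longrightarrow} M_n(\cl S_2) \stackrel{\phi_2^{(n)}}{\longrightarrow} \cdots$ in $\catou$. Since each $\phi_k$ is a complete order isomorphism onto its image, its amplification $\phi_k^{(n)}$ is unital, injective (entrywise), and satisfies $S \in M_n(\cl S_k)^+$ if and only if $\phi_k^{(n)}(S) \in M_n(\cl S_{k+1})^+$; that is, $\phi_k^{(n)}$ is an order isomorphism onto its image in $\catou$. Remark~\ref{rem:Eachphi_kOIThenphi_k,inftyOI}, applied to this system, then yields that the limit map $\ddot{\phi}_{k,\infty}^n : M_n(\cl S_k) \to \limou M_n(\cl S_k)$ is an order isomorphism onto its image for every $k$; in particular it is injective, and $\ddot{\phi}_{k,\infty}^n(S_k) \in (\limou M_n(\cl S_k))^+$ if and only if $S_k \in M_n(\cl S_k)^+$.

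It then remains to transfer this through $\pi_n$. Using the defining relation $\pi_n \circ \ddot{\phi}_{k,\infty}^{(n)} = \ddot{\phi}_{k,\infty}^n$ together with the bijectivity of $\pi_n$ (Lemma~\ref{l_pin}) and the definition $C_n = \pi_n^{-1}\big((\limou M_n(\cl S_k))^+\big)$, one obtains the chain of equivalences
\[
\ddot{\phi}_{k,\infty}^{(n)}(S_k) \in C_n
\iff \ddot{\phi}_{k,\infty}^n(S_k) \in \big(\limou M_n(\cl S_k)\big)^+
\iff S_k \in M_n(\cl S_k)^+,
\]
while the injectivity of $\ddot{\phi}_{k,\infty}^{(n)}$ follows from the injectivity of $\ddot{\phi}_{k,\infty}^n$ and of $\pi_n$. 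As $n \in \bb{N}$ was arbitrary, this establishes that $\ddot{\phi}_{k,\infty}$ is a complete order isomorphism onto its image.

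I do not anticipate a serious obstacle here, since all the substantive work has already been carried out in Remark~\ref{rem:Eachphi_kOIThenphi_k,inftyOI} and Lemma~\ref{l_pin}. The only point requiring genuine care is the bookkeeping between the two families of maps $\ddot{\phi}_{k,\infty}^n$ and $\ddot{\phi}_{k,\infty}^{(n)}$ — whose codomains are $\limou M_n(\cl S_k)$ and $M_n(\ddot{\cl S}_{\infty})$ respectively — and keeping track of the fact that the matrix order $C_n$ on $\ddot{\cl S}_{\infty}$ is by construction pulled back from the level-$n$ limit cone along $\pi_n$. Once this correspondence is handled cleanly, the equivalences above are immediate.
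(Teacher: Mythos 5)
Your proof is correct and follows essentially the same route as the paper's: both reduce the claim to the statement that $\ddot{\phi}_{k,\infty}^{(n)}(S_k)\in C_n$ forces $S_k\in M_n(\cl S_k)^+$, which comes down to finding $m>k$ with $\phi_{k,m}^{(n)}(S_k)\in M_n(\cl S_m)^+$ and invoking that $\phi_{k,m}$ is a complete order embedding. The only cosmetic difference is that you route the argument explicitly through $\pi_n$ and a level-$n$ application of Remark~\ref{rem:Eachphi_kOIThenphi_k,inftyOI}, whereas the paper unwinds the definition of $C_n$ directly; the underlying mechanism is identical.
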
        
  
  \begin{proof}      
        By Remarks~\ref{rem:Eachphi_kOIThenphi_k,inftyOI} and \ref{rem:phi infinity cp1}, it suffices to show that
        $
            \ddot{\phi}_{k, \infty}^{-1}
        $
        is completely positive.
       Suppose
        $
            \ddot{\phi}_{k, \infty}^{(n)}
            \left(
            S_k
            \right)
            \in C_n
        $
        for some $S_k \in M_n(\cl S_k)$.
        Then there exists $m>k$ such that
        $
            \phi_{k,m}^{(n)}
            \left(
            S_k
            \right)
            \in M_n(\cl S_m)^+.
        $
        Since 
        $\phi_{k,m}$ is a complete order isomorphism onto its image, $S_k\in M_n(\cl S_k)^+$.
\end{proof}

\begin{theorem} \label{thm:universal property  matrix *vectorspace2}
        The triple
        $(\ddot{\cl S}_{\infty}, \{C_n\}_{n \in \bb{N}}, \ddot{e}_{\infty})$ is an inductive limit of the inductive system
        $\cl S_1\stackrel{\phi_1}{\longrightarrow} \cl S_2 \stackrel{\phi_2}{\longrightarrow} \cl S_3 \stackrel{\phi_3}{\longrightarrow} \cl S_4 \stackrel{\phi_4}{\longrightarrow} \cdots$
         in $\catmou$. 
\end{theorem}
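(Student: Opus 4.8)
The plan is to deduce the universal property in $\catmou$ from the one already proved in $\catou$, adding only a check of complete positivity. First I would assemble the structural data that has already been established: by Proposition~\ref{prop:S_infty is matrix ordered space} the triple $(\ddot{\cl S}_{\infty},\{C_n\}_{n\in\bb{N}},\ddot{e}_{\infty})$ is a matrix ordered *-vector space with matrix order unit; by Remark~\ref{rem:phi infinity cp1} each $\ddot{\phi}_{k,\infty}:\cl S_k\to\ddot{\cl S}_{\infty}$ is unital completely positive; and the relation $\ddot{\phi}_{k+1,\infty}\circ\phi_k = \ddot{\phi}_{k,\infty}$ is the special case $l=k+1$ of~(\ref{eq_kl0et}). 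Thus the pair $(\ddot{\cl S}_{\infty},\{\ddot{\phi}_{k,\infty}\}_{k\in\bb{N}})$ satisfies condition~(i) of Definition~\ref{def:inductive limit}, and it remains to verify the universal property~(ii).

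So let $(\cl T, \{\psi_k\}_{k\in\bb{N}})$ be a pair in which $\cl T$ is a matrix ordered *-vector space with matrix order unit and each $\psi_k:\cl S_k\to\cl T$ is unital completely positive with $\psi_{k+1}\circ\phi_k = \psi_k$. The underlying ordered *-vector space with order unit of $\ddot{\cl S}_{\infty}$ is exactly $\limou\cl S_k$; indeed $\pi_1 = \id$ (since $\ddot{\phi}_{k,\infty}^{1} = \ddot{\phi}_{k,\infty}^{(1)} = \ddot{\phi}_{k,\infty}$), whence $C_1 = \ddot{\cl S}_{\infty}^+$. Viewing $\cl T$ at the first matrix level and noting that each $\psi_k$ is in particular unital positive, Theorem~\ref{thm:universal property *vectorspace2} supplies a unique unital positive linear map $\ddot{\psi}:\ddot{\cl S}_{\infty}\to\cl T$ with $\ddot{\psi}\circ\ddot{\phi}_{k,\infty} = \psi_k$ for all $k$. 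Uniqueness of $\ddot{\psi}$ as a $\catmou$-morphism is automatic, since any map intertwining the connecting data must agree with $\ddot{\psi}$ on $\bigcup_k\ddot{\phi}_{k,\infty}(\cl S_k) = \ddot{\cl S}_{\infty}$.

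The one genuinely new step is to show that $\ddot{\psi}$ is completely positive. Fix $n\in\bb{N}$ and take $S\in C_n$; by Lemma~\ref{lem:matrix over inductive limit} write $S = \ddot{\phi}_{k,\infty}^{(n)}(S_k)$ for some $S_k\in M_n(\cl S_k)$. Unwinding the definition $C_n = \pi_n^{-1}\big((\limou M_n(\cl S_k))^+\big)$ through Lemma~\ref{l_pin} and applying the description~(\ref{eq_c}) of the positive cone of the $\catou$-limit of the system $M_n(\cl S_k)$, I obtain an index $m\geq k$ with $\phi_{k,m}^{(n)}(S_k)\in M_n(\cl S_m)^+$. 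Functoriality of the $n$-th amplification, together with $\ddot{\psi}\circ\ddot{\phi}_{k,\infty}=\psi_k$ and $\psi_m\circ\phi_{k,m}=\psi_k$, then yields
\[
\ddot{\psi}^{(n)}(S) = \psi_k^{(n)}(S_k) = \psi_m^{(n)}\big(\phi_{k,m}^{(n)}(S_k)\big),
\]
and the right-hand side lies in $M_n(\cl T)^+$ because $\psi_m$ is completely positive. Hence $\ddot{\psi}^{(n)}(C_n)\subseteq M_n(\cl T)^+$ for every $n$, so $\ddot{\psi}$ is completely positive and thus a morphism in $\catmou$.

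I expect the only delicate point to be the bookkeeping in this last step: correctly translating membership in the abstractly-defined cone $C_n$ (given via the isomorphism $\pi_n$) into the concrete positivity $\phi_{k,m}^{(n)}(S_k)\in M_n(\cl S_m)^+$ extracted from the $\catou$-limit at matrix level $n$. This is precisely the content that Lemma~\ref{l_pin} and~(\ref{eq_c}) are set up to deliver, so once the identifications $\pi_n$ and the cone description are invoked, the argument reduces to the complete positivity of the individual $\psi_m$, which is given.
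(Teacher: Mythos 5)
Your proposal is correct and follows essentially the same route as the paper's proof: invoke Theorem~\ref{thm:universal property *vectorspace2} at the first matrix level to obtain the unique unital positive map $\ddot{\psi}$ with $\ddot{\psi}\circ\ddot{\phi}_{k,\infty}=\psi_k$, then verify complete positivity by choosing a representative $S_k$ with $\phi_{k,m}^{(n)}(S_k)\in M_n(\cl S_m)^+$ for some $m$ and applying the complete positivity of $\psi_m$. Your added remarks on unwinding $C_n$ via Lemma~\ref{l_pin} and on the uniqueness of $\ddot{\psi}$ are just slightly more explicit versions of steps the paper leaves implicit.
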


    \begin{proof}
        Suppose $(\cl T, \lbrace \psi_k \rbrace_{k \in \mathbb{N}})$ is a pair consisting of a matrix ordered 
        *-vector space with matrix order unit and a family of unital completely positive maps $\psi_k: \cl S_k \rightarrow \cl T$ 
        such that $\psi_{k+1} \circ \phi_{k} = \psi_k$ for all $k \in \mathbb{N}$.
        By Theorem~\ref{thm:universal property *vectorspace2}, there exists a unique unital positive map 
        $\ddot{\psi} : \ddot{\cl S}_{\infty} \rightarrow \cl T$ such that
            $\ddot{\psi} \circ \ddot{\phi}_{k, \infty} = \psi_k$
        for all $k \in \bb{N}$. We show that~$\ddot{\psi}$ is completely positive.
        Suppose $\ddot{\phi}_{k,\infty}^{(n)}(S_k) \in C_n$; 
        then there exists $m>k$ such that $\phi_{k,m}^{(n)}(S_k) \in M_n(\cl S_m)^+$. Since $\psi_m$ is completely positive,
        \[
            \ddot{\psi}^{(n)} \circ \ddot{\phi}_{k, \infty}^{(n)}(S_k) = \psi_k^{(n)}(S_k) = \psi_m^{(n)} \circ \phi_{k,m}^{(n)}(S_k) \in M_n(\cl T)^+.
        \]
    \end{proof}

    Following our general convention, we denote the triple $(\ddot{\cl S}_{\infty}, \{C_n\}_{n \in \bb{N}}, \ddot{e}_{\infty})$ 
    by $\limmou \cl S_k$.

    \begin{remark}\label{rem:mou in limit isomorphism morphisms}
    {\rm
      Let $(\{\cl S_k\}_{k \in \mathbb{N}}, \{\phi_k\}_{k \in \bb{N}})$ and $(\{\cl T_k\}_{k \in \mathbb{N}}, \{\psi_k\}_{k \in \bb{N}})$ be 
      inductive systems in $\catmou$ and let $\{\theta_k\}_{k \in \mathbb{N}}$ be a sequence of unital completely positive maps such that the following diagram commutes:
      \begin{equation}\label{eq:square diagram mou}
    \begin{CD}
        \cl S_1 @>{\phi_1}>> \cl S_2 @>{\phi_2}>> \cl S_3 @>{\phi_3}>> \cl S_4 @>{\phi_4}>> \cdots \\
        @V{\theta_1}VV @V{\theta_2}VV @V{\theta_3}VV @V{\theta_4}VV \\
        \cl T_1 @>{\psi_1}>> \cl T_2 @>{\psi_2}>> \cl T_3 @>{\psi_3}>> \cl T_4 @>{\psi_4}>> \cdots.
    \end{CD}
  \end{equation}
    It follows from Theorems~\ref{thm:universal property  matrix *vectorspace2} and \ref{thm:universal property infinitely many maps} that there exists a unique unital completely positive map $\ddot{\theta} : \limmou \cl S_k \rightarrow \limmou \cl T_k$ such that $\ddot{\theta} \circ \ddot{\phi}_{k,\infty} = \ddot{\psi}_{k,\infty} \circ \theta_k$ for all $k \in \bb{N}$. 
    
        We note that if 
        $\theta_k$ is a complete order isomorphism onto its image for each $k \in \bb{N}$, then $\ddot{\theta}$~is a complete order isomorphism onto its image. Indeed, by 
        Remark~\ref{rem:universal property *vectorspace}, it remains to check that $\ddot{\theta}^{-1}$ is completely positive. Suppose
        $
            \ddot{\theta}^{(n)}
            \circ
            \ddot{\phi}_{k,\infty}^{(n)}
            \left(
            S_k
            \right)
            \in M_n(\limmou \cl T_k)^+
        $. 
        Then
        $\ddot{\psi}_{k,\infty}^{(n)} \circ \theta_k^{(n)}(S_k) \in M_n(\limmou \cl T_k)^+$.
        It follows that there exists $m>k$ such that
        $
            \psi_{k,m}^{(n)}
            \circ
            \theta_k^{(n)}
            \left(
            S_k
            \right)
            \in M_n(\cl T_m)^+.
        $
        Since (\ref{eq:square diagram mou}) commutes,
        $
            \theta_m^{(n)}
            \circ
            \phi_{k,m}^{(n)}
            \left(
            S_k
            \right)
            \in M_n(\cl T_m)^+$. Since $\theta_m$ is a complete order isomorphism,
        $
            \phi_{k,m}^{(n)}
            \left(
            S_k
            \right)
            \in M_n(\cl S_m)^+
        $
         and therefore
        $
            \ddot{\phi}_{k,\infty}^{(n)}
            \left(
            S_k
            \right)
            \in M_n(\limmou \cl S_k)^+.
        $
        }
    \end{remark}

\subsection{Inductive limits of operator systems} \label{sect:Inductive limit of operator systems}

    We now proceed to the inductive limit in the category of operator systems.  
    Let $(\cl S_k, \lbrace C_n^k\rbrace_{n \in \bb{N}}, e_k)_{k\in \bb{N}}$ 
    be a sequence of operator systems and let 
    $\phi_k : \cl S_k\to \cl S_{k+1}$ be a unital completely positive map, $k\in \bb{N}$; thus, 
    \begin{equation}\label{eq_seqos1}
        \cl S_1\stackrel{\phi_1}{\longrightarrow} \cl S_2 \stackrel{\phi_2}{\longrightarrow} \cl S_3 \stackrel{\phi_3}{\longrightarrow} \cl S_4 \stackrel{\phi_4}{\longrightarrow} \cdots
    \end{equation}
    is an inductive system in $\catos$. 
    Let $\bf{F} : \catos \rightarrow \catmou$ be the forgetful functor;
    consider the inductive limit $\limmou {\bf F}(\cl S_k)$. 
    We will show that its Archimedeanisation
    is an inductive limit for the inductive system (\ref{eq_seqos1}). 
    
    Write $\limmou {\bf F}(\cl S_k) = (\ddot{\cl S}_{\infty}, \{C_n\}_{n \in \bb{N}}, \ddot{e}_{\infty})$
    (recall that $\ddot{e}_{\infty} = \ddot{\phi}_{k,\infty}(e_k)$, $k\in \bb{N}$). 
Let
    \[
        N = \big\lbrace s\in \ddot{\cl S}_{\infty} :  f(s)=0 \ \text{~for all~} \ f \in S(\ddot{\cl S}_{\infty})\big\rbrace
    \]
    be the {\it null space} of $\ddot{\cl S}_{\infty}$. 
Set
    \[
            \cl S_{\infty} = \ddot{\cl S}_{\infty}/N,
    \]
   write $q_{\cl S}: \ddot{\cl S}_{\infty} \rightarrow \cl S_{\infty}$ for the canonical quotient map and let
   $\phi_{k,\infty} = q_{\cl S} \circ \ddot{\phi}_{k,\infty}$. 
   We may identify $M_n(\ddot{\cl S}_{\infty} / N)$ with $M_n(\ddot{\cl S}_{\infty}) / M_n(N)$ in a natural way.
Note that, since $N$ is closed under the involution of $\ddot{\cl S}_{\infty}$, 
the space $M_n(N)$ is closed under the involution of $M_n(\ddot{\cl S}_{\infty})$.

      The proof of the next lemma is analogous to that of 
      Lemma~\ref{lem:self adjoint element archimedeanisation} and is omitted.

    \begin{lemma}\label{lem:self adjoint element archimedeanisation matrix}
        Let 
        $S_k\in M_n(\cl S_k)$. 
        The following are equivalent:
        \begin{enumerate}[\rm(i)]
            \item $\phi_{k,\infty}^{(n)}(S_k) \in (M_n(\ddot{\cl S}_{\infty}) / M_n(N))_h$;
            \item $\phi_{k,\infty}^{(n)}(S_k) = \phi_{k,\infty}^{(n)}({\rm Re}(S_k))$;
            \item $\phi_{k,\infty}^{(n)}(S_k) = \phi_{l,\infty}^{(n)}(S_l)$ for some $l \in \bb{N}$ and some $S_l \in (M_n(\cl S_l))_h$.
        \end{enumerate}
            \end{lemma}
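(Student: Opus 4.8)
The plan is to reproduce, at the $n$-th matrix level, the argument used for Lemma~\ref{lem:self adjoint element archimedeanisation}, with the scalar map $\ddot{\phi}_{k,\infty}$ replaced by its amplification $\ddot{\phi}_{k,\infty}^{(n)}$ and the subspace $N$ replaced by $M_n(N)$. The single fact that drives every implication is that the amplified connecting maps are involutive and that this property survives the passage to the quotient. Concretely, I would first record that $\ddot{\phi}_{k,\infty}^{(n)}(S_k)^* = \ddot{\phi}_{k,\infty}^{(n)}(S_k^*)$ for every $S_k = (s^k_{i,j})_{i,j} \in M_n(\cl S_k)$; this is immediate from the matrix involution $(s_{i,j})_{i,j}^* = (s^*_{j,i})_{i,j}$ together with the fact, built into the construction of $\limou \cl S_k$, that $\ddot{\phi}_{k,\infty}$ is involutive. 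Since $N$ is closed under the involution of $\ddot{\cl S}_{\infty}$, so is $M_n(N)$ under the matrix involution of $M_n(\ddot{\cl S}_{\infty})$, as noted immediately before the statement; hence this identity descends to $\phi_{k,\infty}^{(n)}(S_k)^* = \phi_{k,\infty}^{(n)}(S_k^*)$ in $M_n(\ddot{\cl S}_{\infty})/M_n(N)$.

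For (i)$\Rightarrow$(ii) I would assume $\phi_{k,\infty}^{(n)}(S_k)$ is hermitian, so that it coincides with its own adjoint, hence with $\phi_{k,\infty}^{(n)}(S_k^*)$ by the identity above; averaging the two expressions gives $\phi_{k,\infty}^{(n)}(S_k) = \phi_{k,\infty}^{(n)}\!\left(\frac{S_k + S_k^*}{2}\right) = \phi_{k,\infty}^{(n)}({\rm Re}(S_k))$. The implication (ii)$\Rightarrow$(iii) is then free, taking $l = k$ and $S_l = {\rm Re}(S_k)$, which is a self-adjoint element of $M_n(\cl S_k)$. For (iii)$\Rightarrow$(i) I would assume $\phi_{k,\infty}^{(n)}(S_k) = \phi_{l,\infty}^{(n)}(S_l)$ with $S_l$ self-adjoint and compute $\phi_{k,\infty}^{(n)}(S_k)^* = \phi_{l,\infty}^{(n)}(S_l)^* = \phi_{l,\infty}^{(n)}(S_l^*) = \phi_{l,\infty}^{(n)}(S_l) = \phi_{k,\infty}^{(n)}(S_k)$, so the element is hermitian.

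I do not anticipate a genuine obstacle: once the involutivity identity is in place, the three implications are purely formal, exactly as in the scalar case. The only point requiring a moment's care is the bookkeeping around the identification of $M_n(\ddot{\cl S}_{\infty}/N)$ with $M_n(\ddot{\cl S}_{\infty})/M_n(N)$ and the verification that the matrix involution is well defined on this quotient; both are already supplied in the discussion preceding the statement, so the argument may invoke them directly. This is precisely why the authors describe the proof as analogous and omit it.
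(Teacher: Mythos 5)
Your proposal is correct and is exactly the argument the authors intend when they say the proof is analogous to Lemma~\ref{lem:self adjoint element archimedeanisation} and omit it: the involutivity identity $\ddot{\phi}_{k,\infty}^{(n)}(S_k)^* = \ddot{\phi}_{k,\infty}^{(n)}(S_k^*)$ (already implicit in the computation at the end of Lemma~\ref{l_pin}) descends to the quotient by $M_n(N)$, and the three implications follow formally just as in the scalar case. No gaps.
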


    For each $n \in \bb{N}$, define

    \begin{equation} \label{eq:def positive matrix in limit}
      \begin{split}
    D_n = \Big\lbrace\phi_{k,\infty}^{(n)}(S_k) \in M_n&(\cl S_{\infty})_h :
S_k\in \cl S_k \mbox{ and for each } r > 0 
          \text{~there exist~}\\ 
          & l\in \bb{N}
          \text{~and~} T_l\in M_n(\cl S_l)
          \text{~with~} \ddot{\phi}_{\l, \infty}^{(n)}(T_l) \in M_n(N) \\
          &
          \text{~and~} 
          \ddot{\phi}_{k,\infty}^{(n)}(r e_k^{(n)} + S_k) + \ddot{\phi}_{l,\infty}(T_l) \in C_n \Big \rbrace.
          \end{split}
  \end{equation}

    \begin{remark}
    {\rm 
    Suppose $\ddot{\phi}_{k,\infty}^{(n)}(S_k) + M_n(N) \in (M_n(\cl S_{\infty}))_h$.
 We have that $\phi_{k,\infty}^{(n)}(S_k)\in  D_n$ if and only if 
 for all $r>0$ there exist $l\in \bb{N}$, $T_l\in M_n(\cl S_l)$ and $m > \max\{k,l\}$ such that 
$\ddot{\phi}_{l,\infty}^{(n)}(T_l) \in M_n(N)$  and 
$re_m^{(n)} + \phi_{k,m}^{(n)}(S_k) + \phi_{l,m}^{(n)}(T_l) \in M_n(\cl S_m)^+$. 
We may assume without loss of generality that $l>k$, $T_l \in (M_n(\cl S_l))_h$,
and 
$\phi_{k,m}(S_k)\in M_n(\cl S_m)_h$. 
}
    \end{remark}

  Note that the space 
  $(\cl S_{\infty}, \{D_n\}_{n \in \bb{N}}, e_{\infty})$,
  where $e_{\infty} = \phi_{k,\infty}(e_k)$ for some (and hence any) $k\in \bb{N}$,
  is the Archimedeanisation of the matrix ordered *-vector space 
  $(\ddot{\cl S}_{\infty}, \{C_n\}_{n \in \bb{N}}, \ddot{e}_{\infty})$.

    \begin{proposition} \label{prop:S_{infty} is operator space}
        The triple $(\cl S_{\infty}, \{D_n\}_{n \in \bb{N}}, e_{\infty})$ 
        is an operator system and $\phi_{k, \infty}$ is a unital completely positive map.
            \end{proposition}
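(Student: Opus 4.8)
The plan is to identify the triple $(\cl S_{\infty}, \{D_n\}_{n \in \bb{N}}, e_{\infty})$ with the Archimedeanisation of the matrix ordered *-vector space $(\ddot{\cl S}_{\infty}, \{C_n\}_{n \in \bb{N}}, \ddot{e}_{\infty})$, and then to read off both assertions from Theorem~\ref{prop: Archimedeanisation of mou space}. To set up the identification I would fix $n$ and a hermitian coset $\phi_{k,\infty}^{(n)}(S_k) \in M_n(\cl S_{\infty})_h$, taking $S_k$ hermitian by Lemma~\ref{lem:self adjoint element archimedeanisation matrix}, and then unpack the membership condition defining $D_n$ in (\ref{eq:def positive matrix in limit}). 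Using $\ddot{\phi}_{k,\infty}(e_k) = \ddot{e}_{\infty}$ and linearity, that condition asserts that for every $r > 0$ there is an element of the form $\ddot{\phi}_{l,\infty}^{(n)}(T_l) \in M_n(N)$ with $r\ddot{e}_{\infty}^{(n)} + \ddot{\phi}_{k,\infty}^{(n)}(S_k) + \ddot{\phi}_{l,\infty}^{(n)}(T_l) \in C_n$.

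Since $M_n(N)$ is a linear subspace and, by Lemma~\ref{lem:matrix over inductive limit}, every element of $M_n(N) \subseteq M_n(\ddot{\cl S}_{\infty})$ is of the form $\ddot{\phi}_{l,\infty}^{(n)}(T_l)$, this condition is equivalent to requiring $r\ddot{e}_{\infty}^{(n)} + \ddot{\phi}_{k,\infty}^{(n)}(S_k) \in C_n + M_n(N)$ for all $r > 0$. This is exactly the condition that the coset $\ddot{\phi}_{k,\infty}^{(n)}(S_k) + M_n(N)$ lie in the Archimedeanisation cone $C_n^{\mathrm{Arch}}$, where I use $M_n(N) = N_n$ to know that the $n$-th matrix level of $\cl S_{\infty}$ is the correct quotient. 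Hence $D_n = C_n^{\mathrm{Arch}}$ for every $n$, confirming the identification stated just before the proposition. The first claim then follows at once: by the construction preceding Theorem~\ref{prop: Archimedeanisation of mou space}, the Archimedeanisation of a matrix ordered *-vector space with matrix order unit is an operator system, so $(\cl S_{\infty}, \{D_n\}_{n \in \bb{N}}, e_{\infty})$ is an operator system.

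For the second claim I would factor $\phi_{k,\infty} = q_{\cl S} \circ \ddot{\phi}_{k,\infty}$. The map $\ddot{\phi}_{k,\infty}$ is unital and completely positive by Remark~\ref{rem:phi infinity cp1}, while the quotient map $q_{\cl S} : \ddot{\cl S}_{\infty} \rightarrow \cl S_{\infty}$ is unital, surjective and completely positive by Theorem~\ref{prop: Archimedeanisation of mou space}. As a composition of unital completely positive maps is again unital and completely positive, $\phi_{k,\infty}$ is unital completely positive, as required.

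I expect the only delicate part to be the bookkeeping in the first two paragraphs: matching the existential formulation of $D_n$ in (\ref{eq:def positive matrix in limit}) with the coset formulation of $C_n^{\mathrm{Arch}}$. This hinges on two already-available facts, namely that $M_n(N) = N_n$ (so that $M_n(\ddot{\cl S}_{\infty})/M_n(N)$ genuinely realises the $n$-th matrix level of $\cl S_{\infty}$) and that $\ddot{\phi}_{k,\infty}^{(n)}$ maps onto $M_n(\ddot{\cl S}_{\infty})$ by Lemma~\ref{lem:matrix over inductive limit}, which together let me realise an arbitrary element of $M_n(N)$ in the form $\ddot{\phi}_{l,\infty}^{(n)}(T_l)$ and so pass freely between the two descriptions of the positive cone.
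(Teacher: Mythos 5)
Your proof is correct and follows essentially the same route as the paper: the paper likewise identifies $(\cl S_{\infty}, \{D_n\}_{n \in \bb{N}}, e_{\infty})$ with the Archimedeanisation of $(\ddot{\cl S}_{\infty}, \{C_n\}_{n \in \bb{N}}, \ddot{e}_{\infty})$, cites \cite[Proposition~3.16]{PaulsenTodorovTomforde} for the operator system claim, and factors $\phi_{k,\infty} = q_{\cl S} \circ \ddot{\phi}_{k,\infty}$ exactly as you do. The only difference is that you explicitly verify $D_n = C_n^{\rm Arch}$ (via Lemma~\ref{lem:matrix over inductive limit} and $M_n(N) = N_n$), whereas the paper records this identification as an unproved note preceding the proposition.
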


    \begin{proof}
        Since $(\cl S_{\infty}, \{D_n\}_{n \in \bb{N}}, e_{\infty})$ is the Archimedeanisation of the matrix ordered 
        *-vector space $(\ddot{\cl S}_{\infty}, \{C_n\}_{n \in \bb{N}}, \ddot{e}_{\infty})$, 
        it follows from \cite[Proposition~3.16]{PaulsenTodorovTomforde} that it is an operator system.
        By Remark~\ref{rem:phi infinity cp1}, $\ddot{\phi}_{k, \infty}$ is a unital completely positive map.
        Since $q_{\cl S}$ is a unital completely positive map,
we have that $\phi_{k, \infty}$ is a unital completely positive map.
    \end{proof}

    \begin{theorem} \label{thm:universal property_operator_system2}
        The triple 
        $(\cl S_{\infty}, \{D_n\}_{n \in \bb{N}}, e_{\infty})$ is an inductive limit of the inductive system
            $$\cl S_1\stackrel{\phi_1}{\longrightarrow} \cl S_2 \stackrel{\phi_2}{\longrightarrow} \cl S_3 \stackrel{\phi_3}{\longrightarrow} \cl S_4 \stackrel{\phi_4}{\longrightarrow} \cdots$$
            in $\catos$. 
                \end{theorem}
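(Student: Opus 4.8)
The plan is to verify the universal property of Definition~\ref{def:inductive limit} directly, by chaining together the two universal properties that are already available: the one for the inductive limit $\ddot{\cl S}_\infty$ in $\catmou$ (Theorem~\ref{thm:universal property  matrix *vectorspace2}) and the one for the Archimedeanisation (Theorem~\ref{prop: Archimedeanisation of mou space}). This mirrors exactly the strategy used in the AOU case (Theorem~\ref{thm:universal property AOU 2}): there, one factored a candidate morphism first through $\limou V_k$ and then through its Archimedeanisation, and the same two-stage factorisation applies here at every matrix level.

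Concretely, suppose $(\cl T, \{\psi_k\}_{k\in\bb{N}})$ is a pair in which $\cl T$ is an operator system and each $\psi_k : \cl S_k \to \cl T$ is a unital completely positive map with $\psi_{k+1}\circ\phi_k = \psi_k$. First I would regard $\cl T$ as an object of $\catmou$ and apply Theorem~\ref{thm:universal property  matrix *vectorspace2}, which yields a unique unital completely positive map $\ddot{\psi} : \ddot{\cl S}_\infty \to \cl T$ with $\ddot{\psi}\circ\ddot{\phi}_{k,\infty} = \psi_k$ for all $k$. Second, since $\cl T$ is an operator system and $\cl S_\infty = \ddot{\cl S}_\infty/N$ is precisely the Archimedeanisation of the matrix ordered space $\ddot{\cl S}_\infty$, I would invoke Theorem~\ref{prop: Archimedeanisation of mou space} to obtain a unique completely positive map $\psi : \cl S_\infty \to \cl T$ satisfying $\psi\circ q_{\cl S} = \ddot{\psi}$; this $\psi$ is automatically unital because $\ddot{\psi}$ is unital and $q_{\cl S}$ is unital and surjective. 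Composing the two factorisations gives $\psi\circ\phi_{k,\infty} = \psi\circ q_{\cl S}\circ\ddot{\phi}_{k,\infty} = \ddot{\psi}\circ\ddot{\phi}_{k,\infty} = \psi_k$ for every $k$, which is the required property. For uniqueness I would use that $\cl S_\infty = \bigcup_{k\in\bb{N}}\phi_{k,\infty}(\cl S_k)$: any unital completely positive $\psi'$ with $\psi'\circ\phi_{k,\infty} = \psi_k$ agrees with $\psi$ on each $\phi_{k,\infty}(\cl S_k)$ and hence on all of $\cl S_\infty$, so $\psi' = \psi$.

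I do not expect a genuine obstacle, since the substantive content has been front-loaded into Proposition~\ref{prop:S_{infty} is operator space} (that $\cl S_\infty$ really is an operator system and each $\phi_{k,\infty}$ is unital completely positive) and into the two universal properties being composed. The only point requiring care is the bookkeeping of \emph{complete} positivity as opposed to mere positivity: I must ensure that the map produced from the $\catmou$ limit is completely positive (it is, by Theorem~\ref{thm:universal property  matrix *vectorspace2}) and that the Archimedeanisation step preserves complete positivity (it does, by Theorem~\ref{prop: Archimedeanisation of mou space}), so that the resulting $\psi$ is a bona fide morphism in $\catos$ rather than merely in $\catou$.
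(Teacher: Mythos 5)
Your proposal is correct and follows essentially the same route as the paper: the paper's proof also obtains $\ddot{\psi}$ from Theorem~\ref{thm:universal property  matrix *vectorspace2}, factors it through the Archimedeanisation via Theorem~\ref{prop: Archimedeanisation of mou space} to get $\psi$ with $\psi\circ q_{\cl S}=\ddot{\psi}$, and concludes by the same composition $\psi\circ\phi_{k,\infty}=\psi_k$. Your additional remarks on uniqueness via $\cl S_\infty=\bigcup_k\phi_{k,\infty}(\cl S_k)$ and on tracking complete (rather than mere) positivity are sound and only make explicit what the paper leaves implicit.
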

           
                \begin{proof}
        Suppose $(\cl T, \lbrace \psi_k \rbrace_{k \in \mathbb{N}})$ is a pair consisting of an operator system and a family of unital completely positive maps $\psi_k: \cl S_k \rightarrow \cl T$ such that $\psi_{k+1} \circ \phi_{k} = \psi_k$ for all $k \in \mathbb{N}$.
       By Theorem~\ref{thm:universal property  matrix *vectorspace2}, 
       there exists a unique unital completely positive map $\ddot{\psi}:\ddot{\cl S}_{\infty} \rightarrow \cl T$ such that
        $\ddot{\psi} \circ \ddot{\phi}_{k, \infty} = \psi_k.$
        By Theorem~\ref{prop: Archimedeanisation of mou space}, there exists a unique unital completely positive map $\psi: \cl S_{\infty} \rightarrow \cl T$ such that
            $\ddot{\psi} = \psi \circ q_{\cl S}.$
        Thus
        \[
            \psi \circ \phi_{k,\infty} = \psi \circ q_{\cl S} \circ \ddot{\phi}_{k,\infty} = \ddot{\psi} \circ \ddot{\phi}_{k,\infty} = \psi_k, \ \ \ k \in \bb{N}.
        \]
    \end{proof}

 Using our general notational convention, we denote by $\limos \cl S_k$ the inductive limit 
  $(\cl S_{\infty}, \{\phi_{k,\infty}\}_{k \in \bb{N}})$ of 
 the inductive system $(\{\cl S_k\}_{k \in \mathbb{N}}, \{\phi_k\}_{k \in \bb{N}})$ 
  in the category $\catos$.
We often write $\cl S_{\infty} = \limos \cl S_k$.

    \begin{remark}\label{rem:both methods are same}
    {\rm 
          Let $(\{\cl S_k\}_{k \in \bb{N}}, \{\phi_{k}\}_{k \in \bb{N}})$ be an inductive system in~$\catos$. For each $n\in \bb{N}$, consider the induced inductive system
    \begin{equation*}\label{eq_seq3}
        M_n(\cl S_1)\stackrel{\phi_1^{(n)}}{\longrightarrow} M_n(\cl S_2) \stackrel{\phi_2^{(n)}}{\longrightarrow}
        M_n(\cl S_3) \stackrel{\phi_3^{(n)}}{\longrightarrow} M_n(\cl S_4) \stackrel{\phi_4^{(n)}}{\longrightarrow} \cdots
    \end{equation*}
    in $\cataou$.
    Let us denote by $\phi_{k,\infty}^n$ the unital positive map associated to $\limaou M_n(\cl S_k)$ so that $\phi_{k,\infty}^n : M_n (\cl S_k) \rightarrow \limaou M_n(\cl S_k)$ and 
    $\phi_{k+1,\infty}^n \circ \phi_{k}^{(n)} = \phi_{k,\infty}^n$ for all $k \in \bb{N}$. 
As a consequence of Remark~\ref{rem:Arch is Arch at every matrix level},    
    one can see that $\limos \cl S_k$ is the operator system 
    with underlying *-vector space 
    $\limaou \cl S_k$ such that $\phi_{k,\infty}^{(n)}(S_k) \in M_n(\limos \cl S_k)^+$ if and only if 
    $\phi_{k,\infty}^n(S_k) \in (\limaou M_n(\cl S_k))^+$.     
    }
    \end{remark}

\begin{proposition}\label{p_coios}
Let
$$\cl S_1\stackrel{\phi_1}{\longrightarrow} \cl S_2 \stackrel{\phi_2}{\longrightarrow} \cl S_3 \stackrel{\phi_3}{\longrightarrow} \cl S_4 \stackrel{\phi_4}{\longrightarrow} \cdots$$
           be an inductive system in $\catos$, and suppose that $\phi_k$ is a complete order embedding for each $k\in \bb{N}$. 
           Then $\phi_{k,\infty}$ is a complete order embedding.
\end{proposition}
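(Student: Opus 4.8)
The plan is to reduce the statement to the corresponding AOU-space result, applied one matrix level at a time. By Proposition~\ref{prop:S_{infty} is operator space} the map $\phi_{k,\infty}$ is already known to be unital and completely positive, so the only remaining tasks are to show that $\phi_{k,\infty}$ is injective and that its inverse on the range is completely positive. Concretely, it suffices to prove that for every $n\in\bb{N}$ and every $S_k\in M_n(\cl S_k)$ one has $\phi_{k,\infty}^{(n)}(S_k)\in M_n(\cl S_\infty)^+$ \emph{only if} $S_k\in M_n(\cl S_k)^+$ (the converse being the complete positivity just recalled).

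I would fix $n$ and pass to the amplified inductive system $(M_n(\cl S_k),\phi_k^{(n)})$ in the category $\cataou$. Since $\phi_k$ is a complete order embedding, each amplification $\phi_k^{(n)}$ is a unital order isomorphism onto its image, so Proposition~\ref{prop:Eachphi_kOIthenN=0} applies: the null space of this system is $\{0\}$ and the associated map $\phi_{k,\infty}^n\colon M_n(\cl S_k)\to\limaou M_n(\cl S_k)$ is a unital order isomorphism onto its image. In particular, $\phi_{k,\infty}^n(S_k)\in(\limaou M_n(\cl S_k))^+$ if and only if $S_k\in M_n(\cl S_k)^+$.

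It then remains to link positivity in $\cl S_\infty$ with positivity in $\limaou M_n(\cl S_k)$, and this is exactly what Remark~\ref{rem:both methods are same} supplies: $\phi_{k,\infty}^{(n)}(S_k)\in M_n(\cl S_\infty)^+$ precisely when $\phi_{k,\infty}^n(S_k)\in(\limaou M_n(\cl S_k))^+$. Composing the two equivalences yields $\phi_{k,\infty}^{(n)}(S_k)\in M_n(\cl S_\infty)^+\iff S_k\in M_n(\cl S_k)^+$ for all $n$, which is the required complete order embedding property; the case $n=1$ (where $\phi_{k,\infty}^1=\phi_{k,\infty}$ and the target is $\limaou\cl S_k=\cl S_\infty$) also delivers injectivity. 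The point demanding attention—and the main obstacle—is that $\phi_{k,\infty}$ factors through the Archimedeanisation quotient $q_{\cl S}$, which a priori could collapse the image and destroy the embedding; the argument succeeds precisely because Proposition~\ref{prop:Eachphi_kOIthenN=0}, invoked at each matrix level, forces the relevant null spaces to vanish, so no such collapsing occurs. One must also take care to distinguish the two amplified limit maps $\phi_{k,\infty}^n$ and $\phi_{k,\infty}^{(n)}$, as flagged in Section~\ref{sect:Inductive limit of matrix ordered vector spaces}.
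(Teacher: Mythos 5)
Your proof is correct and follows exactly the route the paper takes: the paper's own proof is a one-line citation of Proposition~\ref{prop:Eachphi_kOIthenN=0} and Remark~\ref{rem:both methods are same}, which are precisely the two ingredients you combine (applying the first at each matrix level and the second to transfer positivity between $M_n(\cl S_\infty)$ and $\limaou M_n(\cl S_k)$). Your write-up simply makes explicit the details the paper leaves to the reader, including the correct observation that the vanishing of the null spaces is what prevents the Archimedeanisation quotient from collapsing the image.
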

\begin{proof}
The statement follows from Proposition \ref{prop:Eachphi_kOIthenN=0} and Remark \ref{rem:both methods are same}.
\end{proof}

    \begin{remark} \label{rem:theta coi_os}
    {\rm 
        Let 
        $(\{\cl S_k\}_{k \in \bb{N}}, \{\phi_k\}_{k \in \bb{N}})$ and $(\{\cl T_k\}_{k \in \bb{N}}, \{\psi_k\}_{k \in \bb{N}})$ be 
        inductive systems in $\catos$ and let $\{\theta_k\}_{k \in \mathbb{N}}$ be a sequence of unital completely positive maps such that the following diagram commutes:
          \begin{equation*} \label{eq:commuting diagram in lim opsys}
                \begin{CD}
                    \cl S_1 @>{\phi_1}>> \cl S_2 @>{\phi_2}>> \cl S_3 @>{\phi_3}>> \cl S_4 @>{\phi_4}>> \cdots \\
                    @V{\theta_1}VV @V{\theta_2}VV @V{\theta_3}VV @V{\theta_4}VV \\
                    \cl T_1 @>{\psi_1}>> \cl T_2 @>{\psi_2}>> \cl T_3 @>{\psi_3}>> \cl T_4 @>{\psi_4}>> \cdots.
                \end{CD}
            \end{equation*}
        It follows from Theorems \ref{thm:universal property  matrix *vectorspace2} and 
        \ref{thm:universal property infinitely many maps} that there exists a unique unital completely positive map $\theta : \limos \cl S_k \rightarrow \limos \cl T_k$ such that $\theta \circ \phi_{k,\infty} = \psi_{k,\infty} \circ \theta_k$ for all $k \in \bb{N}$. 
It follows from Remark~\ref{rem:universal property AOU} that if 
    each $\theta_k$ is a complete order isomorphism onto its image then $\theta$ is injective.
        }
    \end{remark}

    \begin{remark} \label{rem:all maps coi}
    {\rm 
Let $(\{\cl S_k\}_{k \in \bb{N}}, \{\phi_k\}_{k \in \bb{N}})$ and 
      $(\{\cl T_k\}_{k \in \bb{N}}, \{\psi_k\}_{k \in \bb{N}})$ be inductive systems in $\catos$, and assume that
      $\phi_k$ and $\psi_k$ are unital complete order embeddings, $k\in \bb{N}$.
      If $\{\theta_k\}_{k \in \mathbb{N}}$ is a sequence of unital complete order embeddings such that the following diagram commutes:
        \begin{equation*} \label{eq:commuting diagram in lim opsys}
              \begin{CD}
                  \cl S_1 @>{\phi_1}>> \cl S_2 @>{\phi_2}>> \cl S_3 @>{\phi_3}>> \cl S_4 @>{\phi_4}>> \cdots \\
                  @V{\theta_1}VV @V{\theta_2}VV @V{\theta_3}VV @V{\theta_4}VV \\
                  \cl T_1 @>{\psi_1}>> \cl T_2 @>{\psi_2}>> \cl T_3 @>{\psi_3}>> \cl T_4 @>{\psi_4}>> \cdots,
              \end{CD}
          \end{equation*}
                then $\theta : \limos \cl S_k \rightarrow \limos \cl T_k$ is a unital complete order embedding. 
Indeed, note first that, since the connecting maps are complete oder embeddings, 
the null spaces, associated with the two inductive systems, coincide with the zero spaces
(see Proposition \ref{prop:Eachphi_kOIthenN=0}). 
Suppose that $S_k\in M_n(\cl S_k)$ is such that 
        $
            \theta^{(n)}
            \circ
            \phi_{k,\infty}^{(n)}
            \left(
            S_k
            \right)
            \in M_n(\limmou \cl T_k)^+
        $. 
        Then
        $\psi_{k,\infty}^{(n)} \circ \theta_k^{(n)}(S_k) \in M_n(\limmou \cl T_k)^+$.
        Thus, for every $r > 0$, 
there exists $m > k$ such that 
        $
            r e_{\cl T}^{(n)} + \psi_{k,m}^{(n)}
            \circ
            \theta_k^{(n)}
            \left(
            S_k
            \right)
            \in M_n(\cl T_m)^+.
        $
        Since (\ref{eq:square diagram mou}) commutes, this means that 
        $
            \theta_m^{(n)}
            \left(\phi_{k,m}^{(n)}(S_k) + r e_{\cl S_k}^{(n)}\right)
            \in M_n(\cl T_m)^+$. 
            Since $\theta_m$ is a unital complete order isomorphism,            
        $
        \phi_{k,m}^{(n)}(S_k) + r e_{\cl S_k}^{(n)}
            \in M_n(\cl S_m)^+.
        $
Thus, 
        $
            \phi_{k,\infty}^{(n)}
            \left(
            S_k
            \right)
            \in M_n(\limmou \cl S_k)^+.
       $                
                }
    \end{remark}

    \begin{proposition}\label{prop:norm for inductive limit operator system}
        Let $\cl S_1\stackrel{\phi_1}{\longrightarrow} \cl S_2 \stackrel{\phi_2}{\longrightarrow} \cl S_3 \stackrel{\phi_3}{\longrightarrow} \cl S_4 \stackrel{\phi_4}{\longrightarrow} \cdots$
        be an inductive system in~$\catos$. For each $k \in \bb{N}$, let $\norm{\cdot}_k$ be the norm of 
        $\cl S_k$. If $s_k \in \cl S_k$ then
        \[
            \norm{\phi_{k,\infty}(s_k)} = \lim_{l \rightarrow \infty}\norm{\phi_{k,l}(s_k)}_l.
        \]
            \end{proposition}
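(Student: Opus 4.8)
The plan is to sandwich $\norm{\phi_{k,\infty}(s_k)}$ between the two sides by proving each inequality separately, after first noting that the limit exists. Since each $\phi_l$ is unital and completely positive, it is completely contractive (as recalled after Lemma~\ref{rem:norm of an operator system}), so the sequence is non-increasing: $\norm{\phi_{k,l+1}(s_k)}_{l+1} = \norm{\phi_l(\phi_{k,l}(s_k))}_{l+1} \le \norm{\phi_{k,l}(s_k)}_l$. Hence $c := \lim_{l\to\infty}\norm{\phi_{k,l}(s_k)}_l = \inf_{l\ge k}\norm{\phi_{k,l}(s_k)}_l$ exists. The inequality $\norm{\phi_{k,\infty}(s_k)}\le c$ is then immediate: for each $l\ge k$ we have $\phi_{k,\infty}(s_k) = \phi_{l,\infty}(\phi_{k,l}(s_k))$, and $\phi_{l,\infty}$ is unital completely positive, hence contractive, so $\norm{\phi_{k,\infty}(s_k)}\le\norm{\phi_{k,l}(s_k)}_l$; taking the infimum over $l$ gives the bound.

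The substance is the reverse inequality $\norm{\phi_{k,\infty}(s_k)}\ge c$, for which I would pass to the second matrix level and use the positivity characterisation of the norm. Write $s=\phi_{k,\infty}(s_k)$ and fix any $r>\norm{s}$. By Lemma~\ref{rem:norm of an operator system}, after rescaling the unit on the diagonal, the element $H_k := \left(\begin{smallmatrix} r e_k & s_k \\ s_k^* & r e_k\end{smallmatrix}\right)\in M_2(\cl S_k)$ satisfies $\phi_{k,\infty}^{(2)}(H_k) = \left(\begin{smallmatrix} r e_\infty & s \\ s^* & r e_\infty\end{smallmatrix}\right)\in M_2(\cl S_\infty)^+ = D_2$. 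Unravelling the definition \eqref{eq:def positive matrix in limit} of $D_2$ (and using the normalisation remark following it, so that $T_l$ may be taken hermitian), for every $\varepsilon>0$ there are $l\in\bb{N}$ and $T_l\in (M_2(\cl S_l))_h$ with $\ddot{\phi}_{l,\infty}^{(2)}(T_l)\in M_2(N)$ such that, for all sufficiently large $m$, one has $\varepsilon e_m^{(2)} + \phi_{k,m}^{(2)}(H_k) + \phi_{l,m}^{(2)}(T_l)\in M_2(\cl S_m)^+$; the condition propagates to larger $m$ because the connecting maps are positive.

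The main obstacle is precisely this correction term $\phi_{l,m}^{(2)}(T_l)$, which is nonzero but becomes null in the limit, and must be controlled in order to extract a genuine estimate on $\phi_{k,m}(s_k)$ alone. Here I would invoke Proposition~\ref{thm:characterisations of null space} at the second matrix level (using $M_2(N)=N_2$): since $\ddot{\phi}_{l,\infty}^{(2)}(T_l)$ lies in the null space, $\lim_{m\to\infty}\norm{\phi_{l,m}^{(2)}(T_l)}_m = 0$. Choosing $m$ large enough that this order norm is $<\varepsilon$ and that the displayed positivity holds, and using that $\phi_{l,m}^{(2)}(T_l)$ is hermitian, we get $\phi_{l,m}^{(2)}(T_l)\le \varepsilon e_m^{(2)}$; substituting into the displayed inequality yields $\phi_{k,m}^{(2)}(H_k)\ge -2\varepsilon e_m^{(2)}$, that is, $\left(\begin{smallmatrix} (r+2\varepsilon) e_m & \phi_{k,m}(s_k) \\ \phi_{k,m}(s_k)^* & (r+2\varepsilon) e_m\end{smallmatrix}\right)\ge 0$. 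By Lemma~\ref{rem:norm of an operator system} this gives $\norm{\phi_{k,m}(s_k)}_m\le r+2\varepsilon$, whence $c\le r+2\varepsilon$. Letting $\varepsilon\to 0$ and then $r\downarrow\norm{s}$ gives $c\le\norm{s}$, which combined with the first paragraph finishes the proof. I would close by remarking that when the connecting maps are complete order embeddings the null space is trivial (Proposition~\ref{prop:Eachphi_kOIthenN=0}), so the correction term $T_l$ vanishes and the lower bound reduces to a direct estimate.
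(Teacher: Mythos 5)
Your proof is correct and follows essentially the same route as the paper's: the easy inequality via contractivity (the paper uses the $2\times 2$ positivity trick there too, but this is cosmetic), and the hard inequality by unwinding the Archimedeanised cone $D_2$ at the second matrix level and killing the null-space correction term $T_l$ by means of Proposition~\ref{thm:characterisations of null space} together with $M_2(N)=N_2$, exactly as in the paper. Your additional observation that the sequence $\norm{\phi_{k,l}(s_k)}_l$ is non-increasing, so the limit exists as an infimum, is a small but welcome clarification that the paper leaves implicit.
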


    \begin{proof}
        Let $s_k \in \cl S_k$ and suppose that $\lim_{l \rightarrow \infty}\norm{\phi_{k,l}(s_k)}_l < 1$. Then there exists $m >k$ such that $\norm{\phi_{k,m}(s_k)}_m < 1$. By Lemma~\ref{rem:norm of an operator system},
        \[
            \begin{pmatrix}
                e_m & \phi_{k,m}(s_k) \\
                \phi_{k,m}(s_k)^* & e_m \\
            \end{pmatrix}
            \in M_2(\cl S_m)^+.
        \]
        Thus,
        \[
        \phi_{m,\infty}^{(2)}\left(
        \begin{pmatrix}
          e_m & \phi_{k,m}(s_k) \\
          \phi_{k,m}(s_k)^* & e_m \\
        \end{pmatrix}
        \right)
        =
            \begin{pmatrix}
                e_{\infty} & \phi_{k,\infty}(s_k) \\
                \phi_{k,\infty}(s_k)^* & e_{\infty} \\
            \end{pmatrix}
        \]
        is an element of $M_2(\limos \cl S_k)^+$
        and therefore $\norm{\phi_{k,\infty}(s_k)} \leq 1$. This proves that
        $\norm{\phi_{k,\infty}(s_k)} \leq \lim_{l \rightarrow \infty}\norm{\phi_{k,n}(s_k)}_l$.

To establish the reverse inequality, 
suppose that $\norm{\phi_{k,\infty}(s_k)} < 1$. 
Then 
$$
\begin{pmatrix}
                e_{\infty} & \phi_{k,\infty}(s_k) \\
                \phi_{k,\infty}(s_k)^* & e_{\infty} \\
            \end{pmatrix}
            \in M_2(\cl S_{\infty})^+.
$$
Let $r > 0$. Then there exist $q\geq k$, $T_q\in M_2(\cl S_q)$ and $m > q$ such that 
$\ddot{\phi}_{q,\infty}^{(2)}(T_q)\in M_2(N)$ and 
$$
\begin{pmatrix}
                (1 + r) e_m & \phi_{k,m}(s_k) \\
                \phi_{k,m}(s_k)^* & (1 + r) e_m \\
            \end{pmatrix} 
            + \phi_{q,m}^{(2)}(T_q)
            \in M_2(\cl S_{m})^+.
$$
By Proposition \ref{thm:characterisations of null space} and Remark \ref{rem:both methods are same}, 
we can choose $m$ to have the additional property that 
$$r e_m^{(2)} - \phi_{q,m}^{(2)}(T_q)  \in M_2(\cl S_{m})^+.$$
It now follows that 
$$
\begin{pmatrix}
                (1 + 2r) e_p & \phi_{k,p}(s_k) \\
                \phi_{k,p}(s_k)^* & (1 + 2r) e_p \\
            \end{pmatrix}
            \in M_2(\cl S_{p})^+, \ \ \ p\geq m,
$$
and hence 
$\norm{\phi_{k,p}(s_k)} \leq 1 + 2r$ for every $p\geq m$. 
Since $r$ is arbitrary, we conclude that 
$\lim_{l \rightarrow \infty}\norm{\phi_{k,l}(s_k)}_l \leq 1$. 
    \end{proof}

\subsection{Inductive limits of C*-algebras} \label{sect:Inductive limit c*alg}

    If $(\{\cl A_k\}_{k \in \bb{N}}, \{\phi_k\}_{k \in \bb{N}})$ is 
    an inductive system in $\catcalg$ then it
    is also an inductive system in $\catos.$ In the following theorem, we compare $\limos \cl A_k$ and $\limcalg \cl A_k$.

    \begin{theorem}\label{p_opsysst}
        Let
        $\cl A_1\stackrel{\phi_1}{\longrightarrow} \cl A_2 \stackrel{\phi_2}{\longrightarrow} \cl A_3 \stackrel{\phi_3}{\longrightarrow} \cl A_4 \stackrel{\phi_4}{\longrightarrow} \cdots$
        be an inductive system in~$\catcalg$, $\cl A_0 = \limos \cl A_k$ and 
        $\cl A = \limcalg \cl A_k$.
       Then $\cl A_0$ is unitally completely order isomorphic to a dense operator subsystem of 
       $\cl A$.
    \end{theorem}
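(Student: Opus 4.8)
The plan is to obtain the embedding from the universal property of the operator system inductive limit, and then to verify directly that it has dense range and reflects positivity at every matrix level. First I would note that the C*-algebra $\cl A=\limcalg\cl A_k$ is in particular an operator system, and that its canonical maps $\pi_{k,\infty}\colon\cl A_k\to\cl A$ are unital $*$-homomorphisms, hence unital completely positive, and satisfy $\pi_{k+1,\infty}\circ\phi_k=\pi_{k,\infty}$. Theorem~\ref{thm:universal property_operator_system2} then produces a unique unital completely positive map $\Phi\colon\cl A_0\to\cl A$ with $\Phi\circ\phi_{k,\infty}=\pi_{k,\infty}$ for all $k$.

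Density comes for free: as $\cl A_0=\bigcup_k\phi_{k,\infty}(\cl A_k)$, the range of $\Phi$ is $\bigcup_k\pi_{k,\infty}(\cl A_k)=\cl A_\infty$, which is dense in $\cl A$ by construction. Being unital and positive, $\Phi$ preserves the involution, so $\Phi(\cl A_0)=\cl A_\infty$ is a unital selfadjoint subspace, i.e.\ an operator subsystem of $\cl A$. It thus remains to show that $\Phi$ is a complete order embedding. Injectivity is immediate: by the norm formula recalled in Subsection~\ref{subs_iil}, $\Phi(\phi_{k,\infty}(a_k))=\pi_{k,\infty}(a_k)$ vanishes iff $\lim_m\norm{\phi_{k,m}(a_k)}_m=0$, which by Proposition~\ref{thm:characterisations of null space} says exactly that $\ddot{\phi}_{k,\infty}(a_k)$ lies in the null space $N$, i.e.\ $\phi_{k,\infty}(a_k)=0$; the same computation at each matrix level (using Remark~\ref{rem:Arch is Arch at every matrix level}) shows $\Phi^{(n)}$ is injective. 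Since $\Phi$ is completely positive, it only remains to prove that $\Phi^{(n)}$ reflects positivity.

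So fix $n$ and suppose $\Phi^{(n)}(S)=\pi_{k,\infty}^{(n)}(S_k)\in M_n(\cl A)^+$ for some $S=\phi_{k,\infty}^{(n)}(S_k)\in M_n(\cl A_0)$ (Lemma~\ref{lem:matrix over inductive limit}); since $\Phi^{(n)}$ is injective and $\Phi^{(n)}(S)$ is selfadjoint, $S$ is selfadjoint, so by Lemma~\ref{lem:self adjoint element archimedeanisation matrix} we may take $S_k\in M_n(\cl A_k)$ selfadjoint. Using the standard identification $M_n(\cl A)\cong\limcalg M_n(\cl A_k)$ together with the C*-inductive-limit norm formula of Subsection~\ref{subs_iil}, and writing $t=\norm{\pi_{k,\infty}^{(n)}(S_k)}$, I would compute
\[
\lim_{m\to\infty}\norm{t\,e_m^{(n)}-\phi_{k,m}^{(n)}(S_k)}_{M_n(\cl A_m)}=\norm{t\,1-\pi_{k,\infty}^{(n)}(S_k)}_{M_n(\cl A)}\le t,
\]
the final inequality being the elementary C*-fact that $0\le b\le t\,1$ forces $\norm{t\,1-b}\le t$. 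Hence for every $r>0$ there is $m$ with $\norm{t\,e_m^{(n)}-\phi_{k,m}^{(n)}(S_k)}_{M_n(\cl A_m)}\le t+r$, which rearranges in the C*-algebra $M_n(\cl A_m)$ to $\phi_{k,m}^{(n)}(S_k)+r\,e_m^{(n)}\in M_n(\cl A_m)^+$. Choosing the auxiliary element in the description~(\ref{eq:def positive matrix in limit}) of $D_n=M_n(\cl A_0)^+$ to be zero, this is precisely the condition for $\phi_{k,\infty}^{(n)}(S_k)\in M_n(\cl A_0)^+$. Thus $\Phi^{(n)}$ reflects positivity for every $n$, so $\Phi$ is a complete order embedding with dense range, as required.

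The step I expect to be most delicate is the transition to matrix level in the displayed computation: it relies on identifying $M_n(\cl A)$ with $\limcalg M_n(\cl A_k)$ in a way compatible with the maps $\pi_{k,m}^{(n)}$, so that the norm formula of Subsection~\ref{subs_iil} applies verbatim to $\pi_{k,\infty}^{(n)}(S_k)$. Once this compatibility is pinned down, the elementary C*-inequality together with the explicit form of the cone $D_n$ in~(\ref{eq:def positive matrix in limit}) makes the positivity-reflection — and hence the complete order embedding — routine.
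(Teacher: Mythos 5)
Your argument is correct, and for the harder direction it takes a genuinely different route from the paper. The paper builds the map $\theta$ as an isometry via Proposition~\ref{prop:norm for inductive limit operator system}, proves complete positivity by hand, and then, to show that positivity is reflected, factors a positive matrix $\pi_{k,\infty}^{(n)}(A_k)=BB^*$ in $M_n(\limcalg\cl A_k)$, approximates $B$ by elements $\pi_{m_p,\infty}^{(n)}(B_{m_p})$ from the finite stages, and uses \cite[Corollary~5.6]{PaulsenTomforde} to convert a small norm of a hermitian difference into an order estimate $re_q^{(n)}+\phi_{k,q}^{(n)}(A_k)\in M_n(\cl A_q)^+$. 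You instead obtain the map and its complete positivity for free from the universal property (Theorem~\ref{thm:universal property_operator_system2}), and replace the $BB^*$ factorisation by the elementary spectral observation that for selfadjoint $a$ in a C*-algebra, $0\le a\le t1$ forces $\norm{t1-a}\le t$, while $\norm{t1-a}\le t+r$ forces $a+r1\ge 0$; pushing this through the norm formula $\norm{\pi_{k,\infty}^{(n)}(S_k)}=\lim_m\norm{\pi_{k,m}^{(n)}(S_k)}$ lands exactly in the description~(\ref{eq:def positive matrix in limit}) of $D_n$ with the auxiliary element taken to be zero. The one ingredient you must supply, and correctly flag, is the identification $M_n(\limcalg\cl A_k)\cong\limcalg M_n(\cl A_k)$ compatibly with the connecting maps, so that the scalar norm formula of Subsection~\ref{subs_iil} applies at each matrix level; this is standard (it is the statement that the C*-inductive limit commutes with tensoring by $M_n$, cf.\ \cite[II.8.2]{Blackadar}) and, once granted, your argument is shorter and avoids both the approximation step and the appeal to the order-norm comparison of \cite{PaulsenTomforde} that the paper relies on. The remaining points (density, injectivity via Proposition~\ref{thm:characterisations of null space}, reduction to selfadjoint representatives via Lemma~\ref{lem:self adjoint element archimedeanisation matrix}) match the paper's bookkeeping and are fine.
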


    \begin{proof}
    Consider the commutative diagram
$$        \begin{CD}
        \cl A_1 @>{\phi_1}>> \cl A_2 @>{\phi_2}>> \cl A_3 @>{\phi_3}>> \cl A_4 @>{\phi_4}>> \cdots \\
        @V{\id}VV @V{\id}VV @V{\id}VV @V{\id}VV \\
        \cl A_1 @>{\phi_1}>> \cl A_2 @>{\phi_2}>> \cl A_3 @>{\phi_3}>> \cl A_4 @>{\phi_4}>> \cdots.
    \end{CD}
    $$
By Proposition \ref{prop:norm for inductive limit operator system} and 
the definition of the inductive limit in $\catcalg$, 
there exists an isometric linear map $\theta : \cl A_0 \to \cl A$ with dense range. 

We show that $\theta$ is a complete order isomorphism onto its image. 
        Suppose that $A_k\in M_n(\cl A_k)$ is such that 
        $\phi_{k,\infty}^{(n)}(A_k) \in M_n(\limos \cl A_k)^+$. 
        Fix $r>0$. 
        There exist $l > k$, $m > l$ and $B_l\in M_n(\cl A_l)$ such that 
        $\ddot{\phi}_{l,\infty}^{(n)}(B_l) \in M_n(N)$ and 
        $re_m^{(n)} + \phi_{k,m}^{(n)}(A_k) + \phi_{l,m}^{(n)}(B_l) \in M_n(\cl A_m)^+$.  
        Since $\phi_{m,\infty}$ is a unital 
        *-homomorphism and therefore a unital completely positive map, 
        we have that
        \[
            r \phi_{k,\infty}^{(n)}(e_k) + \phi_{k,\infty}^{(n)}(A_k) = \phi_{m,\infty}^{(n)}\big(r e_m^{(n)} + \phi_{k, m}^{(n)}(A_k) + \phi_{l,m}^{(n)}(B_l)\big) \in M_n(\limcalg \cl A_k)^+.
        \]
        Since $M_n(\limcalg \cl A_k)$ is an AOU space, $\phi_{k,\infty}^{(n)}(A_k) \in M_n(\limcalg \cl A_k)^+$.

       Now suppose that $\phi_{k,\infty}^{(n)}(A_k) \in M_n(\limcalg \cl A_k)^+$,
       where $A_k\in M_n(\cl A_k)$. 
       It follows that $\phi_{k,\infty}^{(n)}(A_k)= BB^*$ where $B \in M_n(\limcalg \cl A_k)$. 
       Assume $B = \lim_{p \rightarrow \infty} B^p$ where, for all $p \in \bb{N}$, 
       $B^p = \phi_{m_p, \infty}^{(n)} (B_{m_p})$ for some $m_p \in \bb{N}$ and some 
       $B_{m_p} \in \cl A_{m_p}$. 
       We may assume, without loss of generality, that $A_k \in M_n(\cl A_k)_h$ and $m_p > k$ for all $p \in \bb{N}$. 
        For all $r > 0$, there exists $p_0 \in \bb{N}$ such that
        \[
            \bignorm{ \phi_{k,\infty}^{(n)}(A_k) - \phi_{m_p, \infty}^{(n)} (B_{m_p}B_{m_p}^*)}_{M_n(\limcalg \cl A_k)} < r, \ \ \ p \geq p_0.
        \]
        Note that
        \[
            \begin{split}
            \bignorm{\phi_{k,\infty}^{(n)}(A_k) &- \phi_{m_p, \infty}^{(n)} (B_{m_p}B_{m_p}^*)}_{M_n(\limcalg \cl A_k)} \\
            &= \bignorm{\phi_{m_p,\infty}^{(n)}(\phi_{k,m_p}^{(n)}(A_k) - B_{m_p}B_{m_p}^*)}_{M_n(\limcalg \cl A_k)}\\
            &= \lim_{q \rightarrow \infty} \bignorm{\phi_{m_p, q}^{(n)}(\phi_{k,m_p}^{(n)}(A_k) - B_{m_p}B_{m_p}^*)}_{M_n(\cl A_q).}
            \end{split}
        \]
        Fix $r > 0$ and choose $p, q \in \bb{N}$ such that
        \[
            \bignorm{\phi_{m_{p}, q}^{(n)}(\phi_{k,m_{p}}^{(n)}(A_k) - B_{m_p}B_{m_p}^*)}_{M_n(\cl A_q)} < \frac{r}{2}.
        \]
        By \cite[Corollary~5.6]{PaulsenTomforde}, the norm $\norm{\cdot}_{M_n(\cl A_q)}$ agrees with the order norm on $M_n(\cl A_q)_h$; thus, 
        \[
            \frac{r}{2}e_q^{(n)} + \phi_{m_{p}, q}^{(n)}\big(\phi_{k,m_{p}}^{(n)}(A_k) - B_{m_p}B_{m_p}^*\big) \in M_n(\cl A_q)^+.
        \]
        Since
\begin{eqnarray*}
           \frac{r}{2}e_q^{(n)}  + \phi_{m_{p}, q}^{(n)}\big(\phi_{k,m_{p}}^{(n)}(A_k) - B_{m_p}B_{m_p}^*\big)
           & = & 
           \big(re_q^{(n)} + \phi_{k, q}^{(n)}(A_k)\big)\\
            & - & \big(\frac{r}{2}e_q^{(n)}
           + \phi_{m_{p}, q}^{(n)}(B_{m_p}B_{m_p}^*)\big)
\end{eqnarray*}
        and $\frac{r}{2}e_q^{(n)} + \phi_{m_{p}, q}^{(n)}(B_{m_p}B_{m_p}^*) \in M_n(\cl A_q)^+$, we have that 
        $$re_q^{(n)} + \phi_{k, q}^{(n)}(A_k) \in M_n(\cl A_q)^+.$$
        Therefore
        \[
            r\phi_{k,\infty}^{(n)}\big(e_{k}^{(n)}\big) + \phi_{k,\infty}^{(n)}(A_k) =  \phi_{q,\infty}^{(n)}\big(re_q^{(n)} + \phi_{k, q}^{(n)}(A_k))  \in M_n(\limos \cl A_k \big)^+.
        \]
        Since this holds for all $r > 0$, we have that $\phi_{k,\infty}^{(n)}(A_k) \in  M_n(\limos \cl A_k)^+$.
Thus, $\theta$ is a unital complete order isomorphism onto its image.
    \end{proof}

    \begin{corollary}\label{l_invdir}
        Let $X_1\stackrel{\alpha_1}{\longleftarrow} X_2 \stackrel{\alpha_2}{\longleftarrow} X_3 \stackrel{\alpha_3}{\longleftarrow} X_4 \stackrel{\alpha_4}{\longleftarrow} \cdots$
        be an inverse system in $\cattop$ such that $X_k$ is compact and Hausdorff, $k\in \bb{N}$. 
        Let $C(X_1) \stackrel{\phi_1}{\longrightarrow} C(X_2) \stackrel{\phi_2}{\longrightarrow} C(X_3) \stackrel{\phi_3}{\longrightarrow} C(X_4) \stackrel{\phi_4}{\longrightarrow} \cdots$
        be the canonically induced inductive system in $\catcalg$.
        Then there exists a unital completely order isomorphic embedding from $\limos C(X_k)$ into $C(\limtop X_k)$.
    \end{corollary}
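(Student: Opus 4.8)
The plan is to deduce the statement directly from Theorem~\ref{p_opsysst}, applied to the C*-algebras $C(X_k)$, combined with the identification of the C*-algebraic inductive limit provided in Remark~\ref{ex:InductiveLimitC(X)}. First I would set $\cl A_k = C(X_k)$ and note that the hypothesised induced inductive system $C(X_1)\stackrel{\phi_1}{\longrightarrow} C(X_2)\stackrel{\phi_2}{\longrightarrow}\cdots$ in $\catcalg$ is precisely an inductive system of unital C*-algebras of the kind considered in Theorem~\ref{p_opsysst}. Applying that theorem with these data yields a unital complete order isomorphism $\theta$ from $\limos C(X_k)$ onto a dense operator subsystem of $\limcalg C(X_k)$; in particular $\theta$ is a unital completely order isomorphic embedding of $\limos C(X_k)$ into $\limcalg C(X_k)$.

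Next I would invoke Remark~\ref{ex:InductiveLimitC(X)}, which asserts that $\limcalg C(X_k)$ is unitally *-isomorphic to $C(\limtop X_k)$; denote such a *-isomorphism by $\rho : \limcalg C(X_k) \to C(\limtop X_k)$. Since every unital *-homomorphism between unital C*-algebras is unital and completely positive, and the inverse of a *-isomorphism is again a *-isomorphism, the map $\rho$ is in particular a unital complete order isomorphism. Composing, $\rho\circ\theta : \limos C(X_k) \to C(\limtop X_k)$ is a unital, injective, completely positive map whose inverse (on its range) is completely positive, that is, a unital completely order isomorphic embedding, exactly as required. Its image $\rho(\theta(\limos C(X_k)))$ is a dense operator subsystem of $C(\limtop X_k)$, since $\theta$ has dense range and $\rho$ is a surjective isometry.

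There is no serious obstacle here: the corollary is essentially a concatenation of the two cited results, and the proof reduces to checking that the two maps compose to give an object of the right type. The only point that warrants attention is that Remark~\ref{ex:InductiveLimitC(X)} is phrased at the level of C*-algebras, so one must observe that a unital *-isomorphism automatically respects the full matrix order structure (being completely positive with completely positive inverse); this is what guarantees that $\rho\circ\theta$ remains a complete order embedding rather than merely a unital complete isometry, and it is what carries the conclusion of Theorem~\ref{p_opsysst} across the identification of $\limcalg C(X_k)$ with $C(\limtop X_k)$.
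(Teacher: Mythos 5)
Your proposal is correct and follows exactly the paper's own route: the paper proves this corollary by citing Theorem~\ref{p_opsysst} together with Remark~\ref{ex:InductiveLimitC(X)}, which is precisely the composition $\rho\circ\theta$ you construct. Your additional observation that the unital *-isomorphism $\rho$ is automatically a complete order isomorphism is a worthwhile detail that the paper leaves implicit.
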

    \begin{proof}
        This follows from Proposition~\ref{p_opsysst} and Remark~\ref{ex:InductiveLimitC(X)}.
    \end{proof}

\subsection{Inductive limits of $\omin$ and $\omax$} \label{sect:omin and omax}

    Let $V_1$ and $V_2$ be AOU spaces and 
    $\phi : V_1 \rightarrow V_2$ be a positive map. 
    It follows from \cite[Theorem~3.4]{PaulsenTodorovTomforde} that 
    $\phi$ is a completely positive map from $\omin(V_1)$ into $\omin(V_2)$ and, from 
    \cite[Theorem~3.22]{PaulsenTodorovTomforde}, that
    $\phi$ is a completely positive map from $\omax(V_1)$ into $\omax(V_2)$.
Therefore, given an inductive system
    $$V_1\stackrel{\phi_1}{\longrightarrow} V_2 \stackrel{\phi_2}{\longrightarrow} V_3 \stackrel{\phi_3}{\longrightarrow} V_4 \stackrel{\phi_4}{\longrightarrow} \cdots$$
    in $\cataou$, we have associated inductive systems
    $$\omin(V_1)\stackrel{\phi_1}{\longrightarrow} \omin(V_2) \stackrel{\phi_2}{\longrightarrow} \omin(V_3) \stackrel{\phi_3}{\longrightarrow} \omin(V_4) \stackrel{\phi_4}{\longrightarrow} \cdots$$
    and
    $$\omax(V_1)\stackrel{\phi_1}{\longrightarrow} \omax(V_2) \stackrel{\phi_2}{\longrightarrow} \omax(V_3) \stackrel{\phi_3}{\longrightarrow} \omax(V_4) \stackrel{\phi_4}{\longrightarrow} \cdots$$
in $\catos$. 
In this section we show that the inductive limit 
intertwines $\omax$ and that it intertwines $\omin$ when the connecting maps are order embeddings.

    \begin{lemma} \label{lem: order iso is complete order iso on omin}
      Let $V$ and $W$ be AOU spaces and let $\phi: V \rightarrow W$ be a unital order embedding. 
      Then $\phi: \omin(V) \rightarrow \omin(W)$ is a unital complete order embedding.
          \end{lemma}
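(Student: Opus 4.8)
The plan is to verify, for each $n\in\bb{N}$, the equivalence $(x_{i,j})_{i,j}\in C_n^{\min}(V)$ if and only if $(\phi(x_{i,j}))_{i,j}\in C_n^{\min}(W)$; unitality of $\phi$ is given, so this equivalence is exactly what is needed for $\phi$ to be a unital complete order embedding. The forward implication is immediate: a unital order embedding is in particular unital and positive, so by \cite[Theorem~3.4]{PaulsenTodorovTomforde} (recalled at the start of this subsection) $\phi$ is completely positive as a map $\omin(V)\to\omin(W)$, whence $\phi^{(n)}(C_n^{\min}(V))\subseteq C_n^{\min}(W)$ for every $n$.

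For the reverse implication I would pass to the state-space description afforded by Theorem~\ref{thm:characterisation of omin}. Suppose $(\phi(x_{i,j}))_{i,j}\in C_n^{\min}(W)$; then for every $g\in S(W)$ the scalar matrix $(\angles{g}{\phi(x_{i,j})})_{i,j}=(\angles{g\circ\phi}{x_{i,j}})_{i,j}$ lies in $M_n^+$. To conclude that $(x_{i,j})_{i,j}\in C_n^{\min}(V)$ it suffices, again by Theorem~\ref{thm:characterisation of omin}, to show that $(\angles{h}{x_{i,j}})_{i,j}\in M_n^+$ for every $h\in S(V)$, and this will follow once I know that every state $h$ on $V$ is of the form $g\circ\phi$ for some $g\in S(W)$.

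The heart of the argument, and the main obstacle, is precisely this lifting of states, which is where the order-embedding hypothesis is used in an essential way (a merely positive $\phi$ would not suffice). Since $\phi$ is a unital order isomorphism onto its image, $\phi(V)$ is a linear subspace of $W$ containing $e_W=\phi(e_V)$, and $\phi:V\to\phi(V)$ is a unital order isomorphism carrying $V^+$ onto $\phi(V)\cap W^+$. Consequently $h\circ\phi^{-1}$ is a state on $\phi(V)$ equipped with the cone $\phi(V)\cap W^+$: it is unital because $h(\phi^{-1}(e_W))=h(e_V)=1$, and positive because any $w\in\phi(V)\cap W^+$ has the form $w=\phi(v)$ with $\phi(v)\in W^+$, forcing $v\in V^+$ and hence $(h\circ\phi^{-1})(w)=h(v)\geq 0$. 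Applying Lemma~\ref{l_exten} to the subspace $\phi(V)\subseteq W$ then produces $g\in S(W)$ with $g|_{\phi(V)}=h\circ\phi^{-1}$, so that $g\circ\phi=h$ on $V$.

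Combining the two parts completes the argument: for every $h\in S(V)$ we may write $h=g\circ\phi$ with $g\in S(W)$, and the hypothesis gives $(\angles{h}{x_{i,j}})_{i,j}=(\angles{g\circ\phi}{x_{i,j}})_{i,j}\in M_n^+$, whence $(x_{i,j})_{i,j}\in C_n^{\min}(V)$ by Theorem~\ref{thm:characterisation of omin}. The only point demanding a little care is the verification that $\phi(V)$ inherits a genuine AOU structure so that Lemma~\ref{l_exten} applies, but this is exactly the content of that lemma once one observes $e_W\in\phi(V)$ and that the relevant cone is $\phi(V)\cap W^+$.
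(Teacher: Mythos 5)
Your proof is correct and follows essentially the same route as the paper's: the forward direction is the completely positivity of positive maps for $\omin$, and the reverse direction extends each state of $V$ to a state of $W$ via Lemma~\ref{l_exten} applied to the subspace $\phi(V)$, then invokes Theorem~\ref{thm:characterisation of omin}. The paper's proof is just a terser version of the same argument, leaving the verification that $h\circ\phi^{-1}$ is a state on $\phi(V)$ implicit.
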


      \begin{proof}
       Suppose that $\phi^{(n)}(X) \in M_n(\omin(W))^+$ for some $X =(x_{i,j})_{i,j} \in M_n(V)$,
       and let $g \in S(V)$.
       By Lemma \ref{l_exten}, there exists  
       $\widetilde{g} \in S(W)$ such that $\widetilde{g} \circ \phi = g$. 
       It follows that 
       $$(\angles{g}{x_{i,j}})_{i,j} = (\angles{\widetilde{g}}{\phi(x_{i,j})})_{i,j}  \in M_n^+.$$ 
       By Theorem~\ref{thm:characterisation of omin}, $X \in M_n(\omin(V))^+$.
      \end{proof}

    \begin{theorem}\label{th_ominin}
        Let\index{$\omin$}
        $V_1\stackrel{\phi_1}{\longrightarrow} V_2 \stackrel{\phi_2}{\longrightarrow} V_3 \stackrel{\phi_3}{\longrightarrow} V_4 \stackrel{\phi_4}{\longrightarrow} \cdots$
        be an inductive system in $\cataou$ such that each $\phi_k$ is a unital order embedding. 
        Then $\omin(\limaou V_k)$ is unitally completely order isomorphic to $\limos \omin(V_k)$.
    \end{theorem}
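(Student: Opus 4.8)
The plan is to realize both operator systems as the \emph{same} operator subsystem of $C(S(\limaou V_k))$, exploiting the fact (recorded after Theorem~\ref{thm:characterisation of omin}) that $\omin(V)$ is precisely the operator system structure induced on $V$ by the canonical inclusion $V \hookrightarrow C(S(V))$. First I would note that, by Lemma~\ref{lem: order iso is complete order iso on omin}, each connecting map $\phi_k \colon \omin(V_k) \to \omin(V_{k+1})$ is a unital complete order embedding, so $(\{\omin(V_k)\}, \{\phi_k\})$ is an inductive system in $\catos$ whose connecting maps are complete order embeddings. Since the underlying AOU space of $\omin(V_k)$ is $V_k$, Remark~\ref{rem:both methods are same} identifies the underlying AOU space of $\limos \omin(V_k)$ with $\limaou V_k$. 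Thus both $\limos \omin(V_k)$ and $\omin(\limaou V_k)$ are operator system structures on the common AOU space $\limaou V_k$, with the same unit and first-level cone, so everything reduces to comparing their matrix orderings.

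Next I would build the comparison inside $C(S(\limaou V_k))$. For each $k$ the inclusion $\iota_k \colon \omin(V_k) \hookrightarrow C(S(V_k))$ is a unital complete order embedding, and the dual $\phi_k' \colon S(V_{k+1}) \to S(V_k)$ is continuous and \emph{surjective}: indeed $\phi_k$ is a unital order isomorphism onto $\phi_k(V_k)$, so by the extension statement of Lemma~\ref{l_exten} every state of $V_k$ lifts to a state of $V_{k+1}$. Consequently the induced $*$-homomorphisms $C(\phi_k') \colon C(S(V_k)) \to C(S(V_{k+1}))$ are unital and injective, hence complete order embeddings, and the squares $\iota_{k+1}\circ \phi_k = C(\phi_k')\circ \iota_k$ commute. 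Since all horizontal and vertical maps are unital complete order embeddings, Remark~\ref{rem:all maps coi} yields a unital complete order embedding $\theta \colon \limos \omin(V_k) \to \limos C(S(V_k))$. Composing $\theta$ with the unital complete order embedding $\limos C(S(V_k)) \hookrightarrow C(\limtop S(V_k))$ of Corollary~\ref{l_invdir}, and with the homeomorphism $\limtop S(V_k) \cong S(\limaou V_k)$ of Proposition~\ref{rem:state space of AOU space}, produces a unital complete order embedding $\beta \colon \limos \omin(V_k) \to C(S(\limaou V_k))$.

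Finally I would compare $\beta$ with the canonical embedding $\alpha \colon \omin(\limaou V_k) \hookrightarrow C(S(\limaou V_k))$ and show they agree. On a generator $\phi_{k,\infty}(x_k)$, unwinding the three identifications of the previous paragraph (the connecting maps of the system $C(S(V_k))$ are pullbacks along $\phi_k'$, and the map of Remark~\ref{ex:InductiveLimitC(X)} sends a class $\psi_{k,\infty}(h)$ to $h$ composed with the projection $\limtop S(V_k) \to S(V_k)$) shows that $\beta(\phi_{k,\infty}(x_k))$ is the function $f \mapsto \langle f, \phi_{k,\infty}(x_k)\rangle$ on $S(\limaou V_k)$, which is exactly $\alpha(\phi_{k,\infty}(x_k))$. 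As $\limos \omin(V_k) = \bigcup_k \phi_{k,\infty}(\omin(V_k))$ and $\omin(\limaou V_k) = \bigcup_k \phi_{k,\infty}(V_k)$ share the same underlying set, the maps $\alpha$ and $\beta$ coincide; since each is a complete order embedding onto its (common) image, $\alpha^{-1}\circ\beta$ is the desired unital complete order isomorphism $\limos \omin(V_k) \to \omin(\limaou V_k)$. I expect the one genuinely delicate point to be the bookkeeping in this last step — tracking the homeomorphism $S(\limaou V_k)\cong\limtop S(V_k)$ together with the pullback description of the $C(S(V_k))$-system carefully enough to see that $\alpha$ and $\beta$ literally agree on generators; the surjectivity of each $\phi_k'$, needed to make the bottom row consist of embeddings, is the other step requiring a small separate argument.
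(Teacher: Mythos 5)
Your proposal is correct and follows essentially the same route as the paper: embed $\limos \omin(V_k)$ into $\limos C(S(V_k))$ via Remark~\ref{rem:all maps coi}, pass to $C(\limtop S(V_k)) \cong C(S(\limaou V_k))$ via Corollary~\ref{l_invdir} and Proposition~\ref{rem:state space of AOU space}, and identify the image with $\omin(\limaou V_k)$ by Theorem~\ref{thm:characterisation of omin}. Your explicit tracking of the generators in the last step (and the surjectivity of each $\phi_k'$ via Lemma~\ref{l_exten}) fills in details the paper leaves implicit, but the argument is the same.
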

    
    \begin{proof}
   Let
        \begin{equation*}\label{eq_omin1}
            S(V_1)\stackrel{\phi_1'}{\longleftarrow} S(V_2) \stackrel{\phi_2'}{\longleftarrow} S(V_3) \stackrel{\phi_3'}{\longleftarrow} S(V_4) \stackrel{\phi_4'}{\longleftarrow} \cdots
        \end{equation*}
        be the corresponding inverse system in $\cattop$.
        Note that each $\phi_k'$ is surjective.
        By Proposition~\ref{rem:state space of AOU space}, there exists a homeomorphism
            $\alpha: S(\limaou V_k) \rightarrow \limtop S(V_k).$
            Let $\hat{\alpha}: C(\limtop S(V_k)) \rightarrow C(S(\limaou V_k))$ be the 
       unital *-isomorphism induced  by $\alpha$.
            
Consider, in addition, the induced inductive system in $\catcalg$ with *-isomorphic embeddings
        \[
            C(S(V_1))\stackrel{\alpha_1}{\longrightarrow} C(S(V_2)) \stackrel{\alpha_2}{\longrightarrow} C(S(V_3)) \stackrel{\alpha_3}{\longrightarrow} C(S(V_4)) \stackrel{\alpha_4}{\longrightarrow} \cdots.
        \]
By Corollary~\ref{l_invdir}, there exists a unital complete order embedding
            $$\beta: \limos C(S(V_k)) \rightarrow C(\limtop S(V_k)).$$
        By Theorem~\ref{thm:characterisation of omin}, for each $k \in \mathbb{N}$ the natural inclusion $\iota_k : \omin(V_k) \rightarrow C(S(V_k))$ is a unital completely order isomorphic embedding.
        
The diagram
        \[
            \begin{CD}
                \omin(V_1) @>{\phi_1}>> \omin(V_2) @>{\phi_2}>> \omin(V_3) @>{\phi_3}>> \cdots \\
                @V{\iota_1}VV @V{\iota_2}VV @V{\iota_3}VV \\
                C(S(V_1))
                 @>{\alpha_1}>> C(S(V_2)) @>{\alpha_2}>> C(S(V_3)) @>{\alpha_3}>> \cdots
            \end{CD}
        \]
commutes since ${\alpha_k}|_{\omin(V_k)} = \phi_k$. By Remark~\ref{rem:all maps coi}, there exists a 
unital complete order embedding
        $$\iota: \limos \omin(V_k) \rightarrow \limos C(S(V_k)).$$
        Therefore
        \[
             \hat{\alpha} \circ \beta \circ \iota : \limos \omin(V_k) \rightarrow C(S(\limaou V_k))
        \]
        is a unital completely order isomorphic embedding. Thus, 
        $ \limos \omin(V_k)$ is completely order isomorphic to an operator subsystem $\cl T$ of  
        the C*-algebra $C(S(\limaou V_k))$. 
        By Theorem~\ref{thm:characterisation of omin}, $\cl T = \omin (\limaou V_k)$.
    \end{proof}

    \begin{theorem}\label{th_omax}
        Let\index{$\omax$}
        $V_1\stackrel{\phi_1}{\longrightarrow} V_2 \stackrel{\phi_2}{\longrightarrow} V_3 \stackrel{\phi_3}{\longrightarrow} V_4 \stackrel{\phi_4}{\longrightarrow} \cdots$
        be an inductive system in $\cataou$. Then $\omax(\limaou V_k)$ is unitally completely order isomorphic to $\limos \omax(V_k)$.
    \end{theorem}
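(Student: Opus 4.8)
The plan is to exploit the fact, recorded in Subsection~\ref{sec:pre.omin and omax}, that $\omax : \cataou \to \catos$ is \emph{left adjoint} to the forgetful functor $\mathbf{F} : \catos \to \cataou$: since left adjoints preserve colimits and an inductive limit is a (filtered) colimit, one expects $\omax$ to commute with it, and the absence of any hypothesis on the connecting maps (in contrast to the $\omin$ case, where $\omin$ is only a \emph{right} adjoint) is explained exactly by this. I would make the statement concrete by constructing unital completely positive maps in both directions and showing they are mutually inverse; the only categorical input needed is that an isomorphism in $\catos$ is precisely a unital complete order isomorphism, so the resulting bijection is automatically of the required type. Write $V_\infty = \limaou V_k$ with canonical unital positive maps $\phi_{k,\infty} : V_k \to V_\infty$ satisfying $\phi_{k+1,\infty}\circ\phi_k = \phi_{k,\infty}$, and observe that applying $\omax$ to the connecting maps yields an inductive system $\omax(V_1)\to\omax(V_2)\to\cdots$ in $\catos$; denote its limit $\limos\omax(V_k)$ with structure maps $\psi_{k,\infty}:\omax(V_k)\to\limos\omax(V_k)$.

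First I would construct $\Theta:\limos\omax(V_k)\to\omax(V_\infty)$. Applying the functor $\omax$ to each $\phi_{k,\infty}$ produces unital completely positive maps $\omax(\phi_{k,\infty}):\omax(V_k)\to\omax(V_\infty)$, which by functoriality and the relation $\phi_{k+1,\infty}\circ\phi_k=\phi_{k,\infty}$ satisfy $\omax(\phi_{k+1,\infty})\circ\phi_k = \omax(\phi_{k,\infty})$. The universal property of the operator system inductive limit (Theorem~\ref{thm:universal property_operator_system2}) then yields a unique unital completely positive $\Theta$ with $\Theta\circ\psi_{k,\infty}=\omax(\phi_{k,\infty})$ for all $k$.

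For the reverse map I would first produce, at the level of AOU spaces, a unital positive $g:V_\infty\to\mathbf{F}(\limos\omax(V_k))$. The maps $\mathbf{F}(\psi_{k,\infty}):V_k\to\mathbf{F}(\limos\omax(V_k))$ are compatible with the $\phi_k$, so the universal property of $\limaou$ (Theorem~\ref{thm:universal property AOU 2}) gives a unique such $g$ with $g\circ\phi_{k,\infty}=\mathbf{F}(\psi_{k,\infty})$. Since $V_\infty$ is an AOU space, its states separate points, so the null space arising in the Archimedeanisation that defines $\omax(V_\infty)$ is trivial; consequently the underlying $*$-vector space of $\omax(V_\infty)$ is exactly $V_\infty$ and the adjunction unit is the identity. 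The left adjointness of $\omax$ then promotes $g$ to a unique unital completely positive map $\Xi:\omax(V_\infty)\to\limos\omax(V_k)$ whose underlying map is $g$.

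Finally I would verify that $\Theta$ and $\Xi$ are mutually inverse by evaluating on the images of the structure maps, using the union descriptions $V_\infty=\bigcup_k\phi_{k,\infty}(V_k)$ and $\limos\omax(V_k)=\bigcup_k\psi_{k,\infty}(\omax(V_k))$. Tracing underlying maps, for $x\in\omax(V_k)$ one gets $\Theta(\psi_{k,\infty}(x))=\omax(\phi_{k,\infty})(x)$, with underlying value $\phi_{k,\infty}(x)$, whence $\Xi\circ\Theta\circ\psi_{k,\infty}=\psi_{k,\infty}$, so $\Xi\circ\Theta=\id$; similarly $\Theta\circ\Xi$ has underlying map $\id_{V_\infty}$, and by the uniqueness clause of the $\omax$ universal property $\Theta\circ\Xi=\id$. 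Thus $\Theta$ is a unital completely positive bijection with unital completely positive inverse, i.e.\ a unital complete order isomorphism. I expect the main obstacle to be purely organisational: keeping the several distinct limit constructions and the two universal properties straight, and in particular checking carefully that the underlying space of $\omax(V_\infty)$ really is $V_\infty$, so that the adjunction unit is an identity and the underlying-map computations above are legitimate.
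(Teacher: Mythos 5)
Your proposal is correct and follows essentially the same route as the paper's own proof: the paper likewise obtains the map $\limos\omax(V_k)\to\omax(\limaou V_k)$ from the universal property of the operator system inductive limit applied to the completely positive maps induced by $\phi_{k,\infty}$, obtains the reverse map from the universal property of $\omax$ applied to the natural unital positive map $\limaou V_k\to\limos\omax(V_k)$, and checks the two are mutually inverse on the images of the structure maps. Your care about the underlying space of $\omax(V_\infty)$ being $V_\infty$ itself is justified and is implicit in the paper's definition of an operator system structure on an AOU space.
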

    \begin{proof}
        Recall that $\phi_{k,\infty}: V_k \rightarrow \limaou V_k$ is a unital positive map for all $k \in \bb{N}$. 
        By \cite[Theorem~3.22]{PaulsenTodorovTomforde}, the 
        map $\hat{\phi}_{k,\infty}$ that formally coincides with $\phi_{k,\infty}$, 
        but considered from $\omax(V_k)$ into $\omax(\limaou V_k)$, 
        is unital and completely positive, $k \in \bb{N}$. 
        Denote temporarily by $\tilde{\phi}_{k,\infty}$ the canonical map from 
        $\omax(V_k)$ into $\limos \omax(V_k)$.
        By Theorem~\ref{thm:universal property_operator_system2}, there exists 
        a unique unital completely positive map $\iota: \limos \omax(V_k) \rightarrow \omax(\limaou V_k)$ 
        such that $\iota \circ \tilde{\phi}_{k,\infty} = \hat{\phi}_{k,\infty}$. 
        
        Note that the natural map 
        $j : \limaou V_k \rightarrow \limos \omax(V_k)$ is a unital positive map. 
        By \cite[Theorem~3.22]{PaulsenTodorovTomforde}, 
        $j : \omax(\limaou V_k) \rightarrow \limos \omax(V_k)$ 
        is a unital completely positive map. 
        Finally note that 
        $$ j \circ \iota \circ \tilde{\phi}_{k,\infty} = j \circ \hat{\phi}_{k,\infty} = \tilde{\phi}_{k,\infty}$$ and 
        $$\iota \circ j \circ \hat{\phi}_{k,\infty} = \iota \circ \tilde{\phi}_{k,\infty} = \hat{\phi}_{k,\infty}.$$
        It follows that $\iota$ is a complete order isomorphism.
    \end{proof}

    \begin{remark}
    {\rm 
        Let $\omax: {\cataou} \rightarrow {\catos}$ be the functor sending
        $V$ to $\omax(V)$. As pointed out 
        in Section~\ref{sec:pre.omin and omax}, $\omax$ is a left adjoint to the forgetful functor ${\bf F}: \catos \rightarrow \cataou$. Thus, 
        Theorem~\ref{th_omax} is a consequence of 
        the well-known fact that left adjoints commute with colimits \cite{MacLane}.
        We have decided to include a proof relying on the features of the considered categories since 
        it clarifies the concrete workings in the case of interest. }
    \end{remark}

\subsection{Inductive limits of universal C*-algebras} \label{sect: universal c*Alg}

      In this section, we consider the universal C*-algebra of an inductive limit operator system;
     we show in Theorem \ref{thm:Cu(S)} that $C_u^*$ commutes with $\limos$ 
     when the connecting maps are complete order embeddings. 
     The result is well-known in the case of closed operator systems (see \cite[Proposition 2.4]{LuthraKumar}).
     We have decided to include complete arguments in order to keep the exposition self-contained.  
  
The following lemma was established in \cite{KirchbergWassermann}.

      \begin{lemma}
      [\cite{KirchbergWassermann}]\label{lem:op system coi then univer calg *iso}
  Let $\cl S$ and $\cl T$ be operator systems with universal C*-algebras $(C_u^*(\cl S), \iota_{\cl S})$ and $(C_u^*(\cl T), \iota_{\cl T})$,
  respectively, and let $\phi:\cl S \rightarrow \cl T$ be a unital complete order embedding. 
  Then the 
  *-homomorphism $\widetilde{\phi}: C_u^*(\cl S) \rightarrow C_u^*(\cl T)$ 
  with the property that $\widetilde{\phi} \circ \iota_{\cl S} = \iota_{\cl T} \circ \phi$ is injective. 
    \end{lemma}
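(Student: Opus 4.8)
The plan is to produce a left inverse for $\widetilde{\phi}$ at the level of $*$-homomorphisms, which immediately forces $\widetilde{\phi}$ to be injective. First I would fix a faithful unital representation $\pi : C_u^*(\cl S) \to \BH$ of $C_u^*(\cl S)$ on some Hilbert space $\hilb$. It then suffices to construct a $*$-homomorphism $\sigma : C_u^*(\cl T) \to \BH$ with $\sigma \circ \widetilde{\phi} = \pi$; since $\pi$ is injective, this would yield the injectivity of $\widetilde{\phi}$.

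To build such a $\sigma$, I would exploit that $\phi$ is a unital complete order embedding, so that $\phi$ is a unital complete order isomorphism onto the operator subsystem $\phi(\cl S) \subseteq \cl T$, and hence its inverse $\phi^{-1} : \phi(\cl S) \to \cl S$ is a unital completely positive map. Composing, the map $\pi \circ \iota_{\cl S} \circ \phi^{-1} : \phi(\cl S) \to \BH$ is unital and completely positive. The key step is to extend this map from the subsystem $\phi(\cl S)$ to the whole of $\cl T$: by Arveson's extension theorem there is a unital completely positive map $\rho : \cl T \to \BH$ with $\rho|_{\phi(\cl S)} = \pi \circ \iota_{\cl S} \circ \phi^{-1}$. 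In particular $\rho \circ \phi = \pi \circ \iota_{\cl S}$.

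Now I would invoke the universal property of the universal C*-algebra (Proposition~\ref{prop:universal prop universal calg}) applied to the unital completely positive map $\rho : \cl T \to \BH$. This yields a $*$-homomorphism $\sigma : C_u^*(\cl T) \to \BH$ with $\sigma \circ \iota_{\cl T} = \rho$. On the generating set $\iota_{\cl S}(\cl S)$ of $C_u^*(\cl S)$ one then computes
\[
\sigma \circ \widetilde{\phi} \circ \iota_{\cl S} = \sigma \circ \iota_{\cl T} \circ \phi = \rho \circ \phi = \pi \circ \iota_{\cl S}.
\]
Since $\sigma \circ \widetilde{\phi}$ and $\pi$ are $*$-homomorphisms that agree on $\iota_{\cl S}(\cl S)$, and $\iota_{\cl S}(\cl S)$ generates $C_u^*(\cl S)$ as a C*-algebra, they coincide; thus $\sigma \circ \widetilde{\phi} = \pi$, and faithfulness of $\pi$ gives the injectivity of $\widetilde{\phi}$.

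The main obstacle is the extension step: the inverse map $\phi^{-1}$ is only defined on the subsystem $\phi(\cl S)$, and it must be propagated to a completely positive map on all of $\cl T$. This is precisely where the injectivity of $\BH$ as an operator system (Arveson's extension theorem) is indispensable; once the extension $\rho$ is in hand, the remainder is a formal consequence of the universal property of $C_u^*(\cdot)$ together with the fact that $\iota_{\cl S}(\cl S)$ generates $C_u^*(\cl S)$.
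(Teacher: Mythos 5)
Your proof is correct. The paper does not prove this lemma itself but cites Kirchberg--Wassermann, and your argument — invert $\phi$ on its range, push forward a faithful representation of $C_u^*(\cl S)$ via Arveson's extension theorem to a unital completely positive map $\rho$ on all of $\cl T$, and then use the universal property of $C_u^*(\cl T)$ together with the fact that $\iota_{\cl S}(\cl S)$ generates $C_u^*(\cl S)$ to manufacture a left inverse for $\widetilde{\phi}$ — is precisely the standard argument behind that cited result, so there is nothing to add beyond noting that the extension step for a possibly non-complete $\cl T$ is justified by first embedding $\cl T$ completely order isomorphically into some $\cl B(\cl K)$ via Choi--Effros.
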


Clealry, if 
$\cl S_1\stackrel{\phi_1}{\longrightarrow} \cl S_2 \stackrel{\phi_2}{\longrightarrow} \cl S_3 \stackrel{\phi_3}{\longrightarrow} \cl S_4 \stackrel{\phi_4}{\longrightarrow} \cdots$
        is an inductive system in $\catos$ then
        \begin{equation*} \label{eq:univ1}
            C^*_u(\cl S_1)
            \stackrel{\widetilde{\phi}_1}{\longrightarrow}
            C^*_u(\cl S_2)
            \stackrel{\widetilde{\phi}_2}{\longrightarrow}
            C^*_u(\cl S_3)
            \stackrel{\widetilde{\phi}_3}{\longrightarrow}
            C^*_u(\cl S_4)
            \stackrel{\widetilde{\phi}_4}{\longrightarrow}
            \cdots
        \end{equation*}
        is an inductive system in $\catcalg$.
    Let
        $\pi_{k} :\cl C^*_u(\cl S_k) \rightarrow  \limcalg C_u^*({\cl S_k})$ be the canonical unital *-homomorphism,
    $k \in \bb{N}$.

    \begin{theorem} \label{thm:Cu(S)}
               Let $\cl S_1\stackrel{\phi_1}{\longrightarrow} \cl S_2 \stackrel{\phi_2}{\longrightarrow} \cl S_3 \stackrel{\phi_3}{\longrightarrow} \cl S_4 \stackrel{\phi_4}{\longrightarrow} \cdots$
        be an inductive system in $\catos$ such that each $\phi_k$ is a unital complete order embedding. Then
         $C^*_u(\limos \cl S_k)$ is *-isomorphic to $\limcalg C^*_u(\cl S_k)$.
    \end{theorem}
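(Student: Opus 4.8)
The plan is to exhibit $(\limcalg C^*_u(\cl S_k), \iota)$ as a C*-cover of $\cl S_\infty := \limos \cl S_k$ satisfying the universal factorisation property of Proposition~\ref{prop:universal prop universal calg}, whence the desired *-isomorphism follows. Write $\iota_{\cl S_k} : \cl S_k \to C^*_u(\cl S_k)$ for the canonical embeddings, $\widetilde{\phi}_k : C^*_u(\cl S_k) \to C^*_u(\cl S_{k+1})$ for the induced *-homomorphisms (injective by Lemma~\ref{lem:op system coi then univer calg *iso}, since each $\phi_k$ is a complete order embedding), and $\pi_k : C^*_u(\cl S_k) \to \limcalg C^*_u(\cl S_k)$ for the canonical unital *-homomorphisms; by Remark~\ref{rem:universal prop Calg} each $\pi_k$ is injective. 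The maps $\pi_k \circ \iota_{\cl S_k} : \cl S_k \to \limcalg C^*_u(\cl S_k)$ are unital completely positive and satisfy $(\pi_{k+1} \circ \iota_{\cl S_{k+1}}) \circ \phi_k = \pi_{k+1} \circ \widetilde{\phi}_k \circ \iota_{\cl S_k} = \pi_k \circ \iota_{\cl S_k}$, so by the universal property of $\cl S_\infty$ (Theorem~\ref{thm:universal property_operator_system2}) they induce a unique unital completely positive map $\iota : \cl S_\infty \to \limcalg C^*_u(\cl S_k)$ with $\iota \circ \phi_{k,\infty} = \pi_k \circ \iota_{\cl S_k}$.

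The main obstacle is to show that $\iota$ is a unital complete order embedding. For this I would apply the vertical maps $\iota_{\cl S_k}$ to the two inductive systems $(\cl S_k, \phi_k)$ and $(C^*_u(\cl S_k), \widetilde{\phi}_k)$: the connecting maps $\phi_k$ are complete order embeddings by hypothesis, the $\widetilde{\phi}_k$ are injective unital *-homomorphisms and hence complete order embeddings, and each $\iota_{\cl S_k}$ is a unital complete order embedding. Remark~\ref{rem:all maps coi} then yields a unital complete order embedding $\widetilde{\iota} : \cl S_\infty \to \limos C^*_u(\cl S_k)$ satisfying $\widetilde{\iota} \circ \phi_{k,\infty} = \widetilde{\pi}_k \circ \iota_{\cl S_k}$, where $\widetilde{\pi}_k$ denotes the canonical map into the operator system inductive limit. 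By Theorem~\ref{p_opsysst} there is a complete order embedding $j : \limos C^*_u(\cl S_k) \to \limcalg C^*_u(\cl S_k)$ with $j \circ \widetilde{\pi}_k = \pi_k$. Comparing on the generating sets $\phi_{k,\infty}(\cl S_k)$ gives $j \circ \widetilde{\iota} \circ \phi_{k,\infty} = \pi_k \circ \iota_{\cl S_k} = \iota \circ \phi_{k,\infty}$, and since $\cl S_\infty = \cup_k \phi_{k,\infty}(\cl S_k)$ we conclude $\iota = j \circ \widetilde{\iota}$, a composite of complete order embeddings; thus $\iota$ is a unital complete order embedding.

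Next I would check that $\iota(\cl S_\infty)$ generates $\limcalg C^*_u(\cl S_k)$ as a C*-algebra: it contains each $\pi_k(\iota_{\cl S_k}(\cl S_k))$, which generates $\pi_k(C^*_u(\cl S_k))$, and $\cup_k \pi_k(C^*_u(\cl S_k))$ is dense in $\limcalg C^*_u(\cl S_k)$. Hence $(\limcalg C^*_u(\cl S_k), \iota)$ is a C*-cover of $\cl S_\infty$. Finally I would verify the factorisation property: given a C*-algebra $\cl A$ and a unital completely positive map $\phi : \cl S_\infty \to \cl A$, the compositions $\phi \circ \phi_{k,\infty}$ lift through the universal property of $C^*_u(\cl S_k)$ to *-homomorphisms $\psi_k : C^*_u(\cl S_k) \to \cl A$ with $\psi_k \circ \iota_{\cl S_k} = \phi \circ \phi_{k,\infty}$; these are compatible with the $\widetilde{\phi}_k$ (they agree on the generating set $\iota_{\cl S_k}(\cl S_k)$), so they induce a *-homomorphism $\Psi : \limcalg C^*_u(\cl S_k) \to \cl A$ with $\Psi \circ \pi_k = \psi_k$, and then $\Psi \circ \iota = \phi$ holds on each $\phi_{k,\infty}(\cl S_k)$, hence everywhere. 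Uniqueness of such a $\Psi$ is immediate since $\iota(\cl S_\infty)$ generates the C*-algebra. The ``moreover'' part of Proposition~\ref{prop:universal prop universal calg} now yields the required *-isomorphism $C^*_u(\cl S_\infty) \cong \limcalg C^*_u(\cl S_k)$.
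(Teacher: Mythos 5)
Your proof is correct and follows essentially the same route as the paper: both hinge on constructing the unital complete order embedding $\iota : \limos \cl S_k \rightarrow \limcalg C^*_u(\cl S_k)$ via Lemma~\ref{lem:op system coi then univer calg *iso}, Remark~\ref{rem:all maps coi} and Theorem~\ref{p_opsysst}, and on the same lifting computations through the universal properties of $C_u^*(\cl S_k)$ and of the C*-algebraic inductive limit. The only difference is in the packaging of the endgame: you verify the factorisation property of the pair $(\limcalg C^*_u(\cl S_k), \iota)$ for an arbitrary target C*-algebra and invoke the uniqueness clause of Proposition~\ref{prop:universal prop universal calg}, whereas the paper specialises the same computation to build two explicit *-homomorphisms $\mu$ and $\nu$ and checks directly that they are mutually inverse on dense generating operator systems.
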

    \begin{proof}
        Set $\cl S_{\infty} = \limos \cl S_k$ and 
    let $\iota_{\los{\cl S}} :  \cl S_{\infty} \to C^*_u(\cl S_{\infty})$ be the canonical embedding.
    Consider the following commutative diagram
    \begin{equation}\label{eq:univ2000}
        \begin{CD}
            \cl S_1 @>{\phi_1}>> \cl S_2 @>{\phi_2}>> \cl S_3 @>{\phi_3}>> \cl S_4 @>{\phi_4}>> \cdots \\
            @V{\iota_1}VV @V{\iota_2}VV @V{\iota_3}VV @V{\iota_4}VV \\
             C^*_u(\cl S_1)
             @>{\widetilde{\phi}_1}>> C^*_u(\cl S_2) @>{\widetilde{\phi}_2}>>  C^*_u(\cl S_3) @>{\widetilde{\phi}_3}>> C^*_u(\cl S_4) @>{\widetilde{\phi}_4}>> \cdots.
        \end{CD}
    \end{equation}
    By Lemma~\ref{lem:op system coi then univer calg *iso},
    all maps in (\ref{eq:univ2000}) are 
    unital complete order embeddings. 
    By Remark~\ref{rem:all maps coi}, there exists a unique unital complete order embedding 
    $\iota : \cl S_{\infty} \rightarrow \limos C_u^*({\cl S_k})$ such that
    \begin{equation} \label{eq:univ4}
        \iota \circ \phi_{k, \infty} = \pi_{k} \circ \iota_k, \ \ \ k \in \bb{N}.
    \end{equation}
By Proposition~\ref{p_opsysst}, the natural map 
        $\id: \limos C_u^*({\cl S_k}) \rightarrow \limcalg C_u^*({\cl S_k})$
    is a unital complete order embedding; thus, 
        $\iota : \cl S_{\infty} \rightarrow \limcalg C_u^*({\cl S_k})$
    is a unital complete order embedding.

    By Proposition~\ref{prop:universal prop universal calg}, there exists a unique unital *-homomorphism
        $$\nu: C_u^*(\cl S_{\infty}) \rightarrow \limcalg C_u^*({\cl S_k})$$
    such that
    \begin{equation}\label{eq:univ5}
        \nu \circ \iota_{\los{\cl S}} = \iota.
    \end{equation}
    Note that
        $\iota_{\los{\cl S}} \circ \phi_{k,\infty} : \cl S_k \rightarrow C_u^*(\cl S_{\infty})$
    is a unital completely order isomorphic embedding, $k \in \bb{N}$. 
    By Proposition~\ref{prop:universal prop universal calg}, there exists a  unital *-homomorphism
        $$\widetilde{\iota_{\los{\cl S}} \circ \phi_{k,\infty}} :  C_u^*(\cl S_k) \rightarrow C_u^*(\cl S_{\infty})$$
    such that
    \begin{equation} \label{eq:univ6}
        (\widetilde{\iota_{\los{\cl S}} \circ \phi_{k,\infty}} ) \circ \iota_k = \iota_{\los{\cl S}} \circ \phi_{k,\infty}, \ \ \ k \in \bb{N}.
    \end{equation}
    By (\ref{eq:univ6}),
    \begin{equation*}
    \begin{split}
       (\widetilde{\iota_{\los{\cl S}} \circ \phi_{k+1,\infty}} ) &\circ  \widetilde{\phi}_k \circ \iota_k
                = (\widetilde{\iota_{\los{\cl S}} \circ \phi_{k+1,\infty}} ) \circ \iota_{k+1} \circ \phi_k \\
                &=  \iota_{\los{\cl S}} \circ \phi_{k+1,\infty} \circ \phi_k
                =  \iota_{\los{\cl S}} \circ \phi_{k,\infty}
                =  (\widetilde{\iota_{\los{\cl S}} \circ \phi_{k,\infty}} ) \circ \iota_k \\
    \end{split}
    \end{equation*}
    for all $k \in \bb{N}$. By the universal property of the inductive limit in the category of C*-algebras, there exists a unique unital *-homomorphism
        $\mu: \limcalg C_u^*(\cl S_k) \rightarrow C_u^*(\cl S_{\infty})$
    such that
    \begin{equation} \label{eq:univ7}
        \mu \circ \pi_{k}  = (\widetilde{\iota_{\los{\cl S}} \circ \phi_{k,\infty}} ), \ \ \  k \in \bb{N}.
    \end{equation}

Note that $\mu \circ \nu = \id_{C_u^*(\cl S_{\infty})}$ and $\nu \circ \mu = \id_{\limcalg C_u^*(\cl S_k)}$. Indeed, by (\ref{eq:univ4}), (\ref{eq:univ5}), (\ref{eq:univ6}) and (\ref{eq:univ7}),
    \begin{eqnarray*}
        \mu \circ \nu \circ \iota_{\los{\cl S}} \circ \phi_{k,\infty}
       & = & 
       \mu \circ \iota \circ \phi_{k,\infty} = \mu \circ \pi_{k} \circ \iota_k
        = (\widetilde{\iota_{\los{\cl S}} \circ \phi_{k, \infty}}) \circ \iota_k\\
        & = &  \iota_{\los{\cl S}} \circ \phi_{k,\infty}
      \end{eqnarray*}
    and
    \begin{eqnarray*}
        \nu \circ \mu \circ \pi_{k} \circ \iota_k
       & = & \nu \circ (\widetilde{\iota_{\los{\cl S}} \circ \phi_{k,\infty}}) \circ \iota_k
        = \nu \circ \iota_{\los{\cl S}} \circ \phi_{k,\infty} = \iota \circ \phi_{k,\infty}\\
        & = & \pi_{k} \circ \iota_k.
      \end{eqnarray*}
    Since $\mu\circ \nu$ and $\nu\circ \mu$ coincide with the identities on dense operator systems, generating the 
    corresponding C*-algebras, we have that $\mu$ is a *-isomorphism.
    Finally, note that $\mu$ is unital, since 
    $$\mu \circ \pi_{k} \circ \iota_k(e_k) = (\widetilde{\iota_{\los{\cl S}} \circ \phi_{k,\infty}}) \circ \iota_k(e_k) 
    = \iota_{\los{\cl S}} \circ \phi_{k,\infty}(e_k).$$
    \end{proof}

    \begin{corollary} \label{cor:universalCalgCor}
        Let
        $\cl S_1\stackrel{\phi_1}{\longrightarrow} \cl S_2 \stackrel{\phi_2}{\longrightarrow} \cl S_3 \stackrel{\phi_3}{\longrightarrow} \cl S_4 \stackrel{\phi_4}{\longrightarrow} \cdots$
        be an inductive system in $\catos$ such that each $\phi_k$ is a unital completely order isomorphic embedding. Then
        $\limos C^*_u(\cl S_k)$ is unitally completely order isomorphic to an operator subsystem of 
        $C^*_u(\limos \cl S_k)$.
    \begin{proof}
        By Theorem~\ref{p_opsysst}, $\id: \limos C^*_u(\cl S_k) \rightarrow \limcalg C^*_u(\cl S_k)$ 
        is a unital completely order isomorphic embedding.
         By Theorem~\ref{thm:Cu(S)}, there exists a unital complete order isomorphism
        $\mu: \limcalg C^*_u(\cl S_k) \rightarrow C^*_u(\limos \cl S_k)$. Therefore $\mu \circ {\id}: \limos C^*_u(\cl S_k) \rightarrow C^*_u(\limos \cl S_k)$ is a unital completely order isomorphic embedding.
    \end{proof}
    \end{corollary}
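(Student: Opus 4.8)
The plan is to obtain the desired embedding as a composition of two maps already furnished by the preceding results, applied not to the given system but to the induced inductive system of universal C*-algebras. First I would pass from $(\{\cl S_k\}_{k\in\bb{N}}, \{\phi_k\}_{k\in\bb{N}})$ to the system
\[
C^*_u(\cl S_1) \stackrel{\widetilde{\phi}_1}{\longrightarrow} C^*_u(\cl S_2) \stackrel{\widetilde{\phi}_2}{\longrightarrow} C^*_u(\cl S_3) \stackrel{\widetilde{\phi}_3}{\longrightarrow} \cdots
\]
in $\catcalg$, where $\widetilde{\phi}_k$ is the *-homomorphism determined by $\widetilde{\phi}_k \circ \iota_k = \iota_{k+1} \circ \phi_k$ (as recorded just before Theorem~\ref{thm:Cu(S)}). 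Since each $\phi_k$ is a unital complete order embedding, Lemma~\ref{lem:op system coi then univer calg *iso} ensures each $\widetilde{\phi}_k$ is injective, so this is a bona fide inductive system of unital C*-algebras.

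Next I would apply Theorem~\ref{p_opsysst} to this C*-algebra system with $\cl A_k = C^*_u(\cl S_k)$. This yields that the canonical map $\id : \limos C^*_u(\cl S_k) \to \limcalg C^*_u(\cl S_k)$ is a unital completely order isomorphic embedding of the operator-system inductive limit into the C*-algebra inductive limit (indeed, onto a dense operator subsystem). Separately, Theorem~\ref{thm:Cu(S)} applies under exactly the present hypothesis that each $\phi_k$ is a unital complete order embedding, and it produces a unital *-isomorphism $\mu : \limcalg C^*_u(\cl S_k) \to C^*_u(\limos \cl S_k)$, which is in particular a unital complete order isomorphism.

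Finally, composing the two, the map $\mu \circ \id : \limos C^*_u(\cl S_k) \to C^*_u(\limos \cl S_k)$ is a unital completely order isomorphic embedding, so $\limos C^*_u(\cl S_k)$ is unitally completely order isomorphic to its image, which is an operator subsystem of $C^*_u(\limos \cl S_k)$. The argument carries no genuine difficulty: the conceptual content has already been absorbed into Theorems~\ref{p_opsysst} and~\ref{thm:Cu(S)}. The only point requiring attention—what one might call the main obstacle—is verifying that the hypotheses of both invoked theorems are met, namely that the $\widetilde{\phi}_k$ genuinely form an inductive system in $\catcalg$ (via the injectivity supplied by Lemma~\ref{lem:op system coi then univer calg *iso}) so that Theorem~\ref{p_opsysst} is applicable, and that the complete-order-embedding condition on the $\phi_k$ is precisely what licenses the *-isomorphism of Theorem~\ref{thm:Cu(S)}. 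Once these are in place, the corollary is immediate.
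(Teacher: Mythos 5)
Your proposal is correct and follows essentially the same route as the paper: apply Theorem~\ref{p_opsysst} to the induced system of universal C*-algebras to embed $\limos C^*_u(\cl S_k)$ into $\limcalg C^*_u(\cl S_k)$, then compose with the *-isomorphism $\mu$ from Theorem~\ref{thm:Cu(S)}. The extra verification via Lemma~\ref{lem:op system coi then univer calg *iso} that the $\widetilde{\phi}_k$ are injective is harmless but not required, since Theorem~\ref{p_opsysst} applies to any inductive system in $\catcalg$.
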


\subsection{Quotients of inductive limits of operator systems} \label{sect:Quotients}

In this subsection, we relate inductive limits with the quotient theory of operator systems. 
We first recall the basic facts about quotient operator systems, as 
developed in \cite{KavrukPaulsenTodorovTomforde}. 

    Let $\cl S$ be an operator system and let $J \subseteq \cl S$  be a subspace. If there exists an operator system $\cl T$ and a unital completely positive map $\phi: \cl S \rightarrow \cl T$ such that $J = \ker \phi$, then we say that $J$ is a {\it kernel}. 
    If $J$ is a kernel, we let $q : \cl S \to \cl S/J$ be the quotient map and 
    equip the quotient vector space $\cl S/J$ with the involution given by $(x + J)^* = x^* + J$. 
    For $n\in \bb{N}$, let
        \[
    \begin{split} \index{quotient operator system}
        C_n(\cl S / J) = \Big\lbrace (x_{i,j}+ J) &\in M_n(\cl S/ J):  \forall r > 0 ~\exists \ k_{i,j} \in J  
        \\
        & \text{~such that}~ r e^{(n)} + (x_{i,j} + k_{i,j})_{i,j} \in M_n(\cl S)^+\Big\rbrace.
    \end{split}
    \]
It was shown in \cite[Section~3]{KavrukPaulsenTodorovTomforde} that 
$(\cl S/J, \{ C_n(\cl S / J)\}_{n \in \bb{N}}, e+J)$ is an operator system
(called henceforth a \emph{quotient operator system}); moreover, the following holds:

    \begin{theorem}\label{thm:quotient op sys universal property}
        Let $\cl S$ and $\cl T$ be operator systems and let $J$ be a kernel in $\cl S$. If $\phi: \cl S \rightarrow \cl T$ is a unital completely positive map with $J \subseteq \ker \phi$ then the map 
        $\widetilde{\phi}: \cl S/ J \rightarrow \cl T$, 
        defined by the identity $\widetilde{\phi} \circ q = \phi$, is unital and completely positive.

        Furthermore, if $\cl P$  is an operator system and $\psi: \cl S \rightarrow \cl P$ is a unital completely positive map such that whenever $\cl T$ is an operator system and $\phi: \cl S \rightarrow \cl T$ is a unital completely positive map with $J \subseteq \ker \phi$ there exists a unique unital completely positive map $\widetilde{\phi}: \cl P \rightarrow \cl T$ 
        with the property that $\widetilde{\phi} \circ \psi = \phi$, 
        then there exists a complete order isomorphism $\varphi: \cl P \rightarrow \cl S/J$ such that $\varphi \circ \psi = q$.
    \end{theorem}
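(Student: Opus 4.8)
The plan is to treat the two assertions separately: first the existence of the factorisation $\widetilde{\phi}$, and then the fact that $(\cl S/J,q)$ is determined up to complete order isomorphism by its universal property.

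For the first assertion, since $J\subseteq\ker\phi$ the map $\phi$ descends to a well-defined linear map $\widetilde{\phi}:\cl S/J\to\cl T$ given by $\widetilde{\phi}(x+J)=\phi(x)$; it is the unique map satisfying $\widetilde{\phi}\circ q=\phi$ because $q$ is surjective, and it is unital since $\widetilde{\phi}(e+J)=\phi(e)=e_{\cl T}$. The only real content is complete positivity. Fix $n\in\bb{N}$ and take $(x_{i,j}+J)_{i,j}\in C_n(\cl S/J)$. By the definition of the quotient cone, for every $r>0$ there exist $k_{i,j}\in J$ with $r e^{(n)}+(x_{i,j}+k_{i,j})_{i,j}\in M_n(\cl S)^+$. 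Applying the positive map $\phi^{(n)}$ and using $\phi(k_{i,j})=0$ together with unitality, I obtain $r e_{\cl T}^{(n)}+(\phi(x_{i,j}))_{i,j}\in M_n(\cl T)^+$ for every $r>0$. Since $e_{\cl T}$ is an Archimedean matrix order unit, $e_{\cl T}^{(n)}$ is an Archimedean order unit for $M_n(\cl T)$, whence $\widetilde{\phi}^{(n)}((x_{i,j}+J)_{i,j})=(\phi(x_{i,j}))_{i,j}\in M_n(\cl T)^+$. As $n$ was arbitrary, $\widetilde{\phi}$ is completely positive.

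For the second assertion I would run the standard comparison of two objects enjoying the same universal property. By the first part, together with the surjectivity of $q$ (which forces the factoring map to be unique), the pair $(\cl S/J,q)$ itself satisfies the stated universal property, with $J=\ker q$. Applying the hypothesised universal property of $(\cl P,\psi)$ to the map $q:\cl S\to\cl S/J$ (legitimate since $J\subseteq\ker q$) yields a unique unital completely positive $\varphi:\cl P\to\cl S/J$ with $\varphi\circ\psi=q$. Conversely, using that $\psi$, like $q$, annihilates $J$, that is $J\subseteq\ker\psi$, I apply the universal property of $(\cl S/J,q)$ to $\psi:\cl S\to\cl P$ to obtain a unique unital completely positive $\sigma:\cl S/J\to\cl P$ with $\sigma\circ q=\psi$.

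It then remains to check that $\varphi$ and $\sigma$ are mutually inverse. From $\varphi\circ\sigma\circ q=\varphi\circ\psi=q$ and the surjectivity of $q$ I get $\varphi\circ\sigma=\id_{\cl S/J}$. From $\sigma\circ\varphi\circ\psi=\sigma\circ q=\psi$, and since both $\sigma\circ\varphi$ and $\id_{\cl P}$ are unital completely positive maps $h:\cl P\to\cl P$ with $h\circ\psi=\psi$, the uniqueness clause in the universal property of $\cl P$ (applied with $\phi=\psi$, which annihilates $J$) gives $\sigma\circ\varphi=\id_{\cl P}$. Thus $\varphi$ is a unital bijection whose inverse $\sigma$ is completely positive, so $\varphi$ is a complete order isomorphism satisfying $\varphi\circ\psi=q$, as required. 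The substantive step is the Archimedeanisation argument of the first paragraph; the second assertion is a formal diagram chase, whose only delicate point is that one must use $J\subseteq\ker\psi$ (without it the conclusion fails, as the pair $(\cl S,\id_{\cl S})$ shows) and must combine the surjectivity of $q$ with the uniqueness clause to upgrade the two one-sided identities to a genuine two-sided inverse.
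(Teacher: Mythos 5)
The paper does not prove this statement: it is recalled verbatim as background, with the proof deferred to \cite[Section 3]{KavrukPaulsenTodorovTomforde}, so there is no in-paper argument to compare against. Your proof is correct and is the standard one. The first paragraph is exactly the right computation: apply $\phi^{(n)}$ to the perturbed positives $r e^{(n)}+(x_{i,j}+k_{i,j})_{i,j}$, kill the $k_{i,j}$ using $J\subseteq\ker\phi$, and invoke the Archimedean property of $e_{\cl T}^{(n)}$ (the matrix $(\phi(x_{i,j}))_{i,j}$ is hermitian because $r e_{\cl T}^{(n)}+(\phi(x_{i,j}))_{i,j}$ is positive for every $r>0$, so the Archimedean axiom does apply). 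The second part is the usual two-sided comparison of objects with the same universal property, and your use of the surjectivity of $q$ for one composite and the uniqueness clause of $(\cl P,\psi)$ for the other is sound.

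Your one substantive observation deserves emphasis: as transcribed in the paper, the second assertion omits the hypothesis $J\subseteq\ker\psi$, and your counterexample $(\cl P,\psi)=(\cl S,\id_{\cl S})$ is valid --- that pair satisfies the stated existence-and-uniqueness condition vacuously ($\widetilde{\phi}=\phi$ is forced), yet $q$ is not injective when $J\neq\{0\}$, so no complete order isomorphism $\varphi$ with $\varphi\circ\id_{\cl S}=q$ can exist. The hypothesis cannot be recovered from the uniqueness clause (taking $\phi=q$ only yields $\psi(J)\subseteq\ker\varphi$), so it must be assumed; it is present in the original formulation in \cite{KavrukPaulsenTodorovTomforde} and was dropped in the restatement here. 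With that hypothesis restored, both places where you need it --- constructing $\sigma$ via the universal property of $(\cl S/J,q)$, and applying the uniqueness clause of $(\cl P,\psi)$ with $\phi=\psi$ --- go through, and the argument is complete.
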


    If $\cl X$ is a (not necessarily complete) operator space and $Y$ is a closed subspace of $\cl X$, 
    then the quotient $\cl X/Y$ has a canonical operator space structure given by assigning $M_n(\cl X/Y)$ the norm arising from the identification $M_n(\cl X/Y) = M_n(\cl X)/M_n(Y)$, that is, by setting
    \begin{equation} \label{eq:quotient1}
      \norm{(x_{i,j}+Y)}_{M_n(\cl X/Y)} = \inf\big \lbrace\norm{x_{i,j} + y_{i,j}}_{M_n(\cl X)} : y_{i,j} \in Y\big \rbrace, ~~~(x_{i,j})  \in M_n(\cl X).
    \end{equation}
    If $\cl S$ is an operator system and $J$ is a kernel, then $\cl S/J$ can be equipped, 
    on one hand, with the operator space structure inherited from the quotient operator system $\cl S/J$
    and, on the other hand, with the operator space structure given by (\ref{eq:quotient1}). It is proved in \cite[Section~4]{KavrukPaulsenTodorovTomforde} that the matrix norms obtained 
    {\it via} these two methods are in general distinct. 
    If $J$ is a kernel in $\cl S$ such that the operator space quotient and the operator system quotient are completely isometric then we call $J$ {\it completely biproximinal}. 
    
\medskip

Suppose that 
$\cl S_1\stackrel{\phi_1}{\longrightarrow} \cl S_2 \stackrel{\phi_2}{\longrightarrow} \cl S_3 \stackrel{\phi_3}{\longrightarrow} \cl S_4 \stackrel{\phi_4}{\longrightarrow} \cdots$ is an inductive system in~$\catos$ and that,
for each $k \in \bb{N}$, $J_k$ is a kernel in $\cl S_k$ such that $\phi_{k}(J_k) \subseteq J_{k+1}$. 
Let $q_k : \cl S_k\to \cl S_k/J_k$ be the quotient map. 
By Theorem~\ref{thm:quotient op sys universal property}, there is a natural inductive system in $\catos$,
  \begin{equation}\label{eq:quotient11}
  \cl S_1/J_1\stackrel{\psi_1}{\longrightarrow} \cl S_2/J_2 \stackrel{\psi_2}{\longrightarrow} \cl S_3/J_3 \stackrel{\psi_3}{\longrightarrow} \cl S_4/J_4 \stackrel{\psi_4}{\longrightarrow} \cdots,
\end{equation}
such that 
        \begin{equation}\label{eq_ker_1}
            \psi_k \circ q_k = q_{k+1} \circ \phi_k, \ \ \ k \in \bb{N}.
        \end{equation}

   In this subsection we prove that if each of the $J_k$ is completely biproximinal, then the inductive limit of (\ref{eq:quotient11}) is a quotient operator system.

    \begin{lemma}\label{lem:limJ_i_kernel}
        Let $\cl S_1\stackrel{\phi_1}{\longrightarrow} \cl S_2 \stackrel{\phi_2}{\longrightarrow} \cl S_3 \stackrel{\phi_3}{\longrightarrow} \cl S_4 \stackrel{\phi_4}{\longrightarrow} \cdots$
        be an inductive system in $\catos$. 
        For each $k \in \bb{N}$, let $J_k$ be a completely biproximinal kernel in $\cl S_k$ such that $\phi_{k}(J_k) \subseteq J_{k+1}$.
        Then $\underrightarrow{\lim}J_k \stackrel{def}{=} \overline{\cup_{k \in \mathbb{N}}\phi_{k, \infty}(J_k)}$ is a kernel in $\limos \cl S_k$.
            \end{lemma}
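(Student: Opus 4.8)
The plan is to realise $\underrightarrow{\lim}J_k$ as the kernel of an explicit unital completely positive map out of $\limos \cl S_k$, which is exactly what the definition of a kernel demands. The natural target is the inductive limit $\limos (\cl S_k/J_k)$ of the quotient system (\ref{eq:quotient11}). Since each quotient map $q_k : \cl S_k \to \cl S_k/J_k$ is unital completely positive and, by (\ref{eq_ker_1}), the squares $\psi_k \circ q_k = q_{k+1}\circ \phi_k$ commute, Remark~\ref{rem:theta coi_os} supplies a unique unital completely positive map $q_{\infty} : \limos \cl S_k \to \limos (\cl S_k/J_k)$ with $q_{\infty}\circ \phi_{k,\infty} = \psi_{k,\infty}\circ q_k$ for all $k$. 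I will then prove that $\ker q_{\infty} = \underrightarrow{\lim}J_k$, which immediately gives the conclusion.

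One inclusion is routine: for $x \in J_k$ we have $q_{\infty}(\phi_{k,\infty}(x)) = \psi_{k,\infty}(q_k(x)) = 0$, so $\cup_k \phi_{k,\infty}(J_k) \subseteq \ker q_{\infty}$; as $q_{\infty}$ is contractive (hence continuous) and its kernel is closed, $\underrightarrow{\lim}J_k = \overline{\cup_k \phi_{k,\infty}(J_k)} \subseteq \ker q_{\infty}$.

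The substance is the reverse inclusion, where complete biproximinality enters. Take $x \in \ker q_{\infty}$, and recall that every element of $\limos \cl S_k$ has the form $\phi_{k,\infty}(s_k)$ for some $k$ and $s_k\in \cl S_k$; write $x = \phi_{k,\infty}(s_k)$. Then $\psi_{k,\infty}(s_k+J_k) = q_{\infty}(x) = 0$ in the operator system $\limos (\cl S_k/J_k)$, so its norm vanishes, and Proposition~\ref{prop:norm for inductive limit operator system}, applied to the quotient system (\ref{eq:quotient11}), yields $\lim_{l\to\infty}\norm{\psi_{k,l}(s_k+J_k)}_{\cl S_l/J_l} = 0$. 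By (\ref{eq_ker_1}) we have $\psi_{k,l}(s_k+J_k) = \phi_{k,l}(s_k)+J_l$, and since $J_l$ is completely biproximinal the operator system quotient norm at the ground level coincides with the operator space quotient norm, so $\norm{\phi_{k,l}(s_k)+J_l}_{\cl S_l/J_l} = \inf_{y\in J_l}\norm{\phi_{k,l}(s_k)+y}_{\cl S_l}$. Fixing $\varepsilon>0$, I choose $l$ and $y_l \in J_l$ with $\norm{\phi_{k,l}(s_k)-y_l}_{\cl S_l}<\varepsilon$; then $\phi_{l,\infty}(y_l)\in \phi_{l,\infty}(J_l)$, and because $\phi_{l,\infty}$ is unital completely positive, hence contractive,
$$\bignorm{x - \phi_{l,\infty}(y_l)} = \bignorm{\phi_{l,\infty}\big(\phi_{k,l}(s_k)-y_l\big)} \le \bignorm{\phi_{k,l}(s_k)-y_l}_{\cl S_l} < \varepsilon.$$
As $\varepsilon$ is arbitrary, $x \in \overline{\cup_l \phi_{l,\infty}(J_l)} = \underrightarrow{\lim}J_k$, completing the reverse inclusion. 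Thus $\underrightarrow{\lim}J_k = \ker q_{\infty}$ is a kernel in $\limos \cl S_k$.

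I expect the main obstacle to be precisely this reverse inclusion: verifying that an element killed by $q_{\infty}$ can be norm-approximated by elements of $\cup_l\phi_{l,\infty}(J_l)$. The two ingredients that make it work are the norm formula of Proposition~\ref{prop:norm for inductive limit operator system} for the quotient inductive limit, which converts membership in the kernel into a vanishing limit of quotient norms, and complete biproximinality, which converts those quotient norms into genuine distances to $J_l$ that can be pulled back through the contractive maps $\phi_{l,\infty}$. Without biproximinality the operator system quotient norm need not equal the distance to $J_l$, and this approximation step would break down.
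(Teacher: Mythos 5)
Your proof is correct and follows essentially the same route as the paper's: both realise $\underrightarrow{\lim}J_k$ as the kernel of the induced map $q$ into $\limos(\cl S_k/J_k)$, and both prove the reverse inclusion by combining the norm formula of Proposition~\ref{prop:norm for inductive limit operator system} for the quotient system with complete biproximinality to produce elements $y_l\in J_l$ whose images approximate the given kernel element. The only cosmetic difference is that you invoke Remark~\ref{rem:theta coi_os} where the paper applies Theorem~\ref{thm:universal property_operator_system2} directly to the family $\psi_{k,\infty}\circ q_k$; these yield the same map.
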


    \begin{proof}
    Set $\cl S_{\infty} = \limos \cl S_k$ and 
$J = \underrightarrow{\lim} J_k$; clearly, $J$ is a closed subspace of $\cl S_{\infty}$. 
Note that 
$q_{k+1} \circ \phi_k : \cl S_k \rightarrow \cl S_{k+1} /J_{k+1}$ is a unital completely positive map.
        Consider the commuting diagram
        \[
            \begin{CD}
                \cl S_1 @>{\phi_1}>> \cl S_2 @>{\phi_2}>> \cl S_3 @>{\phi_3}>> \cl S_4 @>{\phi_4}>> \cdots \\
                @V{q_1}VV @V{q_2}VV @V{q_3}VV @V{q_4}VV \\
                  \cl S_1/ J_1
                 @>{\psi_1}>> \cl S_2/ J_2
                 @>{\psi_2}>> \cl S_3/ J_3
                 @>{\psi_3}>> \cl S_4/ J_4
                 @>{\psi_4}>> \cdots.
            \end{CD}
        \]
        By Theorem~\ref{thm:universal property_operator_system2}, there exists a (unique) 
        unital completely positive map 
        $q: \cl S_{\infty} \rightarrow \limos \left(\cl S_k / J_k\right)$,
        such that
        \begin{equation}\label{eq_ker_2}
            q \circ \phi_{k, \infty} = \psi_{k, \infty} \circ q_k, \ \ \ k \in \bb{N}.
        \end{equation}
        We show that $\ker q = J$.
        Since $\ker q$ is closed, in order to prove that $J \subseteq \ker q$,
        it suffices to show that $\cup_{k \in \mathbb{N}}\phi_{k, \infty}(J_k) \subseteq \ker q.$ 
        But, if $y_k \in J_k$ then $q \circ \phi_{k,\infty}(y_k) = \psi_{k,\infty} \circ q_k (y_k) = 0$. 
        Now suppose that 
        $\phi_{k,\infty}(s_k) \in \ker q$ for some $s_k\in \cl S_k$;
        then $\psi_{k,\infty} \circ q_k (s_k) = q \circ \phi_{k, \infty}(s_k) = 0$. 
        By Proposition \ref{prop:norm for inductive limit operator system},
        \[
          \lim_{m \rightarrow \infty} \norm{q_m \circ \phi_{k,m}(s_k)}_{\cl S_m/J_m} =   \lim_{m \rightarrow \infty} \norm{\psi_{k,m} \circ q_k(s_k)}_{\cl S_m/J_m}  =0.
        \]
        For $l \in \bb{N}$, let $m_l \in \bb{N}$ be such that
        \[
            \norm{q_m \circ \phi_{k,m}(s_k)}_{\cl S_m/J_m} < \frac{1}{l}, \ \ \ m \geq m_l.
        \]
Since $J_{m_l}$ is completely biproximinal, there exists $y_{m_l} \in J_{m_l}$ such that
        \[
            \norm{\phi_{k,m_l}(s_k) + y_{m_l}}_{\cl S_{m_l}} < \frac{1}{l}.
        \]
                The map $\phi_{m_l, \infty}$ is unital and completely positive; therefore it is contractive
        and hence, for all $l \in \bb{N}$,
        \[
                \norm{\phi_{m_l, \infty}(\phi_{k,m_l}(s_k) + y_{m_l})}_{\cl S_{\infty}} 
                \leq \norm{\phi_{k,m_l}(s_k) + y_{m_l}}_{\cl S_{m_l}} < \frac{1}{l}.
        \]
Thus, $\phi_{m_l, \infty}(y_{m_l})\in J$ and 
$\phi_{m_l, \infty}(y_{m_l})\to_{l\to\infty} \phi_{k,\infty}(s_k)$, showing that 
$\ker q \subseteq J.$
    \end{proof}

In view of Lemma \ref{lem:limJ_i_kernel}, the operator system 
$(\limos \cl S_k) / (\underrightarrow{\lim} J_k)$ is well-defined. We let 
$\gamma : \limos \cl S_k \to (\limos \cl S_k) / (\underrightarrow{\lim} J_k)$ be the corresponding quotient map.

    \begin{theorem}\label{Ind_Lim_Quotient}
        Let \index{quotient operator system}
        $\cl S_1\stackrel{\phi_1}{\longrightarrow} \cl S_2 \stackrel{\phi_2}{\longrightarrow} \cl S_3 \stackrel{\phi_3}{\longrightarrow} \cl S_4 \stackrel{\phi_4}{\longrightarrow} \cdots$
        be an inductive system in $\catos$. 
        Let $J_k$ be a completely biproximinal kernel in $\cl S_k$ such that 
        $\phi_{k}(J_k) \subseteq J_{k+1}$, $k \in \bb{N}$.
        Then there exists a unital complete order isomorphism $\rho :  
        \limos \left(\cl S_k/ J_k \right) \to (\limos \cl S_k) / (\underrightarrow{\lim} J_k)$ such that 
        $$\rho \circ \psi_{k,\infty} \circ q_k = \gamma \circ \phi_{k,\infty}, \ \ k\in \bb{N}.$$ 
            \end{theorem}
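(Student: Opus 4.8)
The plan is to produce two mutually inverse unital completely positive maps between $\limos(\cl S_k/J_k)$ and $(\limos \cl S_k)/(\underrightarrow{\lim} J_k)$ by playing the universal property of the quotient operator system (Theorem~\ref{thm:quotient op sys universal property}) against that of the inductive limit (Theorem~\ref{thm:universal property_operator_system2}). Throughout, write $\cl S_\infty = \limos \cl S_k$ and $J = \underrightarrow{\lim} J_k$, and recall from the proof of Lemma~\ref{lem:limJ_i_kernel} the unital completely positive map $q : \cl S_\infty \to \limos(\cl S_k/J_k)$ satisfying $\ker q = J$ and $q \circ \phi_{k,\infty} = \psi_{k,\infty}\circ q_k$ for all $k \in \bb{N}$.

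First I would build a map $\tilde q : \cl S_\infty/J \to \limos(\cl S_k/J_k)$. Since $J \subseteq \ker q$, Theorem~\ref{thm:quotient op sys universal property} applied to $q$ yields a unital completely positive $\tilde q$ with $\tilde q \circ \gamma = q$. Next I would build $\rho$ in the opposite direction. For each $k$, the composite $\gamma \circ \phi_{k,\infty} : \cl S_k \to \cl S_\infty/J$ is unital completely positive and annihilates $J_k$, because $\phi_{k,\infty}(J_k)\subseteq J = \ker\gamma$; hence Theorem~\ref{thm:quotient op sys universal property} produces a unital completely positive $\mu_k : \cl S_k/J_k \to \cl S_\infty/J$ with $\mu_k \circ q_k = \gamma \circ \phi_{k,\infty}$. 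Using (\ref{eq_ker_1}) and $\phi_{k+1,\infty}\circ\phi_k = \phi_{k,\infty}$ one computes $\mu_{k+1}\circ\psi_k\circ q_k = \gamma\circ\phi_{k,\infty} = \mu_k\circ q_k$, and since $q_k$ is surjective, $\mu_{k+1}\circ\psi_k = \mu_k$. The universal property of $\limos(\cl S_k/J_k)$ then furnishes a unique unital completely positive $\rho : \limos(\cl S_k/J_k) \to \cl S_\infty/J$ with $\rho\circ\psi_{k,\infty} = \mu_k$; composing with $q_k$ gives precisely the asserted identity $\rho\circ\psi_{k,\infty}\circ q_k = \gamma\circ\phi_{k,\infty}$.

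It then remains to verify that $\rho$ and $\tilde q$ are mutually inverse, which I would do by evaluating on the generating images and invoking $\cl S_\infty = \cup_k \phi_{k,\infty}(\cl S_k)$ and $\limos(\cl S_k/J_k) = \cup_k \psi_{k,\infty}(\cl S_k/J_k)$. On the one hand $\rho\circ q\circ\phi_{k,\infty} = \rho\circ\psi_{k,\infty}\circ q_k = \mu_k\circ q_k = \gamma\circ\phi_{k,\infty}$, so $\rho\circ q = \gamma$; combined with $\tilde q\circ\gamma = q$ this gives $\rho\circ\tilde q\circ\gamma = \gamma$, whence $\rho\circ\tilde q = \id$ by surjectivity of $\gamma$. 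On the other hand, from $\tilde q\circ\mu_k\circ q_k = \tilde q\circ\gamma\circ\phi_{k,\infty} = q\circ\phi_{k,\infty} = \psi_{k,\infty}\circ q_k$ and surjectivity of $q_k$ one gets $\tilde q\circ\mu_k = \psi_{k,\infty}$, so $\tilde q\circ\rho\circ\psi_{k,\infty} = \psi_{k,\infty}$ for all $k$ and therefore $\tilde q\circ\rho = \id$. Thus $\rho$ is a unital bijection whose inverse $\tilde q$ is completely positive, i.e.\ a unital complete order isomorphism.

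A word on where the difficulty lies: granting Lemma~\ref{lem:limJ_i_kernel}, the argument above is a purely categorical diagram chase, and the hypothesis of complete biproximinality plays no further role. The genuine content—and the only place biproximinality intervenes—is in that lemma, which guarantees that $J = \underrightarrow{\lim} J_k$ is actually a kernel (so that $(\limos \cl S_k)/J$ is a bona fide operator system and $\gamma$ exists) and that $\ker q$ is exactly $J$; biproximinality is what lets one approximate an element of $\ker q$ at a fixed finite stage by correcting with an element of $J_{m_l}$ of comparably small norm. I would therefore expect the only points requiring care in the present proof to be the routine bookkeeping that each factorisation map ($\tilde q$, $\mu_k$, $\rho$) is unital and completely positive, and the use of surjectivity of $q_k$ and $\gamma$ to cancel them on the right.
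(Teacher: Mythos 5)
Your proposal is correct and follows essentially the same route as the paper: both arguments rest on Lemma~\ref{lem:limJ_i_kernel} and the map $q$ with $\ker q = \underrightarrow{\lim}J_k$, and both factor $\gamma\circ\phi_{k,\infty}$ through $q_k$ and then through $\psi_{k,\infty}$ via the universal properties of the quotient and of the inductive limit. The only difference is packaging: the paper verifies that $(\limos(\cl S_k/J_k),q)$ satisfies the abstract universal property of the quotient for an arbitrary test map $\theta$ and then invokes the uniqueness clause of Theorem~\ref{thm:quotient op sys universal property}, whereas you specialise to $\theta=\gamma$ and exhibit the two mutually inverse unital completely positive maps explicitly — the same diagram chase either way.
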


    \begin{proof}
    Set $\cl S_{\infty} = \limos \cl S_k$. 
        Let $\cl T$ be an operator system and 
        $\theta : \cl S_{\infty} \rightarrow \cl T$
        be a unital completely positive map
        such that $\underrightarrow{\lim}J_k \subseteq \ker \theta$;
        then $\theta \circ \phi_{k, \infty}: \cl S_k \rightarrow \cl T$ is a unital completely positive map, $k \in \mathbb{N}$.
        Let $k \in \bb{N}$ and suppose $y_k \in J_k$; 
        by definition, $\phi_{k, \infty}(y_k) \in \underrightarrow{\lim} J_k$ and so $\theta \circ \phi_{k, \infty}(y_k) = 0$. 
        Thus, $J_k \subseteq \ker (\theta \circ \phi_{k, \infty})$. By Theorem~\ref{thm:quotient op sys universal property}, there exists a unique unital completely positive map $\big(\widetilde{\theta \circ \phi_{k, \infty}}\big): \cl S_k / J_k \rightarrow \cl T$ such that
        \begin{equation}\label{eq_ker_3}
            \big(\widetilde{\theta \circ \phi_{k, \infty}}\big) \circ q_k=\theta \circ \phi_{k, \infty}, \ \ \ k \in \bb{N}.
        \end{equation}
        By (\ref{eq_ker_1}) and (\ref{eq_ker_3}),
        \[
            \begin{split}
                \big(\widetilde{\theta \circ \phi_{k+1, \infty}}\big) \circ \psi_k \circ q_k
                &= \big(\widetilde{\theta \circ \phi_{k+1, \infty}}\big) \circ q_{k+1} \circ \phi_k
                = \theta  \circ \phi_{k+1, \infty} \circ \phi_k \\
                &= \theta \circ \phi_{k, \infty}
                = \big(\widetilde{\theta \circ \phi_{k, \infty}}) \circ q_k \\
            \end{split}
        \]
        for every $k \in \bb{N}$.
        By Theorem~\ref{thm:universal property_operator_system2}, there exists a unique unital completely positive map $\widetilde{\theta}: \limos (\cl S_k/ J_k) \rightarrow \cl T$ such that
        \begin{equation}\label{eq_ker_4}
            \widetilde{\theta} \circ \psi_{k, \infty} = \big(\widetilde{\theta \circ \phi_{k, \infty}}\big), \ \ \ k \in \bb{N}.
        \end{equation}
        By (\ref{eq_ker_2}), (\ref{eq_ker_3}) and (\ref{eq_ker_4}), 
        \[
                \widetilde{\theta} \circ q \circ \phi_{k, \infty}
                = \widetilde{\theta} \circ \psi_{k, \infty} \circ q_k
                = \big(\widetilde{\theta \circ \phi_{k, \infty}}\big) \circ q_k
                = \theta \circ \phi_{k, \infty}, \ \ \ k \in \bb{N},
        \]
        where $q: \cl S_{\infty} \rightarrow \limos \left(\cl S_k / J_k\right)$ is the map defined through (\ref{eq_ker_2}). 
        Thus, $\widetilde{\theta} \circ q = \theta$. By Theorem~\ref{thm:quotient op sys universal property}, 
        there exists a unital complete order isomorphism $\rho: \limos \left({\cl S_k / J_k}\right) \rightarrow ({\limos \cl S_k})/({\underrightarrow{\lim} J_k})$
such that $\rho\circ q = \gamma$. This implies that 
$\rho\circ q \circ \phi_{k,\infty} = \gamma \circ \phi_{k,\infty}$
which, by virtue of (\ref{eq_ker_2}), means that 
$\rho\circ \psi_{k,\infty} \circ q_k = \gamma \circ \phi_{k,\infty}$, $k\in \bb{N}$.
    \end{proof}


\subsection{Inductive limits and tensor products} \label{sect:tensor products}

    Let
    \begin{equation}\label{eq:tensor product1}
        \cl S_1\stackrel{\phi_1}{\longrightarrow} \cl S_2 \stackrel{\phi_2}{\longrightarrow} \cl S_3 \stackrel{\phi_3}{\longrightarrow} \cl S_4 \stackrel{\phi_4}{\longrightarrow} \cdots
    \end{equation}
    be an inductive system in $\catos$. Let $\cl T$ be an operator system; for any functorial operator system tensor product $\mu$, we may define the following inductive system in $\catos$:
    \begin{equation}\label{eq:tensor product2}
        \cl S_1 \otimes_{\mu} \cl T \stackrel{\phi_1 \otimes \id_{\cl T}}{\longrightarrow}
        \cl S_2 \otimes_{\mu} \cl T \stackrel{\phi_2 \otimes \id_{\cl T}}{\longrightarrow}
        \cl S_3 \otimes_{\mu} \cl T \stackrel{\phi_3 \otimes \id_{\cl T}}{\longrightarrow}
        \cl S_4 \otimes_{\mu} \cl T \stackrel{\phi_4 \otimes \id_{\cl T}}{\longrightarrow}
        \cdots.
    \end{equation}
    We are interested to know if $\limos (\cl S_k \otimes_{\mu} \cl T)$ is completely order isomorphic to $(\limos\cl S_k) \otimes_{\mu} \cl T$. 
    We first discuss the canonical linear isomorphism between these vector spaces.

    Recalling the notation from Subsection~\ref{sect:inductive limits AOU spaces}, 
    let $N$ be the null space for the inductive system~(\ref{eq:tensor product1}) and let $N_{\mu}$ be the null space for the inductive system~(\ref{eq:tensor product2}). 
Let $\psi_{k} = \phi_{k} \otimes \id_{\cl T}$ and 
$\psi_{k,\infty}: \cl S_k \otimes_{\mu} \cl T \rightarrow \limos (\cl S_k \otimes_{\mu} \cl T)$ be the unital completely positive map 
associated to the inductive system (\ref{eq:tensor product2}).

    \begin{lemma}\label{lem:notation tensor product of inductive limit}
     If $x \in (\limos \cl S_k)\odot \cl T$ 
     then there exist $k, n \in \bb{N}$, $s_{k}^i \in \cl S_k$ and $t^{i} \in \cl T$, $1 \leq i \leq n$, such that
     the set $\{t^i\}_{i=1}^n$ is linearly independent and 
              $$x = \sum_{i=1}^n \phi_{k,\infty} (s_k^i) \otimes t^i. $$
    \begin{proof}
        Since $\limos \cl S_k = \cup_{k\in \bb{N}} \phi_{k,\infty}(\cl S_k)$, 
        there exists $n \in \bb{N}$, $k_i\in\bb{N}$, $s_{k_i}\in \cl S_{k_i}$ and $t^i\in \cl T$, $i = 1,\dots,n$, 
        such that
              $x = \sum_{i=1}^n \phi_{k_i,\infty} (s_{k_i}) \otimes t^i.$
              Let $k= \max\{k_i : 1 \leq i \leq n\}$ and $s_k^i = \phi_{k_i, k}(s_{k_i})$, $i = 1,\dots,n$.
Choosing $n$ to be minimal with this property ensures that $\{t^i\}_{i = 1}^n$ is linearly independent. 
    \end{proof}
    \end{lemma}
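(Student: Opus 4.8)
The plan is to reduce an arbitrary element of the algebraic tensor product to a single common stage $\cl S_k$ and then clean up the second tensor factors so that they become linearly independent. First I would recall that, by the construction of the inductive limit in $\catos$, we have $\limos \cl S_k = \bigcup_{k \in \bb{N}} \phi_{k,\infty}(\cl S_k)$; consequently every element of $(\limos \cl S_k)\odot \cl T$ is a \emph{finite} sum $x = \sum_{i=1}^n u_i \otimes t^i$ with $u_i \in \limos \cl S_k$ and $t^i \in \cl T$. Writing each $u_i = \phi_{k_i,\infty}(s_{k_i})$ for suitable $k_i \in \bb{N}$ and $s_{k_i}\in \cl S_{k_i}$, I would set $k = \max\{k_1,\dots,k_n\}$ and use the identity $\phi_{k_i,\infty} = \phi_{k,\infty}\circ\phi_{k_i,k}$ (the iterate of $\phi_{k+1,\infty}\circ\phi_k = \phi_{k,\infty}$) to rewrite $u_i = \phi_{k,\infty}(s_k^i)$, where $s_k^i = \phi_{k_i,k}(s_{k_i}) \in \cl S_k$. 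This already yields $x = \sum_{i=1}^n \phi_{k,\infty}(s_k^i)\otimes t^i$ with all first factors coming from the single space $\cl S_k$.

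It remains to arrange that the $\{t^i\}_{i=1}^n$ are linearly independent. The clean way is to choose, among all representations of $x$ of the above form, one in which the number of summands $n$ is minimal. I claim this minimal representation automatically has linearly independent second factors: if some $t^j$ were a linear combination $t^j = \sum_{i\neq j}\lambda_i t^i$ of the others, then bilinearity of $\otimes$ would let me absorb the $j$-th term into the remaining ones, namely $\phi_{k,\infty}(s_k^j)\otimes t^j = \sum_{i\neq j}\phi_{k,\infty}(\lambda_i s_k^j)\otimes t^i$, producing a representation of $x$ with $n-1$ summands and contradicting minimality. Here I use that each $\phi_{k,\infty}$ is linear and that $s_k^i + \lambda_i s_k^j$ stays inside $\cl S_k$. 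Hence the $t^i$ in a minimal representation are linearly independent, which completes the argument.

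The only point requiring care---and the one I would regard as the main, though mild, obstacle---is the passage to a common stage: one must be sure that \emph{finitely many} indices $k_i$ occur, so that the maximum $k$ exists and a single target space $\cl S_k$ suffices. This is guaranteed precisely because elements of the algebraic tensor product are finite sums and because $\limos \cl S_k$ is exhausted by the increasing family $\phi_{k,\infty}(\cl S_k)$; no completion or limiting process is involved, so the reduction is purely algebraic. Everything else is the standard linear-algebra fact that an element of a tensor product can always be written with linearly independent factors on one side.
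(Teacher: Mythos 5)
Your proof is correct and follows essentially the same route as the paper: write the element as a finite sum, push all first factors to the common stage $\cl S_k$ with $k$ the maximum of the finitely many indices, and take a representation with minimal $n$ to force linear independence of the $t^i$. You in fact spell out the minimality argument more explicitly than the paper does, which simply asserts it.
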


    \begin{proposition} \label{prop:TensorProdCommute-LinearSpaces}
        Let
        $\cl S_1\stackrel{\phi_1}{\longrightarrow} \cl S_2 \stackrel{\phi_2}{\longrightarrow} \cl S_3 \stackrel{\phi_3}{\longrightarrow} \cl S_4 \stackrel{\phi_4}{\longrightarrow} \cdots$
        be an inductive system in~$\catos$. Let $\cl T$ be an operator system and $\mu$ be a functorial operator system tensor product. 
        Then the mapping 
        $\widetilde{\alpha}: (\limos \cl S_k) \odot \cl T \rightarrow \limos (\cl S_k \otimes_{\mu} \cl T)$ given by
        \begin{equation}\label{eq:alpha equation}
          \widetilde{\alpha} \circ (\phi_{k,\infty} \otimes {\id}_{\cl T}) = \psi_{k,\infty}, \ \ \ k\in \bb{N},
        \end{equation}
        is a well-defined linear bijection.
    \end{proposition}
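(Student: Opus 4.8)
The plan is to define $\widetilde\alpha$ through the prescribed intertwining relation (\ref{eq:alpha equation}) and then verify, in turn, that it is well defined, surjective and injective; linearity is automatic once it is well defined, since it is assembled from the linear maps $\psi_{k,\infty}$. First I would address well-definedness. By Lemma~\ref{lem:notation tensor product of inductive limit} every $x\in(\limos\cl S_k)\odot\cl T$ has the form $(\phi_{k,\infty}\otimes\id_{\cl T})(u)$ for some $k$ and some $u\in\cl S_k\odot\cl T$, and I would set $\widetilde\alpha(x)=\psi_{k,\infty}(u)$. Two reductions are required. Passing to a higher level is harmless: writing $\psi_{k,l}=\phi_{k,l}\otimes\id_{\cl T}$, one has $(\phi_{l,\infty}\otimes\id_{\cl T})\circ\psi_{k,l}=\phi_{k,\infty}\otimes\id_{\cl T}$ and $\psi_{l,\infty}\circ\psi_{k,l}=\psi_{k,\infty}$, so replacing $u$ by $\psi_{k,l}(u)$ changes neither side. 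This lets me place two competing representations at a common level $k$, after which well-definedness reduces to the kernel containment $\ker(\phi_{k,\infty}\otimes\id_{\cl T})\subseteq\ker\psi_{k,\infty}$ inside $\cl S_k\odot\cl T$.

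To prove this containment I would write $u=\sum_{i=1}^{n}s_k^{i}\otimes t^{i}$ with $\{t^{i}\}$ linearly independent; then $(\phi_{k,\infty}\otimes\id_{\cl T})(u)=0$ forces $\phi_{k,\infty}(s_k^{i})=0$, i.e. $\lim_{m}\norm{\phi_{k,m}(s_k^{i})}_{m}=0$ by Proposition~\ref{prop:norm for inductive limit operator system}. The key estimate is that for any operator system tensor product $\mu$ one has $\norm{a\otimes b}_{\cl A\otimes_{\mu}\cl B}\le\norm{a}\,\norm{b}$: for $\norm{a}\le1$ and $\norm{b}\le1$ the matrices $\left(\begin{smallmatrix}e&a\\a^{*}&e\end{smallmatrix}\right)$ and $\left(\begin{smallmatrix}e&b\\b^{*}&e\end{smallmatrix}\right)$ are positive by Lemma~\ref{rem:norm of an operator system}, their tensor lies in the level-$4$ cone by the defining property $C_n\otimes D_m\subseteq P_{nm}$ of the operator system structure, and compressing to the $(1,4)$ corner (by compatibility of the matrix ordering) yields $\left(\begin{smallmatrix}e\otimes e&a\otimes b\\a^{*}\otimes b^{*}&e\otimes e\end{smallmatrix}\right)\in M_2(\cl A\otimes_{\mu}\cl B)^{+}$, so $\norm{a\otimes b}_{\mu}\le1$. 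Applying this in $\cl S_m\otimes_{\mu}\cl T$ gives $\norm{\psi_{k,m}(u)}\le\sum_{i}\norm{\phi_{k,m}(s_k^{i})}_{m}\norm{t^{i}}\to0$, whence $\psi_{k,\infty}(u)=0$ by Proposition~\ref{prop:norm for inductive limit operator system}. Surjectivity is then immediate, since $\limos(\cl S_k\otimes_{\mu}\cl T)=\bigcup_{k}\psi_{k,\infty}(\cl S_k\odot\cl T)$ and each generator $\psi_{k,\infty}(u)$ equals $\widetilde\alpha\bigl((\phi_{k,\infty}\otimes\id_{\cl T})(u)\bigr)$.

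The substantive point is injectivity, which is the reverse containment $\ker\psi_{k,\infty}\subseteq\ker(\phi_{k,\infty}\otimes\id_{\cl T})$ and is where functoriality of $\mu$ enters. Here I would build bounded \emph{slice maps}. Writing $u=\sum_{i}s_k^{i}\otimes t^{i}$ with $\{t^{i}\}$ independent, I choose $g_j\in\cl T'$ with $g_j(t^{i})=\delta_{ij}$; since the continuous dual $\cl T'$ is spanned by the states $S(\cl T)$, each $g_j$ is a finite combination of states $\omega$. For a state $\omega$, functoriality of $\mu$ together with the identification $\cl S_m\otimes_{\mu}\bb{C}=\cl S_m$ shows that $\id_{\cl S_m}\otimes\omega:\cl S_m\otimes_{\mu}\cl T\to\cl S_m$ is unital completely positive, hence contractive; thus the slice map $R_j=\id_{\cl S_m}\otimes g_j$ is bounded with a bound independent of $m$ and satisfies $R_j\bigl(\sum_{i}\phi_{k,m}(s_k^{i})\otimes t^{i}\bigr)=\phi_{k,m}(s_k^{j})$. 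If $\psi_{k,\infty}(u)=0$ then $\norm{\sum_{i}\phi_{k,m}(s_k^{i})\otimes t^{i}}\to0$, so $\norm{\phi_{k,m}(s_k^{j})}_{m}\le\norm{R_j}\,\norm{\sum_{i}\phi_{k,m}(s_k^{i})\otimes t^{i}}\to0$ for each $j$, giving $\phi_{k,\infty}(s_k^{j})=0$ and hence $(\phi_{k,\infty}\otimes\id_{\cl T})(u)=0$.

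I expect the construction of these uniformly bounded slice maps to be the main obstacle — in particular justifying $\cl S\otimes_{\mu}\bb{C}=\cl S$ for a functorial $\mu$ and the decomposition of the separating functionals $g_j$ into states, which is exactly the place where functoriality (rather than mere positivity of the bilinear map) is indispensable. The well-definedness direction, by contrast, needs only the tensor-norm bound above and so holds for every operator system tensor product. The remaining verifications are routine linear algebra in the algebraic tensor product and repeated applications of Proposition~\ref{prop:norm for inductive limit operator system}.
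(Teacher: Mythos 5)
Your proposal is correct and follows essentially the same route as the paper: well-definedness via the cross-norm inequality $\norm{a\otimes b}_{\mu}\le\norm{a}\norm{b}$ combined with Proposition~\ref{prop:norm for inductive limit operator system}, surjectivity from the union description of the inductive limit, and injectivity via uniformly bounded slice maps $\id\otimes g_j$ attached to functionals dual to a linearly independent family $\{t^i\}$. The only difference is presentational: where the paper imports the cross-norm bound and the slice-map estimate from \cite[Propositions~3.4 and~3.7]{KavrukPaulsenTodorovTomforde2}, you re-derive them from the tensor product axioms (the $2\times2$ positivity trick, and functoriality plus the decomposition of bounded functionals into states), which is sound.
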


    \begin{proof}
        Suppose that $(\phi_{k,\infty}(s_k),t_1 )=(\phi_{l,\infty}(s_l),t_2)$
        for some $s_k \in \cl S_k, s_l\in \cl S_l$ where $k<l$ and $t_1, t_2 \in \cl T$. Then 
        $\phi_{l,\infty}(\phi_{k,l}(s_k) - s_l) = 0$ 
        and $t_1 = t_2$.
         By Proposition \ref{prop:norm for inductive limit operator system}, 
         $\lim_{p \rightarrow \infty}\norm{\phi_{l,p}(\phi_{k,l}(s_k) - s_l)}_{\cl S_p}=0$ and thus
        \[
        \begin{split}
           &  \lim_{p \rightarrow \infty}\norm{\psi_{l,p}(\psi_{k,l}((s_k \otimes t_1) - s_l \otimes t_2))}_{\cl S_p \otimes_{\mu} \cl T} \\
            &=\lim_{p \rightarrow \infty}\norm{\psi_{l,p}((\phi_{k,l}(s_k)-s_l) \otimes t_1)}_{\cl S_p \otimes_{\mu} \cl T} \\
            &= \lim_{p \rightarrow \infty}\norm{\phi_{l,p}(\phi_{k,l}(s_k) - s_l) \otimes t_1}_{\cl S_p \otimes_{\mu} \cl T} \\
            &\leq \norm{t_1}_{\cl T} \lim_{p \rightarrow \infty}\norm{\phi_{l,p}(\phi_{k,l}(s_k) - s_l)}_{\cl S_p}
            = 0,
          \end{split}
        \]
        where the last inequality follows from \cite[Proposition~3.4]{KavrukPaulsenTodorovTomforde2}.
        By Proposition \ref{prop:norm for inductive limit operator system},
$\psi_{l,\infty}(\psi_{k,l}(s_k\otimes t_1) - s_l\otimes t_2) = 0$ and hence $\psi_{k,\infty}(s_k\otimes t_1) = \psi_{l,\infty}(s_l\otimes t_2)$. 
It follows that the map
        $\alpha: (\limos \cl S_k) \times \cl T \rightarrow \limos (\cl S_k \otimes_{\mu} \cl T)$, given by
              $  \alpha(\phi_{k,\infty}(s_k),t )= \psi_{k,\infty}(s_k \otimes t)$, is well-defined. 
The map $\alpha$ is clearly bilinear, and its linearisation 
 $\widetilde{\alpha}: (\limos\cl S_k) \odot \cl T \rightarrow \limos (\cl S_k \otimes_{\mu} \cl T)$ satisfies
        \[
                \widetilde{\alpha} \left(\phi_{k,\infty}(s_k) \otimes t\right) = \psi_{k,\infty}(s_k \otimes t), \ \ s_k\in \cl S_k, t\in \cl T, k\in \bb{N}.
        \]

        We show that $\widetilde{\alpha}$ is bijective. To show that $\widetilde{\alpha}$ is surjective, suppose that
        $y \in \limos (\cl S_k \otimes_{\mu} \cl T)$ and write 
        \[
                y = \psi_{k,\infty}\Big( \sum_{i=1}^n s_k^i \otimes t^i \Big),
        \]
        where $s_k^i \in \cl S_k$, $k\in \bb{N}$, and $t^i \in \cl T$, $1 \leq i \leq n$. Then
        \[
                \sum_{i=1}^n \phi_{k,\infty}(s_k^i) \otimes t^i \in (\limos \cl S_k) \odot \cl T
        \]
        and
        \[
        \begin{split}
                \widetilde{\alpha}\Big(\sum_{i=1}^n \phi_{k,\infty}(s_k^i) \otimes t^i \Big)
                &= \widetilde{\alpha}\circ (\phi_{k,\infty}\otimes\id) \Big(\sum_{i=1}^n s_k^i \otimes t^i \Big)\\
                &= \sum_{i=1}^n \widetilde{\alpha}\big(\phi_{k,\infty}(s_k^i) \otimes t^i \big)
                = \sum_{i=1}^n \psi_{k,\infty}(s_k^i \otimes t^i)\\
                & = \psi_{k,\infty}\Big(\sum_{i=1}^n s_k^i \otimes t^i\Big) = y. \\
        \end{split}
        \]
        To see that $\widetilde{\alpha}$ is injective, 
        let $x \in (\limos \cl S_k) \odot \cl T$ with $\widetilde{\alpha}(x) =0$. 
        Using Lemma~\ref{lem:notation tensor product of inductive limit}, write
                $x = \sum_{i=1}^n \phi_{k,\infty} (s_k^i) \otimes t^i$
        for some $k \in \bb{N}, s_{k}^i \in \cl S_k$, $1 \leq i \leq n$, 
        and a linearly independent family $\{t^{i}\}_{i=1}^n \subseteq \cl T$. 
        Since
        \[
            \widetilde{\alpha} \Big( \sum_{i=1}^n \phi_{k,\infty} (s_k^i) \otimes t^i \Big)
            = \psi_{k,\infty} \left(\sum_{i=1}^n  s_k^i \otimes t^i \right), 
        \]
        it follows by Proposition \ref{prop:norm for inductive limit operator system} that
        \[
                            \lim_{p \rightarrow \infty} \Big\lVert\sum_{i=1}^n \phi_{k,p}(s_k^i) \otimes t^i\Big\rVert_{\cl S_p \otimes_{\mu} \cl T}= \lim_{p \rightarrow \infty} \Big\lVert\psi_{k,p}\Big(\sum_{i=1}^n s_k^i \otimes t^i\Big)\Big\rVert_{\cl S_p \otimes_{\mu} \cl T}=0.
        \]
        Let $W  = {\rm span}\{t^1, t^2, \ldots, t^n\}\subseteq \cl T$
        and define, for each $l = 1, \ldots, n$, a linear functional $f_l : W \rightarrow \mathbb{C}$ by letting
        \[
                    f_l(t^i) =
            \begin{cases}
                1 &\mbox{if } i=l \\
                0 & \mbox{if } i \neq l.
            \end{cases}
        \]
        Each $f_l$ is bounded and may be extended to a bounded functional $\widetilde{f_l}: \cl T \rightarrow \mathbb{C}$.
        It follows from \cite[Proposition~3.7]{KavrukPaulsenTodorovTomforde2} that for any $k \in \mathbb{N}$ and 
        $1 \leq l \leq n$, $\norm{{\id}_{\cl S_k} \otimes \widetilde{f_l}} \leq \norm{\widetilde{f_l}}$. Therefore, for each $l = 1, \ldots, n$,
        \[
            \begin{split}
                \lim_{p \rightarrow \infty} \norm{\phi_{k,p}({s_k^l})}_{\cl S_p}
                &= \lim_{p \rightarrow \infty} \Big\lVert({\id}_{\cl S_k} \otimes \widetilde{f_l})\Big(\sum_{i=1}^{n} \phi_{k,p}(s_k^i) \otimes t^i\Big)\Big\rVert_{\cl S_p}\\
                &\leq \norm{\widetilde{f_l}} \lim_{p \rightarrow \infty} \Big\lVert\sum_{i=1}^n \phi_{k,p}(s_k^i) \otimes t^i\Big\rVert_{\cl S_p \otimes_{\mu} \cl T} =0. \\
            \end{split}
        \]
        By Proposition \ref{prop:norm for inductive limit operator system}, $\phi_{k,\infty}(s_k^l) = 0$ for each
$l =1, \ldots, n$ and hence $x = 0$.
    \end{proof}

    Throughout this section, unless otherwise specified, we let $\widetilde{\alpha}$ denote the map defined 
by (\ref{eq:alpha equation}).

    \begin{remark}\label{rem:nR=nS}
Let $k\in \bb{N}$ and $R \in M_n(\cl S_k \otimes_{\mu}\cl T)$. We have that 
$\ddot{\psi}_{k,\infty}^{(n)}(R) \in M_n(N_{\mu})$ if and only if $(\phi_{k,\infty} \otimes {\id}_{\cl T})^{(n)}(R) = 0$.
    \end{remark}
\begin{proof}
If $R = (r_{i,j})_{i,j} \in M_n(\cl S_k \otimes_{\mu}\cl T)$ and $\ddot{\psi}_{k,\infty}^{(n)}(R) \in M_n(N_{\mu})$ 
then 
$\psi_{k,\infty}(r_{i,j})$ $= 0$ for all $i,j$ and hence, by the injectivity of the map $\widetilde{\alpha}$, established in 
Proposition~\ref{prop:TensorProdCommute-LinearSpaces}, we have that 
$(\phi_{k,\infty} \otimes {\id}_{\cl T})(r_{i,j}) =0$ for all $i,j$. Thus, $(\phi_{k,\infty} \otimes {\id}_{\cl T})^{(n)}(R) = 0$.
Conversely, if $(\phi_{k,\infty} \otimes {\id}_{\cl T})^{(n)}(R) = 0$ then 
$\psi_{k,\infty}(r_{i,j})$ $=$ $\widetilde{\alpha}((\phi_{k,\infty} \otimes {\id}_{\cl T})(r_{i,j})) = 0$ for all $i,j$ and hence 
$\ddot{\psi}_{k,\infty}^{(n)}(R) \in M_n(N_{\mu})$.
\end{proof}


    \begin{theorem}\label{thm:alpa-1 is ucp}
        Let
        $\cl S_1\stackrel{\phi_1}{\longrightarrow} \cl S_2 \stackrel{\phi_2}{\longrightarrow} \cl S_3 \stackrel{\phi_3}{\longrightarrow} \cl S_4 \stackrel{\phi_4}{\longrightarrow} \cdots$
        be an inductive system in $\catos$. Let $\cl T$ be an operator system and $\mu$ be a functorial operator system tensor product.
        Then the inverse 
            $\widetilde{\alpha}^{-1}: \limos (\cl S_k \otimes_{\mu} \cl T) \rightarrow (\limos \cl S_k) \otimes_{\mu} \cl T$ 
            of the map $\widetilde{\alpha}$ 
        is a unital completely positive map.
            \end{theorem}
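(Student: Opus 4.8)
The plan is to build a candidate for $\widetilde{\alpha}^{-1}$ directly from the universal property of the inductive limit on the domain side, and then to identify that candidate with $\widetilde{\alpha}^{-1}$. Since the domain $\limos(\cl S_k \otimes_{\mu}\cl T)$ is itself an inductive limit, Theorem~\ref{thm:universal property_operator_system2} is available to manufacture unital completely positive maps out of it, which is exactly the direction we need. Note that the companion assertion, that $\widetilde{\alpha}$ itself be completely positive, is \emph{not} claimed here; its domain is a tensor product rather than an inductive limit, and that harder direction is treated separately under additional hypotheses on $\mu$ or the connecting maps.

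First I would set $\beta_k = \phi_{k,\infty}\otimes \id_{\cl T} : \cl S_k \otimes_{\mu}\cl T \to (\limos \cl S_k)\otimes_{\mu}\cl T$. By Proposition~\ref{prop:S_{infty} is operator space} the map $\phi_{k,\infty}$ is unital and completely positive, and $\id_{\cl T}$ is trivially so; hence functoriality of $\mu$ guarantees that each $\beta_k$ is unital and completely positive. Next I would record the compatibility with the connecting maps $\psi_k = \phi_k \otimes \id_{\cl T}$ of the system~(\ref{eq:tensor product2}): using $\phi_{k+1,\infty}\circ \phi_k = \phi_{k,\infty}$ we obtain
\[
\beta_{k+1}\circ \psi_k = (\phi_{k+1,\infty}\circ \phi_k)\otimes \id_{\cl T} = \phi_{k,\infty}\otimes \id_{\cl T} = \beta_k, \qquad k \in \bb{N}.
\]
Thus $\big((\limos \cl S_k)\otimes_{\mu}\cl T,\{\beta_k\}_{k\in\bb{N}}\big)$ is a pair of the kind appearing in the universal property of the inductive limit, so Theorem~\ref{thm:universal property_operator_system2} supplies a unique unital completely positive map $\Theta : \limos(\cl S_k \otimes_{\mu}\cl T)\to (\limos \cl S_k)\otimes_{\mu}\cl T$ with $\Theta\circ \psi_{k,\infty} = \beta_k$ for all $k$.

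It then remains to check $\Theta = \widetilde{\alpha}^{-1}$, after which the theorem follows immediately since $\Theta$ is unital and completely positive. Using the defining identity $\widetilde{\alpha}\circ \beta_k = \psi_{k,\infty}$ from Proposition~\ref{prop:TensorProdCommute-LinearSpaces}, I would compute $\widetilde{\alpha}\circ \Theta\circ \psi_{k,\infty} = \widetilde{\alpha}\circ \beta_k = \psi_{k,\infty}$; since $\limos(\cl S_k \otimes_{\mu}\cl T) = \bigcup_{k}\psi_{k,\infty}(\cl S_k \otimes_{\mu}\cl T)$, this forces $\widetilde{\alpha}\circ \Theta = \id$. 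In the other direction, $\Theta\circ \widetilde{\alpha}\circ \beta_k = \Theta\circ \psi_{k,\infty} = \beta_k$, and because every element of $(\limos \cl S_k)\odot \cl T$ lies in $\bigcup_k \beta_k(\cl S_k \otimes_{\mu}\cl T)$ by Lemma~\ref{lem:notation tensor product of inductive limit}, this forces $\Theta\circ \widetilde{\alpha} = \id$. Hence $\Theta = \widetilde{\alpha}^{-1}$.

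There is no serious analytic obstacle in this argument; the care needed is organizational. The point to get right is the compatibility identity above, so that the universal property of Theorem~\ref{thm:universal property_operator_system2} genuinely applies, together with the two ``density''/span descriptions of $\limos(\cl S_k\otimes_{\mu} \cl T)$ and of $(\limos \cl S_k)\odot \cl T$ used to pin the two composites down as identities. The genuine difficulty of the whole circle of ideas lies entirely in the opposite direction—showing that $\widetilde{\alpha}$ is completely positive—which is precisely why the present statement is restricted to the inverse map.
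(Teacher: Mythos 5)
Your argument is correct, but it proceeds along a genuinely different route from the paper's. The paper proves the statement by a direct computation with the matricial cones: it takes $\psi_{k,\infty}^{(n)}(R)$ positive, unwinds the description of the positive cone of the Archimedeanisation (for every $r>0$ one finds $l$, $P$ and $m$ with $\ddot{\psi}_{l,\infty}^{(n)}(P)\in M_n(N_{\mu})$ and $r(e_m\otimes e_{\cl T})^{(n)}+\psi_{k,m}^{(n)}(R)+\psi_{l,m}^{(n)}(P)$ positive), pushes this forward with the completely positive map $(\phi_{m,\infty}\otimes\id_{\cl T})^{(n)}$, kills the $P$-term using Remark~\ref{rem:nR=nS}, and concludes by Archimedeanity of $(\limos \cl S_k)\otimes_{\mu}\cl T$. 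You instead invoke the universal property of Theorem~\ref{thm:universal property_operator_system2} applied to the compatible family $\beta_k=\phi_{k,\infty}\otimes\id_{\cl T}$ to manufacture a unital completely positive $\Theta$, and then identify $\Theta=\widetilde{\alpha}^{-1}$ by composing with $\widetilde{\alpha}$ on the dense unions of images; all the ingredients you cite (functoriality of $\mu$ for complete positivity of $\beta_k$, the defining relation $\widetilde{\alpha}\circ\beta_k=\psi_{k,\infty}$ from Proposition~\ref{prop:TensorProdCommute-LinearSpaces}, and the span descriptions from Lemma~\ref{lem:notation tensor product of inductive limit} and $\limos(\cl S_k\otimes_{\mu}\cl T)=\bigcup_k\psi_{k,\infty}(\cl S_k\otimes_{\mu}\cl T)$) are available and used correctly. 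Your approach is shorter and more conceptual, and it bypasses Remark~\ref{rem:nR=nS} and the explicit Archimedeanisation cones entirely; the paper's element-level argument is more self-contained and makes visible exactly how positivity passes through the null space, which is the mechanism reused in the subsequent tensor-product theorems. Your closing remark correctly locates the real difficulty in the opposite direction (complete positivity of $\widetilde{\alpha}$ itself), which the paper handles separately under injectivity or maximality hypotheses.
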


    \begin{proof}
        Suppose $\psi_{k,\infty}^{(n)}(R) \in M_n(\limos (\cl S_k \otimes \cl T))^+$ for some
        $R \in M_n(\cl S_k \otimes_{\mu} \cl T)_h$, $k\in\bb{N}$.
        Then for every $r > 0$ there exist $l\in \bb{N}$, $P\in \cl S_l \otimes_{\mu} \cl T$ and $m > \max\{k,l\}$ such that 
        $\ddot{\psi}_{l,\infty}^{(n)}(P) \in M_n(N_{\mu})$ and 
        \[
                r (e_m \otimes e_{\cl T})^{(n)} + \psi_{k,m}^{(n)}(R) + \psi_{l,m}^{(n)}(P) \in M_n(\cl S_m \otimes_{\mu}\cl T)^+.
        \]
        By Remark~\ref{rem:nR=nS},
        \[
        \begin{split}
                r&(\phi_{k,\infty}(e_k) \otimes e_{\cl T})^{(n)}+ (\phi_{k,\infty} \otimes {\id}_{\cl T})^{(n)}(R) \\
                &=
                (\phi_{m,\infty} \otimes {\id}_{\cl T})^{(n)}\big(r (e_m \otimes e_{\cl T})^{(n)} + \psi_{k,m}^{(n)}(R) + \psi_{l,m}^{(n)}(P)\big) \\
                &  \in M_n((\limos \cl S_k) \otimes_{\mu} \cl T)^+. \\
        \end{split}
        \]
        Since this holds for all $r>0$, it follows that 
$$(\phi_{k,\infty} \otimes {\id}_{\cl T})^{(n)}(R) \in M_n((\limos \cl S_k) \otimes_{\mu} \cl T)^+.$$ 
Since $\widetilde{\alpha}^{-1} \circ \psi_{k,\infty}= \phi_{k,\infty} \otimes {\id}_{\cl T}$, the proof is complete.
    \end{proof}

    \begin{theorem} \label{thm:TensorCommutesWithIndLimitWhenCOI}
        Let
            $\cl S_1\stackrel{\phi_1}{\longrightarrow} \cl S_2 \stackrel{\phi_2}{\longrightarrow} \cl S_3 \stackrel{\phi_3}{\longrightarrow} \cl S_4 \stackrel{\phi_4}{\longrightarrow} \cdots$
        be an inductive system in~$\catos$ such that each $\phi_k$ is a complete order isomorphism onto its image. Let $\cl T$ be an operator system and $\mu$ be a functorial, injective operator system tensor product.  
        Then the map $\widetilde{\alpha} : (\limos \cl S_k) \otimes_{\mu}
        \cl T \to \limos (\cl S_k \otimes_{\mu} \cl T)$ is a unital complete order isomorphism.
    \end{theorem}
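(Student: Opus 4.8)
The plan is to combine the two facts already established—that $\widetilde{\alpha}$ is a linear bijection (Proposition~\ref{prop:TensorProdCommute-LinearSpaces}) and that $\widetilde{\alpha}^{-1}$ is unital and completely positive (Theorem~\ref{thm:alpa-1 is ucp})—with the stronger claim that, under the present hypotheses, $\widetilde{\alpha}^{-1}$ is in fact a complete order embedding. Since a completely positive bijection whose inverse is also completely positive is a complete order isomorphism, this will finish the proof: the only thing left to verify is that $(\widetilde{\alpha}^{-1})^{(n)}(y) \in M_n((\limos\cl S_k)\otimes_{\mu}\cl T)^+$ forces $y \in M_n(\limos(\cl S_k\otimes_{\mu}\cl T))^+$.

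The crucial point, where both hypotheses enter, is that $\phi_{k,\infty}\otimes\id_{\cl T}$ is a complete order embedding. Indeed, since each connecting map $\phi_k$ is a complete order isomorphism onto its image, Proposition~\ref{p_coios} guarantees that $\phi_{k,\infty}: \cl S_k \to \limos\cl S_k$ is a complete order embedding; as $\id_{\cl T}$ is trivially one, the injectivity of the tensor product $\mu$ then yields that $\phi_{k,\infty}\otimes\id_{\cl T}: \cl S_k\otimes_{\mu}\cl T \to (\limos\cl S_k)\otimes_{\mu}\cl T$ is a complete order embedding.

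With this in hand the verification is direct. Given $y \in M_n(\limos(\cl S_k\otimes_{\mu}\cl T))$ with $(\widetilde{\alpha}^{-1})^{(n)}(y)$ positive, I would use Lemma~\ref{lem:matrix over inductive limit} applied to the inductive system (\ref{eq:tensor product2}) to write $y = \psi_{k,\infty}^{(n)}(R)$ for some $k\in\bb{N}$ and $R\in M_n(\cl S_k\otimes_{\mu}\cl T)$. Since $\widetilde{\alpha}^{-1}\circ\psi_{k,\infty} = \phi_{k,\infty}\otimes\id_{\cl T}$, we have $(\widetilde{\alpha}^{-1})^{(n)}(y) = (\phi_{k,\infty}\otimes\id_{\cl T})^{(n)}(R) \in M_n((\limos\cl S_k)\otimes_{\mu}\cl T)^+$; because $\phi_{k,\infty}\otimes\id_{\cl T}$ is a complete order embedding, this forces $R\in M_n(\cl S_k\otimes_{\mu}\cl T)^+$, whence $y = \psi_{k,\infty}^{(n)}(R)$ is positive, $\psi_{k,\infty}$ being completely positive. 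Thus $\widetilde{\alpha}^{-1}$ is a complete order embedding, hence—being bijective—a complete order isomorphism, and so is its inverse $\widetilde{\alpha}$. Unitality of $\widetilde{\alpha}$ is immediate from $\widetilde{\alpha}(e_{\limos\cl S_k}\otimes e_{\cl T}) = \psi_{k,\infty}(e_k\otimes e_{\cl T})$, giving the desired unital complete order isomorphism.

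The only genuine work is the middle step—establishing that $\phi_{k,\infty}\otimes\id_{\cl T}$ is a complete order embedding—which is precisely where the two standing assumptions (the connecting maps being complete order embeddings, so that Proposition~\ref{p_coios} applies, and $\mu$ being injective) are used; everything else is bookkeeping resting on the earlier results. I expect this to be the main obstacle only in the sense that it is the step that genuinely consumes the hypotheses: the positivity-reflection at every matrix level passes through it, and without injectivity of $\mu$ the forward map $\widetilde{\alpha}$ need not be completely positive.
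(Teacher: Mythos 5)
Your proof is correct, and it takes a genuinely different route from the paper's. The paper proves the theorem by showing that the pair $\left((\limos \cl S_k)\otimes_{\mu}\cl T,\ \{\phi_{k,\infty}\otimes\id_{\cl T}\}_{k\in\bb{N}}\right)$ satisfies the universal property of the inductive limit: it constructs, for an arbitrary compatible family $\{\rho_k\}$ into an operator system $\cl R$, a well-defined bilinear map, linearises it, and verifies complete positivity of the linearisation. You instead work entirely with the concrete bijection $\widetilde{\alpha}$, reusing Proposition~\ref{prop:TensorProdCommute-LinearSpaces} and Theorem~\ref{thm:alpa-1 is ucp} for one direction and checking the other by hand, which spares you the well-definedness and linearisation bookkeeping that the paper's universal-property argument must redo. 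The decisive step is identical in both treatments: Proposition~\ref{p_coios} together with the injectivity of $\mu$ gives that $\phi_{k,\infty}\otimes\id_{\cl T}$ is a complete order embedding, and this is exactly what lets positivity be reflected back from $M_n\big((\limos\cl S_k)\otimes_\mu\cl T\big)$ to $M_n(\cl S_k\otimes_\mu\cl T)$ (in your argument, to conclude $R\geq 0$; in the paper's, to conclude $X\geq 0$ before applying $\rho_k^{(n)}$). What the paper's approach buys is the canonical identification via the universal property, which it then exploits in later results; what yours buys is brevity and a cleaner logical dependence on the two preceding results about $\widetilde{\alpha}$. Your reduction of an arbitrary element of $M_n\big(\limos(\cl S_k\otimes_\mu\cl T)\big)$ to the form $\psi_{k,\infty}^{(n)}(R)$ is legitimate (the matrix-level analogue of Lemma~\ref{lem:matrix over inductive limit} passes to the quotient by the null space), and the fact that this representation is not unique is harmless, since positivity of $y$ is deduced from positivity of one chosen preimage via the complete positivity of $\psi_{k,\infty}$.
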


    \begin{proof}
Note that the maps $\phi_{k,\infty}\otimes\id_{\cl T}$, $k\in \bb{N}$, are completely positive and 
$$(\phi_{k+1,\infty}\otimes\id\mbox{}_{\cl T}) \circ (\phi_{k}\otimes\id\mbox{}_{\cl T}) = \phi_{k,\infty}\otimes\id\mbox{}_{\cl T}, 
\ \ k\in \bb{N}.$$
        We will show that the pair
        $$\left((\limos \cl S_k) \otimes_{\mu} \cl T, \{\phi_{k,\infty}\otimes\id\mbox{}_{\cl T}\}_{k\in \bb{N}}\right)$$
        satisfies the universal property of the inductive limit $\limos (\cl S_k \otimes_{\mu} \cl T)$. 
        Suppose that $(\cl R, \lbrace \rho_k \rbrace_{k \in \mathbb{N}})$ is another pair consisting of an operator system and a family of unital completely positive maps $\rho_k: \cl S_k \otimes_{\mu} \cl T \rightarrow \cl R$ such that
        \begin{equation}\label{eq_Ten2}
            \rho_{k+1} \circ \psi_k  = \rho_k, \ \ \ k \in \mathbb{N}.
        \end{equation}
        Suppose that $(\phi_{k, \infty}(s_k), t_1)=(\phi_{l, \infty}(s_l), t_2)$ for some 
        $k, l \in\mathbb{N}, s_k \in \cl S_k, s_l \in \cl S_l$ and $t_1, t_2 \in \cl T$. 
        By Proposition \ref{prop:Eachphi_kOIthenN=0}, 
        there exists $m > \max\lbrace k, l \rbrace$ such that $\phi_{k, m}(s_k) = \phi_{l, m}(s_l)$.
        By (\ref{eq_Ten2}),
        \[
            \begin{split}
                \rho_k(s_k \otimes t_1) &= \rho_m \circ (\phi_{k, m} \otimes {\id}_{\cl T})(s_k \otimes t_1)
                = \rho_m(\phi_{k, m}(s_k) \otimes t_1)\\
                &= \rho_m(\phi_{l, m}(s_l) \otimes t_2)
                = \rho_m \circ (\phi_{l, m} \otimes {\id}_{\cl T})(s_l \otimes t_2)
                = \rho_l(s_l \otimes t_2). \\
            \end{split}
        \]
       It follows that the map $\theta: (\limos \cl S_k) \times \cl T \rightarrow \cl R$, given by
        \[
          \theta  (\phi_{k, \infty}(s_k), t) = \rho_k(s_k \otimes t), \ \ \ k \in \bb{N},
        \]
        is well-defined. 
        Clearly, $\theta$ is bilinear; let 
      $\widetilde{\theta}: (\limos \cl S_k) \otimes_{\mu} \cl T \rightarrow \cl R$ be its linearisation. 
      Thus, $\widetilde{\theta} \circ (\phi_{k,\infty} \otimes {\id}_{\cl T}) = \rho_k$, $k\in \bb{N}$. 
      Since $\rho_k$ is unital, $k\in \bb{N}$, we have that $\widetilde{\theta}$ is unital.      
      
      We check that $\widetilde{\theta}$ is completely positive.
        Suppose that $X \in M_n(\cl S_k \otimes_{\mu} \cl T)$ is such that
        \[
              (\phi_{k,\infty} \otimes {\id}_{\cl T})^{(n)}(X) \in M_n((\limos \cl S_k)\otimes_{\mu} \cl T)^+.
        \]
         By Proposition~\ref{p_coios},
         $\phi_{k,\infty}$ is a unital complete order embedding.
         Since $\mu$ is an injective functorial tensor product, $\phi_{k,\infty} \otimes {\id}_{\cl T}$ is a complete order embedding.
        Therefore $X \in M_n(\cl S_k \otimes_{\mu} \cl T)^+$ and, since $\rho_k$ is completely positive,
        \[
            \widetilde{\theta}^{(n)} \circ (\phi_{k,\infty} \otimes {\id}_{\cl T})^{(n)}(X) = \rho_k^{(n)}(X) \in M_n(\cl R)^+.
        \]
        It follows that $\widetilde{\theta}$ is completely positive, and the proof is complete. 
    \end{proof}

As a direct consequence of Theorem \ref{thm:TensorCommutesWithIndLimitWhenCOI}, we 
obtain the following fact, which was observed in \cite{LuthraKumar} in the case of complete operator systems.

    \begin{corollary}\label{cor: min tensor product commutes when maps are coi}
        Let\index{operator system tensor product!minimal}
            $\cl S_1\stackrel{\phi_1}{\longrightarrow} \cl S_2 \stackrel{\phi_2}{\longrightarrow} \cl S_3 \stackrel{\phi_3}{\longrightarrow} \cl S_4 \stackrel{\phi_4}{\longrightarrow} \cdots$
        be an inductive system in~$\catos$ such that each $\phi_k$ is a complete order isomorphism onto its image, 
        and let $\cl T$ be an operator system.  Then $\limos (\cl S_k \mitp \cl T)$ is unitally completely order isomorphic to $(\limos \cl S_k) \mitp \cl T$.
    \end{corollary}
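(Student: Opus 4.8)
The plan is to deduce this directly from the general intertwining result Theorem~\ref{thm:TensorCommutesWithIndLimitWhenCOI}, by specializing the functorial, injective tensor product $\mu$ to the minimal operator system tensor product $\mitp$. The only external input needed is the fact, recorded just after the definition of $\cl S \mitp \cl T$ and established in \cite[Theorem~4.4]{KavrukPaulsenTodorovTomforde2}, that $\mitp$ is both functorial and injective.

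First I would verify that the hypotheses of Theorem~\ref{thm:TensorCommutesWithIndLimitWhenCOI} are met. By assumption each connecting map $\phi_k$ is a complete order isomorphism onto its image, and $\cl T$ is an arbitrary operator system, so the only remaining condition to check is that $\mu = \mitp$ is a functorial, injective operator system tensor product; this is precisely the cited property. With $\mu = \mitp$, the theorem produces the canonical linear bijection $\widetilde{\alpha}$ of Proposition~\ref{prop:TensorProdCommute-LinearSpaces}, now realized as a map
\[
\widetilde{\alpha} : (\limos \cl S_k) \mitp \cl T \longrightarrow \limos (\cl S_k \mitp \cl T),
\]
and asserts that it is a unital complete order isomorphism. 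Since $\widetilde{\alpha}$ is unital and intertwines the canonical maps through (\ref{eq:alpha equation}), the two operator systems are unitally completely order isomorphic, which is exactly the claim.

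There is essentially no obstacle at the level of the corollary itself: all the substantive work has already been carried out in Proposition~\ref{prop:TensorProdCommute-LinearSpaces} (the underlying linear isomorphism $\widetilde{\alpha}$), Theorem~\ref{thm:alpa-1 is ucp} (complete positivity of $\widetilde{\alpha}^{-1}$, valid for any functorial $\mu$), and Theorem~\ref{thm:TensorCommutesWithIndLimitWhenCOI} (complete positivity of $\widetilde{\alpha}$, obtained via the universal property of the inductive limit together with the injectivity of $\mu$). The one point worth flagging is that injectivity of the tensor product is genuinely used in the latter step---it is what guarantees that $\phi_{k,\infty} \otimes \id_{\cl T}$ is a complete order embedding, so that positivity can be pulled back along it---whence the hypothesis that each $\phi_k$ be a complete order isomorphism onto its image cannot be dropped. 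Consequently the proof reduces to a one-line invocation of the general theorem with $\mu = \mitp$.
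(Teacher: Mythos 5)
Your proposal is correct and follows exactly the paper's route: the paper states this corollary as a direct consequence of Theorem~\ref{thm:TensorCommutesWithIndLimitWhenCOI}, obtained by specializing $\mu$ to $\mitp$ and invoking the functoriality and injectivity of the minimal tensor product from \cite[Theorem~4.4]{KavrukPaulsenTodorovTomforde2}. Your additional remarks on where injectivity is actually used are accurate but not part of the paper's (essentially one-line) argument.
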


Although the maximal operator system tensor product is not injective, 
the conclusion of Theorem \ref{thm:TensorCommutesWithIndLimitWhenCOI} still holds for it, as we show 
in the next theorem. We note that, in the case where the connecting maps
are complete order embeddings, this result was first stated in \cite{Li}.

    \begin{theorem} \label{thm:MaxTensorCommutesWithIndLimit}
        Let \index{operator system tensor product!maximal}
        $\cl S_1\stackrel{\phi_1}{\longrightarrow} \cl S_2 \stackrel{\phi_2}{\longrightarrow} \cl S_3 \stackrel{\phi_3}{\longrightarrow} \cl S_4 \stackrel{\phi_4}{\longrightarrow} \cdots$
        be an inductive system in $\catos$ and let $\cl T$ be an operator system. Then $\limos (\cl S_k \mtp \cl T)$ is unitally completely order isomorphic to $(\limos \cl S_k) \mtp \cl T$.
            \end{theorem}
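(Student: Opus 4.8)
The plan is to show that the canonical linear bijection $\widetilde{\alpha} : (\limos \cl S_k) \mtp \cl T \to \limos (\cl S_k \mtp \cl T)$ furnished by Proposition~\ref{prop:TensorProdCommute-LinearSpaces} is a unital complete order isomorphism. Since the maximal operator system tensor product is functorial, Theorem~\ref{thm:alpa-1 is ucp} already gives that $\widetilde{\alpha}^{-1}$ is unital and completely positive. Hence the whole statement reduces to proving that $\widetilde{\alpha}$ \emph{itself} is completely positive. The main obstacle is that, in contrast with Theorem~\ref{thm:TensorCommutesWithIndLimitWhenCOI}, we cannot invoke injectivity of the tensor product: the maximal tensor product is not injective, and here nothing is assumed about the connecting maps $\phi_k$. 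A genuinely different argument is therefore required, and supplying it is the crux of the proof.

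To establish positivity of $\widetilde{\alpha}$, I would exploit the universal property of the maximal tensor product (Theorem~\ref{thm:universal property of mtp}). Recall that $\widetilde{\alpha}$ is the linearisation of the bilinear map $\alpha : (\limos \cl S_k) \times \cl T \to \limos(\cl S_k \mtp \cl T)$ determined by $\alpha(\phi_{k,\infty}(s_k), t) = \psi_{k,\infty}(s_k \otimes t)$. Fixing, via the Choi--Effros Theorem, a unital complete order embedding of $\limos(\cl S_k \mtp \cl T)$ into some $\BH$ (which preserves and reflects joint complete positivity), it suffices to show that $\alpha$ is jointly completely positive. Once this is done, Theorem~\ref{thm:universal property of mtp} yields that the linearisation $\widetilde{\alpha}$ is completely positive, and together with the previous paragraph this completes the argument.

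The heart of the matter is thus the verification that $\alpha$ is jointly completely positive. Given $P \in M_p(\limos \cl S_k)^+$ and $Q \in M_q(\cl T)^+$, I would write $P = \phi_{k,\infty}^{(p)}(P_k)$ for a suitable $k$ and $P_k \in M_p(\cl S_k)$, and then invoke the description of the positive cone of $\limos \cl S_k$ in (\ref{eq:def positive matrix in limit}): for each $r > 0$ there exist $l$, some $m > \max\{k,l\}$ and $T_l \in M_p(\cl S_l)$ with $\ddot{\phi}_{l,\infty}^{(p)}(T_l) \in M_p(N)$ such that $P'_m := r e_m^{(p)} + \phi_{k,m}^{(p)}(P_k) + \phi_{l,m}^{(p)}(T_l) \in M_p(\cl S_m)^+$. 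Since the canonical bilinear map $\cl S_m \times \cl T \to \cl S_m \mtp \cl T$ is jointly completely positive (that is, $C_p^m \otimes D_q^{\cl T} \subseteq M_{pq}(\cl S_m\mtp\cl T)^+$) and $\psi_{m,\infty}$ is completely positive, we get $\psi_{m,\infty}^{(pq)}(P'_m \otimes Q) \in M_{pq}(\limos(\cl S_k \mtp \cl T))^+$. The null contribution vanishes, because $(\phi_{l,\infty} \otimes \id_{\cl T})^{(pq)}(T_l \otimes Q) = 0$ forces $\psi_{m,\infty}^{(pq)}(\phi_{l,m}^{(p)}(T_l)\otimes Q) = \psi_{l,\infty}^{(pq)}(T_l \otimes Q) = 0$ by Remark~\ref{rem:nR=nS}; moreover $\psi_{m,\infty}^{(pq)}(\phi_{k,m}^{(p)}(P_k)\otimes Q) = \psi_{k,\infty}^{(pq)}(P_k \otimes Q) = \alpha^{(p,q)}(P,Q)$. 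This leaves the relation $\alpha^{(p,q)}(P,Q) + r\,\psi_{m,\infty}^{(pq)}(e_m^{(p)} \otimes Q) \in M_{pq}(\limos(\cl S_k\mtp\cl T))^+$.

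Finally, choosing $s > 0$ with $Q \le s\, e_{\cl T}^{(q)}$ gives $e_m^{(p)} \otimes Q \le s\, e_m^{(p)} \otimes e_{\cl T}^{(q)} = s\,(e_m \otimes e_{\cl T})^{(pq)}$, whence $\psi_{m,\infty}^{(pq)}(e_m^{(p)}\otimes Q) \le s\, u^{(pq)}$, where $u$ denotes the order unit of $\limos(\cl S_k\mtp\cl T)$ and, crucially, $s$ does not depend on $r$. Consequently $\alpha^{(p,q)}(P,Q) + rs\, u^{(pq)} \in M_{pq}(\limos(\cl S_k\mtp\cl T))^+$ for every $r > 0$, and the Archimedean property of the matrix order unit $u$ lets me let $r \to 0$ and conclude that $\alpha^{(p,q)}(P,Q) \in M_{pq}(\limos(\cl S_k\mtp\cl T))^+$. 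Thus $\alpha$ is jointly completely positive, $\widetilde{\alpha}$ is completely positive, and hence a unital complete order isomorphism. I expect the delicate bookkeeping to lie in carrying the Archimedeanisation correction term $T_l$ through the tensoring and in the order-unit domination of $Q$, as these two devices together take the place of the injectivity used in the functorial-injective case.
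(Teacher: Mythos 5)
Your proof is correct. The decisive computation --- showing that $P\otimes Q$ is carried to a positive element of $\limos(\cl S_k\mtp\cl T)$ by unwinding the Archimedeanised cone description (\ref{eq:def positive matrix in limit}), killing the null-space correction term via Remark~\ref{rem:nR=nS}, dominating $e_m^{(p)}\otimes Q$ by $\norm{Q}$ times the order unit, and letting $r\to 0$ --- is exactly the computation the paper performs. Where you diverge is in how the universal property of $\mtp$ is deployed. The paper pulls back the cones of $\limos(\cl S_k\mtp\cl T)$ along $\widetilde{\alpha}^{-1}$ to obtain a family $\{D_n\}_{n\in\bb{N}}$ on $(\limos\cl S_k)\odot\cl T$, verifies that this family is an operator system \emph{tensor product} structure --- which forces a second, separate computation showing that $(f\otimes g)^{(t)}$ maps $D_t$ into $M_{pqt}^+$ for matricial states $f,g$, via the representation $re^{(t)}+\cdots=A^*(P\otimes Q)A$ --- and then invokes the \emph{uniqueness} half of Theorem~\ref{thm:universal property of mtp}. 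You instead apply the \emph{existence} half directly: after a Choi--Effros embedding $\iota$ of the target, joint complete positivity of $\iota\circ\alpha$ immediately yields complete positivity of $\iota\circ\widetilde{\alpha}$ on $(\limos\cl S_k)\mtp\cl T$, and $\iota$ reflects positivity; combined with Theorem~\ref{thm:alpa-1 is ucp} this finishes the proof. This is a genuine economy: the matricial-state condition and the uniqueness argument are not needed, since you never have to identify $\{D_n\}_{n\in\bb{N}}$ as a tensor product structure in its own right. The only thing the paper's longer route buys is the incidental fact that the pulled-back structure is itself an operator system tensor product structure; for the theorem as stated your argument is complete and shorter.
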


    \begin{proof}
        By Proposition~\ref{prop:TensorProdCommute-LinearSpaces}, 
        $\widetilde{\alpha} : (\limos \cl S_k) \mtp \cl T \to \limos (\cl S_k \mtp \cl T)$ is a linear bijection.
        Set $D_n = (\widetilde{\alpha}^{-1})^{(n)}( M_n(\limos( \cl S_k \mtp \cl T))^+)$, $n\in \bb{N}$.
        By Lemma~\ref{rem:linear map maintain operator system structure}, $\{D_n\}_{n\in \bb{N}}$
                is an operator system structure on $(\limos \cl S_k)\odot \cl T$.
We claim that $\{D_n\}_{n\in \bb{N}}$ is a tensor product operator system structure. 
Suppose that $P\in M_p(\limos \cl S_k)^+$ and $Q\in M_q(\cl T)^+$. 
For every $r > 0$ there exist $k,l\in \bb{N}$, $R\in M_p(\cl S_l)$, $S\in  M_p(\cl S_k)_h$ and $m > \max\{k,l\}$ such that
$\ddot{\phi}_{l,\infty}^{(p)}(R) \in M_p(N)$, $\phi_{k,\infty}^{(n)}(S) = P$ and 
$$\frac{r}{\|Q\|} e_m^{(p)} + \phi_{k,m}^{(p)}(S) + \phi_{l,m}^{(p)}(R) \in M_p(\cl S_m)^+.$$
We have that $S\otimes Q\in M_{pq}(\cl S_k\otimes_{\max} \cl T)_h$,  
$$(\phi_{k,\infty}^{(p)}\otimes\id\mbox{}_{\cl T}^{(q)})(S\otimes Q) = P\otimes Q$$ 
and, by Remark \ref{rem:nR=nS}, 
$\ddot{\psi}_{l,\infty}(R\otimes Q)\in M_{pq} (N_{\mu})$. Moreover,
$$\frac{r}{\|Q\|} e_m^{(p)}\otimes Q + (\phi_{k,m}\otimes\id\mbox{}_{\cl T})^{(pq)}(S\otimes Q) + 
(\phi_{l,m} \otimes\id\mbox{}_{\cl T})^{(pq)}(R\otimes Q)$$
belongs to $M_{pq}(\cl S_m\otimes_{\max}\cl T)^+$,
that is,
$$\frac{r}{\|Q\|} e_m^{(p)}\otimes Q + \psi_{k,m}^{(pq)}(S\otimes Q) + 
\psi_{l,m}^{(pq)}(R\otimes Q) \in M_{pq}(\cl S_m\otimes_{\max}\cl T)^+.$$
Since $Q\leq \|Q\|e_{\cl T}^{(q)}$, we conclude that 
$$r (e_m^{(p)} \otimes e_{\cl T}^{(q)}) + \psi_{k,m}^{(pq)}(S\otimes Q) + 
\psi_{l,m}^{(pq)}(R\otimes Q) \in M_{pq}(\cl S_m\otimes_{\max}\cl T)^+.$$ 
Thus, 
$$r (e_{\infty}^{(p)} \otimes e_{\cl T}^{(q)}) + \psi_{k,\infty}^{(pq)}(S\otimes Q)
\in M_{pq}(\limos( \cl S_k \mtp \cl T)^+).$$ 
Since this holds for every $r > 0$, we have that 
$$\psi_{k,\infty}^{(pq)}(S\otimes Q) \in M_{pq}(\limos( \cl S_k \mtp \cl T)^+).$$
However, 
$\psi_{k,\infty}^{(pq)}(S\otimes Q) = \widetilde{\alpha}^{(pq)}(P\otimes Q)$, and we conclude that 
$P\otimes Q \in D_{pq}$. 

Suppose next that $f : \limos\cl S_k \to M_p$ and $g : \cl T\to M_q$ are unital completely positive maps
and that $L\in D_t$, for some $t\in \bb{N}$. We will show that $(f\otimes g)^{(t)}(L)\in M_{pqt}^+$, 
thus obtaining that $\{D_n\}_{n\in \bb{N}}$ is an operator system tensor product structure on 
$(\limos\cl S_k)\odot \cl T$. 
Let $T = \widetilde{\alpha}^{(t)}(L)$; we have that $T\in M_t(\limos(\cl S_k\otimes_{\max}\cl T))^+$. 
Fix $r > 0$. Then there exist $k,l\in \bb{N}$, $m > \max\{k,l\}$, 
$R\in M_t(\cl S_l)$ and $S\in M_t(\cl S_k\otimes_{\max}\cl T)_h$
such that $\ddot{\psi}_{k,\infty}(S) = T$, $\psi_{l,\infty}(R)\in M_t(N_{\max})$ and 
$$\frac{r}{2} e^{(t)} + \psi_{k,m}^{(t)}(S) + \psi_{l,m}^{(t)}(R)\in M_t(\cl S_m\otimes_{\max}\cl T)^+.$$
By the definition of the maximal operator system structure, there exist 
$a,b\in \bb{N}$, $A\in M_{ab,t}$, $P\in M_a(\cl S_m)^+$ and $Q\in M_b(\cl T)^+$ such that 
$$r e^{(t)} + \psi_{k,m}^{(t)}(S) + \psi_{l,m}^{(t)}(R) = A^*(P\otimes Q)A.$$
The last identity can be rewritten as
$$r e^{(t)} + (\phi_{k,m}^{(t)}\otimes\id\mbox{}_{\cl T})(S) 
+ (\phi_{l,m}^{(t)}\otimes\id\mbox{}_{\cl T})(R) = A^*(P\otimes Q)A.$$
Note that 
$$\widetilde{\alpha}^{(t)}\circ (\phi_{m,\infty}^{(t)}\otimes \id\mbox{}_{\cl T})(\phi_{k,m}^{(t)}\otimes\id\mbox{}_{\cl T})(S) = 
\psi_{k,\infty}^{(t)}(S) = T = \widetilde{\alpha}^{(t)}(L);$$
the injectivity of $\widetilde{\alpha}$ implies that 
$$(\phi_{k,\infty}^{(t)}\otimes \id\mbox{}_{\cl T})(S) = 
(\phi_{m,\infty}^{(t)}\otimes \id\mbox{}_{\cl T})(\phi_{k,m}^{(t)}\otimes\id\mbox{}_{\cl T})(S) = L.$$
Using Remark \ref{rem:nR=nS}, we have that 
\begin{eqnarray*}
& & r I_{pqt} + (f\otimes g)^{(t)}(L)\\
& = & 
(f\otimes g)^{(t)}(r e^{(t)}) + 
(f\otimes g)^{(t)}\circ (\phi_{m,\infty}^{(t)}\otimes\id\mbox{}_{\cl T}) 
((\phi_{k,m}^{(t)}\otimes\id\mbox{}_{\cl T})(S))\\
& + & (f\otimes g)^{(t)}\circ (\phi_{m,\infty}^{(t)}\otimes\id\mbox{}_{\cl T}) (\phi_{l,m}^{(t)}\otimes\id\mbox{}_{\cl T})(R)) \\
& = & (f\otimes g)^{(t)}((\phi_{m,\infty}^{(t)}\otimes\id\mbox{}_{\cl T})(A^*(P\otimes Q)A))\\
& = & (f\otimes g)^{(t)}(A^*(\phi_{m,\infty}^{(a)}(P)\otimes Q)A)\\
& = & A^*(f^{(a)}(\phi_{m,\infty}^{(a)}(P))\otimes g^{(b)}(Q))A \in M_{pqt}^+.
\end{eqnarray*}

        Suppose $\hilb$ is a Hilbert space and
            $\theta: (\limos \cl S_k) \times \cl T \rightarrow \BH$
        is a unital jointly completely positive map.
        Let $\widetilde{\theta}$ denote the linearisation of $\theta$. Then
            $\widetilde{\theta} : (\limos \cl S_k) \mtp \cl T \rightarrow \BH$
        is a unital completely positive map.
        Since
            $\phi_{k, \infty} \otimes {\id}_{\cl T} : \cl S_k \mtp \cl T \rightarrow (\limos \cl S_k) \mtp \cl T$
        is a unital completely positive map, we have that
            $\widetilde{\theta} \circ (\phi_{k, \infty} \otimes {\id}_{\cl T}) : \cl S_k \mtp \cl T \rightarrow \BH$
        is a unital completely positive map, $k \in \mathbb{N}$. Furthermore,
$$\widetilde{\theta} \circ (\phi_{k+1, \infty} \otimes {\id}_{\cl T}) \circ (\phi_k \otimes {\id}_{\cl T}) 
= \widetilde{\theta} \circ (\phi_{k, \infty} \otimes \id), \ \ k \in \bb{N}.$$
       By Theorem~\ref{thm:universal property_operator_system2}, 
       there exists a unique unital completely positive map
            $\eta : \limos (\cl S_k \mtp \cl T) \rightarrow \BH$
        such that
            $\eta \circ \psi_{k, \infty} =\widetilde{\theta} \circ (\phi_{k, \infty} \otimes {\id}_{\cl T}).$
        Thus,
$$\widetilde{\theta} \circ (\phi_{k, \infty} \otimes {\id}_{\cl T}) = \eta \circ \psi_{k, \infty} 
= \eta \circ \widetilde{\alpha} \circ (\phi_{k, \infty} \otimes {\id}_{\cl T}), \ \ k \in \bb{N}.$$
        Therefore $\widetilde{\theta} = \eta \circ \widetilde{\alpha}$; that is, $\widetilde{\theta} \circ \widetilde{\alpha}^{-1} = \eta$. It follows that $\widetilde{\theta} \circ \widetilde{\alpha}^{-1}$ is a unital completely positive map; that is, $\widetilde{\theta}$ is completely positive 
        for the operator system structure $\{D_n\}_{n\in \bb{N}}$. 
By Theorem~\ref{thm:universal property of mtp}, $\widetilde{\alpha}$ is a completely positive map.
    \end{proof}

Our next aim is to identify conditions that guarantee that the inductive limit intertwines the commuting tensor product.

        \begin{lemma}\label{lem:norm closure of op system}
            Let $(\cl S, \{C_n\}_{n \in \bb{N}}, e)$ be an operator system and let $\widehat{\cl S}$ be the completion of $\cl S$. If
            $\widehat{C}_n$ is the completion of $C_n$, $n \in \bb{N}$ then $(\widehat{\cl S}, \{\widehat{C}_n\}_{n \in \bb{N}}, e)$ is an operator system.
            Moreover, if $\rho : \cl S\to \cl B(H)$ is a unital complete isometry then $\widehat{\cl S}$ is 
            unitally completely order isomorphic to the concrete operator system $\overline{\rho(\cl S)}$.
          \end{lemma}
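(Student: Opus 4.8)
The plan is to reduce the statement to the concrete picture afforded by the Choi--Effros theorem and to identify $\widehat{C}_n$ with the cone of a concrete operator system. First I would record that, by the remark following Lemma~\ref{rem:norm of an operator system}, any unital complete isometry $\rho:\cl S\to\cl B(H)$ is automatically a unital complete order embedding; consequently the image $\cl R:=\rho(\cl S)$ is a concrete (though in general incomplete) operator system with cones $\rho^{(n)}(C_n)=M_n(\cl R)\cap M_n(\cl B(H))^+$, and $\rho$ is a unital complete order isomorphism onto $\cl R$. Since $\rho$ is completely isometric and $\cl B(H)$ is complete, each $\rho^{(n)}$ extends by continuity to an isometry of $M_n(\widehat{\cl S})=\widehat{M_n(\cl S)}$ into $M_n(\cl B(H))$; these assemble into a unital complete isometry $\widehat{\rho}:\widehat{\cl S}\to\cl B(H)$ whose range, being complete and containing the dense subspace $\cl R$ of $\overline{\cl R}$, equals $\overline{\cl R}=\overline{\rho(\cl S)}$. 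As $\widehat{\rho}^{(n)}$ is a homeomorphism onto its range, it carries $\widehat{C}_n=\overline{C_n}$ onto $\overline{\rho^{(n)}(C_n)}$, the closure being taken in $M_n(\overline{\cl R})$.

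The heart of the argument, which I expect to be the main obstacle, is the identity
\[
\overline{\rho^{(n)}(C_n)}=M_n(\overline{\cl R})\cap M_n(\cl B(H))^+,\qquad n\in\bb{N},
\]
that is, that the norm closure of the concrete positive cone coincides with the positive cone of the closure. The inclusion $\subseteq$ is immediate, since the right-hand side is closed (an intersection of the closed sets $M_n(\overline{\cl R})$ and $M_n(\cl B(H))^+$) and contains $\rho^{(n)}(C_n)$. For $\supseteq$ I would take a selfadjoint $x\in M_n(\overline{\cl R})$ with $x\ge 0$ and approximate it by selfadjoint elements $y_k\in M_n(\cl R)$, obtained by replacing an arbitrary approximating sequence by its real parts, which lie in $M_n(\cl R)$ because $\cl R$ is selfadjoint. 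Writing $\delta_k=\norm{y_k-x}\to 0$, the operator inequality $y_k-x\ge -\delta_k I$ gives $y_k+\delta_k e^{(n)}\ge x\ge 0$, where $e^{(n)}$ is the unit $I$ of $M_n(\cl R)$; hence $y_k+\delta_k e^{(n)}\in M_n(\cl R)\cap M_n(\cl B(H))^+=\rho^{(n)}(C_n)$, and $y_k+\delta_k e^{(n)}\to x$. This places $x$ in the closure, using in an essential way that the order unit is available at every matrix level.

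Combining these facts, $\widehat{\rho}$ is a unital linear bijection of $\widehat{\cl S}$ onto $\overline{\cl R}$ with $\widehat{\rho}^{(n)}(\widehat{C}_n)=M_n(\overline{\cl R})\cap M_n(\cl B(H))^+$ for every $n$, and it is $*$-preserving, since positive maps between operator systems preserve the involution and this persists under the continuous extension. As $\overline{\cl R}$ is a closed selfadjoint subspace of $\cl B(H)$ containing $I$, it is a concrete operator system, so $\widehat{\rho}$ exhibits $\widehat{\cl S}$ as unitally completely order isomorphic to $\overline{\rho(\cl S)}$; this is the final assertion. For the first assertion I would apply the above with $\rho$ any Choi--Effros embedding (Theorem~\ref{choieffros}) and then transport the operator system structure of $\overline{\cl R}$ back along the injective linear map $\widehat{\rho}^{-1}$ via Lemma~\ref{rem:linear map maintain operator system structure}, which produces exactly the data $(\widehat{\cl S},\{\widehat{C}_n\}_{n\in\bb{N}},e)$ and certifies that it is an operator system.
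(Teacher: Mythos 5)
Your proposal is correct, and its overall architecture matches the paper's: embed $\cl S$ concretely via a unital complete isometry, reduce everything to the claim that the norm closure of the concrete positive cone equals the positive cone of the norm closure, and then read off both assertions of the lemma. The difference lies in how that key identity is proved. The paper takes $x\in\cl T^+$, perturbs to $rI+x$, approximates by hermitian elements of $\cl S$, and invokes Murphy's theorem on continuity of the spectrum to conclude that the approximants are eventually positive; you instead take hermitian approximants $y_k$ of $x$ itself and use the elementary operator inequality $\norm{y_k-x}\le\delta_k\Rightarrow y_k+\delta_k e^{(n)}\ge x\ge 0$ to land $y_k+\delta_k e^{(n)}$ in the cone. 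Your route is more self-contained (no external spectral-continuity result) and makes explicit the role of the matrix order unit; the paper's is marginally shorter given the citation. One small remark: your final-paragraph appeal to Lemma~\ref{rem:linear map maintain operator system structure} is not strictly needed once you know $\widehat{\rho}^{(n)}(\widehat{C}_n)=M_n(\overline{\cl R})\cap M_n(\cl B(H))^+$, since a bijective, unital, $*$-preserving map matching the cones at every level is already a unital complete order isomorphism onto a concrete operator system, and the operator system axioms for $(\widehat{\cl S},\{\widehat{C}_n\},e)$ transport back directly; but the appeal is harmless and correct.
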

          
          \begin{proof}
Let $\rho : \cl S\to \cl B(H)$ is a unital complete isometry, and let $\cl T = \overline{\rho(\cl S)}$. 
We equip $\cl T$ with the canonical operator system structure arising from its inclusion $\cl T\subseteq \cl B(H)$. 
We claim that 
$M_n(\cl T)^+ = \overline{M_n(\cl S)^+}$, $n\in \bb{N}$. 
It suffices to establish the identity in the case $n = 1$. 
Suppose that $x\in \cl T^+$, $r > 0$, and let $(x_k)_{k\in \bb{N}}\subseteq \cl S_h$ be a sequence such that 
$r I + x = \lim_{k\to\infty} x_k$. By \cite[Theorem~2]{Murphy}, there exists $k_0\in \bb{N}$ such that 
$x_k\geq 0$, $k\geq k_0$. 
It follows that $rI + x\in \overline{\cl S^+}$, for every $r > 0$. Thus, $x\in \overline{\cl S^+}$. 
The statements of the lemma are now evident. 
          \end{proof}



    \begin{lemma} \label{lem:injectivity of mtp with closure}
        Let $\cl S$ and $\cl T$ be an operator systems and let $\widehat{\cl S}$ be the completion of $\cl S$. 
        Then $\id_{\cl S} \otimes \id_{\cl T}: \cl S \mtp \cl T \rightarrow \widehat{\cl S} \mtp \cl T$ is a complete order isomorphism onto its image.
    \begin{proof}
        Fix $n \in \bb{N}$ and suppose that 
        $U \in M_n(\cl S \mtp \cl T) \cap M_n(\widehat{\cl S} \mtp \cl T)^+$. 
        Since the set of hermitian elements is closed, $U = U^*$. 
        For all $r > 0$, we have that 
        $r(e_{\cl S} \otimes e_{\cl T})^{(n)}+ U = \alpha (P^r \otimes Q^r) \alpha^*$ where $\alpha \in M_{n, km}$, $P^r \in M_k(\widehat{\cl S})^+$ and $Q^r \in M_m(\cl T)^+$ for some $k,m \in \bb{N}$. 
By Lemma \ref{lem:norm closure of op system}, 
$P^r = \lim_{l \rightarrow \infty} P_l^r$, for some sequence $(P_l^r)_{l\in \bb{N}} \subseteq M_n(\cl S)^+$. 
Let $X_l^{r} = \alpha (P_l^r \otimes Q^r) \alpha^*$, $l \in \bb{N}$. It follows that $r(e_{\cl S} \otimes e_{\cl T})^{(n)}+ U = \lim_{l \rightarrow \infty} X_l^{r}$ 
        with $X_l^{r} \in M_n(\cl S\mtp \cl T)^+$ for all $r>0$.

        Fix $r > 0$ and choose $l \in \bb{N}$ such that
        \[
          \Bignorm{\frac{r}{2}(e_{\cl S} \otimes e_{\cl T})^{(n)} + U -  X_l^{\frac{r}{2}}}_{M_n(\cl S \mtp \cl T)} < \frac{r}{2}.
        \]
        We have
        \[
          \frac{r}{2}(e_{\cl S} \otimes e_{\cl T})^{(n)} + \frac{r}{2}(e_{\cl S} \otimes e_{\cl T})^{(n)} + U -  X_l^{\frac{r}{2}} \in M_n(\cl S \mtp \cl T)^+.
        \]
        Thus
        $r(e_{\cl S} \otimes e_{\cl T})^{(n)} + U \in   M_n(\cl S \mtp \cl T)^+.$
        Since  this holds for all $r > 0$ and $M_n(\cl S \mtp \cl T)$ is an AOU space, $U \in   M_n(\cl S \mtp \cl T)^+$.
    \end{proof}
    \end{lemma}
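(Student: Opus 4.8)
The plan is to treat the two halves of ``complete order isomorphism onto its image'' separately. That the map $\id_{\cl S}\otimes\id_{\cl T}$ is injective is immediate from the fact that $\cl S\hookrightarrow\widehat{\cl S}$ is an injective linear map, so the induced map on algebraic tensor products $\cl S\odot\cl T\to\widehat{\cl S}\odot\cl T$ is injective. That it is completely positive follows at once from the functoriality of $\mtp$ together with the observation, supplied by Lemma~\ref{lem:norm closure of op system}, that the canonical inclusion $\cl S\hookrightarrow\widehat{\cl S}$ is a unital complete order embedding: functoriality then makes $\id_{\cl S}\otimes\id_{\cl T}:\cl S\mtp\cl T\to\widehat{\cl S}\mtp\cl T$ a unital completely positive map. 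The entire substance of the lemma therefore lies in showing that the inverse, defined on the image, is completely positive; equivalently, that whenever $U\in M_n(\cl S\odot\cl T)$ satisfies $(\id_{\cl S}\otimes\id_{\cl T})^{(n)}(U)\in M_n(\widehat{\cl S}\mtp\cl T)^+$, one has $U\in M_n(\cl S\mtp\cl T)^+$.

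To handle this, I would fix $n$ and such a $U$, first noting that $U=U^*$, since hermitian elements form a closed set and positive elements are hermitian. Using that $\widehat{\cl S}\mtp\cl T$ is the Archimedeanization of the cone system $\{P_t^{\max}(\widehat{\cl S},\cl T)\}_t$ and Remark~\ref{rem:Arch is Arch at every matrix level}, positivity of $U$ means that for every $r>0$ one can write
\[
r(e_{\cl S}\otimes e_{\cl T})^{(n)}+U=\alpha(P^r\otimes Q^r)\alpha^*,
\]
with $\alpha\in M_{n,km}$, $P^r\in M_k(\widehat{\cl S})^+$ and $Q^r\in M_m(\cl T)^+$. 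The key external input is again Lemma~\ref{lem:norm closure of op system}, which identifies $M_k(\widehat{\cl S})^+$ as the norm-closure of $M_k(\cl S)^+$; this lets me approximate $P^r=\lim_l P_l^r$ with $P_l^r\in M_k(\cl S)^+$, and hence approximate $r(e_{\cl S}\otimes e_{\cl T})^{(n)}+U$ in the max-tensor norm by the elements $X_l^r:=\alpha(P_l^r\otimes Q^r)\alpha^*$, each of which lies in $M_n(\cl S\mtp\cl T)^+$ by the very definition of $P_n^{\max}(\cl S,\cl T)$.

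The final step, and the main obstacle, is that positivity in the maximal tensor product is detected only ``up to the Archimedean closure,'' so one cannot simply pass to the limit $l\to\infty$ in $X_l^r$ to conclude $r(e_{\cl S}\otimes e_{\cl T})^{(n)}+U\geq 0$. Instead I would run the approximation at level $r/2$: choose $l$ with $\norm{\tfrac r2(e_{\cl S}\otimes e_{\cl T})^{(n)}+U-X_l^{r/2}}<\tfrac r2$, so that the hermitian element $\tfrac r2(e_{\cl S}\otimes e_{\cl T})^{(n)}+U-X_l^{r/2}$ has order-norm less than $r/2$ and therefore, after adding $\tfrac r2(e_{\cl S}\otimes e_{\cl T})^{(n)}$, becomes positive in $\cl S\mtp\cl T$. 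Adding back $X_l^{r/2}\in M_n(\cl S\mtp\cl T)^+$ yields $r(e_{\cl S}\otimes e_{\cl T})^{(n)}+U\in M_n(\cl S\mtp\cl T)^+$. Since this holds for every $r>0$ and $M_n(\cl S\mtp\cl T)$ is an AOU space, the Archimedean property forces $U\in M_n(\cl S\mtp\cl T)^+$, completing the argument. The delicate point throughout is the interplay between the order-norm estimate and the non-closed max cone, which is precisely why the $r/2$--$r/2$ splitting, rather than a naive limit of positive elements, is required.
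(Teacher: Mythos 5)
Your proposal is correct and follows essentially the same route as the paper's own proof: the same reduction to hermitian $U$, the same use of Lemma~\ref{lem:norm closure of op system} to approximate $P^r$ by positives from $M_k(\cl S)^+$, and the identical $r/2$--$r/2$ splitting combined with the order-norm estimate and the Archimedean property of $M_n(\cl S \mtp \cl T)$. The only difference is that you explicitly record the (routine) complete positivity and injectivity of the forward map, which the paper leaves implicit.
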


In the case the inductive limit is taken in the category of complete operator systems, 
Theorem \ref{thm:ComTensorCommutesWithIndLimit} below follows from \cite[Proposition 4.1]{LuthraKumar}.
In our proof, we also supply some details that were not fully provided in \cite{LuthraKumar}. 
First we need a lemma that may be interesting in its own right.

\begin{lemma}\label{l_univct}
Let $\cl S$ and $\cl T$ be operator systems, and 
$\iota : \cl S\otimes_{\rm c}\cl T\to C_u^*(\cl S\otimes_{\rm c} \cl T)$ and
$j : \cl S\otimes_{\rm c}\cl T\to C_u^*(\cl S)\otimes_{\max} C^*_u(\cl T)$
be the canonical embeddings. Then there exists a 
*-isomorphism $\delta : C_u^*(\cl S)\otimes_{\max} C^*_u(\cl T) \to C_u^*(\cl S\otimes_{\rm c} \cl T)$ such that 
$\delta \circ j = \iota$. 
\end{lemma}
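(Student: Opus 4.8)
The plan is to produce $\delta$ together with a two-sided inverse and to check the relation $\delta\circ j=\iota$ on a generating operator system. I would build both maps from universal properties. For the easy direction, observe that $j$ realises $\cl S\otimes_{\rm c}\cl T$ as an operator subsystem of the C*-algebra $C_u^*(\cl S)\mtp C_u^*(\cl T)$, and that its image generates this C*-algebra: it contains $\iota_{\cl S}(s)\otimes 1=j(s\otimes e_{\cl T})$ and $1\otimes\iota_{\cl T}(t)=j(e_{\cl S}\otimes t)$, and products of these span a dense $*$-subalgebra. Hence $\big(C_u^*(\cl S)\mtp C_u^*(\cl T),j\big)$ is a C*-cover of $\cl S\otimes_{\rm c}\cl T$, and Proposition~\ref{prop:universal prop universal calg} yields a surjective $*$-homomorphism $\widetilde j:C_u^*(\cl S\otimes_{\rm c}\cl T)\to C_u^*(\cl S)\mtp C_u^*(\cl T)$ with $\widetilde j\circ\iota=j$.

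For the reverse direction I would consider the unital completely positive maps $\lambda_{\cl S}:\cl S\to C_u^*(\cl S\otimes_{\rm c}\cl T)$, $\lambda_{\cl S}(s)=\iota(s\otimes e_{\cl T})$, and $\lambda_{\cl T}:\cl T\to C_u^*(\cl S\otimes_{\rm c}\cl T)$, $\lambda_{\cl T}(t)=\iota(e_{\cl S}\otimes t)$, and lift them through Proposition~\ref{prop:universal prop universal calg} to $*$-homomorphisms $\pi_{\cl S}:C_u^*(\cl S)\to C_u^*(\cl S\otimes_{\rm c}\cl T)$ and $\pi_{\cl T}:C_u^*(\cl T)\to C_u^*(\cl S\otimes_{\rm c}\cl T)$ with $\pi_{\cl S}\circ\iota_{\cl S}=\lambda_{\cl S}$ and $\pi_{\cl T}\circ\iota_{\cl T}=\lambda_{\cl T}$. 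Provided the ranges of $\pi_{\cl S}$ and $\pi_{\cl T}$ commute, the universal property of the maximal C*-tensor product gives a $*$-homomorphism $\delta=\pi_{\cl S}\times\pi_{\cl T}$ with $\delta(a\otimes b)=\pi_{\cl S}(a)\pi_{\cl T}(b)$; evaluating at $a=\iota_{\cl S}(s)$, $b=\iota_{\cl T}(t)$ yields $\delta(j(s\otimes t))=\iota(s\otimes e_{\cl T})\,\iota(e_{\cl S}\otimes t)$, which I would identify with $\iota(s\otimes t)$, so that $\delta\circ j=\iota$. (Alternatively, one could first manufacture $\delta$ as a unital completely positive map via the jointly-completely-positive universal property of Theorem~\ref{thm:universal property of mtp} and then argue multiplicativity, but this requires the same commutation input.)

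With both maps in hand, the compositions $\widetilde j\circ\delta$ and $\delta\circ\widetilde j$ are $*$-endomorphisms fixing, respectively, the generators $\iota_{\cl S}(s)\otimes 1$ and $1\otimes\iota_{\cl T}(t)$ of $C_u^*(\cl S)\mtp C_u^*(\cl T)$, and the generators $\iota(s\otimes t)$ of $C_u^*(\cl S\otimes_{\rm c}\cl T)$. Since these sets generate the respective C*-algebras and the compositions are continuous $*$-homomorphisms, both equal the identity, so $\delta$ is a $*$-isomorphism with $\delta=\widetilde j^{-1}$; unitality follows from $\lambda_{\cl S}(e_{\cl S})=\iota(e_{\cl S}\otimes e_{\cl T})$.

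The hard part will be the commutativity of the ranges of $\pi_{\cl S}$ and $\pi_{\cl T}$, equivalently that $\iota(s\otimes e_{\cl T})$ and $\iota(e_{\cl S}\otimes t)$ commute in $C_u^*(\cl S\otimes_{\rm c}\cl T)$. Because these algebras are generated by $\lambda_{\cl S}(\cl S)$ and $\lambda_{\cl T}(\cl T)$ and commutation passes to products and to adjoints (using that $\lambda_{\cl S},\lambda_{\cl T}$ are $*$-preserving on the self-adjoint spaces $\cl S,\cl T$), it would suffice to commute the generators. To obtain this I would try to use the defining feature of the commuting tensor product: in the ambient algebra $C_u^*(\cl S)\mtp C_u^*(\cl T)$ the relevant images are the genuinely commuting operators $\iota_{\cl S}(s)\otimes 1$ and $1\otimes\iota_{\cl T}(t)$, and I would attempt to lift this commutation back to the universal C*-cover by checking that every $*$-representation of $C_u^*(\cl S\otimes_{\rm c}\cl T)$, arising from a unital completely positive map on $\cl S\otimes_{\rm c}\cl T$, respects the commuting-pair structure built into $\otimes_{\rm c}$. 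This compatibility between the commuting-pair description of $\otimes_{\rm c}$ and the multiplicative structure of the universal C*-cover is precisely the delicate point on which the whole argument hinges, and it is where I would focus the main effort.
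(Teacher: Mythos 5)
Your overall architecture (two maps built from universal properties, then checked to be mutually inverse on generators) is sound, and the first half — that $\bigl(C_u^*(\cl S)\mtp C_u^*(\cl T), j\bigr)$ is a C*-cover of $\cl S\ctp\cl T$, yielding a surjective $*$-homomorphism $\widetilde j$ with $\widetilde j\circ\iota=j$ — is correct. But the construction of $\delta$ itself rests on two multiplicative identities inside $C_u^*(\cl S\ctp\cl T)$ that you assert rather than prove: that $\iota(s\otimes e_{\cl T})$ and $\iota(e_{\cl S}\otimes t)$ commute, and that $\iota(s\otimes e_{\cl T})\,\iota(e_{\cl S}\otimes t)=\iota(s\otimes t)$. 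Neither is automatic: the universal C*-algebra of an operator system imposes no multiplicative relations beyond those forced by all unital completely positive maps, so both identities must be derived from the structure of $\ctp$. You correctly flag the first as the crux, but the proposal stops exactly there, so as written the proof of the existence of $\delta$ (and hence of the relation $\delta\circ j=\iota$) is incomplete.

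The missing ingredient is the defining feature of the commuting tensor product from \cite{KavrukPaulsenTodorovTomforde2}: every unital completely positive $\rho:\cl S\ctp\cl T\to\cl B(H)$ factors as $\rho(x\otimes y)=\rho_{\cl S}(x)\rho_{\cl T}(y)$ for unital completely positive maps with commuting ranges. Applying this to $\pi\circ\iota$ for a faithful representation $\pi$ of $C_u^*(\cl S\ctp\cl T)$ gives both of your needed identities by injectivity of $\pi$, which would close the gap. The paper avoids the issue altogether by running the argument in the other direction: it uses the same factorisation to show that \emph{every} unital completely positive map on $\cl S\ctp\cl T$ extends, via the $*$-homomorphic extensions $\tilde\rho_{\cl S}$, $\tilde\rho_{\cl T}$ and the universal property of $\mtp$ for C*-algebras, to a $*$-homomorphism of $C_u^*(\cl S)\mtp C_u^*(\cl T)$ intertwining $j$; thus $\bigl(C_u^*(\cl S)\mtp C_u^*(\cl T), j\bigr)$ satisfies the universal property characterising $C_u^*(\cl S\ctp\cl T)$, and the uniqueness clause of Proposition~\ref{prop:universal prop universal calg} hands over the $*$-isomorphism $\delta$ with $\delta\circ j=\iota$ without ever having to verify multiplicative relations inside the universal algebra. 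Either route works, but you must actually invoke the commuting-pair characterisation of $\ctp$; without it the argument does not go through.
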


\begin{proof}
Let $H$ be a Hilbert space and $\rho : \cl S \ctp \cl T \to \cl B(H)$ be a unital completely positive map.
Let $\rho_{\cl S} : \cl S\to \cl B(H)$ and $\rho_{\cl T} : \cl T\to \cl B(H)$ be the unital completely positive maps
such that $\rho(x\otimes y) = \rho_{\cl S}(x)\rho_{\cl T}(y)$, $x\in \cl S$, $y\in \cl T$. 
Let $\tilde{\rho}_{\cl S} : C^*_u(\cl S) \to \cl B(H)$ and $\tilde{\rho}_{\cl T} : C^*_u(\cl T) \to \cl B(H)$
be their canonical *-homomorphic extensions. 
Since the ranges of $\rho_{\cl S}$ and $\rho_{\cl T}$ commute, so do the ranges of 
$\tilde{\rho}_{\cl S}$ and $\tilde{\rho}_{\cl T}$. 
Let $\theta : C_u^*(\cl S)\otimes_{\max} C^*_u(\cl T) \to \cl B(H)$ be the *-homomorphism 
given by 
$\theta(x\otimes y) = \tilde{\rho}_{\cl S}(x) \tilde{\rho}_{\cl T}(y)$, $x\in C_u^*(\cl S)$, $y\in C_u^*(\cl T)$.
Note that $\theta \circ j = \rho$. 
Thus, the pair $(C_u^*(\cl S)\otimes_{\max} C^*_u(\cl T), j)$ satisfies the universal property of 
$C_u^*(\cl S\otimes_{\rm c} \cl T)$. The conclusion follows from the uniqueness of the universal C*-algebra. 
\end{proof}

    \begin{theorem} \label{thm:ComTensorCommutesWithIndLimit}
        Let 
        $\cl S_1\stackrel{\phi_1}{\longrightarrow} \cl S_2 \stackrel{\phi_2}{\longrightarrow} \cl S_3 \stackrel{\phi_3}{\longrightarrow} \cl S_4 \stackrel{\phi_4}{\longrightarrow} \cdots$
        be an inductive system in $\catos$ such that each $\phi_k$ is a complete order embedding, and let $\cl T$ be an operator system.
      Assume that the map
     $\phi_k\otimes\id_{\cl T}$ is a complete order embedding of $\cl S_k \ctp \cl T$ into $\cl S_{k+1} \ctp \cl T$, $k\in \bb{N}$. 
        Then $\limos (\cl S_k \ctp \cl T)$ is unitally completely order isomorphic to $(\limos \cl S_k) \ctp \cl T$.
     \end{theorem}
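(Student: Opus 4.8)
The plan is to reduce the statement to the complete positivity of a single linear bijection, and then to extract that positivity from the universal C*-algebra picture. Since the commuting tensor product $\ctp$ is functorial, Proposition~\ref{prop:TensorProdCommute-LinearSpaces} provides a linear bijection $\widetilde{\alpha}:(\limos \cl S_k)\ctp \cl T \to \limos(\cl S_k\ctp \cl T)$ determined by $\widetilde{\alpha}\circ(\phi_{k,\infty}\otimes\id_{\cl T})=\psi_{k,\infty}$, and Theorem~\ref{thm:alpa-1 is ucp} shows that $\widetilde{\alpha}^{-1}$ is unital and completely positive. Thus it suffices to prove that $\widetilde{\alpha}$ is completely positive. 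Inspecting the proof of Theorem~\ref{thm:TensorCommutesWithIndLimitWhenCOI}, the only place injectivity of the tensor product was used was to guarantee that $\phi_{k,\infty}\otimes\id_{\cl T}:\cl S_k\ctp \cl T\to(\limos \cl S_k)\ctp \cl T$ is a complete order embedding; once this is known, the verbatim argument shows that $\big((\limos \cl S_k)\ctp\cl T,\{\phi_{k,\infty}\otimes\id_{\cl T}\}_{k\in\bb{N}}\big)$ satisfies the universal property of $\limos(\cl S_k\ctp\cl T)$, so by uniqueness of inductive limits $\widetilde{\alpha}$ is a complete order isomorphism. Hence the whole proof reduces to establishing that each $\phi_{k,\infty}\otimes\id_{\cl T}$ is a complete order embedding.

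To this end, write $\cl S_\infty=\limos \cl S_k$ and let $j_k:\cl S_k\ctp\cl T\to C^*_u(\cl S_k)\mtp C^*_u(\cl T)$ and $j_\infty:\cl S_\infty\ctp\cl T\to C^*_u(\cl S_\infty)\mtp C^*_u(\cl T)$ be the defining complete order embeddings of the commuting tensor product. Let $\widetilde{\phi}_{k,\infty}:C^*_u(\cl S_k)\to C^*_u(\cl S_\infty)$ be the $*$-homomorphism induced by the complete order embedding $\phi_{k,\infty}$ (which exists and is injective by Proposition~\ref{p_coios} and Lemma~\ref{lem:op system coi then univer calg *iso}). Evaluating on elementary tensors shows that the square $j_\infty\circ(\phi_{k,\infty}\otimes\id_{\cl T})=(\widetilde{\phi}_{k,\infty}\otimes\id_{C^*_u(\cl T)})\circ j_k$ commutes. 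Since $j_k$ is a complete order embedding and an injective $*$-homomorphism between C*-algebras is automatically a complete order embedding, injectivity of $\widetilde{\phi}_{k,\infty}\otimes\id_{C^*_u(\cl T)}$ would make $(\widetilde{\phi}_{k,\infty}\otimes\id_{C^*_u(\cl T)})\circ j_k$ a complete order embedding; as $j_\infty$ is a complete order embedding, Lemma~\ref{rem:ucio conjgation} then yields that $\phi_{k,\infty}\otimes\id_{\cl T}$ is a complete order embedding. So it remains only to prove that $\widetilde{\phi}_{k,\infty}\otimes\id_{C^*_u(\cl T)}:C^*_u(\cl S_k)\mtp C^*_u(\cl T)\to C^*_u(\cl S_\infty)\mtp C^*_u(\cl T)$ is injective.

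This is where the hypothesis enters. The assumption that $\phi_k\otimes\id_{\cl T}$ is a complete order embedding of $\cl S_k\ctp\cl T$ into $\cl S_{k+1}\ctp\cl T$ gives, by Lemma~\ref{lem:op system coi then univer calg *iso}, an injective $*$-homomorphism $\widetilde{\phi_k\otimes\id_{\cl T}}:C^*_u(\cl S_k\ctp\cl T)\to C^*_u(\cl S_{k+1}\ctp\cl T)$. Using the $*$-isomorphisms $\delta_k:C^*_u(\cl S_k)\mtp C^*_u(\cl T)\to C^*_u(\cl S_k\ctp\cl T)$ of Lemma~\ref{l_univct}, a diagram chase on generating elementary tensors shows $\delta_{k+1}\circ(\widetilde{\phi}_k\otimes\id_{C^*_u(\cl T)})=\widetilde{\phi_k\otimes\id_{\cl T}}\circ\delta_k$, so the connecting map $\widetilde{\phi}_k\otimes\id_{C^*_u(\cl T)}$ is itself injective. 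Thus all connecting maps in the C*-inductive system $\big(C^*_u(\cl S_k)\mtp C^*_u(\cl T),\ \widetilde{\phi}_k\otimes\id_{C^*_u(\cl T)}\big)$ are injective, and by Remark~\ref{rem:universal prop Calg} the canonical maps into $\limcalg\big(C^*_u(\cl S_k)\mtp C^*_u(\cl T)\big)$ are injective. Finally, combining Theorem~\ref{thm:Cu(S)} (which identifies $C^*_u(\cl S_\infty)$ with $\limcalg C^*_u(\cl S_k)$ and $\widetilde{\phi}_{k,\infty}$ with the canonical embedding into that limit) with the standard fact that the maximal tensor product of C*-algebras commutes with inductive limits, we may identify $C^*_u(\cl S_\infty)\mtp C^*_u(\cl T)$ with $\limcalg\big(C^*_u(\cl S_k)\mtp C^*_u(\cl T)\big)$ in such a way that $\widetilde{\phi}_{k,\infty}\otimes\id_{C^*_u(\cl T)}$ corresponds to the canonical map into the limit. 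Its injectivity follows, completing the reduction.

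The main obstacle is precisely this last injectivity: the maximal tensor product of C*-algebras does not preserve injective $*$-homomorphisms in general, so the injectivity of $\widetilde{\phi}_{k,\infty}$ alone is insufficient. The extra hypothesis on $\phi_k\otimes\id_{\cl T}$ is exactly what is needed to force injectivity of the connecting maps $\widetilde{\phi}_k\otimes\id_{C^*_u(\cl T)}$ at the level of universal C*-algebras (\emph{via} Lemma~\ref{l_univct}), and it is this that the proof exploits; the remaining steps are the routine diagram chases and the invocation of the continuity of the maximal C*-tensor product under inductive limits.
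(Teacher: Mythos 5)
Your proof is correct. It uses the same essential ingredients as the paper's argument --- passage to universal C*-algebras via Lemma~\ref{lem:op system coi then univer calg *iso} and Lemma~\ref{l_univct}, the identification $C^*_u(\limos \cl S_k)\cong\limcalg C^*_u(\cl S_k)$ from Theorem~\ref{thm:Cu(S)}, and a commutation of $\mtp$ with inductive limits --- but organises them around a different pivot. The paper never isolates the statement that $\phi_{k,\infty}\otimes{\id}_{\cl T}$ is a complete order embedding of $\cl S_k\ctp\cl T$ into $(\limos\cl S_k)\ctp\cl T$; instead it assembles the single factorisation $(\iota_{\los{\cl S}}\otimes\iota_{\cl T})\circ\widetilde{\alpha}^{-1}=\big(\mu\otimes{\id}_{C_u^*(\cl T)}\big)\circ\gamma\circ\widetilde{\beta}^{-1}\circ\iota$ through complete order embeddings and applies Lemma~\ref{rem:ucio conjgation} once at the end, deriving the needed $\mtp$-commutation internally from Theorem~\ref{thm:MaxTensorCommutesWithIndLimit} together with Lemma~\ref{lem:injectivity of mtp with closure} and Theorem~\ref{p_opsysst}, exactly where you invoke the classical continuity of $\otimes_{\max}$ under C*-inductive limits as an external fact. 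Your reduction --- first prove the embedding statement by chasing the square $j_\infty\circ(\phi_{k,\infty}\otimes{\id}_{\cl T})=(\widetilde{\phi}_{k,\infty}\otimes{\id}_{C^*_u(\cl T)})\circ j_k$ and applying Lemma~\ref{rem:ucio conjgation}, then rerun the universal-property argument of Theorem~\ref{thm:TensorCommutesWithIndLimitWhenCOI} --- is more modular, and it makes transparent exactly where the extra hypothesis on $\phi_k\otimes{\id}_{\cl T}$ is consumed: it forces injectivity of the connecting maps $\widetilde{\phi}_k\otimes{\id}_{C^*_u(\cl T)}$ at the C*-level, which $\otimes_{\max}$ would not supply on its own. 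The only cost is the appeal to a standard C*-algebraic continuity fact that the paper prefers to reprove in operator-system form; both routes are sound, and all the diagram chases you defer to (agreement of $*$-homomorphisms on generating elementary tensors, and the identification of $\widetilde{\phi}_{k,\infty}\otimes{\id}$ with the canonical map into $\limcalg\big(C^*_u(\cl S_k)\mtp C^*_u(\cl T)\big)$) do go through.
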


    \begin{proof}
        Let $\iota_{\cl T} : \cl T \to C^*_u(\cl T)$ and $\iota_k : \cl S_k\to C_u^*(\cl S_k)$,
        $k\in \bb{N}$, be the corresponding canonical embeddings. 
        Consider the following inductive system in $\catcalg$, and therefore in $\catos$:
        \begin{equation} \label{eq:ctp1}
             C^*_u(\cl S_1)
            \stackrel{\eta_1}{\longrightarrow}
             C^*_u(\cl S_2)
            \stackrel{\eta_2}{\longrightarrow}
             C^*_u(\cl S_3)
            \stackrel{\eta_3}{\longrightarrow}
             C^*_u(\cl S_4)
            \stackrel{\eta_4}{\longrightarrow}
            \cdots,
        \end{equation}
        where $\eta_k$ is the extension of $\phi_k$, $k \in \bb{N}$, guaranteed by the universal property of the universal C*-algebra. 
        By Lemma~\ref{lem:op system coi then univer calg *iso}, $\eta_k$ is a *-isomorphic embedding for all $k \in \bb{N}$.
        Let
        \[
            \eta_{k,\infty}: C_u^*(\cl S_k) \rightarrow \limos C_u^*(\cl S_k)
        \]
        be the unital complete order embeddings associated with the inductive system (\ref{eq:ctp1}).
        Consider the inductive system 
        \begin{equation} \label{eq:ctp2}
             C^*_u(\cl S_1) \mtp C^*_u(\cl T)
            \stackrel{\rho_1}{\longrightarrow}
             C^*_u(\cl S_2) \mtp C^*_u(\cl T)
            \stackrel{\rho_2}{\longrightarrow}
             C^*_u(\cl S_3) \mtp C^*_u(\cl T)
            \stackrel{\rho_3}{\longrightarrow}
            \cdots
        \end{equation}
        in $\catos$,
        where $\rho_k = \eta_k \otimes \id_{C^*_u(\cl T)}$, $k \in \bb{N}$. 
By assumption, the map 
$\phi_k\otimes\id_{\cl T} :  \cl S_k \ctp \cl T\to \cl S_{k+1} \ctp \cl T$ is a complete order isomorphic embedding, $k \in \bb{N}$. 
By Lemmas \ref{lem:op system coi then univer calg *iso} and \ref{l_univct},     
$\rho_k$ is a complete order embedding, $k \in \bb{N}$.
Let
        \[
            \rho_{k,\infty}:C^*_u(\cl S_k) \mtp C^*_u(\cl T) \rightarrow \limos (C^*_u(\cl S_k) \mtp C^*_u(\cl T)), \ \ \ k\in \bb{N},
        \]
        be the unital completely order isomorphic embeddings associated with the inductive system (\ref{eq:ctp2}), and 
        let
            $\widetilde{\alpha}: (\limos \cl S_k) \odot \cl T \rightarrow \limos(\cl S_k \ctp \cl T)$
        be the linear bijection from Proposition~\ref{prop:TensorProdCommute-LinearSpaces}. 
        Note that
        \begin{equation}\label{eq:ctp3}
            \widetilde{\alpha} \circ (\phi_{k,\infty} \otimes {\id}_{\cl T}) = \psi_{k,\infty}, \ \ \ k \in \bb{N},
        \end{equation}
        where $\{\psi_{k,\infty}\}_{k \in \bb{N}}$ are the unital completely order isomorphic embeddings associated to 
        $ \limos(\cl S_k \ctp \cl T)$ (with connecting mappings $\psi_k = \phi_k \otimes {\id}$, $k\in \bb{N}$). 
        Let
        \[
            \widetilde{\beta}: (\limos C_u^*(\cl S_k)) \mtp C_u^*(\cl T) \rightarrow \limos (C_u^*(\cl S_k) \mtp C_u^*(\cl T))
        \]
        be the unital complete order isomorphism such that
        \begin{equation}\label{eq:ctp5}
            \widetilde{\beta} \circ \big(\eta_{k,\infty} \otimes {\id}_{C_u^*(\cl T)}\big) = \rho_{k,\infty}, \ \ \ k \in \bb{N},
        \end{equation}
        whose existence is guaranteed by Theorem~\ref{thm:MaxTensorCommutesWithIndLimit}.

Consider the commutative diagram
        \begin{equation}\label{eq: ctp eq 4}
            \begin{CD}
                \cl S_1 \ctp \cl T @>{\psi_1}>>
                \cl S_2 \ctp \cl T @>{\psi_2}>>
                \cdots \\
                @V{\iota_1 \otimes \iota_{\cl T}}VV
                @V{\iota_2 \otimes \iota_{\cl T}}VV  \\
                C^*_u(\cl S_1) \mtp C^*_u(\cl T) @>{\rho_1}>>
                C^*_u(\cl S_2) \mtp C^*_u(\cl T) @>{\rho_2}>>
                \cdots,
            \end{CD}
        \end{equation}
and note that all the maps appearing in it are unital complete order embeddings.
        By Remark~\ref{rem:all maps coi}, there exists a unique unital complete order embedding
        \[
            \iota: \limos (\cl S_k \ctp \cl T) \rightarrow \limos \big(C^*_u(\cl S_k) \mtp C_u^*(\cl T)\big)
        \]
        such that
        \begin{equation}\label{eq:ctp4}
            \iota \circ \psi_{k,\infty} =  \rho_{k,\infty} \circ (\iota_k \otimes \iota_{\cl T}), \ \ \ k \in \bb{N}.
        \end{equation}

        By Lemma~\ref{lem:injectivity of mtp with closure} 
        and Theorem \ref{p_opsysst}, the canonical map
        \[
     \gamma : \left(\limos C_u^*(\cl S_k)\right) \mtp C_u^*(\cl T) \rightarrow \left(\limcalg C_u^*(\cl S_k)\right) \mtp C_u^*(\cl T)
        \]
        is a completely order isomorphic embedding.
        By Theorem~\ref{thm:Cu(S)}, there exists a unital 
        *-isomorphism
            $\mu : \limcalg C_u^*(\cl S_k) \rightarrow C_u^*(\limos \cl S_k)$
        such that
        \begin{equation}\label{eq:ctp6}
            \mu \circ \eta_{k,\infty} \circ \iota_k = \iota_{\los{\cl S}} \circ \phi_{k, \infty}
        \end{equation}
        for all $k \in \bb{N}$, where $\iota_{\los{\cl S}} : \limos \cl S_k \to C_u^*(\limos \cl S_k)$ 
        is the canonical embedding. We have that 
        \[
            \mu \otimes {\id}_{C_u^*(\cl T)} : \left(\limcalg C_u^*(\cl S_k)\right) \mtp C_u^*(\cl T) \rightarrow C_u^*(\limos \cl S_k) \mtp C_u^*(\cl T)
        \]
        is a unital *-isomorphism.
        By the definition of the commuting tensor product,
        \[
            \iota_{\los{\cl S}} \otimes \iota_{\cl T}: (\limos \cl S_k) \ctp \cl T \rightarrow C_u^*(\limos \cl S_k) \mtp C_u^*(\cl T)
        \]
        is a unital complete order isomorphism onto its image.

        We will show that
        \begin{equation}\label{eq_gammmma}
            (\iota_{\los{\cl S}} \otimes \iota_{\cl T}) \circ \widetilde{\alpha}^{-1} 
            = \big(\mu \otimes {\id}_{C_u^*(\cl T)}\big) 
            \circ  \gamma
            \circ \widetilde{\beta}^{-1} \circ \iota;
        \end{equation}
since $\big(\mu \otimes {\id}_{C_u^*(\cl T)}\big) \circ \gamma \circ \widetilde{\beta}^{-1} \circ \iota$  and 
$\iota_{\los{\cl S}} \otimes \iota_{\cl T}$ are  complete order embeddings, it will follow 
from Lemma~\ref{rem:ucio conjgation} that 
$\widetilde{\alpha}$ is a complete order embedding.
By (\ref{eq:ctp3}), (\ref{eq:ctp5}), (\ref{eq:ctp4}) and (\ref{eq:ctp6}), for every $k\in \bb{N}$, we have
        \[
            \begin{split}
                \big(\mu &\otimes {\id}_{C_u^*(\cl T)}\big) \circ \gamma \circ \widetilde{\beta}^{-1} \circ \iota \circ \psi_{k,\infty}\\
                &= \big(\mu \otimes {\id}_{C_u^*(\cl T)}\big) \circ \widetilde{\beta}^{-1} \circ \iota \circ \psi_{k,\infty}\\
                &=  \big(\mu \otimes {\id}_{C_u^*(\cl T)}\big) \circ \widetilde{\beta}^{-1} \circ \rho_{k,\infty} \circ (\iota_k \otimes \iota_{\cl T})\\
                &= \big(\mu \otimes {\id}_{C_u^*(\cl T)}\big) \circ  (\eta_{k,\infty} \otimes {\id}_{C^*_u(\cl T)})  \circ (\iota_k \otimes \iota_{\cl T})\\
                &= \Big( \mu \circ \eta_{k,\infty} \circ \iota_k \Big)  \otimes \big({\id}_{C_u^*(\cl T)} \circ \iota_{\cl T}\big) 
                = (\iota_{\los{\cl S}} \circ \phi_{k,\infty}) \otimes \iota_{\cl T} \\
                &= (\iota_{\los{\cl S}} \otimes \iota_{\cl T}) \circ \big(\phi_{k,\infty} \otimes {\id}_{\cl T}\big)
                = (\iota_{\los{\cl S}} \otimes \iota_{\cl T}) \circ \widetilde{\alpha}^{-1} \circ \psi_{k,\infty}.\\
            \end{split}
        \]
        This establishes (\ref{eq_gammmma}), and the proof is complete. 
        \end{proof}

Recall \cite{KavrukPaulsenTodorovTomforde} that an operator system $\cl S$ is 
said to possess the \emph{double commutant expectation property} 
if, for every complete order embedding $\cl S\subseteq \cl B(\cl H)$ (where $H$ is a Hilbert space), 
there exists a completely positive map from $\cl B(\cl H)$ into the double commutant $\cl S''$ of $\cl S$
that fixes $\cl S$ element-wise.

\begin{corollary}
        Let 
        $\cl S_1\stackrel{\phi_1}{\longrightarrow} \cl S_2 \stackrel{\phi_2}{\longrightarrow} \cl S_3 \stackrel{\phi_3}{\longrightarrow} \cl S_4 \stackrel{\phi_4}{\longrightarrow} \cdots$
        be an inductive system in $\catos$ such that each $\phi_k$ is a completely order isomorphic embedding, and let $\cl T$ be an operator system.
Assume that $\cl S_k$ satisfies the double commutant expectation property for each $k\in \bb{N}$.
        Then $\limos (\cl S_k \ctp \cl T)$ is unitally completely order isomorphic to $(\limos \cl S_k) \ctp \cl T$.
\end{corollary}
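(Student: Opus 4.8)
The plan is to obtain the corollary directly from Theorem~\ref{thm:ComTensorCommutesWithIndLimit}. The hypotheses of the present statement already include that each $\phi_k$ is a complete order embedding, so the only thing left to establish is the remaining hypothesis of Theorem~\ref{thm:ComTensorCommutesWithIndLimit}: that $\phi_k \otimes \id_{\cl T} : \cl S_k \ctp \cl T \to \cl S_{k+1} \ctp \cl T$ is a complete order embedding for each $k \in \bb{N}$. Once this embedding property is verified, the conclusion is immediate, with no further work on the inductive limit itself.

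To verify the embedding property I would invoke the characterisation of the double commutant expectation property from \cite{KavrukPaulsenTodorovTomforde}: an operator system $\cl R$ has the DCEP if and only if it is $(\mathrm{el},\mathrm{c})$-nuclear, that is, $\cl R \otimes_{\mathrm{el}} \cl P = \cl R \ctp \cl P$ for every operator system $\cl P$, where $\otimes_{\mathrm{el}}$ denotes the enveloping left tensor product of \cite{KavrukPaulsenTodorovTomforde2}. Applying this with $\cl R = \cl S_k$ and then with $\cl R = \cl S_{k+1}$, taking $\cl P = \cl T$ in each case, identifies $\cl S_k \ctp \cl T$ with $\cl S_k \otimes_{\mathrm{el}} \cl T$ and $\cl S_{k+1} \ctp \cl T$ with $\cl S_{k+1} \otimes_{\mathrm{el}} \cl T$ as operator systems on the same underlying algebraic tensor products. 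Since $\otimes_{\mathrm{el}}$ is injective in its first variable \cite{KavrukPaulsenTodorovTomforde2} and $\phi_k$ is a complete order embedding, the map $\phi_k \otimes \id_{\cl T} : \cl S_k \otimes_{\mathrm{el}} \cl T \to \cl S_{k+1} \otimes_{\mathrm{el}} \cl T$ is a complete order embedding; transporting along the two identifications, the same linear map $\phi_k \otimes \id_{\cl T} : \cl S_k \ctp \cl T \to \cl S_{k+1} \ctp \cl T$ is a complete order embedding, as required. Theorem~\ref{thm:ComTensorCommutesWithIndLimit} then yields the desired unital complete order isomorphism between $\limos (\cl S_k \ctp \cl T)$ and $(\limos \cl S_k) \ctp \cl T$.

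The only genuine point requiring care is to match the variable in which injectivity is needed with the one in which the DCEP is assumed: the map $\phi_k \otimes \id_{\cl T}$ acts on the first tensor factor, so it is precisely \emph{left}-injectivity of $\otimes_{\mathrm{el}}$ that is relevant, and correspondingly the DCEP is used for the left factors $\cl S_k$, for which the equality $\cl S_k \ctp \cl T = \cl S_k \otimes_{\mathrm{el}} \cl T$ is the correct instance of $(\mathrm{el},\mathrm{c})$-nuclearity. I would also record that both the DCEP characterisation and the left-injectivity of $\otimes_{\mathrm{el}}$ hold for general, not necessarily complete, operator systems, so that no completeness hypothesis enters and the argument remains entirely within the non-complete setting used throughout the paper.
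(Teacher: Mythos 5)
Your proposal is correct and follows essentially the same route as the paper: the paper's proof likewise reduces everything to Theorem~\ref{thm:ComTensorCommutesWithIndLimit} and cites \cite[Theorems 7.1 and 7.3]{KavrukPaulsenTodorovTomforde} to conclude that each $\phi_k\otimes\id_{\cl T}$ is a complete order embedding of $\cl S_k\ctp\cl T$ into $\cl S_{k+1}\ctp\cl T$. You have merely unpacked that citation — identifying the DCEP with $(\mathrm{el},\mathrm{c})$-nuclearity and invoking left injectivity of $\otimes_{\mathrm{el}}$ — which is exactly the mechanism behind the cited results, and your remark about the argument staying within the non-complete setting is a correct and worthwhile observation.
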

\begin{proof}
Since $\cl S_k$ satisfies the double commutant expectation property, 
\cite[Theorems 7.1 and 7.3]{KavrukPaulsenTodorovTomforde} imply that 
the map 
$\phi_k\otimes\id_{\cl T} :  \cl S_k \ctp \cl T\to \cl S_{k+1} \ctp \cl T$ is a complete order embedding, $k \in \bb{N}$. 
The claim now follows from Theorem \ref{thm:ComTensorCommutesWithIndLimit}.
\end{proof}


\section{Inductive limits of operator C*-systems} \label{chapt: operator systems additional structure}

    In this section, 
    we adapt our construction of the inductive limit of operator systems to 
    the category of operator C*-systems.
      We recall some notions and results that will be required shortly.
    Let $(\cl S, \{C_n\}_{n \in \mathbb{N}}, e)$ be a complete 
    operator system and $\cl A$ be 
    a unital C*-algebra such that $\cl S$ is an $\cl A$-bimodule. Let us denote the bimodule action by $\cdot$ so that $(a_1 a_2) \cdot s = a_1 \cdot (a_2 \cdot s)$ whenever $s \in \cl S$ and $a_1, a_1 \in \cl A$. 
We assume that $a \cdot e = e \cdot a$, $a \in \cl A$, and equip $M_n(\cl S)$ with a bimodule action of $M_n(\cl A)$ by letting 
$(a_{i,j})\cdot (s_{i,j}) = \left(\sum_{k=1}^n a_{i,k}\cdot s_{k,j}\right)$ and 
$(s_{i,j})\cdot (a_{i,j}) = \left(\sum_{k=1}^n s_{i,k}\cdot a_{k,j}\right)$. 
If
    \[
       A^* \cdot X\cdot A \in C_n \ \mbox{ whenever } \ X\in C_n,
    \]
we say that 
$\cl S$ is an {\it operator} $\cl A${\it -system} or that the pair $(\cl S, \cl A)$ is an {\it operator C*-system}. 
Let $(\cl S, \cl A)$ and $(\cl T, \cl B)$ be operator C*-systems.  A pair $(\phi, \pi)$ will be 
called an {\it operator C*-system homomorphism} 
    if $\phi: \cl S \rightarrow \cl T$ is a unital completely positive map, $\pi: \cl A \rightarrow \cl B$ is a unital *-homomorphism and $\phi(a_1 \cdot s \cdot a_2) = \pi(a_1) \cdot \phi(s) \cdot \pi(a_2)$ for all $a_1, a_2 \in \cl A$ and $s \in \cl S$. We write $(\phi, \pi) : (\cl S, \cl A) \rightarrow (\cl T, \cl B)$. We call the operator C*-system homomorphism $(\phi, \pi)$ an {\it operator C*-system monomorphism} if $\phi$ is completely isometric. 
    If $(\phi, \pi) : (\cl S, \cl A) \rightarrow (\cl T, \cl B)$ and $(\psi, \rho) : (\cl T, \cl B) \rightarrow (\cl R, \cl C)$ are operator 
    C*-system homomorphisms, we write $(\phi, \pi) \circ (\psi, \rho)$ for the pair $(\phi \circ \psi, \pi \circ \rho)$;
    it is straightforward to see that the latter is an operator C*-system homomorphism. 
    The following theorem is contained in \cite[Chapter~15]{PaulsenBook}.

    \begin{theorem}\label{thm:characterisation of operator A modules}
        Let $(\cl S, \cl A)$ be an operator C*-system. Then there exists a Hilbert space~$\hilb$ and 
        an operator C*-system monomorphism $(\Phi, \Pi): (\cl S, \cl A) \rightarrow (\BH, \BH)$ 
        such that the order unit of $\cl S$ is mapped to the identity operator.
    \end{theorem}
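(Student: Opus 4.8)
The plan is to realise both $\cl S$ and $\cl A$ on a common Hilbert space by a Stinespring/KSGNS-type construction that is driven by the fixed order unit $e$ and by the module action, and to use the Choi--Effros embedding of $\cl S$ as the source of positivity.

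First I would record two elementary consequences of the axioms. Consider $\gamma:\cl A\to\cl S$, $\gamma(a)=a\cdot e$. Since $a\cdot e=e\cdot a$, one checks entrywise that $B\cdot e^{(n)}=e^{(n)}\cdot B$ for every $B\in M_n(\cl A)$, and $\gamma^{(n)}(A)=A\cdot e^{(n)}$ for $A\in M_n(\cl A)$. Hence, for $A=B^{*}B$, we get $\gamma^{(n)}(A)=B^{*}\cdot e^{(n)}\cdot B\in C_n$ by the positivity axiom $A^{*}\cdot X\cdot A\in C_n$ applied to $X=e^{(n)}$; so $\gamma$ is unital completely positive. Next, by Theorem~\ref{choieffros} fix a unital complete order embedding $\iota:\cl S\to\BK$ with $\iota(e)=I$, so that $\iota\circ\gamma:\cl A\to\BK$ is unital completely positive.

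The heart of the argument is the dilation. Let $(\Pi,V,\hilb)$ be the minimal Stinespring dilation of $\iota\circ\gamma$: thus $\Pi:\cl A\to\BH$ is a unital $*$-representation, $V:\cl K\to\hilb$ an isometry with $\hilb=\overline{\Pi(\cl A)V\cl K}$, and $\iota(a\cdot e)=V^{*}\Pi(a)V$. On the dense span of the vectors $\Pi(a)V\xi=[a\otimes\xi]$ I would define, for $s\in\cl S$, an operator $\Phi(s)$ through the form
\[
\langle \Phi(s)\,\Pi(a)V\xi,\ \Pi(b)V\eta\rangle \ =\ \langle \iota(b^{*}\cdot s\cdot a)\,\xi,\ \eta\rangle .
\]
The main obstacle, and the only place genuine work is needed, is to show this form is well defined on the quotient and bounded, so that $\Phi(s)\in\BH$. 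This is exactly where the positivity axiom is used a second time: normalising $\norm{s}\le1$, Lemma~\ref{rem:norm of an operator system} gives $\bigl(\begin{smallmatrix} e & s \\ s^{*} & e\end{smallmatrix}\bigr)\in M_2(\cl S)^{+}$; conjugating by $\diag(b,a)\in M_2(\cl A)$ and applying $\iota^{(2)}$ produces a positive $2\times2$ operator matrix whose Cauchy--Schwarz estimate reads (up to relabelling)
\[
\bigl|\langle \iota(b^{*}\cdot s\cdot a)\xi,\eta\rangle\bigr|^{2}\ \le\ \langle \iota((b^{*}b)\cdot e)\eta,\eta\rangle\,\langle\iota((a^{*}a)\cdot e)\xi,\xi\rangle ,
\]
i.e.\ $|\langle\Phi(s)u,v\rangle|\le\norm{s}\,\norm{u}\,\norm{v}$ in the Stinespring norm. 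Boundedness forces the form to descend to the completion, giving $\Phi(s)\in\BH$ with $\norm{\Phi(s)}\le\norm{s}$.

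It then remains to read off the properties from the defining formula. Setting $s=e$ recovers the inner product, so $\Phi(e)=I$; the involution compatibility $(a\cdot s\cdot b)^{*}=b^{*}\cdot s^{*}\cdot a$ yields $\Phi(s^{*})=\Phi(s)^{*}$; and a direct substitution gives the covariance $\Phi(a\cdot s\cdot b)=\Pi(a)\Phi(s)\Pi(b)$. Moreover $V^{*}\Phi(s)V=\iota(s)$, and compressing $\Phi^{(n)}$ entrywise by $V$ gives $\iota^{(n)}(X)$ back. Complete positivity of $\Phi$ follows once more from the axiom: for $X=(s_{pq})\in M_n(\cl S)^{+}$ and $A=\diag(a_1,\dots,a_n)$ one has $A^{*}\cdot X\cdot A\in M_n(\cl S)^{+}$, and applying $\iota^{(n)}$ and pairing with $(\xi_1,\dots,\xi_n)$ shows $\sum_{p,q}\langle\Phi(s_{pq})\Pi(a_q)V\xi_q,\Pi(a_p)V\xi_p\rangle\ge0$ on the dense subspace. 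Finally, if $\Phi^{(n)}(X)\ge0$ then $\iota^{(n)}(X)\ge0$ by the compression identity, whence $X\in M_n(\cl S)^{+}$ because $\iota$ is a complete order embedding; thus $\Phi$ is a unital complete order embedding, equivalently a unital complete isometry by \cite[Proposition~3.6]{PaulsenBook}. The pair $(\Phi,\Pi):(\cl S,\cl A)\to(\BH,\BH)$ is then the desired operator C*-system monomorphism carrying $e$ to $I$.
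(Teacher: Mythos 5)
The paper does not actually prove this statement --- it is quoted from \cite[Chapter~15]{PaulsenBook} --- and your argument is, in substance, the standard one underlying that reference: dilate the unital completely positive map $a\mapsto \iota(a\cdot e)$ by Stinespring, define $\Phi$ through the covariant sesquilinear form, and extract both boundedness and complete positivity from the axiom $A^{*}\cdot X\cdot A\in C_n$ together with the Choi--Effros embedding. The construction is correct; the one step you should make explicit is that the Cauchy--Schwarz bound has to be run for finite sums $\sum_j\Pi(a_j)V\xi_j$ rather than single vectors $\Pi(a)V\xi$ (otherwise well-definedness and boundedness of the form on the dense span are not yet established), which the same conjugation argument applied to row matrices over $\cl A$ and the rectangular form of the positivity axiom (obtained from the square case by padding with zeros) delivers. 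Two cosmetic points: the adjoint identity should read $(a\cdot s\cdot b)^{*}=b^{*}\cdot s^{*}\cdot a^{*}$, and this identity, not being explicitly listed among the paper's axioms, is worth a line of justification (it follows for hermitian $s$ by polarisation from the positivity axiom, since every hermitian element is a difference of positives).
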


We denote by $\catoas$ the category whose objects are operator 
C*-systems and whose morphisms are operator C*-system homomorphisms.
            
Before considering inductive systems in $\catoas$, we make some observations which we shall refer to later in the section.

        \begin{lemma}\label{cauchy sequence bounded}
            Let 
            $\cl S_1\stackrel{\phi_1}{\longrightarrow} \cl S_2 \stackrel{\phi_2}{\longrightarrow} \cl S_3 \stackrel{\phi_3}{\longrightarrow} \cl S_4 \stackrel{\phi_4}{\longrightarrow} \cdots$ be an inductive system in~$\catos$. 
            If $s_{k_n}\in \cl S_{k_n}$ and 
            $(\phi_{k_n,\infty}(s_{k_n}))_{n \in \bb{N}}$ is a Cauchy sequence in $\limos \cl S_k$ 
            then there exists a sequence $(m_n)_{n \in \bb{N}} \subseteq \bb{N}$ such that $(\phi_{k_n, m_n}(s_{k_n}))_{n \in \bb{N}}$ is a bounded sequence.
                    \end{lemma}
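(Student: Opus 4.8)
The plan is to reduce everything to Proposition~\ref{prop:norm for inductive limit operator system}, which expresses the norm of $\phi_{k,\infty}(s_k)$ in $\limos \cl S_k$ as the limit $\lim_{l\to\infty}\norm{\phi_{k,l}(s_k)}_l$ of the norms at the finite stages. First I would observe that a Cauchy sequence in any normed space is bounded; hence $M := \sup_{n\in\bb{N}}\norm{\phi_{k_n,\infty}(s_{k_n})} < \infty$. This uniform bound on the limiting norms is the only feature of the Cauchy hypothesis that I actually need.

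Next, for each fixed $n$ I would apply Proposition~\ref{prop:norm for inductive limit operator system} to $s_{k_n}\in\cl S_{k_n}$, obtaining $\norm{\phi_{k_n,\infty}(s_{k_n})} = \lim_{l\to\infty}\norm{\phi_{k_n,l}(s_{k_n})}_l$. By the definition of the limit I can then select an index $m_n\ge k_n$ with $\norm{\phi_{k_n,m_n}(s_{k_n})}_{m_n} \le \norm{\phi_{k_n,\infty}(s_{k_n})} + 1 \le M+1$. Performing this choice for every $n$ produces the desired sequence $(m_n)_{n\in\bb{N}}$, and by construction $\sup_n \norm{\phi_{k_n,m_n}(s_{k_n})}_{m_n}\le M+1$, so $(\phi_{k_n,m_n}(s_{k_n}))_{n\in\bb{N}}$ is bounded.

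The step that most deserves care is the last inequality: the limiting characterization gives $\norm{\phi_{k_n,\infty}(s_{k_n})}$ as a limit, and since the connecting maps $\phi_{l,l+1}$ are unital completely positive they are contractive, so the sequence $(\norm{\phi_{k_n,l}(s_{k_n})}_l)_{l\ge k_n}$ is nonincreasing and converges downward to $\norm{\phi_{k_n,\infty}(s_{k_n})}$. This monotonicity guarantees that sufficiently large $m_n$ yield an \emph{upper} bound close to the limiting norm (rather than merely a lower bound), which is exactly what boundedness requires. I do not expect a genuine obstacle here; the one conceptual point to keep in mind is that the terms $\phi_{k_n,m_n}(s_{k_n})$ live in different operator systems $\cl S_{m_n}$, so ``bounded'' must be read as a uniform bound $\sup_n\norm{\phi_{k_n,m_n}(s_{k_n})}_{m_n}<\infty$, which the argument delivers directly.
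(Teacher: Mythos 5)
Your proof is correct and follows essentially the same route as the paper: both arguments note that a Cauchy sequence is norm-bounded and then invoke Proposition~\ref{prop:norm for inductive limit operator system} to pick, for each $n$, an index $m_n$ at which the finite-stage norm is within a fixed bound. Your additional remark that the finite-stage norms are nonincreasing (since the connecting maps are unital completely positive, hence contractive) is true but not needed — the definition of the limit already yields the required index.
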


        \begin{proof}
          Clearly, 
       there exists $M \in \bb{N}$ such that $\norm{\phi_{k_n,\infty}(s_{k_n})}_{\limos \cl S_k} < M$, $n \in \bb{N}$. 
       By Proposition~\ref{prop:norm for inductive limit operator system}, for each $n \in \bb{N}$, there exists $m_n \in \bb{N}$ such that $\norm{\phi_{k_n, m_n}(s_{k_n})}_{\cl S_{m_n}} < M$. 
        \end{proof}

        We fix throughout the section an inductive system
        \begin{equation}\label{eq:ind limit oas 1}
    (\cl S_1, \cl A_1)\stackrel{(\phi_1, \pi_1)}{\longrightarrow} (\cl S_2, \cl A_2) \stackrel{(\phi_2, \pi_2)}{\longrightarrow} (\cl S_3, \cl A_3) \stackrel{(\phi_3, \pi_3)}{\longrightarrow} (\cl S_4, \cl A_4) \stackrel{(\phi_4, \pi_4)}{\longrightarrow} \cdots
    \end{equation}
    in $\catoas$. Thus,
    $\cl S_1\stackrel{\phi_1}{\longrightarrow} \cl S_2 \stackrel{\phi_2}{\longrightarrow} \cl S_3 \stackrel{\phi_3}{\longrightarrow} \cl S_4 \stackrel{\phi_4}{\longrightarrow} \cdots$
    is an inductive system in $\catos$,
    $\cl A_1\stackrel{\pi_1}{\longrightarrow} \cl A_2 \stackrel{\pi_2}{\longrightarrow} \cl A_3 \stackrel{\pi_3}{\longrightarrow} \cl A_4 \stackrel{\pi_4}{\longrightarrow} \cdots$
    is an inductive system in $\catcalg$, 
    $\cl S_k$ is an operator $\cl A_k$-system and $(\phi_k, \pi_k)$ is an operator C*-system homomorphism, $k \in \bb{N}$. 
    We set 
    $$\cl S_{\infty} = \limos \cl S_k, \ \cl A_{\infty} = \limos \cl A_k, \ 
    \widehat{\cl A}_{\infty} = \limcalg \cl A_k, $$
    and $\widehat{\cl S}_{\infty}$ to be the completion of $\cl S_{\infty}$. 
    By Theorem \ref{p_opsysst}, $\widehat{\cl A}_{\infty}$ is the completion of $\cl A_{\infty}$. 
    
    We proceed with the construction of the inductive limit of $(\ref{eq:ind limit oas 1}).$
    Let $a \in \widehat{\cl A}_{\infty}$ and $s \in \widehat{\cl S}_{\infty}$. Then $a = \lim_{n \rightarrow \infty} \pi_{k_n, \infty}(a_{k_n})$ and $s = \lim_{n \rightarrow \infty} \phi_{l_n, \infty}(s_{l_n})$ for some $a_{k_n} \in \cl A_{k_n}$ and $s_{l_n} \in \cl S_{l_n}$. 
    Letting 
   $m_n = \max\{k_n, l_n\}$, $a_{m_n} = \pi_{k_n, m_n}(a_{k_n})$ and $s_{m_n} = \phi_{l_n, m_n}(s_{l_n})$, we have 
            \[
                a = \lim_{n \rightarrow \infty} \pi_{m_n, \infty}(a_{m_n}) ~\text{~and~}~ s = \lim_{n \rightarrow \infty} \phi_{m_n, \infty}(s_{m_n}).
            \]
We let
\begin{equation}\label{eq_asa}
                a \cdot s = \lim_{n \rightarrow \infty} \phi_{m_n, \infty}(a_{m_n} \cdot s_{m_n})
                \mbox{ and } 
                s \cdot a = \lim_{n \rightarrow \infty} \phi_{m_n, \infty}(s_{m_n} \cdot a_{m_n}).
\end{equation}

        \begin{proposition}\label{prop:module action catoas}
The operations (\ref{eq_asa}) are well-defined and turn
$\widehat{\cl S}_{\infty}$ into an $\widehat{\cl A}_{\infty}$-bimodule.
\end{proposition}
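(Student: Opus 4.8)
The plan is to first establish a submultiplicative norm estimate governing the module actions, use it to define the products on the dense subspaces $\cl A_\infty$ and $\cl S_\infty$, and then pass to the completions by continuity, verifying that the resulting map agrees with (\ref{eq_asa}); the bimodule axioms then follow by a density argument.

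First I would record the pointwise estimate: for each $l$, the operator $\cl A_l$-system $(\cl S_l,\cl A_l)$ admits, by Theorem~\ref{thm:characterisation of operator A modules}, an operator C*-system monomorphism $(\Phi,\Pi)$ into $(\cl B(\cl H),\cl B(\cl H))$, whence $\Phi(b\cdot t)=\Pi(b)\Phi(t)$ and $\|b\cdot t\|_{\cl S_l}=\|\Pi(b)\Phi(t)\|\le\|b\|_{\cl A_l}\|t\|_{\cl S_l}$ for all $b\in\cl A_l$, $t\in\cl S_l$ (and symmetrically for the right action). Combining this with the identity $\phi_{k,l}(b\cdot t)=\pi_{k,l}(b)\cdot\phi_{k,l}(t)$, valid because $(\phi_{k,l},\pi_{k,l})$ is an operator C*-system homomorphism, together with Proposition~\ref{prop:norm for inductive limit operator system} and the corresponding norm formula for $\limcalg\cl A_k$, I obtain
\[
\|\phi_{k,\infty}(b\cdot t)\|_{\widehat{\cl S}_\infty}=\lim_{l}\|\pi_{k,l}(b)\cdot\phi_{k,l}(t)\|_{\cl S_l}\le\|\pi_{k,\infty}(b)\|_{\widehat{\cl A}_\infty}\,\|\phi_{k,\infty}(t)\|_{\widehat{\cl S}_\infty},
\]
the last step using that both factor sequences are non-increasing and convergent, so the limit of the products equals the product of the limits.

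Next I would define, for $a=\pi_{k,\infty}(a_k)\in\cl A_\infty$ and $s=\phi_{l,\infty}(s_l)\in\cl S_\infty$, the product $a\cdot s:=\phi_{m,\infty}(\pi_{k,m}(a_k)\cdot\phi_{l,m}(s_l))$ for any $m\ge\max\{k,l\}$. Well-definedness (independence of $m$ and of the representatives) is immediate from the estimate above: if two representatives yield the same $a$ and the same $s$, then after lifting both to a common stage $m$ the difference of the two candidate products is controlled, via the decomposition $\tilde a\cdot\tilde s-\tilde a'\cdot\tilde s'=\tilde a\cdot(\tilde s-\tilde s')+(\tilde a-\tilde a')\cdot\tilde s'$, by $\|a\|\,\|\phi_{m,\infty}(\tilde s-\tilde s')\|+\|\pi_{m,\infty}(\tilde a-\tilde a')\|\,\|s\|=0$. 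The same estimate shows that the resulting bilinear map $\cl A_\infty\times\cl S_\infty\to\widehat{\cl S}_\infty$ is jointly bounded, with $\|a\cdot s\|\le\|a\|\,\|s\|$; the right action is treated symmetrically.

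Finally, since $\cl A_\infty$ and $\cl S_\infty$ are dense in $\widehat{\cl A}_\infty$ and $\widehat{\cl S}_\infty$ (the latter by definition, the former by Theorem~\ref{p_opsysst}), the bounded bilinear products extend uniquely to jointly continuous bilinear maps $\widehat{\cl A}_\infty\times\widehat{\cl S}_\infty\to\widehat{\cl S}_\infty$. For $a=\lim_n\pi_{m_n,\infty}(a_{m_n})$ and $s=\lim_n\phi_{m_n,\infty}(s_{m_n})$ as in (\ref{eq_asa}), the level-representatives satisfy $\pi_{m_n,\infty}(a_{m_n})\cdot\phi_{m_n,\infty}(s_{m_n})=\phi_{m_n,\infty}(a_{m_n}\cdot s_{m_n})$, so joint continuity yields exactly $a\cdot s=\lim_n\phi_{m_n,\infty}(a_{m_n}\cdot s_{m_n})$; this proves that (\ref{eq_asa}) is well defined and coincides with the continuous extension (the relevant sequences are Cauchy, being images of convergent, hence bounded, sequences, with Lemma~\ref{cauchy sequence bounded} supplying bounded level-representatives should one prefer to argue at finite stages). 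The bimodule identities --- bilinearity, $(a_1a_2)\cdot s=a_1\cdot(a_2\cdot s)$, $(a_1\cdot s)\cdot a_2=a_1\cdot(s\cdot a_2)$, $s\cdot(a_1a_2)=(s\cdot a_1)\cdot a_2$, and unitality --- hold on the dense subspaces because each $\cl S_k$ is an operator $\cl A_k$-system and the connecting maps intertwine the actions, and they persist under the limits by joint continuity of all the operations involved. The main obstacle is the submultiplicative estimate of the first step, and in particular the interchange of the two limits defining the norms in $\widehat{\cl A}_\infty$ and $\widehat{\cl S}_\infty$; everything else is a routine density-and-continuity argument.
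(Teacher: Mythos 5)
Your proposal is correct and follows essentially the same route as the paper: both define the actions by pushing level-wise products into the limit and reduce well-definedness and the Cauchy property to a submultiplicative norm estimate, which the paper dismisses as a ``straightforward calculation'' and you make explicit via the representation theorem for operator C*-systems together with Proposition~\ref{prop:norm for inductive limit operator system}. Your packaging of the passage to $\widehat{\cl S}_{\infty}$ as the unique continuous extension of a bounded bilinear map on the dense algebraic inductive limits is a cleaner organisation of the same argument, and it correctly handles the point that equality of representatives in the limit is only an asymptotic (norm) statement rather than agreement at a finite stage.
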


            \begin{proof}
         It is easy to see that $\big(\phi_{m_n, \infty}(a_{m_n} \cdot s_{m_n})\big)_{n \in \bb{N}}$ is a Cauchy sequence.
Moreover, if  
$a = \lim_{n \rightarrow \infty} \pi_{m_n, \infty}(b_{m_n}) \in \widehat{\cl A}_{\infty}$
            and
$s = \lim_{n \rightarrow \infty} \phi_{m_n, \infty}(t_{m_n}) \in \widehat{\cl S}_{\infty}$, 
a straightforward calculation shows that 
$$\lim_{l \rightarrow \infty} \norm{\phi_{m_n, l} (a_{m_n} \cdot s_{m_n} - b_{m_n} \cdot t_{m_n})}_{\cl S_l}  = 0.$$
It follows that the left action in (\ref{eq_asa}) is well-defined; similarly, the right action is well-defined. 
The fact that these operations are module actions is straightforward. 
We check, for example,  the property $(a\cdot s)\cdot b = a\cdot (s\cdot b)$: 
writing 
$b = \lim_{n \rightarrow \infty} \pi_{m_n, \infty}(b_{m_n}) \in \limcalg \cl A_k$, we have 
            \[
            \begin{split}
                (a \cdot s) \cdot b
                &= \big(\lim_{n \rightarrow \infty}\phi_{m_n, \infty}(a_{m_n} \cdot s_{m_n})\big) \cdot b\\
                &= \lim_{n \rightarrow \infty} \phi_{m_n, \infty} ((a_{m_n} \cdot s_{m_n}) \cdot b_{m_n})
                = \lim_{n \rightarrow \infty} \phi_{m_n, \infty} (a_{m_n} \cdot (s_{m_n} \cdot b_{m_n})) \\
                &= a \cdot \big(\lim_{n \rightarrow \infty} \phi_{m_n, \infty}(s_{m_n} \cdot b_{m_n})\big)
                = a \cdot (s \cdot b).\\
                \end{split}
            \]
\end{proof}

\begin{remark}\label{r_noncads}
{\rm Note that, if $k\in \bb{N}$, $a,b\in \cl A_k$ and $s\in \cl S_k$ then 
$$\pi_{k,\infty}(a)\cdot \phi_{k,\infty}(s)\cdot \pi_{k,\infty}(b) = \phi_{k,\infty}(a\cdot s\cdot b).$$}
\end{remark}

        \begin{lemma}\label{lem:inductive lim oas positive cones}
If $S \in M_n(\cl S_{\infty})^+$ and $A \in M_n(\cl A_{\infty})$ then $A^*\cdot S\cdot A \in M_n(\cl S_{\infty})^+$.
        \end{lemma}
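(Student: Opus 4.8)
The plan is to reduce the claimed positivity statement at the level of the inductive limit $\cl S_\infty = \limos \cl S_k$ to the corresponding positivity statement at the finite stages $\cl S_k$, where it holds by the defining axiom of an operator C*-system. Recall that $\cl S_k$ is an operator $\cl A_k$-system, which means precisely that $B^* \cdot X \cdot B \in M_n(\cl S_k)^+$ whenever $X \in M_n(\cl S_k)^+$ and $B \in M_n(\cl A_k)$. So the whole task is to ``pull back'' the data $S \in M_n(\cl S_\infty)^+$ and $A \in M_n(\cl A_\infty)$ to some common finite stage, apply the axiom there, and push the result back up via the completely positive connecting map $\phi_{m,\infty}$.

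\textbf{First I would} use Lemma~\ref{lem:matrix over inductive limit} (in its two incarnations, for the operator system $\cl S_\infty$ and for the C*-algebra $\cl A_\infty$) to write $A = \ddot{\phi}_{k,\infty}^{(n)}(A_k)$ for some $A_k \in M_n(\cl A_k)$ and to express $S$ at a finite level. The subtlety here is that $S \in M_n(\cl S_\infty)^+ = M_n(D_n)$ is a \emph{positive} element of the \emph{Archimedeanised} limit, so its positivity is only an approximate/Archimedean condition: by the description in (\ref{eq:def positive matrix in limit}), for each $r > 0$ there exist a stage $m$ (which we may take large enough to also receive $A_k$) and elements realising $r e_m^{(n)} + \phi_{k,m}^{(n)}(S_k) + (\text{null correction}) \in M_n(\cl S_m)^+$. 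Using the module relation of Remark~\ref{r_noncads}, namely $\pi_{k,\infty}(a)\cdot\phi_{k,\infty}(s)\cdot\pi_{k,\infty}(b) = \phi_{k,\infty}(a\cdot s\cdot b)$, together with the compatibility of the module actions with the matrix amplifications, I would conjugate this approximate-positivity relation by $A_m := \pi_{k,m}^{(n)}(A_k)$ at the finite stage $m$.

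\textbf{The main obstacle} I expect is handling the Archimedean ``$r e_m^{(n)}$'' slack and the null-space correction terms under conjugation by $A_m$, since $A^* \cdot (r e_m^{(n)}) \cdot A$ is not itself a multiple of the unit but rather $r\, A_m^* \cdot A_m$, which need not be dominated by a fixed multiple of $e_m^{(n)}$ unless we bound $\norm{A_m}$. I would control this by invoking Proposition~\ref{prop:norm for inductive limit operator system}: since $A = \ddot{\phi}_{k,\infty}^{(n)}(A_k)$ is a fixed element of the (complete-enough) limit, $\lim_{l}\norm{\phi_{k,l}^{(n)}(A_k)}_{\cl A_l}$ is finite, so we may choose $m$ large with $\norm{A_m} \le C$ for a constant $C$ independent of $r$. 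Then $A_m^* \cdot (r e_m^{(n)} + \phi_{k,m}^{(n)}(S_k)) \cdot A_m \in M_n(\cl S_m)^+$ by the operator $\cl A_m$-system axiom, and since $A_m^* \cdot A_m \le C^2 e_m^{(n)}$, this yields $r C^2 e_m^{(n)} + (\text{conjugate of }S) \in M_n(\cl S_m)^+$ up to a null term.

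\textbf{Finally I would} push this forward: applying the completely positive map $\phi_{m,\infty}^{(n)}$ and using that null corrections map into $M_n(N)$ (Remark~\ref{rem:nR=nS}-style reasoning, or directly the definition of $D_n$), I obtain $r C^2\, e_\infty^{(n)} + A^* \cdot S \cdot A \in M_n(\cl S_\infty)^+$ for every $r > 0$. Since $\cl S_\infty$ is an operator system and hence $e_\infty^{(n)}$ is an Archimedean matrix order unit, letting $r \to 0$ gives $A^* \cdot S \cdot A \in M_n(\cl S_\infty)^+$, which is exactly the assertion of the lemma. The only bookkeeping to watch is that the module action on $\cl S_\infty$ was defined (via Proposition~\ref{prop:module action catoas}) on the \emph{completion} $\widehat{\cl S}_\infty$, so I must check that for $A \in M_n(\cl A_\infty)$ and $S \in M_n(\cl S_\infty)$ with entries in the dense subsystem, the product $A^* \cdot S \cdot A$ again lands in $M_n(\cl S_\infty)$ rather than only in its completion — this is immediate from Remark~\ref{r_noncads}, which shows the action of finite-stage elements stays within $\cl S_\infty$.
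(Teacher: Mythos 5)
Your proposal is correct, and its core strategy is the same as the paper's: write $A$ and $S$ in terms of representatives at a finite stage, apply the operator $\cl A_m$-system axiom there, and push the result forward along the completely positive map $\phi_{m,\infty}$ using the module identity of Remark~\ref{r_noncads}. The difference is in execution. The paper's proof is a three-line computation: it writes $S=\phi_{k,\infty}^{(n)}(S_k)$ and immediately asserts $A_k^*\cdot S_k\cdot A_k\in M_n(\cl S_k)^+$, which tacitly presumes that the representative $S_k$ can be taken positive at some finite stage; that is automatic for the non-Archimedeanised cones $C_n$ (by (\ref{eq_c})) but is not literally guaranteed for the Archimedeanised cone $D_n$ of (\ref{eq:def positive matrix in limit}). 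Your version confronts exactly this point: you unwind the definition of $D_n$, conjugate the approximate positivity relation $re_m^{(n)}+\phi_{k,m}^{(n)}(S_k)+\phi_{l,m}^{(n)}(T_l)\in M_n(\cl S_m)^+$ by $A_m=\pi_{k,m}^{(n)}(A_k)$, control the conjugated unit term by $A_m^*\cdot e_m^{(n)}\cdot A_m\le\|A_k\|^2e_m^{(n)}$ (the bound is uniform in $m$ since the $\pi_{k,m}$ are contractive, so no appeal to Proposition~\ref{prop:norm for inductive limit operator system} is even needed), check that the conjugated null term remains null, and finish with the Archimedean property of $e_\infty^{(n)}$. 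So what your argument buys is a complete justification of the step the paper compresses; what the paper's shortcut buys is brevity, at the cost of leaving the reduction to a positive finite-stage representative implicit. Your closing remark that $A^*\cdot S\cdot A$ lands in $M_n(\cl S_\infty)$ rather than merely in its completion is also a point worth making explicit, and Remark~\ref{r_noncads} does settle it.
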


        \begin{proof}
            Write $S = \phi_{k,\infty}^{(n)}(S_k) \in M_n(\cl S_{\infty})^+$
            and
            $A = \pi_{k,\infty}^{(n)}(A_k) \in M_n(\cl A_{\infty})$, where $S_k\in M_n(\cl S_k)$ and $A_k\in M_n(\cl A_k)$ for some $k$.    
            Then $A_k^*\cdot S_k \cdot A_k\in M_n(\cl S_k)^+$. Since 
            the map $\phi_{k,\infty}$ is completely positive, Remark \ref{r_noncads} shows that 
            \begin{eqnarray*}
                A^*\cdot S\cdot A 
                & = & 
                \pi_{k,\infty}^{(n)}(A_k^*) \cdot \phi_{k,\infty}^{(n)}(S_k) \cdot \pi_{k, \infty}^{(n)}(A_k)\\
                & = & 
                \phi_{k,\infty}^{(n)}(A_k^* \cdot S_k\cdot  A_k)
               \in M_n(\cl S_{\infty})^+.
\end{eqnarray*}
        \end{proof}

        \begin{proposition} \label{prop:limSkIsAnOperatorLimAkSystem}
            The space $\widehat{\cl S}_{\infty}$ is an operator $\widehat{\cl A}_{\infty}$-system 
            and $(\phi_{k,\infty}, \pi_{k, \infty})$ is an operator C*-system homomorphism
            from $(\cl S_k, \cl A_k)$ into $(\widehat{\cl S}_{\infty}, \widehat{\cl A}_{\infty})$ 
            such that $(\phi_{k+1,\infty}, \pi_{k+1,\infty}) \circ (\phi_k, \pi_k) = (\phi_{k,\infty}, \pi_{k, \infty})$,
            $k \in \bb{N}$.
                    \end{proposition}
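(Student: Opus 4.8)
The plan is to verify the two defining conditions of an operator $\widehat{\cl A}_\infty$-system for $\widehat{\cl S}_\infty$ and then read off the morphism assertions from results already in hand. First I would record that $\widehat{\cl S}_\infty$ is a complete operator system by Lemma~\ref{lem:norm closure of op system}, and that it is an $\widehat{\cl A}_\infty$-bimodule by Proposition~\ref{prop:module action catoas}; the unit compatibility $a\cdot e_\infty = e_\infty\cdot a$ transfers from the level-wise identities $a_k\cdot e_k = e_k\cdot a_k$ by passing to the limits defining the actions in (\ref{eq_asa}).

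The substantive point is the matricial positivity condition: for $A\in M_n(\widehat{\cl A}_\infty)$ and $X\in M_n(\widehat{\cl S}_\infty)^+$ one must show $A^*\cdot X\cdot A\in M_n(\widehat{\cl S}_\infty)^+$. My strategy is a density-and-continuity argument built on Lemma~\ref{lem:inductive lim oas positive cones}. By Lemma~\ref{lem:norm closure of op system} we have $M_n(\widehat{\cl S}_\infty)^+ = \overline{M_n(\cl S_\infty)^+}$, while $M_n(\cl A_\infty)$ is dense in $M_n(\widehat{\cl A}_\infty)$. I would choose $X_j\in M_n(\cl S_\infty)^+$ with $X_j\to X$ and $A_j\in M_n(\cl A_\infty)$ with $A_j\to A$; Lemma~\ref{lem:inductive lim oas positive cones} then gives $A_j^*\cdot X_j\cdot A_j\in M_n(\cl S_\infty)^+$, and it suffices to establish $A_j^*\cdot X_j\cdot A_j\to A^*\cdot X\cdot A$ in norm, for then the limit lies in $\overline{M_n(\cl S_\infty)^+} = M_n(\widehat{\cl S}_\infty)^+$.

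To secure this convergence I expect the main obstacle to be verifying that the bimodule action, together with each of its matrix amplifications, is completely contractive on the completions. I would prove this first on the dense pairing $M_n(\cl A_\infty)\times M_n(\cl S_\infty)$: for $A = \pi_{k,\infty}^{(n)}(A_k)$ and $X = \phi_{k,\infty}^{(n)}(X_k)$ the entrywise form of Remark~\ref{r_noncads} yields $A\cdot X = \phi_{k,\infty}^{(n)}(A_k\cdot X_k)$, so Proposition~\ref{prop:norm for inductive limit operator system} (at the matrix level, via Remark~\ref{rem:both methods are same}) computes $\norm{A\cdot X} = \lim_l\norm{\phi_{k,l}^{(n)}(A_k\cdot X_k)}$. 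At each finite level the action is contractive, because $(\cl S_l,\cl A_l)$ embeds into $(\cl B(\cl H_l),\cl B(\cl H_l))$ by Theorem~\ref{thm:characterisation of operator A modules} and the embedding turns the module action into operator multiplication; combined with $\lim_l\norm{\pi_{k,l}^{(n)}(A_k)} = \norm{A}$ and $\lim_l\norm{\phi_{k,l}^{(n)}(X_k)} = \norm{X}$ (again Proposition~\ref{prop:norm for inductive limit operator system}, using Theorem~\ref{p_opsysst} to identify norms in $\widehat{\cl A}_\infty$), this gives $\norm{A\cdot X}\leq\norm{A}\,\norm{X}$. Contractivity then propagates to $\widehat{\cl A}_\infty\times\widehat{\cl S}_\infty$ since the actions are defined as limits of Cauchy sequences, and a standard three-term estimate of $\norm{A^*\cdot X\cdot A - A_j^*\cdot X_j\cdot A_j}$ closes the argument.

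Finally I would dispatch the morphism claims directly: $\phi_{k,\infty}\colon\cl S_k\to\widehat{\cl S}_\infty$ is unital completely positive as the composite of the unital completely positive map of Proposition~\ref{prop:S_{infty} is operator space} with the canonical inclusion $\cl S_\infty\hookrightarrow\widehat{\cl S}_\infty$, the map $\pi_{k,\infty}$ is the canonical unital $*$-homomorphism into $\limcalg\cl A_k$, and the module intertwining identity is precisely Remark~\ref{r_noncads}; hence $(\phi_{k,\infty},\pi_{k,\infty})$ is an operator C*-system homomorphism. The relation $(\phi_{k+1,\infty},\pi_{k+1,\infty})\circ(\phi_k,\pi_k) = (\phi_{k,\infty},\pi_{k,\infty})$ is then immediate from $\phi_{k+1,\infty}\circ\phi_k = \phi_{k,\infty}$ and $\pi_{k+1,\infty}\circ\pi_k = \pi_{k,\infty}$, the defining compatibilities of the inductive limits in $\catos$ and $\catcalg$.
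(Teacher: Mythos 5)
Your proposal is correct and follows essentially the same route as the paper: approximate $A$ and $X$ by elements of the dense subspaces $M_n(\cl A_\infty)$ and $M_n(\cl S_\infty)^+$, apply Lemma~\ref{lem:inductive lim oas positive cones} at the dense level, and conclude by closedness of the cone $M_n(\widehat{\cl S}_\infty)^+$. You in fact supply more detail than the paper does on the one point it leaves implicit, namely the norm-continuity of the module action needed to justify $A_j^*\cdot X_j\cdot A_j\to A^*\cdot X\cdot A$, and your contractivity argument via Theorem~\ref{thm:characterisation of operator A modules} and Proposition~\ref{prop:norm for inductive limit operator system} closes that gap correctly.
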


        \begin{proof}
            By Proposition~\ref{prop:module action catoas}, $\widehat{\cl S}_{\infty}$ is a $\widehat{\cl A}_{\infty}$-bimodule;
            it is clear that $\widehat{\cl S}_{\infty}$ is a complete operator system.            
Suppose $S \in M_n(\widehat{\cl S}_{\infty})^+$ and $A \in M_n(\widehat{\cl A}_{\infty})$ so that $S = \lim_{p \rightarrow \infty} S_p$ 
and $A= \lim_{p \rightarrow \infty} A_p$
            where $S_p \in M_n(\cl S_{\infty})^+$ and $A_p \in M_n(\cl A_{\infty})$. 
            Then $A^*\cdot S \cdot A = \lim_{p \rightarrow \infty} A_p^* \cdot S_p \cdot  A_p$ 
            and, by Lemma~\ref{lem:inductive lim oas positive cones}, 
            $A_p^* \cdot S_p \cdot A_p \in M_n(\cl S_{\infty})^+$
            for all $p \in \mathbb{N}$. 
            Since the cone $M_n(\losc{\cl S})^+$ is closed, $A^*\cdot S \cdot A \in M_n(\losc{\cl S})^+$.
        \end{proof}

        \begin{theorem}\label{thm:universal property_operator_Asystem2}
The triple $(\widehat{\cl S}_{\infty}, \widehat{\cl A}_{\infty}, \{\phi_{k,\infty}, \pi_{k,\infty}\}_{k\in \bb{N}})$ 
is an inductive limit of the inductive system
            $$(\cl S_1, \cl A_1)\stackrel{(\phi_1, \pi_1)}{\longrightarrow} (\cl S_2, \cl A_2) \stackrel{(\phi_2, \pi_2)}{\longrightarrow} (\cl S_3, \cl A_3) \stackrel{(\phi_3, \pi_3)}{\longrightarrow} (\cl S_4, \cl A_4) \stackrel{(\phi_4, \pi_4)}{\longrightarrow} \cdots$$
            in $\catoas$.
        \end{theorem}
        
        \begin{proof}
            Suppose $\big((\cl T, \cl B), \{(\psi_k, \rho_k)\}_{k \in \mathbb{N}}\big)$ is a pair consisting of a complete operator C*-system and a family of operator C*-system homomorphisms
           $(\psi_k, \rho_k): (\cl S_k, \cl A_k) \rightarrow (\cl T, \cl B)$ such that $(\psi_{k+1}, \rho_{k+1}) \circ (\phi_{k}, \pi_k) = (\psi_k, \rho_k)$ for all $k \in \bb{N}$. By Theorem~\ref{thm:universal property_operator_system2}, 
           there exists a unique unital completely positive map $\psi : \cl S_{\infty} \rightarrow \cl T$ such that
            $\psi \circ \phi_{k, \infty} = \psi_k$. 
            Let $\widehat{\psi} : \widehat{\cl S}_{\infty} \to \cl T$ 
            be the continuous extension of~$\psi$. 
            Lemma \ref{lem:norm closure of op system} easily implies that 
            $\widehat{\psi}$ is completely positive. 
            By Section~\ref{subs_iil}, there exists a unique unital 
            *-homomorphism $\widehat{\rho}: \limcalg \cl A_k \rightarrow \cl B$ such that $\widehat{\rho} \circ \pi_{k,\infty} = \rho_k$.
            Suppose that
                $a = \lim_{n \rightarrow \infty} \pi_{m_n, \infty}(a_{m_n}), b = \lim_{n \rightarrow \infty} \pi_{m_n, \infty}(b_{m_n}) \in \widehat{\cl A}_{\infty}$ 
                and
                $s = \lim_{n \rightarrow \infty} \phi_{m_n, \infty}(s_{m_n}) \in \widehat{\cl S}_{\infty};$
            then
            \[
                \begin{split}
                    & \widehat{\psi}(a \cdot s \cdot b)\\
                    &= \widehat{\psi} \big(\lim_{n \rightarrow \infty} \phi_{m_n, \infty} (a_{m_n} \cdot s_{m_n} \cdot b_{m_n})\big)\\
                    &= \lim_{n \rightarrow \infty}  \psi \circ \phi_{m_n, \infty} (a_{m_n} \cdot s_{m_n} \cdot b_{m_n}) \\
                    &=\lim_{n \rightarrow \infty}  \psi_{m_n} (a_{m_n} \cdot s_{m_n} \cdot b_{m_n})\\
                    &= \lim_{n \rightarrow \infty}  \rho_{m_n}(a_{m_n}) \cdot \psi_{m_n}(s_{m_n}) \cdot \rho_{m_n}(b_{m_n})\\
                    &= \lim_{n \rightarrow \infty} \rho_{m_n}(a_{m_n}) \cdot \lim_{n \rightarrow \infty} \psi_{m_n}(s_{m_n}) \cdot \lim_{n \rightarrow \infty} \rho_{m_n}(a_{m_n})\\
                    &= \lim_{n \rightarrow \infty} \rho \circ \pi_{m_n, \infty}(a_{m_n}) \cdot \lim_{n \rightarrow \infty} \psi \circ \phi_{m_n, \infty}(s_{m_n})\cdot \lim_{n \rightarrow \infty} \rho \circ \pi_{m_n, \infty}(b_{m_n})\\
                    &= \widehat{\rho}(a) \cdot \widehat{\psi}(s)\cdot \widehat{\rho}(b).\\
                \end{split}
            \]
            This completes the proof.
        \end{proof}

We denote the inductive limit whose existence is established in 
Theorem \ref{thm:universal property_operator_Asystem2}
by $\limoas (\cl S_k, \cl A_k)$ or $\limoas \cl S_k$, when the context is clear.

    \begin{remark}
    {\rm
      Let $\big(\{\cl S_k, \cl A_k\}_{k \in \bb{N}}, \{\phi_k, \pi_k\}_{k \in \bb{N}}\big)$ and $\big(\{\cl T_k, \cl B_k\}_{k \in \bb{N}}, \{\psi_k, \rho_k\}_{k \in \bb{N}}\big)$ be inductive systems in $\catoas$ and let $\{(\theta_k, \varphi_k)\}_{k \in \mathbb{N}}$ be a sequences of operator C*-system homomorphisms such that the following diagrams commute:
      \[
    \begin{CD}
        \cl S_1 @>{\phi_1}>> \cl S_2 @>{\phi_2}>> \cl S_3 @>{\phi_3}>> \cl S_4 @>{\phi_4}>> \cdots \\
        @V{\theta_1}VV @V{\theta_2}VV @V{\theta_3}VV @V{\theta_4}VV \\
        \cl T_1 @>{\psi_1}>> \cl T_2 @>{\psi_2}>> \cl T_3 @>{\psi_3}>> \cl T_4 @>{\psi_4}>> \cdots
    \end{CD}
    \]
    and
    \[
    \begin{CD}
      \cl A_1 @>{\pi_1}>> \cl A_2 @>{\pi_2}>> \cl A_3 @>{\pi_3}>> \cl A_4 @>{\pi_4}>> \cdots \\
      @V{\varphi_1}VV @V{\varphi_2}VV @V{\varphi_3}VV @V{\varphi_4}VV \\
      \cl B_1 @>{\rho_1}>> \cl B_2 @>{\rho_2}>> \cl B_3 @>{\rho_3}>> \cl B_4 @>{\rho_4}>> \cdots.
    \end{CD}
    \]
    It follows from Theorem~\ref{thm:universal property infinitely many maps} and Theorem~\ref{thm:universal property_operator_Asystem2} that there exists a unique operator C*-system homomorphism 
    $$(\widehat{\theta}, \widehat{\varphi}): (\limoas \cl S_k, \limcalg \cl A_k) \rightarrow (\limoas \cl T_k, \limcalg \cl B_k)$$ such that
    $(\widehat{\theta}, \widehat{\varphi}) \circ (\phi_{k,\infty}, \pi_{k,\infty}) = (\psi_{k,\infty}, \rho_{k,\infty}) \circ (\theta_k, \varphi_k)$
    for all $k \in \bb{N}$.
    }
    \end{remark}

    \begin{remark}\label{rem:op A systems are isomorphic for commmuting diagram}
    {\rm
      Suppose that 
      $\big(\{\cl S_k, \cl A_k\}_{k \in \bb{N}}, \{\phi_k, \pi_k\}_{k \in \bb{N}}\big)$ and 
      $\big(\{\cl T_k, \cl B_k\}_{k \in \bb{N}},$ $\{\psi_k, \rho_k\}_{k \in \bb{N}}\big)$ are inductive systems in $\catoas$, and let $\{(\theta_{m_k}, \varphi_{m_k})\}_{k \in \mathbb{N}}$ and $\{(\mu_{n_k}, \nu_{n_k})\}_{k \in \mathbb{N}}$ be sequences of 
      operator C*-system monomorphisms such that the diagrams
      \vspace{-2mm}
      \[
        \begin{tikzcd}
            \cl S_1 \arrow{r}{\phi_{1, m_1}} \arrow{d}{\theta_{1}} & \cl S_{m_1} \arrow{d}{\theta_{m_1}}\arrow{r}{\phi_{m_1, m_2}}&  \cl S_{m_2} \arrow{d}{\theta_{m_2}} \arrow{r}{\phi_{m_2 m_3}}& \cdots \\
            \cl T_{n_1} \arrow[swap]{r}{\psi_{n_1, n_2}} \arrow[swap]{ur}{\mu_{n_1}}      & \cl T_{n_2} \arrow[swap]{r}{\psi_{n_2, n_3}} \arrow[swap]{ur}{\mu_{n_2}}          & \cl T_{n_3} \arrow[swap]{ur}{\mu_{n_3}}  \arrow[swap]{r}{\psi_{n_3, n_4}}          & \cdots \\
        \end{tikzcd}
          \vspace{-10mm}
      \]
      and
      \vspace{-2mm}
    \[
        \begin{tikzcd}
            \cl A_1 \arrow{r}{\pi_{1, m_1}} \arrow{d}{\varphi_{1}} & \cl A_{m_1} \arrow{d}{\varphi_{m_1}}\arrow{r}{\pi_{m_1, m_2}}&  \cl A_{m_2} \arrow{d}{\varphi_{m_2}} \arrow{r}{\pi_{m_2, m_3}}& \cdots \\
            \cl B_{n_1} \arrow[swap]{r}{\rho_{n_1, n_2}} \arrow[swap]{ur}{\nu_{n_1}}      & \cl B_{n_2} \arrow[swap]{r}{\rho_{n_2, n_3}} \arrow[swap]{ur}{\nu_{n_2}}         & \cl B_{n_3} \arrow[swap]{ur}{\nu_{n_3}} \arrow[swap]{r}{\rho_{n_3, n_4}}          & \cdots \\
        \end{tikzcd}
          \vspace{-10mm}
    \]
commute.
    By Theorem~\ref{thm:universal property_operator_Asystem2}, $\limoas(\cl S_k, \cl A_k)$ and $\limoas(\cl T_k, \cl B_k)$ are isomorphic. 
    In particular, $\limoas \cl S_k$ is unitally completely order isomorphic to $\limoas \cl T_k$ and $\limcalg \cl A_k$ is unitally *-isomorphic to $\limcalg \cl B_k$.
    }
    \end{remark}


\section{Inductive limits of graph operator systems} \label{chpt:graph operator systems}

In this section, we examine inductive limits of graph operator systems, viewing them as the operator systems 
of topological graphs {\it via} the theory of topological equivalence relations \cite{Power}. We identify the C*-envelope of 
such an operator system, and prove an isomorphism theorem; these can be viewed as a 
topological version of recent results from \cite{PaulsenOrtiz}. 
We also establish a version of the Glimm Theorem for this class of operator systems. 
As our results rely crucially on \cite{Power} (and thus on \cite{Power2}, \cite{Power3}, \cite{Power4} and \cite{DavidsonPower})), 
for the convenience of the reader, we often provide the background and details.



    A {\it UHF algebra} \cite{Glimm}
    (or, otherwise, \emph{uniformly hyper-finite} C*-algebra) is a C*-algebra 
    which is (*-isomorphic to) the inductive limit of an inductive system
 \begin{equation}\label{eq_UHFal}
        M_{n_1}\stackrel{\pi_1}{\longrightarrow} M_{n_2}\stackrel{\pi_2}{\longrightarrow} M_{n_3}\stackrel{\pi_3}{\longrightarrow} M_{n_4}\stackrel{\pi_4}{\longrightarrow} \cdots,
 \end{equation}
where $\pi_k$ is a unital *-homomorphism, $k \in \mathbb{N}$. 
UHF algebras and their classification appear extensively in the literature, see for example \cite{DavidsonBook}, \cite{MurphyBook} or \cite{RordamLarsenLausten}. For each $k \in \mathbb{N}$, let $e_{i,j}^k$ denote the matrix in~$M_{n_k}$ with $1$ at the 
$(i,j)^{\text{th}}$ entry and $0$ elsewhere and let $l_k = \frac{n_{k+1}}{n_k}$. 
We have that
    \[
        \pi_k(e_{i,j}^{k}) = \sum_{r=0}^{l_k-1} e^{k+1}_{rn_k+i, rn_k+j}.
    \]
We call $e_{i,j}^{k}$ the \emph{canonical matrix units}. 

    Let $\cl A$ be a C*-algebra. A C*-subalgebra of~$\cl A$ is called a {\it maximal abelian self-adjoint  algebra} 
    ({\it masa}, for short)
    if it is abelian and not properly contained in another abelian C*-subalgebra of~$\cl A$. 
    Let
    \[
        \cl D_1 \stackrel{\pi_1}{\longrightarrow} \cl D_2\stackrel{\pi_2}{\longrightarrow} \cl D_3\stackrel{\pi_3}{\longrightarrow} \cl D_4\stackrel{\pi_4}{\longrightarrow} \cdots
    \]
    be the inductive system in $\catcalg$ induced by (\ref{eq_UHFal}),
    where $\cl D_k$ is the subalgebra of diagonal matrices in $M_{n_k}$ for each $k \in \bb{N}$. A proof of the following result may be found in \cite[Proposition~4.1]{Power}.

    \begin{proposition} \label{prop:Ind limit masa}
        The C*-algebra $\limcalg \cl D_k$ is a masa in $\limcalg M_{n_k}$.\index{masa}
    \end{proposition}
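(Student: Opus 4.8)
The plan is to show that $\cl D:=\limcalg \cl D_k$ coincides with its own relative commutant in $\cl A:=\limcalg M_{n_k}$; since $\cl D$ is manifestly a self-adjoint abelian C*-subalgebra, this immediately yields maximal abelianness, and hence the masa property. Write $\pi_{k,\infty}:M_{n_k}\to\cl A$ for the canonical unital embeddings and recall from Subsection~\ref{subs_iil} that $\cup_{k}\pi_{k,\infty}(M_{n_k})$ is dense in $\cl A$ (and that each $\pi_{k,\infty}$ is isometric, as every unital $*$-homomorphism out of the simple algebra $M_{n_k}$ is injective). It suffices to prove that any $a\in\cl A$ commuting with every element of $\cl D$ already lies in $\cl D$: indeed, if $\cl D\subseteq\cl B$ for some abelian C*-subalgebra $\cl B$, then each $b\in\cl B$ commutes with $\cl D$, whence $b\in\cl D$ and $\cl B=\cl D$.

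The key device is the diagonal conditional expectation together with its realisation as an average over a cyclic group of diagonal unitaries. For each $k$, let $E_k:M_{n_k}\to\cl D_k$ send a matrix to its diagonal part. Fixing a primitive $n_k$-th root of unity $\zeta$ and the diagonal unitary $U_k=\diag(1,\zeta,\dots,\zeta^{n_k-1})\in\cl D_k$, the relations $U_k^{m}e_{i,j}^k U_k^{-m}=\zeta^{m(i-j)}e_{i,j}^k$ together with the vanishing of sums of nontrivial roots of unity give
\[
E_k(b)=\frac{1}{n_k}\sum_{m=0}^{n_k-1}U_k^{m}\,b\,U_k^{-m},\qquad b\in M_{n_k}.
\]
Setting $u_k=\pi_{k,\infty}(U_k)\in\cl D$ and using that $\pi_{k,\infty}$ is a $*$-homomorphism, this transports into $\cl A$ as $\pi_{k,\infty}(E_k(b))=\frac{1}{n_k}\sum_{m}u_k^{m}\,\pi_{k,\infty}(b)\,u_k^{-m}$.

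To conclude, take $a\in\cl A$ commuting with $\cl D$ and $\varepsilon>0$. By density choose $k$ and $b\in M_{n_k}$ with $\norm{a-\pi_{k,\infty}(b)}<\varepsilon$. Since $a$ commutes with $u_k\in\cl D$, conjugation by $u_k^{m}$ fixes $a$, so $a=\frac{1}{n_k}\sum_{m}u_k^{m}a\,u_k^{-m}$, and therefore
\[
\Bignorm{a-\pi_{k,\infty}(E_k(b))}
=\Bignorm{\frac{1}{n_k}\sum_{m=0}^{n_k-1}u_k^{m}\big(a-\pi_{k,\infty}(b)\big)u_k^{-m}}
\le \norm{a-\pi_{k,\infty}(b)}<\varepsilon,
\]
the inequality holding because each $u_k$ is unitary. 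As $\pi_{k,\infty}(E_k(b))\in\cl D$ and $\cl D$ is closed, letting $\varepsilon\to 0$ gives $a\in\cl D$.

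The main subtlety lies precisely in this last step: a naive argument approximating $a$ by $\pi_{k,\infty}(b)$ and then applying an expectation does not control the off-diagonal part of $b$, whose image in $\cl A$ need not be small. The averaging trick circumvents this by exploiting that $a$ itself is invariant under conjugation by the diagonal unitary $u_k\in\cl D$, so that the approximation error is carried unchanged through the average and bounded in norm. If one prefers to assemble the $E_k$ into a single expectation $E:\cl A\to\cl D$, one should additionally verify the compatibility $E_{k+1}\circ\pi_k=\pi_k\circ E_k$, which follows from $\pi_k(e_{i,j}^k)=\sum_{r=0}^{l_k-1}e^{k+1}_{rn_k+i,\,rn_k+j}$; however, the finite-level averages used above suffice and no global $E$ is strictly required.
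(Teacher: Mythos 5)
Your proof is correct. Note that the paper does not supply its own argument here\,---\,it defers entirely to \cite[Proposition~4.1]{Power}\,---\,so what you have written is a self-contained replacement for that citation, and it is the standard one: identify the relative commutant of $\limcalg \cl D_k$ with $\limcalg \cl D_k$ itself by averaging over powers of the diagonal unitaries $\diag(1,\zeta,\dots,\zeta^{n_k-1})$, using that an element commuting with the masa is fixed by these conjugations so that the approximation error passes through the average without growth. You correctly flag the one genuinely delicate point (that naively applying the expectation to an approximant does not control the off-diagonal part) and resolve it properly; the remaining implicit steps\,---\,that $\limcalg \cl D_k$ sits inside $\limcalg M_{n_k}$ as the closed union of the $\pi_{k,\infty}(\cl D_k)$ and is abelian, and that $\pi_{k,\infty}$ is injective because each $M_{n_k}$ is simple\,---\,are all legitimate and consistent with the paper's Remark~\ref{rem:universal prop Calg}.
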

    
    Denote by $\Delta(\cl C)$ the Gelfand spectrum of an abelian C*-algebra $\cl C$. 
    We call $\limcalg \cl D_k$ the {\it canonical masa} in the UHF algebra $\limcalg M_{n_k}$. 
    Since $\limcalg \cl D_k$ is an abelian C*-algebra, we have that $\limcalg \cl D_k$ is 
    *-isomorphic to $C(X_{\infty})$ where $X_{\infty} = \Delta(\limcalg \cl D_k)$. 
    For the following remark, which is a special case of 
    Remark \ref{ex:InductiveLimitC(X)}, 
    let $X_k = \Delta(\cl D_k)$.

\begin{remark} \label{rem:functionals on diagonal matrix algebra}
        The space $X_{\infty}$ is homeomorphic to $\limtop X_k$. 
\end{remark}

    The following theorem, whose proof may be found in \cite{Glimm} (see \cite{MurphyBook} for an alternative proof), 
    characterises UHF algebras.

    \begin{theorem}[Glimm]\label{thm:glimms}
The UHF algebras $\limcalg M_{n_k}$ and $\limcalg M_{m_k}$
are *-isomorphic if and only if for all $w \in \bb{N}$ there exists $x \in \bb{N}$ such that $n_w | m_x$, and for all $y \in \bb{N}$ there exists $z \in \bb{N}$ such that $m_y | n_z$.
    \end{theorem}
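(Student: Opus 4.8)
The plan is to reduce everything to two elementary facts about full matrix algebras: a unital $*$-homomorphism $M_r\to M_s$ exists precisely when $r\mid s$ (the image decomposes $\bb{C}^s$ into $r$-dimensional blocks, forcing $s$ to be a multiple of $r$), and any two unital $*$-homomorphisms $M_r\to M_s$ are unitarily equivalent. Write $A=\limcalg M_{n_k}$ and $B=\limcalg M_{m_k}$, with canonical unital $*$-homomorphisms $\pi_{k,\infty}:M_{n_k}\to A$ and $\sigma_{k,\infty}:M_{m_k}\to B$; since the connecting maps are injective, so are these (Remark~\ref{rem:universal prop Calg}), and $\bigcup_k\pi_{k,\infty}(M_{n_k})$ is dense in $A$.

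For the necessity direction I would first establish the criterion that $M_r$ embeds unitally into $A$ if and only if $r\mid n_k$ for some $k$. The implication $(\Leftarrow)$ is immediate: compose a unital embedding $M_r\hookrightarrow M_{n_k}$ with the injective map $\pi_{k,\infty}$. For $(\Rightarrow)$, given a unital $*$-monomorphism $\psi:M_r\to A$ with matrix units $\psi(e_{i,j})$, I would simultaneously approximate all $\psi(e_{i,j})$, within an arbitrarily small tolerance, by elements of a single $\pi_{k,\infty}(M_{n_k})$; these approximants form an approximate system of $r\times r$ matrix units, and by the semiprojectivity of $M_r$ (the standard matrix-unit perturbation lemma) they can, once the tolerance is small enough, be corrected inside $\pi_{k,\infty}(M_{n_k})$ to an exact unital system of matrix units, yielding a unital copy of $M_r$ in $\pi_{k,\infty}(M_{n_k})\cong M_{n_k}$ and hence $r\mid n_k$. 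Granting this, necessity is immediate: if $A$ and $B$ are $*$-isomorphic, then for each $w$ the algebra $M_{n_w}$ sits unitally inside $A\cong B$, so $n_w\mid m_x$ for some $x$, and symmetrically $m_y\mid n_z$ for some $z$.

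For sufficiency I would iterate the two divisibility conditions to produce strictly increasing sequences $(w_k)$ and $(x_k)$ with
\[
n_{w_1}\mid m_{x_1}\mid n_{w_2}\mid m_{x_2}\mid n_{w_3}\mid\cdots,
\]
and then realise this interlacing by a commuting zig-zag between the subsequences $(M_{n_{w_k}})_k$ and $(M_{m_{x_k}})_k$. I would construct the downward maps $\theta_k:M_{n_{w_k}}\to M_{m_{x_k}}$ and the upward maps $\varphi_k:M_{m_{x_k}}\to M_{n_{w_{k+1}}}$ one at a time: after $\theta_k$ is chosen, the composite $\varphi_k\circ\theta_k$ and the given connecting map $\pi_{w_k,w_{k+1}}$ are both unital $*$-homomorphisms between the same matrix algebras, hence differ by conjugation by a unitary, which I absorb into $\varphi_k$ so that $\varphi_k\circ\theta_k=\pi_{w_k,w_{k+1}}$; the same adjustment applied to $\theta_{k+1}\circ\varphi_k$ and $\sigma_{x_k,x_{k+1}}$ pins down $\theta_{k+1}$. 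The result is exactly the commuting diagram required by Proposition~\ref{prop:iso ind limit infinitely many maps}, and since passing to the subsequences $(w_k)$ and $(x_k)$ does not alter the inductive limits (Remark~\ref{rem:indlimit subsequence}), that proposition delivers $A\cong B$.

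The hard part will be the embedding criterion underlying necessity: the clean algebraic conclusion $r\mid n_k$ depends on the analytic perturbation of matrix units (the semiprojectivity of $M_r$), which is where the C*-norm, rather than the mere order structure, genuinely enters the argument. Sufficiency, by contrast, is essentially formal once one exploits that unital matrix embeddings are unique up to unitary conjugation, allowing each triangle of the zig-zag to be rectified in turn.
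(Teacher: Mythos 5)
The paper offers no proof of this theorem: it is quoted as Theorem~\ref{thm:glimms} with the proof deferred to \cite{Glimm} and \cite{MurphyBook}. Your proposal is essentially the classical argument from those sources, and it is correct in outline. The genuine analytic content is exactly where you locate it: the reduction of ``$M_r$ embeds unitally into $\limcalg M_{n_k}$'' to ``$r\mid n_k$ for some $k$'' rests on the matrix-unit perturbation lemma (stability/semiprojectivity of $M_r$), which you invoke but do not prove; that lemma is standard and is the heart of Glimm's original proof, so citing it is reasonable, but a self-contained write-up would have to include it, together with the observation that the approximants of the $\psi(e_{i,j})$ can all be taken in a single $\pi_{k,\infty}(M_{n_k})$ and that their diagonal sum is close to the common unit, so the corrected matrix units are again unital in $M_{n_k}$. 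Two small points worth making explicit in the sufficiency half: (1) the interlacing chain $n_{w_1}\mid m_{x_1}\mid n_{w_2}\mid\cdots$ with \emph{strictly increasing} indices uses that $n_k\mid n_{k+1}$ (forced by unitality of the connecting maps), so that once $m_{x_k}\mid n_z$ for some $z$ you may replace $z$ by any larger index; (2) the unitary you absorb into $\varphi_k$ must be a unitary of $M_{n_{w_{k+1}}}$ conjugating $\varphi_k'\circ\theta_k$ to $\pi_{w_k,w_{k+1}}$, which exists precisely because two unital $*$-homomorphisms between full matrix algebras have the same multiplicity and are therefore unitarily equivalent. With these details supplied, Remark~\ref{rem:indlimit subsequence} and Proposition~\ref{prop:iso ind limit infinitely many maps} do indeed finish the argument as you describe.
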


    Let $X$ be a topological space. We define a {\it graph} to be a pair $G=(X,E)$ of sets such that $E \subseteq X \times X$
    is a closed subset which is 
    symmetric (that is, $(x,y) \in E$ if and only if $(y,x) \in E$) 
    and anti-reflexive (that is, $(x,x)\notin E$ for all $x \in X$). 
    We call the elements of $X$ the {\it vertices} of $G$ and say that two vertices $x,y \in X$ are 
    {\it adjacent} if $(x,y)\in E$. Given $G$, we set 
    $\widetilde{G} = (X, \widetilde{E})$ where $\widetilde{E} = E \cup \{(x,x): x \in X\}$
    is the \emph{extended edge set} of $G$. 
    Two graphs $G = (X,E)$ and $G' = (X',E')$ are called {\it isomorphic} if there exists a homeomorphism 
    $\varphi: X \rightarrow X'$ such that $(x,y) \in E$ if and only if $(\varphi(x),\varphi(y))\in E'$.

    Let $G$ be a graph on $n$ vertices so that $X = \{1, \ldots, n\}$.
    Denote by $e_{i,j}$ the $n \times n$ matrix with $1$ in its $(i,j)$th-entry and $0$ elsewhere. 
    We define the {\it operator system} $\cl S_{G}$ of $G$ by letting 
    \[
        \cl S_{G} = \SPAN \lbrace e_{i,j} : (i,j) \in \widetilde{E}\rbrace.
    \]
    A {\it graph operator system} is an operator system of the form $\cl S_G$.
    
    Denote temporarily by $\cl D$ be the subalgebra of diagonal matrices in $M_n$. 
    Clearly, $(\cl S_G, \cl D)$ is an operator C*-system when we take the module operation 
    to be the usual matrix multiplication in $M_n$. 
    The following characterisation is well-known, see \cite{PaulsenPowerSmith}.

    \begin{proposition}
    Let $\cl S$ be an operator subsystem of $M_n$.
    Then $\cl S$ is a graph operator system if and only if $\cl D \cl S \cl D \subseteq \cl S$. 
    In this case the graph $G = (X,E)$ is defined by letting 
    $X = \{1, \ldots, n\}$ and $E = \{(i,j) : i\neq j \mbox{ and } e_{i,j} \in \cl S\}$.
    \end{proposition}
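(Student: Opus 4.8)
The plan is to exploit the action of the diagonal matrix units $e_{i,i}$ to isolate individual entries of elements of $\cl S$, thereby reducing the condition $\cl D \cl S \cl D \subseteq \cl S$ to the statement that $\cl S$ is spanned by a self-adjoint, diagonal-containing family of matrix units. Throughout I would write a general element of $\cl S$ as $s = \sum_{i,j} s_{i,j} e_{i,j}$ and lean on the multiplication rule $e_{k,k} e_{i,j} e_{l,l} = \delta_{k,i}\delta_{j,l}\, e_{i,j}$.

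For the forward implication, suppose $\cl S = \cl S_G$ for a graph $G = (X,E)$. Since $\cl D = \SPAN\{e_{i,i} : i \in X\}$, it suffices by bilinearity to check that $e_{k,k} e_{i,j} e_{l,l} \in \cl S_G$ for every generator $e_{i,j}$ with $(i,j) \in \widetilde{E}$; but this product equals $e_{i,j}$ when $(k,l) = (i,j)$ and equals $0$ otherwise, and both lie in $\cl S_G$. Hence $\cl D \cl S_G \cl D \subseteq \cl S_G$, which is the easy half.

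For the converse, assume $\cl D \cl S \cl D \subseteq \cl S$ and define $E = \{(i,j) : i \neq j,\ e_{i,j} \in \cl S\}$. First I would verify that $(X,E)$ is genuinely a graph: anti-reflexivity holds by definition; symmetry follows from the self-adjointness of $\cl S$, since $e_{i,j} \in \cl S$ forces $e_{i,j}^* = e_{j,i} \in \cl S$; and $E$ is automatically closed because $X = \{1,\dots,n\}$ carries the discrete topology. The heart of the argument is the entry-extraction identity $e_{i,i}\, s\, e_{j,j} = s_{i,j}\, e_{i,j}$, which shows $s_{i,j} e_{i,j} \in \cl S$ for all $i,j$; consequently, whenever $s_{i,j} \neq 0$ I may divide by $s_{i,j}$ to conclude $e_{i,j} \in \cl S$. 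For $i \neq j$ this means $(i,j) \in E$, while for $i = j$ one always has $(i,i) \in \widetilde{E}$, so every nonzero term of $s$ is supported on $\widetilde{E}$ and thus $\cl S \subseteq \cl S_G$. For the reverse inclusion, the off-diagonal generators $e_{i,j}$ with $(i,j) \in E$ lie in $\cl S$ by definition, and each diagonal unit satisfies $e_{i,i} = e_{i,i}\, I\, e_{i,i} \in \cl D \cl S \cl D \subseteq \cl S$ because $I \in \cl S$; hence $\cl S_G \subseteq \cl S$, giving $\cl S = \cl S_G$.

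Because $M_n$ is finite-dimensional, no analytic subtleties intervene and there is no serious obstacle. The only points demanding genuine attention are the verification of the graph axioms---in particular, deducing the symmetry of $E$ from the self-adjointness of $\cl S$---and the use of $I \in \cl S$ to recover the full diagonal, without which $\cl S_G \subseteq \cl S$ could fail. The entry-extraction identity does all of the real work.
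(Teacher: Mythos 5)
Your proof is correct. The paper itself gives no argument for this proposition --- it is stated as well-known with a reference to Paulsen--Power--Smith --- so there is nothing to compare against; what you have written is the standard compression argument. The entry-extraction identity $e_{i,i}\, s\, e_{j,j} = s_{i,j} e_{i,j}$ does indeed carry the whole converse, and you correctly isolate the two points that are easy to overlook: symmetry of $E$ comes from self-adjointness of $\cl S$, and the inclusion $\cl D \subseteq \cl S$ (hence $\cl S_G \subseteq \cl S$) comes from compressing the identity, which lies in $\cl S$ because $\cl S$ is an operator system.
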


    The following two results about graph operator systems were proved in \cite[Theorem~3.2~and~Theorem~3.3]{PaulsenOrtiz}.

    \begin{theorem}[Paulsen--Ortiz] \label{thm:C*envelopefinitegraph}
        Let $G$ be a graph on $n$ vertices. Then the C*-subalgebra of $M_n$ generated by $\cl S_G$ is the C*-envelope of $\cl S_G$.
    \end{theorem}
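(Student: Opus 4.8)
The plan is to exhibit the generated C*-algebra as a C*-cover of $\cl S_G$ and then invoke the universal property of the C*-envelope to obtain a canonical surjective $*$-homomorphism onto $C^*_e(\cl S_G)$, so that the entire theorem reduces to an injectivity statement. Write $\cl A$ for the C*-subalgebra of $M_n$ generated by $\cl S_G$ and let $\iota : \cl S_G \to \cl A$ be the inclusion. Since $(i,i)\in\widetilde{E}$ for every vertex $i$, each diagonal unit $e_{i,i}$ lies in $\cl S_G$, and therefore $I_n = \sum_{i=1}^n e_{i,i}\in\cl A$; thus $\cl A$ is unital, $\iota$ is a unital complete order embedding, and $\iota(\cl S_G)$ generates $\cl A$, so that $(\cl A,\iota)$ is a C*-cover of $\cl S_G$. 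By the universal property of the C*-envelope $(C^*_e(\cl S_G),\kappa)$, there is a unital $*$-homomorphism $\pi : \cl A \to C^*_e(\cl S_G)$ with $\pi\circ\iota = \kappa$. Because $\pi(\cl A)$ is a C*-algebra containing the generating set $\kappa(\cl S_G)$, the map $\pi$ is automatically surjective, and it remains only to prove that $\ker\pi = \{0\}$.

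To control $\ker\pi$ I would exploit the finite-dimensionality of $\cl A$: every closed two-sided ideal of $\cl A$ has the form $z\cl A$ for a central projection $z\in\cl A$. The decisive structural feature is that $\cl A$ contains the diagonal algebra $\cl D = \SPAN\{e_{i,i} : 1\le i\le n\}$, which is a masa in $M_n$. A central projection $z$ of $\cl A$ commutes with all of $\cl A$, in particular with every element of $\cl D$; since $\cl D$ is maximal abelian in $M_n$, any matrix commuting with $\cl D$ must itself be diagonal. Hence $z$ is a diagonal projection, that is, $z = \sum_{i\in S}e_{i,i}$ for some $S\subseteq\{1,\dots,n\}$.

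Writing $\ker\pi = z\cl A$ as above, suppose toward a contradiction that $z\neq 0$, so $S\neq\emptyset$, and pick $i\in S$. Then $e_{i,i} = z\,e_{i,i}\in z\cl A = \ker\pi$, whence $\kappa(e_{i,i}) = \pi(\iota(e_{i,i})) = 0$; but $\kappa$ is a unital complete order embedding, hence injective, while $e_{i,i}\neq 0$, a contradiction. Therefore $z=0$, $\pi$ is injective, and consequently $\pi$ is a $*$-isomorphism of $\cl A$ onto $C^*_e(\cl S_G)$ intertwining $\iota$ and $\kappa$, which identifies $\cl A$ with the C*-envelope. The one genuinely load-bearing observation — the step I expect to require the most care to state cleanly — is that every nonzero ideal of the generated algebra must absorb a diagonal matrix unit of $\cl S_G$; once the presence of the full diagonal $\cl D$ inside $\cl S_G$ is leveraged through its maximal abelianness in $M_n$, the rest of the argument is forced.
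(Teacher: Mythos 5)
Your argument is correct: the paper does not prove this statement itself but imports it from \cite{PaulsenOrtiz}, and your proof follows essentially the same route as the one there --- realise $C^*(\cl S_G)$ as a C*-cover, reduce to showing the kernel of the canonical surjection onto $C^*_e(\cl S_G)$ (equivalently, the boundary ideal) is zero, and use that the ideal is $z\cl A$ for a central projection $z$ which must lie in the diagonal masa $\cl D\subseteq \cl S_G$, so that $z\neq 0$ would force $\kappa(e_{i,i})=0$ for some $i$, contradicting injectivity of $\kappa$. No gaps.
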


    \begin{theorem}[Paulsen--Ortiz] \label{thm:PaulsenOrtiz}
        Let $G_1$ and $G_2$ be graphs on $n$ vertices. Then $G_1$ is isomorphic to $G_2$ if and only if $\cl S_{G_1}$ is unitally completely order isomorphic to $\cl S_{G_2}$.
    \end{theorem}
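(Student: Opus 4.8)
The plan is to prove both implications, the forward one being routine and the converse requiring the C*-envelope together with a rigidity statement about the diagonal masa. For the forward implication, suppose $\varphi$ is a graph isomorphism $G_1\cong G_2$, i.e. a permutation of $\{1,\dots,n\}$ with $(i,j)\in E_1$ if and only if $(\varphi(i),\varphi(j))\in E_2$. Let $U_\varphi$ be the permutation unitary determined by $U_\varphi e_j=e_{\varphi(j)}$. Then $\mathrm{Ad}_{U_\varphi}\colon M_n\to M_n$, $x\mapsto U_\varphi x U_\varphi^*$, is a unital $*$-automorphism, hence a unital complete order isomorphism, and $U_\varphi e_{i,j}U_\varphi^*=e_{\varphi(i),\varphi(j)}$. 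Since $(i,j)\in\widetilde E_1\iff(\varphi(i),\varphi(j))\in\widetilde E_2$, we get $\mathrm{Ad}_{U_\varphi}(\cl S_{G_1})=\cl S_{G_2}$, and restricting yields the desired isomorphism.

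For the converse, let $\Phi\colon\cl S_{G_1}\to\cl S_{G_2}$ be a unital complete order isomorphism. By Remark~\ref{cor:ucoi induces *iso in envelopes} it extends to a unital $*$-isomorphism $\rho\colon C_e^*(\cl S_{G_1})\to C_e^*(\cl S_{G_2})$ with $\rho|_{\cl S_{G_1}}=\Phi$, and by Theorem~\ref{thm:C*envelopefinitegraph} we may take $C_e^*(\cl S_{G_i})$ to be the C*-subalgebra $\cl B_i\subseteq M_n$ generated by $\cl S_{G_i}$; thus $\rho(\cl S_{G_1})=\cl S_{G_2}$. Put $q_i=\rho(e_{i,i})$. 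As $\{e_{i,i}\}_{i=1}^n$ is a family of mutually orthogonal minimal projections of $\cl B_1$ summing to $I$, the $q_i$ are mutually orthogonal rank-one projections of $M_n$ with $\sum_i q_i=I$; write $q_i=w_iw_i^*$ for an orthonormal basis $\{w_i\}$. Since $e_{i,i}\in\cl S_{G_1}$ we have $q_i\in\cl S_{G_2}$, and the relation $e_{i,i}\cl S_{G_1}e_{j,j}\subseteq\cl S_{G_1}$ (valid because $(\cl S_{G_1},\cl D)$ is an operator C*-system) gives $q_i\cl S_{G_2}q_j\subseteq\cl S_{G_2}$. Moreover, for $i\neq j$ the element $w_iw_j^*$ is the unique-up-to-phase partial isometry in $M_n$ with initial projection $q_j$ and final projection $q_i$, so whenever $w_iw_j^*\in\cl S_{G_2}$ it lies in $\cl B_2$ and $\rho^{-1}(w_iw_j^*)$ is a scalar multiple of $e_{i,j}$, while conversely $\rho(e_{i,j})$ is a scalar multiple of $w_iw_j^*$. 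Hence $w_iw_j^*\in\cl S_{G_2}\iff e_{i,j}\in\cl S_{G_1}\iff(i,j)\in E_1$.

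It therefore suffices to prove the following rigidity statement: if $\{w_i\}$ is an orthonormal basis of $\bb C^n$ with each $q_i=w_iw_i^*$ in $\cl S_{G_2}$ and $q_i\cl S_{G_2}q_j\subseteq\cl S_{G_2}$ for all $i,j$, then the graph $G'$ on $\{1,\dots,n\}$ defined by $(i,j)\in E'\iff i\neq j,\ w_iw_j^*\in\cl S_{G_2}$ is isomorphic to $G_2$; combined with the previous paragraph this yields $G_1=G'\cong G_2$. To prove it I would first reduce to connected $G_2$: each $q_i$ is a minimal projection of $\cl B_2=\bigoplus_c M_{|c|}$ (the sum over connected components $c$ of $G_2$), hence supported inside a single block, so the vertex and edge sets of $G'$ split over the components exactly as $\cl B_2$ does. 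For a complete component one has $\cl S_{G_2}=M_{|c|}$, every $w_iw_j^*\in\cl S_{G_2}$, and $G'$ is complete, hence isomorphic. The substantive case is a connected, non-complete $G_2$: writing $S_i=\mathrm{supp}(w_i)$, the condition $q_i\in\cl S_{G_2}$ forces each $S_i$ to be a clique, while $q_i\cl S_{G_2}q_j\subseteq\cl S_{G_2}$ forces, for $i\neq j$, the dichotomy that either $S_i\times S_j\subseteq\widetilde E_2$ or there is no $\widetilde E_2$-edge between $S_i$ and $S_j$. Analysing this clique-support data—using connectedness to locate a vertex adjacent to some but not all of a hypothetical multi-vertex support, which would violate the dichotomy—one shows that $\{w_i\}$ is carried to the standard basis, up to phases and relabelling, by a unitary of $\cl B_2$ normalising $\cl S_{G_2}$, and the induced permutation is the required isomorphism $G'\cong G_2$.

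The main obstacle is exactly this rigidity step, and it is genuinely needed rather than a formality: a unital complete order automorphism of $\cl S_G$ need \emph{not} fix the diagonal masa $\cl D$. For the triangle-with-pendant graph, for instance, conjugation by a unitary acting as a rotation on the two symmetric triangle coordinates normalises $\cl S_G$ while moving $\cl D$ to a different masa, so one cannot simply assert $\rho(\cl D)=\cl D$. Instead one must show that every masa of $\cl B$ lying in $\cl S_G$ and normalising it in the bimodule sense produces, through its minimal projections, a graph isomorphic to $G$. The component reduction and the complete case are straightforward, so the crux is the combinatorial analysis of the clique supports and the adjacency dichotomy in the connected non-complete case.
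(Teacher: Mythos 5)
The paper does not actually prove this statement; it is imported verbatim from \cite{PaulsenOrtiz}, so your argument can only be assessed on its own terms. Its architecture is sound: the forward direction via the permutation unitary is correct, and in the converse the passage to the C*-envelope, the identification of $C_e^*(\cl S_{G_i})$ with the generated subalgebra of $M_n$ (a direct sum of full matrix blocks over the connected components, so that the $q_i=\rho(e_{i,i})$ are indeed rank one), the bimodule inclusion $q_i\cl S_{G_2}q_j\subseteq\cl S_{G_2}$, the resulting support dichotomy, and the reduction to the rigidity lemma are all correct.

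The gap is in your proof of the rigidity lemma. You propose to rule out multi-vertex supports by ``using connectedness to locate a vertex adjacent to some but not all of a hypothetical multi-vertex support.'' Such a vertex need not exist: take $G_2=K_4$ minus the edge $(3,4)$, which is connected and non-complete, and $w_1=\tfrac{1}{\sqrt2}(e_1+e_2)$, $w_2=\tfrac{1}{\sqrt2}(e_1-e_2)$, $w_3=e_3$, $w_4=e_4$. All your hypotheses hold (each $S_i$ is a clique and the dichotomy is satisfied for every pair), yet $S_1=S_2=\{1,2\}$ has two vertices and every other vertex is adjacent to both of them. Indeed your own closing example --- the rotation on the two symmetric coordinates of the triangle-with-pendant graph --- produces exactly such a basis, so the phenomenon you correctly identify as the obstacle defeats the specific argument you propose to overcome it. What the dichotomy actually yields is weaker but sufficient: if $|S_i|\geq 2$ then every vertex $v$ lies in some $S_j$ with $j\neq i$, whence $v$ is closed-adjacent to all of $S_i$ or to none of it; therefore each $S_i$ is contained in a \emph{true-twin class} (a maximal set of vertices with equal closed neighbourhoods). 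Between two distinct twin classes the edge relation is all-or-nothing and each twin class is a clique, so $\cl S_{G_2}$ is a sum of full blocks with respect to the twin-class decomposition of the coordinates; a counting argument then shows that the $w_i$ supported in a given class form an orthonormal basis of its coordinate subspace, and the block-diagonal unitary standardising them normalises $\cl S_{G_2}$ and induces the required isomorphism $G'\cong G_2$. This repairs the lemma and completes your proof, but as written the connected non-complete case does not go through.
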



    \subsection{Operator C*-systems in UHF algebras}\label{sec:closed operator subsystems}

    We define a {\it concrete operator C*-system}
    to be a triple $(\cl D, \cl S, \cl A)$ where 
    $\cl D,\cl A \in \catcalg$, $\cl S \in \catos$, $(\cl S, \cl D) \in \catoas$, 
    $\cl D \subseteq \cl S \subseteq \cl A$ and $\cl D \cl S \cl D \subseteq \cl S$. 
    When the context is clear, we simplify the notation and call $\cl S$ a {\it concrete operator $\cl D$-system}, without mention of $\cl A$. 

Throughout this chapter, we fix an inductive system
$$        M_{n_1}\stackrel{\pi_1}{\longrightarrow} M_{n_2}\stackrel{\pi_2}{\longrightarrow} M_{n_3}\stackrel{\pi_3}{\longrightarrow} M_{n_4}\stackrel{\pi_4}{\longrightarrow} \cdots$$
in $\catcalg$.
Suppose that $G_k$ is a graph on $n_k$ vertices, such that $\pi_k(\cl S_{G_k})\subseteq \cl S_{G_{k+1}}$,  
and let $\phi_k = \pi_k|_{\cl S_{G_k}}$, $k\in \bb{N}$. We thus have inductive systems
    \[
        \cl S_{G_1} \stackrel{\phi_1}{\longrightarrow} \cl S_{G_2} \stackrel{\phi_2}{\longrightarrow} \cl S_{G_3} \stackrel{\phi_3}{\longrightarrow} \cl S_{G_4} \stackrel{\phi_4}{\longrightarrow} \cdots
    \]
    and
    \[
       \cl D_1 \stackrel{\pi_1}{\longrightarrow} \cl D_2 \stackrel{\pi_2}{\longrightarrow} \cl D_3 \stackrel{\pi_3}{\longrightarrow} \cl D_4 \stackrel{\pi_4}{\longrightarrow} \cdots;
    \]
    since $\cl S_{G_k}$ is an operator $\cl D_k$-system, 
    the latter inductive systems can 
be viewed as an inductive system in $\catoas$.
   Note that the inductive limit $\limoas \cl S_{G_k}$ is the completion of $\limos \cl S_{G_k}$ or, equivalently, 
   the closure of $\limos \cl S_{G_k}$ in $\limcalg \cl A_k$. 
   (Here, and in the sequel, write $\cl A_k = \pi_{k,\infty}(M_{n_k})$; note that $\cl A_k \cong M_{n_k}$.)
   We will see that every
 concrete operator $(\limcalg \cl D_k)$-system 
 (defined shortly) is the inductive limit of a sequence of graph operator systems,
and will associate to $\limoas \cl S_{G_k}$ a graph which is related to the sequence of graphs $(G_k)_{k \in \mathbb{N}}$.

We will use the following notation to denote the inductive limits:
    \[
        \begin{split}
            \los{\cl S} & = \limos \cl S_k, \\
            \losc{\cl S}& =\limoas \cl S_k,  \\
            \lcalg{\cl D} & = \limcalg \cl D_k \text{~and~}\\
             \lcalg {\cl A}& =\limcalg \cl A_k. \\
        \end{split}
    \]
    Observe that $(\lcalg{\cl D}, \losc{\cl S}, \lcalg {\cl A})$ is a concrete operator C*-system.
            Since each~$\pi_k$ is a unital injective *-homomorphism, by Remark~\ref{rem:universal prop Calg},
    $\pi_{k,\infty}$ is a unital injective *-homomorphism for all $k \in \bb{N}$; 
    we therefore sometimes simplify the notation and write $a_k$ in the place of $\pi_{k,\infty}(a_k)$. 

Recall \cite{Power} that a closed linear subspace $\cl S$ of $\lcalg{\cl A}$ is said to be {\it inductive} relative to $(\cl A_k)_{k \in \mathbb{N}}$ if
    \[
        \cl S = \overline{\bigcup_{k \in \mathbb{N}} \cl S \cap \cl A_k}.
    \]
We note the following fact which follows from \cite[Theorem~4.7]{Power}.

    \begin{proposition}\label{thm:closed D_bimodule is ind limit of graph operator systems}
        Let $\cl S \subseteq \lcalg{\cl A}$ be a concrete operator $\lcalg{\cl D}$-system and set 
        $\cl S_k = \cl S \cap \cl A_k$. Then $\cl S_k \subseteq \cl A_k$ is a concrete operator $\cl D_k$-system and $\cl S = \overline{\limos\cl S_k}$.
            \end{proposition}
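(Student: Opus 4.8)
The plan is to dispatch the concreteness of the $\cl S_k$ by direct inspection, to identify $\limos \cl S_k$ with the algebraic union $\bigcup_{k}\cl S_k$ as operator systems, and then to invoke Power's theorem in order to pass to norm closures.

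First I would verify that $(\cl D_k, \cl S_k, \cl A_k)$ is a concrete operator C*-system. Since $\cl A_k \cong M_{n_k}$ is finite dimensional, $\cl S_k = \cl S \cap \cl A_k$ is automatically closed; it is selfadjoint and contains the unit because $\cl S$ and $\cl A_k$ are. The inclusions $\cl D_k \subseteq \cl S_k \subseteq \cl A_k$ hold since $\cl D_k \subseteq \lcalg{\cl D} \subseteq \cl S$ and $\cl D_k \subseteq \cl A_k$. Finally, for $d_1, d_2 \in \cl D_k$ and $s \in \cl S_k$ the product $d_1 s d_2$ lies in $\cl A_k$ (an algebra) and in $\lcalg{\cl D}\,\cl S\,\lcalg{\cl D} \subseteq \cl S$, hence in $\cl S_k$; thus $\cl D_k\cl S_k\cl D_k \subseteq \cl S_k$, and $\cl S_k$ is a concrete operator $\cl D_k$-system.

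Next, under the identification $a_k = \pi_{k,\infty}(a_k)$ one has $\cl A_k \subseteq \cl A_{k+1}$, so $\cl S_k \subseteq \cl S_{k+1}$ and the connecting maps of the system $\cl S_1 \to \cl S_2 \to \cdots$ are inclusions; as both sides carry the matrix order inherited from $\lcalg{\cl A}$, these are unital complete order embeddings. Applying Remark~\ref{rem:all maps coi} to the constant inductive system on $\cl S$ with identity connecting maps (whose limit is canonically $\cl S$, its null space being trivial by Proposition~\ref{prop:Eachphi_kOIthenN=0}) and to the vertical inclusions $\jmath_k \colon \cl S_k \hookrightarrow \cl S$, which are unital complete order embeddings, furnishes a unital complete order embedding $\jmath \colon \limos \cl S_k \to \cl S$ with $\jmath \circ \phi_{k,\infty} = \jmath_k$. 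Since $\limos \cl S_k = \bigcup_{k} \phi_{k,\infty}(\cl S_k)$, the range of $\jmath$ is $\bigcup_{k}\jmath_k(\cl S_k) = \bigcup_{k}\cl S_k$, so $\jmath$ identifies $\limos \cl S_k$ with $\bigcup_{k}\cl S_k$, carrying the inductive limit operator system structure to the one inherited from $\lcalg{\cl A}$. As a unital complete order embedding is a complete isometry, Lemma~\ref{lem:norm closure of op system} then identifies the completion $\overline{\limos \cl S_k}$ with the norm closure $\overline{\bigcup_{k}\cl S_k}$ formed inside $\lcalg{\cl A}$.

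It remains to prove $\overline{\bigcup_{k}\cl S_k} = \cl S$, that is, that $\cl S$ is inductive relative to $(\cl A_k)_{k \in \bb{N}}$. This is exactly the content of \cite[Theorem~4.7]{Power} for a closed operator $\lcalg{\cl D}$-system inside the UHF algebra $\lcalg{\cl A}$, and I expect it to be the only genuine obstacle: unlike the bookkeeping above, it is not formal that a closed subspace of an AF algebra coincides with the closure of the union of its finite-level slices, and the masa-bimodule hypothesis together with the matrix-unit decomposition coming from Power's analysis of topological equivalence relations is precisely what forces it. Granting this, $\cl S = \overline{\bigcup_{k}\cl S_k}$ is completely order isomorphic to $\overline{\limos \cl S_k}$, which is the assertion.
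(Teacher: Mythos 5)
Your argument is correct and follows the same route the paper intends: the paper offers no proof beyond citing \cite[Theorem~4.7]{Power}, and your writeup correctly reduces the statement to exactly that inductivity result, with the verification that each $\cl S_k$ is a concrete operator $\cl D_k$-system and the identification of $\overline{\limos \cl S_k}$ with $\overline{\bigcup_k \cl S_k}$ being the (correctly executed) bookkeeping. Your isolation of $\cl S = \overline{\bigcup_k(\cl S\cap\cl A_k)}$ as the only non-formal step is exactly right.
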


The next result is an infinite dimensional analogue of Theorem~\ref{thm:C*envelopefinitegraph}.

     \begin{theorem} \label{thm:c*algebra generated by uhf algebra is c*envelope} 
        Let $\losc{\cl S} \subseteq \lcalg{\cl A}$ be a concrete operator $\lcalg{\cl D}$-system. 
        The C*-envelope of $\losc{\cl S}$ coincides with the 
        C*-subalgebra of~$\lcalg{\cl A}$ generated by $\losc{\cl S}$.
    \end{theorem}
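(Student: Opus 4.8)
The plan is to reduce the infinite-dimensional statement to the known finite-dimensional result of Paulsen--Ortiz (Theorem~\ref{thm:C*envelopefinitegraph}) combined with the general theory of C*-envelopes and the commutation of $C_u^*$ with inductive limits (Theorem~\ref{thm:Cu(S)}). Let me sketch the two candidate routes and identify the obstacle.

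\medskip

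Let $\cl B$ denote the C*-subalgebra of $\lcalg{\cl A}$ generated by $\losc{\cl S}$, and write $\nu : \losc{\cl S}\to \cl B$ for the inclusion, which is a unital complete order embedding whose range generates $\cl B$; thus $(\cl B,\nu)$ is a C*-cover of $\losc{\cl S}$. By the universal property of the C*-envelope, there is a surjective unital $*$-homomorphism $\pi : \cl B \to C_e^*(\losc{\cl S})$ fixing $\losc{\cl S}$, and the task is to show $\pi$ is injective. First I would invoke Proposition~\ref{thm:closed D_bimodule is ind limit of graph operator systems} to write $\losc{\cl S} = \overline{\limos \cl S_{G_k}}$, where $\cl S_{G_k} = \losc{\cl S}\cap \cl A_k$ is, by the Paulsen--Powers--Smith characterisation, a graph operator system in $\cl A_k\cong M_{n_k}$. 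By Theorem~\ref{thm:C*envelopefinitegraph}, the C*-algebra $\cl B_k$ generated by $\cl S_{G_k}$ inside $\cl A_k$ is exactly $C_e^*(\cl S_{G_k})$. The natural maps assemble into an inductive system of C*-algebras $\cl B_1\to \cl B_2\to\cdots$ inside $\lcalg{\cl A}$, and one checks $\cl B = \overline{\bigcup_k \cl B_k} = \limcalg \cl B_k = \limcalg C_e^*(\cl S_{G_k})$.

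\medskip

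The heart of the argument is then to show that this inductive limit of C*-envelopes \emph{is} the C*-envelope of the inductive limit operator system. The plan is to prove that $\cl B$ has no non-trivial boundary ideal, i.e.\ that the identity on $\losc{\cl S}$ extends to no proper quotient of $\cl B$. Concretely, suppose $\cl J \subseteq \cl B$ is a closed ideal with $q_{\cl J}|_{\losc{\cl S}}$ completely isometric, where $q_{\cl J}: \cl B\to \cl B/\cl J$ is the quotient map; I must show $\cl J = \{0\}$. The idea is to intersect with the finite stages: $\cl J\cap \cl B_k$ is a boundary ideal for $\cl S_{G_k}$ in $\cl B_k = C_e^*(\cl S_{G_k})$, hence $\cl J\cap \cl B_k = \{0\}$ by the defining property of the C*-envelope at each finite stage (no non-zero ideal of $C_e^*(\cl S_{G_k})$ can be a boundary ideal). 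Since $\cl B = \overline{\bigcup_k \cl B_k}$ and each $\cl J\cap \cl B_k$ vanishes, one concludes $\cl J = \overline{\bigcup_k (\cl J\cap \cl B_k)} = \{0\}$, using that $\cl J$, being inductive relative to $(\cl B_k)$, is the closure of its intersections with the finite stages.

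\medskip

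\textbf{Main obstacle.} The delicate point is the passage ``$\cl J$ complete-isometry on $\losc{\cl S}$ implies $\cl J\cap \cl B_k$ is a boundary ideal for $\cl S_{G_k}$,'' and the claim that $\cl J$ is recovered from its finite-stage intersections. The second is a standard fact about ideals in inductive limits of C*-algebras (an ideal in $\limcalg \cl B_k$ is generated by its traces on the $\cl B_k$), and can be cited from \cite[Section II.8.2]{Blackadar}. The first requires care: restricting the quotient map to $\cl B_k$ gives a $*$-homomorphism whose restriction to $\cl S_{G_k}$ is completely isometric because $\cl S_{G_k}\subseteq \losc{\cl S}$ and $q_{\cl J}$ is completely isometric there; hence its kernel $\cl J\cap \cl B_k$ is a boundary ideal for $\cl S_{G_k}$ in $\cl B_k$, forcing it to be zero. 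I expect the verification that $\cl S_{G_k} = \losc{\cl S}\cap \cl A_k$ sits completely isometrically so that the restriction of $q_{\cl J}$ stays a complete isometry to be the step demanding the most attention. An alternative, cleaner route would be to apply Theorem~\ref{thm:Cu(S)} together with the known behaviour of $C_e^*$ under such limits, identifying $C_e^*(\losc{\cl S})$ as the image of $C_u^*(\losc{\cl S}) = \limcalg C_u^*(\cl S_{G_k})$ under the canonical map, but the boundary-ideal argument above is the most self-contained and is the one I would carry out.
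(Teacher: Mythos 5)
Your proof is correct, but it takes a genuinely different route from the paper's. Both arguments rest on the Paulsen--Ortiz identification $C_e^*(\cl S_{G_k}) = C^*(\cl S_{G_k})$ at the finite stages and on writing $C^*(\losc{\cl S}) = \overline{\bigcup_k C^*(\cl S_{G_k})}$. From there the paper verifies the universal property of the C*-envelope head-on: for an arbitrary C*-cover $(\cl B,\alpha)$ of $\losc{\cl S}$ it produces *-homomorphisms $\rho_k$ on the subalgebras of $\cl B$ generated by the images of the finite stages, checks their compatibility, and assembles them into the required map $\cl B \to \limcalg C_e^*(\cl S_{G_k})$; it also proves $C^*(\losc{\cl S}) \cong \limcalg C^*(\cl S_{G_k})$ by an explicit well-definedness argument on the dense *-algebra of finite products. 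You instead show that the concrete cover $C^*(\losc{\cl S})$ has no non-trivial boundary ideal: the kernel of the canonical surjection onto $C_e^*(\losc{\cl S})$ is a boundary ideal, its intersection with each $\cl B_k = C_e^*(\cl S_{G_k})$ is a finite-stage boundary ideal and hence zero, and a closed ideal of an inductive-limit C*-algebra is the closed union of its intersections with the finite stages. The point you flag as delicate is in fact immediate: $\cl S_{G_k}$ is a subspace of $\losc{\cl S}$, so the restriction of $q_{\cl J}$ to it is automatically completely isometric. Your route is shorter and avoids quantifying over all C*-covers, at the price of importing two standard external facts (the ideal-intersection property of inductive limits and the boundary-ideal characterisation of the envelope); the paper's route is longer but stays within its own stated universal properties and yields the explicit identifications $C^*(\losc{\cl S}) \cong \limcalg C^*(\cl S_{G_k})$ and $C_e^*(\losc{\cl S}) \cong \limcalg C_e^*(\cl S_{G_k})$, which are reused in the proofs of Theorems \ref{thm:graphs homeo then op system iso} and \ref{thm:generalisation Glimms theorem}.
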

    
    \begin{proof}
        Let $C^*(\losc{\cl S})$ denote the C*-subalgebra of $\lcalg{\cl A}$ generated by the operator system~$\losc{\cl S}$ and let $C^*(\cl S_k)$ denote the C*-subalgebra of $\cl A_k$ generated by~$\cl S_k$.
Since $\pi_k(\cl S_k)\subseteq \cl S_{k+1}$, we have that $\pi_k(C^*(\cl S_k))\subseteq C^*(\cl S_{k+1})$.
       
       Consider the following inductive system in $\catcalg$:
        \[
        C^*(\cl S_1) \stackrel{\pi_1}{\longrightarrow} C^*(\cl S_2) \stackrel{\pi_2}{\longrightarrow} C^*(\cl S_3) \stackrel{\pi_3}{\longrightarrow} C^*(\cl S_4) \stackrel{\pi_4}{\longrightarrow} \cdots.
        \]
Note that $\pi_{k,\infty}(C^*(\cl S_k)) \subseteq C^*(\los{\cl S})$. 
We denote again by $\pi_{k,\infty}$ its restriction to $C^*(\cl S_k)$; note that it is a 
*-homomorphism and $\pi_{k+1, \infty} \circ \pi_k = \pi_{k,\infty}$, $k \in \bb{N}$. 
We show that $C^*(\los{\cl S}) = C^*(\losc{\cl S})$, equipped with the family $\{\pi_{k,\infty}\}_{k\in \bb{N}}$,
satisfies the universal property of the inductive limit 
$\limcalg C^*(\cl S_k)$ and therefore they are *-isomorphic. 

Suppose $(\cl B, \{ \theta_k\}_{k \in \mathbb{N}})$ is a pair consisting of a 
C*-algebra and a family of unital *-homomorphisms $\theta_k : C^*(\cl S_k) \rightarrow \cl B$ such that $\theta_{k+1} \circ \pi_k = \theta_k$ for all $k \in \mathbb{N}$.
Note that, if $s_1, \ldots, s_n \in \los{\cl S}$ and  $a = s_1 \cdots s_n$ then, 
        writing $s_i = \pi_{k,\infty}({x_{k_i}})$ for some $x_{k_i} \in \cl S_{k}$, $i = 1,\dots,n$, we have that $a=\pi_{k,\infty}(x_1 \cdots x_n)$.
Suppose that 
$$\pi_{k,\infty}\left(\sum_{i=1}^p x_1^i \cdots x_{n_i}^i\right) = \pi_{l,\infty}\left(\sum_{j=1}^q y_1^j \cdots y_{m_j}^j\right),$$ 
for some $k,l\in \bb{N}$, $x_s^i \in \cl S_k$ and $y_t^j \in \cl S_l$. 
Then 
$$\lim_{d \rightarrow \infty}
\left\| \pi_{k,d}\left(\sum_{i=1}^p x_1^i \cdots x_{n_i}^i\right) - \pi_{l,d}\left(\sum_{j=1}^q y_1^j \cdots y_{m_j}^j\right) \right\|
= 0,$$ 
and letting $m= \max\{k,l\}$, we have that 
$$
\lim_{d \rightarrow \infty}
\left\| \pi_{m,d}\left(\pi_{k,m}\left(\sum_{i=1}^p x_1^i \cdots x_{n_i}^i\right) 
- \pi_{l,m}\left(\sum_{j=1}^q y_1^j \cdots y_{m_j}^j\right)\right)\right\| = 0.$$
Thus, 
$$\lim_{d \rightarrow \infty}
\left\| \theta_d\circ \pi_{m,d}\left(\pi_{k,m}\left(\sum_{i=1}^p x_1^i \cdots x_{n_i}^i\right) 
- \pi_{l,m}\left(\sum_{j=1}^q y_1^j \cdots y_{m_j}^j\right)\right)\right\| = 0.$$
It follows that
$$
 \theta_k \left(\sum_{i=1}^p x_1^i \cdots x_{n_i}^i\right)  = \theta_l \left(\sum_{j=1}^q y_1^j \cdots y_{m_j}^j\right).
$$

Let 
$$\cl U = {\rm span}\left\{\sum_{i=1}^p s_1^i \cdots s_{n_i}^i : p, n_i \in \bb{N}, s_m^i \in \cl S_{\infty}, k\in\bb{N}\right\}.$$ 
It follows from the previous paragraph that the map
       $\theta: \cl U \rightarrow \cl B$, given by 
\begin{equation}\label{eq_thetacsst}       
\theta \circ \pi_{k,\infty} = \theta_k,  \ \ \ k\in \bb{N}, 
\end{equation}
       is well-defined. It is clearly bounded, and we let
$\widetilde{\theta}: C^*(\cl S_{\infty}) \rightarrow \cl B$ 
be its continuous extension. Taking into account (\ref{eq_thetacsst}), we conclude that 
\begin{equation}\label{eq_envenew}
C^*(\losc{\cl S}) \cong \limcalg C^*(\cl S_k).
\end{equation}

By Theorem \ref{thm:C*envelopefinitegraph}, 
$C_e^*(\cl S_k) = C^*(\cl S_k)$, and hence (the restriction of) $\pi_k$ is a well-defined
*-monomorphism from $C_e^*(\cl S_k)$ into $C_e^*(\cl S_{k+1})$; we can thus form the inductive system 
$(\{C_e^*(\cl S_k)\}_{k\in \bb{N}}, \{\pi_k\}_{k\in \bb{N}})$.
Note that, by \cite[Theorem 3.2]{LuthraKumar}, 
\begin{equation}\label{eq_eeile}
C_e^*(\losc{\cl S}) = \limcalg C_e^*(\cl S_k); 
\end{equation}
we provide a direct argument for the equality (\ref{eq_eeile}) for the convenience of the reader. 
Namely, we show that $\limcalg C_e^*(\cl S_k)$ satisfies the universal property of the C*-envelope $C_e^*(\los{\cl S})$. 
Consider the following commuting diagram:
        \begin{equation*}\label{eq:c*alg generated by op system}
            \begin{CD}
                \cl S_1 @>{\phi_1}>> \cl S_2 @>{\phi_2}>> \cl S_3 @>{\phi_3}>> \cl S_4 @>{\phi_4}>> \cdots \\
                @V{\iota_1}VV @V{\iota_2}VV @V{\iota_3}VV @V{\iota_4}VV \\
                C_e^*(\cl S_1) @>{\pi_1}>> C_e^*(\cl S_2) @>{\pi_2}>> C_e^*(\cl S_3) @>{\pi_3}>> C_e^*(\cl S_4) @>{\pi_4}>> \cdots.
            \end{CD}
        \end{equation*}
        Note that we have denoted by $\iota_k$ the inclusion of $\cl S_k$ into $C^*_e(\cl S_k)$. 
        By Remark~\ref{rem:all maps coi}, 
        there exists a unital completely order isomorphic embedding 
        $\psi : \los{\cl S} \rightarrow \limos C_e^*(\cl S_k)$ such that $\psi \circ \phi_{k,\infty} = \pi_{k,\infty}\circ \iota_k$, $k\in \bb{N}$. 
       Observe that $\psi(\los{\cl S})$ generates $\limcalg C_e^*(\cl S_k)$; indeed, 
      each $a_k\in C_e^*(S_k)$
belongs to the span of elements of the form 
      $s_1  \cdots s_n$, where $s_i \in \cl S_k$, $1 \leq i \leq n$. 
      Thus, $\pi_{k,\infty}(a_k)$ belongs to the span of $\pi_{k,\infty}(s_1)  \cdots \pi_{k,\infty}(s_n)$. 
It follows that $(\limcalg C_e^*(\cl S_k), \psi)$ is a C*-cover of $\losc{\cl S}$.

        Suppose that 
        $(\cl B, \alpha)$ is a C*-cover of $\cl S_{\infty}$.
        It follows that $\alpha \circ \pi_{k,\infty} : \cl S_k \rightarrow \cl B$ is a unital complete isometry for all $k \in \bb{N}$. 
        Let $\cl B_k$ be the C*-subalgebra of $\cl B$ generated by $(\alpha \circ \pi_{k,\infty})(\cl S_k)$. 
        Since $\alpha(\cl S_{\infty})$ generates $\cl B$ and $\cup_{k \in \bb{N}}\pi_{k,\infty}(\cl S_k)$ generates $\cl S_{\infty}$, 
        we have that $\cl B = \overline{\cup_{k \in \bb{N}}\cl B_k}$. By the universal property of the C*-envelope, 
        for every $k \in \bb{N}$, 
        there exists a unique *-homomorphism $\rho_k : \cl B_k \rightarrow C^*(\cl S_k) $ such that 
        $\rho_k \circ \alpha \circ \phi_{k,\infty} =  \iota_k$. Therefore
        \[
            \pi_k \circ \rho_k \circ \alpha \circ \phi_{k,\infty}
            = \pi_k \circ \iota_k
            = \iota_{k+1} \circ \phi_k
            = \rho_{k+1} \circ \alpha \circ \phi_{k+1, \infty} \circ \phi_k
            = \rho_{k+1} \circ \alpha \circ \phi_{k, \infty},
        \]
        for all $k \in \bb{N}$. Thus, $\pi_k \circ \rho_{k} = \rho_{k+1}$, $k \in \bb{N}$. We may thus
        construct the following commuting diagram:
        \begin{equation*}\label{eq:c*alg generated by op system}
            \begin{CD}
              \cl B_1 @>{{\id}_{\cl B}}>> \cl B_2 @>{{\id}_{\cl B}}>> \cl B_3 @>{{\id}_{\cl B}}>> \cl B_4 @>{{\id}_{\cl B}}>> \cdots \\
                @V{\rho_1}VV @V{\rho_2}VV @V{\rho_3}VV @V{\rho_4}VV \\
              C_e^*(\cl S_1) @>{\pi_1}>> C_e^*(\cl S_2) @>{\pi_2}>> C_e^*(\cl S_3) @>{\pi_3}>> C_e^*(\cl S_4) @>{\pi_4}>> \cdots.
            \end{CD}
        \end{equation*}
        By Theorem~\ref{thm:universal property infinitely many maps}, 
        there exists a *-homomorphism 
        $\rho : \cl B \rightarrow \limcalg C^*_e(\cl S_k)$ such that $\rho = \pi_{k,\infty} \circ \rho_k$ for all $k \in \bb{N}$. Note that
        \[
        \rho \circ \alpha \circ \phi_{k,\infty} = \pi_{k,\infty} \circ \rho_k \circ \alpha \circ \phi_{k,\infty} = \pi_{k,\infty} \circ \iota_k = 
        \psi \circ \phi_{k,\infty},
        \]
        for all $k \in \bb{N}$. Therefore $\rho \circ \alpha = \psi$, and
        hence $\limcalg C_e^*(\cl S_k)$ is *-isomorphic to the C*-envelope of $\widehat{\cl S}_{\infty}$.
        It now follows from (\ref{eq_envenew}) and (\ref{eq_eeile}) that 
$C^*_e(\cl S_{\infty}) \cong C^*(\cl S_{\infty})$.
    \end{proof}

    \subsection{Graphs associated to operator subsystems of UHF algebras}\label{sec: infinite graphs}

The framework required to associate a graph with the UHF algebra $\lcalg{\cl A}$ is established in \cite{Power}. 
We give some of its details here, since it will be 
needed in order to define graphs associated with operator subsystems of $\lcalg{\cl A}$.
Recall that $X_{\infty} = \Delta(\lcalg {\cl D})$ and $X_k = \Delta(\cl D_k)$, $k\in \bb{N}$.
By Remark~\ref{rem:functionals on diagonal matrix algebra}, 
$X_{\infty}= \limtop X_k$. For each $k \in \mathbb{N}$ and each $1 \leq i \leq n_k$, we have that 
$e_{i,i}^k \in \cl D_k \subseteq \lcalg{\cl D}$. 
Let
    \[
    	X_i^k = \{x \in X_{\infty} : \angles{x}{e_{i,i}^k} = 1 \}.
    \]
    Clearly, $X_i^k$ is a closed and open subset of $X_{\infty}$ such that, for all $k \in \mathbb{N}$,
    \[
    	X_{\infty} = \bigcup_{i=1}^{n_k} X_i^k.
    \]
We note that, if $[l_k]$ denotes the set $\{0, 1, 2, \ldots, l_k-1\}$, the space 
$X_{\infty}$ is homeomorphic to the Cantor space $\Pi_{k=1}^{\infty} [l_k]$
(recall that $l_k = \frac{n_{k+1}}{n_k}$). 

    For each $k \in \mathbb{N}$ and each $1 \leq i,j \leq n_k$, 
    let $\phi_{i,j}^{k}: C(X_i^k) \rightarrow C(X_j^k)$ be the *-isomorphism given by 
    $\phi_{i,j}^{k}(d)= {e_{i,j}^k}^* d {e_{i,j}^k}$.
Let $\alpha_{i,j}^k : X_j^k \rightarrow X_i^k$ be 
the homeomorphism induced by $\phi_{i,j}^{k}$; thus, 
    \[
    	\angles{\alpha_{i,j}^k(x)}{d} = \angles{x}{\phi_{i,j}^k(d)}, \ \ \ x \in X_j^k, d \in C(X_i^k). 
    \]
For $k \in \mathbb{N}$ and $1 \leq i,j \leq n_k$, let 
    \[
    	E_{i,j}^k = \Big\lbrace(x,y) \in X_{\infty} \times X_{\infty} : x = \alpha_{i,j}^k(y) \text{~for some~} y \in X_j^k\Big \rbrace
    \]
    be the graph of the partial homeomorphism $\alpha_{i,j}^k$ of $X_{\infty}$.
    We have, equivalently,
    \[
        E_{i,j}^k = \Big\lbrace (x,y) \in X_{\infty} \times X_{\infty} : \angles{x}{d} = \angles{y}{e_{j,i}^k d e_{i,j}^k} \text{~for all~} d \in \cl D_k\Big\rbrace.
    \]
It will be convenient to write $R(e_{i,j}^k) = E_{i,j}^k$; for a subset $\cl E$ of canonical matrix units in $\lcalg{\cl A}$, we set 
$R(\cl E) = \cup_{e\in \cl E} R(e)$. 
In particular, 
    \begin{equation}\label{eq:R(A)}
    	R(\lcalg{\cl A}) = \bigcup \big\lbrace E_{i,j}^k : k\in \bb{N}, 1\leq i,j\leq n_k\big\rbrace.
    \end{equation}

In Remark~\ref{rem:facts about partial homeo1}, 
whose statement is drawn from \cite{Power},
we point out how the sets $E_{i,j}^k$ reflect the properties of the matrix units $e_{i,j}^k$. 
    We set ${E_{i,j}^k}^* = E_{j,i}^k$.
    For $E,F \subseteq X_{\infty} \times X_{\infty}$, let
    \begin{equation}\label{eq:circedges}
        E\circ F = \{(x,z) \in X_{\infty} \times X_{\infty} : \ \exists \ y\in X_{\infty}
        \mbox{ with } (x,y) \in E \mbox{ and } (y,z) \in F\}.
    \end{equation}

    \begin{remark}\label{rem:facts about partial homeo1}
    The following hold, for any $k,m \in \mathbb{N}$ and any $1 \leq i \leq n_k$, $1 \leq j \leq n_m$:

(i) \ \ $E_{i,i}^k = \{(x,x): x \in X_i^k\}$;

(ii) \ $ (x,y) \in E_{i,j}^k \text{~if and only if~} (y,x) \in E_{j,i}^k$;

(iii) $E_{i,j}^m \circ E_{j,k}^m = E_{i,k}^m \text{~and~} E_{i,j}^m \circ E_{k,l}^m = \emptyset \text{~when~} j \neq k$.
    \end{remark}

We have that $R(\lcalg{\cl A})$ is 
an equivalence relation on $X_{\infty} \times X_{\infty}$ and endows~$\lcalg{\cl A}$ with an associated graph.
    We define a topology on $R(\lcalg{\cl A})$ by specifying 
    $\{E_{i,j}^k : k\in \bb{N}, 1\leq i,j \leq n_k\}$ as a base of open sets. 
    Note that each $E_{m,n}^l$ is either disjoint from $E_{i,j}^k$ or is a subset of $E_{i,j}^k$ 
    (if the latter happens then $l > k$). 
    Thus, this base consists of closed and open sets. Since $X_{\infty}$ is compact, 
    the sets $E_{i,j}^k$ are compact, too. 

    If $\losc{\cl S}$ is an operator subsystem of $\lcalg{\cl A}$, set
    \begin{equation} \label{eq:R(S)}
    	R(\losc{\cl S}) = \bigcup \big\lbrace E_{i,j}^k : e_{i,j}^k \in \losc{\cl S}\big\rbrace.
    \end{equation}

We specialise to the case of operator systems the Spectral Theorem for Bimodules from \cite{Power}. 
    The following proposition follows from \cite[Proposition 7.3~and~Proposition~7.4]{Power}.

    \begin{proposition}\label{prop:local spectral theorem}
        Let $\losc{\cl S}$ and $\losc{\cl T}$ be concrete operator $\lcalg{\cl D}$-systems. 
        
        (i) \ We have that $E_{i,j}^k \subseteq R(\losc{\cl S})$ if and only if $e_{i,j}^k \in \losc{\cl S}$;

(ii)  If $R(\losc{\cl S}) = R(\losc{\cl T})$ then $\losc{\cl S} = \losc{\cl T}$.
    \end{proposition}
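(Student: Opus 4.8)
The plan is to establish part (i) first, since part (ii) will follow quickly once the nontrivial implication of (i) is in hand. This is the operator system specialisation of Power's Spectral Theorem for Bimodules, and the heart of the matter is a compactness-and-refinement bookkeeping that exploits the $\lcalg{\cl D}$-bimodule structure of $\losc{\cl S}$, together with the nesting dichotomy of the basic clopen sets $E_{a,b}^l$.

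For (i), the implication $e_{i,j}^k\in\losc{\cl S}\Rightarrow E_{i,j}^k\subseteq R(\losc{\cl S})$ is immediate from the definition (\ref{eq:R(S)}). For the converse I would first note that $R(\losc{\cl S})$ is open, being a union of the basic clopen sets $E_{a,b}^l$, while $E_{i,j}^k$ is compact; hence there is a finite subcover $E_{i,j}^k\subseteq\bigcup_{s=1}^N E_{a_s,b_s}^{l_s}$ with each $e_{a_s,b_s}^{l_s}\in\losc{\cl S}$. Setting $L=\max\{k,l_1,\dots,l_N\}$ and refining $e_{i,j}^k$ to level $L$ via $\pi_{k,L}(e_{i,j}^k)=\sum_t e_{c_t,d_t}^L$, Remark~\ref{rem:facts about partial homeo1} gives the disjoint decomposition $E_{i,j}^k=\bigsqcup_t E_{c_t,d_t}^L$. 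Each piece $E_{c_t,d_t}^L$ is nonempty (as $X_{d_t}^L\neq\emptyset$) and contained in $E_{i,j}^k$; since $L\ge l_s$, the nesting property of the base forces it into a single covering set $E_{a_s,b_s}^{l_s}$, so $e_{c_t,d_t}^L$ occurs as a summand of $\pi_{l_s,L}(e_{a_s,b_s}^{l_s})$. Compressing by diagonal matrix units, $e_{c_t,d_t}^L=e_{c_t,c_t}^L\cdot e_{a_s,b_s}^{l_s}\cdot e_{d_t,d_t}^L$; as $e_{c_t,c_t}^L,e_{d_t,d_t}^L\in\cl D_L\subseteq\lcalg{\cl D}$ and $\losc{\cl S}$ is a $\lcalg{\cl D}$-bimodule, I conclude $e_{c_t,d_t}^L\in\losc{\cl S}$. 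Summing the finitely many pieces yields $e_{i,j}^k=\sum_t e_{c_t,d_t}^L\in\losc{\cl S}$, since $\losc{\cl S}$ is a linear subspace.

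For (ii), I would invoke Proposition~\ref{thm:closed D_bimodule is ind limit of graph operator systems} to write $\losc{\cl S}=\overline{\limos \cl S_k}$ with $\cl S_k=\losc{\cl S}\cap\cl A_k$ a graph operator system, so that $\losc{\cl S}$ is the closed linear span of the matrix units it contains. If $e_{i,j}^k\in\losc{\cl S}$, then by (i) $E_{i,j}^k\subseteq R(\losc{\cl S})=R(\losc{\cl T})$, and applying (i) to $\losc{\cl T}$ gives $e_{i,j}^k\in\losc{\cl T}$. Thus every matrix unit of $\losc{\cl S}$ lies in the closed subspace $\losc{\cl T}$, whence $\losc{\cl S}\subseteq\losc{\cl T}$; the reverse inclusion is symmetric, so $\losc{\cl S}=\losc{\cl T}$.

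The main obstacle is the bookkeeping in the converse of (i): one must match the combinatorics of the refinement $\pi_{k,L}(e_{i,j}^k)=\sum_t e_{c_t,d_t}^L$ with the disjoint decomposition $E_{i,j}^k=\bigsqcup_t E_{c_t,d_t}^L$, confirm that each level-$L$ piece is nonempty and sits inside a single covering set so the nesting dichotomy applies cleanly, and check that the compression by diagonal units extracts exactly the single matrix unit $e_{c_t,d_t}^L$ rather than a partial sum. Everything else reduces to the definitions, the bimodule property, and the compactness of $X_{\infty}$.
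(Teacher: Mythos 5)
Your argument is correct. Note, however, that the paper does not actually prove this proposition: it is stated as a consequence of Propositions~7.3 and~7.4 of Power's monograph, so there is no internal proof to compare against. What you have written is a self-contained reconstruction of the standard argument behind Power's spectral theorem for bimodules, specialised to operator systems, and all the steps check out: the forward implication of (i) is definitional; for the converse, compactness of $E_{i,j}^k$ yields a finite cover by basic sets $E_{a_s,b_s}^{l_s}$ with $e_{a_s,b_s}^{l_s}\in\losc{\cl S}$, the refinement to a common level $L$ decomposes $E_{i,j}^k$ into the disjoint nonempty graphs of the level-$L$ summands of $e_{i,j}^k$ (nonempty because each $e_{d_t,d_t}^L$ is a nonzero projection, hence supports a character), the nesting dichotomy of the base places each such graph inside a single $E_{a_s,b_s}^{l_s}$ and — since distinct level-$L$ matrix units have disjoint graphs — identifies it with the graph of a summand of $\pi_{l_s,L}(e_{a_s,b_s}^{l_s})$, and the compression $e_{c_t,c_t}^L\cdot e_{a_s,b_s}^{l_s}\cdot e_{d_t,d_t}^L$ then extracts exactly $e_{c_t,d_t}^L$, which lies in $\losc{\cl S}$ by the $\lcalg{\cl D}$-bimodule property. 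Part (ii) then follows as you say from Proposition~\ref{thm:closed D_bimodule is ind limit of graph operator systems}, which exhibits $\losc{\cl S}$ as the closed span of the canonical matrix units it contains. The only gain in citing Power directly is brevity; your proof has the advantage of making the paper self-contained at this point.
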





    \begin{proposition}\label{p_orss}
        Let $\losc{\cl S}$ be a concrete operator $\lcalg{\cl D}$-system. 
        Then $R(\losc{\cl S})$ is an open, reflexive and symmetric subset of $R(\lcalg{\cl A})$.
    \end{proposition}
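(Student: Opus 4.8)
The plan is to verify the three asserted properties---openness, reflexivity and symmetry---one at a time, since each should follow directly from the structure of $\losc{\cl S}$ as a concrete operator $\lcalg{\cl D}$-system (namely $\lcalg{\cl D}\subseteq\losc{\cl S}\subseteq\lcalg{\cl A}$ with $\losc{\cl S}$ an operator system) together with the description of the basic sets $E_{i,j}^k$ recorded in Remark~\ref{rem:facts about partial homeo1}.

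First I would settle openness. By its definition in~(\ref{eq:R(S)}), the set $R(\losc{\cl S})$ is the union of those $E_{i,j}^k$ for which $e_{i,j}^k\in\losc{\cl S}$; since $\losc{\cl S}\subseteq\lcalg{\cl A}$, each such $E_{i,j}^k$ already occurs in the union~(\ref{eq:R(A)}) defining $R(\lcalg{\cl A})$, so $R(\losc{\cl S})\subseteq R(\lcalg{\cl A})$. As the sets $E_{i,j}^k$ form the declared base of the topology on $R(\lcalg{\cl A})$, every union of them is open, and openness follows at once.

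Next, for reflexivity I would exploit the inclusion $\lcalg{\cl D}\subseteq\losc{\cl S}$. Each diagonal matrix unit satisfies $e_{i,i}^k\in\cl D_k\subseteq\lcalg{\cl D}\subseteq\losc{\cl S}$, so by Remark~\ref{rem:facts about partial homeo1}(i) the diagonal piece $E_{i,i}^k=\{(x,x):x\in X_i^k\}$ is contained in $R(\losc{\cl S})$. Taking the union over $1\le i\le n_k$ and using the covering $X_{\infty}=\bigcup_{i=1}^{n_k}X_i^k$ then shows that the full diagonal $\{(x,x):x\in X_{\infty}\}$ lies in $R(\losc{\cl S})$. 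This is the one place where a little care is needed: one must ensure that all of $X_{\infty}$ (and not merely the diagonal over a single level) is captured, which is exactly what the covering guarantees.

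Finally, symmetry would follow from self-adjointness of the operator system $\losc{\cl S}$. If $(x,y)\in R(\losc{\cl S})$, then $(x,y)\in E_{i,j}^k$ for some indices with $e_{i,j}^k\in\losc{\cl S}$; since $\losc{\cl S}$ is closed under the adjoint, $e_{j,i}^k=(e_{i,j}^k)^*\in\losc{\cl S}$, and Remark~\ref{rem:facts about partial homeo1}(ii) gives $(y,x)\in E_{j,i}^k\subseteq R(\losc{\cl S})$. I do not anticipate any genuine obstacle: the proposition is essentially a bookkeeping exercise translating the three defining closure conditions on $\losc{\cl S}$---membership in $\lcalg{\cl A}$, containment of $\lcalg{\cl D}$, and self-adjointness---into the corresponding relational statements by way of Remark~\ref{rem:facts about partial homeo1}.
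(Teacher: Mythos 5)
Your proposal is correct and follows essentially the same route as the paper: openness as a union of basic open sets, reflexivity from $\lcalg{\cl D}\subseteq\losc{\cl S}$, and symmetry from self-adjointness via Remark~\ref{rem:facts about partial homeo1}(ii). The only (immaterial) difference is that for symmetry you extract $e_{i,j}^k\in\losc{\cl S}$ directly from the definition~(\ref{eq:R(S)}) of $R(\losc{\cl S})$, whereas the paper routes this step through Proposition~\ref{prop:local spectral theorem}; both are valid.
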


    \begin{proof}
        We have that $R(\losc{\cl S})$ is 
      open since it is a union of open sets. 
      Since $\losc{\cl S}$ contains the identity operator, $R(\losc{\cl S})$ is reflexive. 
Suppose that $(x,y) \in R(\losc{\cl S})$. Then there exists $i,j,k$ such that $(x,y) \in E_{i,j}^k$ and $E_{i,j}^k\subseteq R(\losc{\cl S})$. 
By Proposition \ref{prop:local spectral theorem}, 
$e_{i,j}^k \in \losc{\cl S}$. Thus, $e_{j,i}^k = (e_{i,j}^k)^* \in \losc{\cl S}$ and, again by 
Proposition \ref{prop:local spectral theorem}, $E_{j,i}^k\subseteq R(\losc{\cl S})$. 
Thus, $(y,x)\in R(\losc{\cl S})$.
    \end{proof}

By Proposition \ref{p_orss}, we may view $R(\losc{\cl S})$ is a (closed and) open subgraph of $R(\lcalg{\cl A})$. 
Conversely, if 
$P\subseteq R(\lcalg{\cl A})$ is an open, symmetric and reflexive subset, let 
    \begin{equation} \label{eq:S(P)}
    	\cl S_{\infty}(P) = \overline{\SPAN} \{ d e_{i,j}^k f : d, f \in \lcalg{\cl D}, E_{i,j}^k \subseteq P\}.
    \end{equation}

    \begin{theorem}\label{thm:bijective correspondance between open symmetric reflexive subsets}
The map
$P\to \cl S_{\infty}(P)$ is a bijective correspondence between the open subgraphs of $R(\lcalg{\cl A})$ and 
the concrete operator $\lcalg{\cl D}$-systems.
    \end{theorem}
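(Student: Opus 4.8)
The plan is to show that the assignment $P \mapsto \cl S_{\infty}(P)$ is well-defined, that it is inverse to the assignment $\losc{\cl S} \mapsto R(\losc{\cl S})$ from Proposition~\ref{p_orss}, and hence that both are bijections. First I would verify that $\cl S_{\infty}(P)$, as defined by (\ref{eq:S(P)}), is genuinely a concrete operator $\lcalg{\cl D}$-system whenever $P$ is an open, symmetric and reflexive subgraph of $R(\lcalg{\cl A})$. It is by construction a closed subspace of $\lcalg{\cl A}$ containing $\lcalg{\cl D}$ (since $P$ is reflexive, all diagonal relations $E_{i,i}^k$ lie in $P$, giving all $e_{i,i}^k$ and hence $\lcalg{\cl D}$); it is selfadjoint because $P$ is symmetric, so that $E_{i,j}^k \subseteq P$ forces $E_{j,i}^k = {E_{i,j}^k}^* \subseteq P$ and therefore $(d e_{i,j}^k f)^* = f^* e_{j,i}^k d^* \in \cl S_{\infty}(P)$; and it is a $\lcalg{\cl D}$-bimodule by the very form of the spanning elements $d e_{i,j}^k f$. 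Thus $(\lcalg{\cl D}, \cl S_{\infty}(P), \lcalg{\cl A})$ is a concrete operator C*-system.

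Next I would establish the two composites are identities. For the round trip starting from an operator system, I claim $\cl S_{\infty}(R(\losc{\cl S})) = \losc{\cl S}$. The inclusion $\supseteq$ is clear: if $e_{i,j}^k \in \losc{\cl S}$ then $E_{i,j}^k \subseteq R(\losc{\cl S})$ by Proposition~\ref{prop:local spectral theorem}(i), so $d e_{i,j}^k f \in \cl S_{\infty}(R(\losc{\cl S}))$, and since $\losc{\cl S}$ is a closed $\lcalg{\cl D}$-bimodule containing the relevant matrix units, taking closed spans recovers $\losc{\cl S}$ (this is exactly the content of the Spectral Theorem for Bimodules, as the matrix units lying in $\losc{\cl S}$ generate it as a closed $\lcalg{\cl D}$-bimodule). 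For $\subseteq$, every generator $d e_{i,j}^k f$ of $\cl S_{\infty}(R(\losc{\cl S}))$ has $E_{i,j}^k \subseteq R(\losc{\cl S})$, whence $e_{i,j}^k \in \losc{\cl S}$ by Proposition~\ref{prop:local spectral theorem}(i) again, and so $d e_{i,j}^k f \in \losc{\cl S}$ by the bimodule property; closedness of $\losc{\cl S}$ finishes the inclusion.

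For the reverse composite I would show $R(\cl S_{\infty}(P)) = P$. The key tool is Proposition~\ref{prop:local spectral theorem}(i): since $\cl S_{\infty}(P)$ is a concrete operator $\lcalg{\cl D}$-system, one has $E_{i,j}^k \subseteq R(\cl S_{\infty}(P))$ if and only if $e_{i,j}^k \in \cl S_{\infty}(P)$. Now $e_{i,j}^k \in \cl S_{\infty}(P)$ holds precisely when $E_{i,j}^k \subseteq P$: one direction is immediate from the definition (\ref{eq:S(P)}) taking $d = e_{i,i}^k$, $f = e_{j,j}^k$, while the converse uses that the spanning elements $d e_{i',j'}^{k'} f$ with $E_{i',j'}^{k'} \subseteq P$ cannot produce a matrix unit $e_{i,j}^k$ with $E_{i,j}^k \not\subseteq P$ — here I would again invoke Proposition~\ref{prop:local spectral theorem}(i) applied to $\cl S_{\infty}(P)$ together with the observation that $R(\cl S_{\infty}(P))$ is, by Proposition~\ref{p_orss}, an open subgraph built from exactly those $E_{i,j}^k$ with $e_{i,j}^k \in \cl S_{\infty}(P)$. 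Since every open subgraph $P$ of $R(\lcalg{\cl A})$ is, by the description (\ref{eq:R(A)}) of the base, the union of the basic sets $E_{i,j}^k$ it contains, the equivalence $E_{i,j}^k \subseteq P \Leftrightarrow E_{i,j}^k \subseteq R(\cl S_{\infty}(P))$ yields $P = R(\cl S_{\infty}(P))$.

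The main obstacle I anticipate is the careful direction $e_{i,j}^k \in \cl S_{\infty}(P) \Rightarrow E_{i,j}^k \subseteq P$, namely ruling out that taking closed spans and multiplying by diagonal coefficients could manufacture a matrix unit whose relation is not already in $P$. This is where the Spectral Theorem for Bimodules (the source of Proposition~\ref{prop:local spectral theorem}) does the real work: it guarantees that the matrix units contained in a concrete operator $\lcalg{\cl D}$-system are exactly detected by the associated relation, so no spurious matrix units arise. Modulo invoking that theorem, the remaining steps are the routine verifications of the bimodule, selfadjointness, and unitality properties sketched above, and the purely set-theoretic fact that open subgraphs are unions of basic relations.
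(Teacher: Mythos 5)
Your overall architecture matches the paper's: check that $\cl S_{\infty}(P)$ is a concrete operator $\lcalg{\cl D}$-system, and reduce bijectivity to the identity $R(\cl S_{\infty}(P)) = P$ together with Proposition~\ref{prop:local spectral theorem}. The well-definedness check is fine, and so is the composite $\cl S_{\infty}(R(\losc{\cl S})) = \losc{\cl S}$: your inclusion $\cl S_{\infty}(R(\losc{\cl S})) \subseteq \losc{\cl S}$ via Proposition~\ref{prop:local spectral theorem}(i) and the bimodule property is correct, and equality then follows from monotonicity of $R$ together with Proposition~\ref{prop:local spectral theorem}(ii), without needing the full Spectral Theorem for Bimodules. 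The genuine gap is in the other composite, at the implication $e_{i,j}^k \in \cl S_{\infty}(P) \Rightarrow E_{i,j}^k \subseteq P$, which you correctly flag as the main obstacle but do not actually prove. Your proposed justification is circular: Proposition~\ref{prop:local spectral theorem}(i) applied to $\cl S_{\infty}(P)$ only yields $E_{i,j}^k \subseteq R(\cl S_{\infty}(P))$, and the containment $R(\cl S_{\infty}(P)) \subseteq P$ is exactly what is at stake --- a priori the closed span in (\ref{eq:S(P)}) could contain a matrix unit $e_{i,j}^k$ whose relation $E_{i,j}^k$ is only partially covered by the basic sets contained in $P$. The ``equivalence $E_{i,j}^k \subseteq P \Leftrightarrow E_{i,j}^k \subseteq R(\cl S_{\infty}(P))$'' you invoke at the end is the statement to be proved, not a consequence of the definition of $R$; without it you have not established injectivity of $P \mapsto \cl S_{\infty}(P)$.

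The paper closes this gap with an approximation argument you would need to supply. Using the $\lcalg{\cl D}$-bimodule projection $\Phi_k$ onto the bimodule generated by $\cl A_k$ (from Power), one writes $e_{i,j}^k$ as a limit of sums $\sum d_{s,t}^{p,m} e_{i,j}^k$ whose diagonal coefficients are supported on $Y = \bigcup\{E_{s,t}^p : E_{s,t}^p \subseteq P \cap E_{i,j}^k\}$; if $Y$ were a proper subset of $E_{i,j}^k$, the corresponding projection $a$ would satisfy $a < e_{i,i}^k$ while $e_{i,j}^k = a\, e_{i,j}^k$, a contradiction, so $E_{i,j}^k = Y \subseteq P$. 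Your fallback of citing the Spectral Theorem for Bimodules outright would only work if that theorem is imported in its full bijectivity form, which is strictly stronger than Proposition~\ref{prop:local spectral theorem} as stated (and essentially assumes the theorem being proved); as written, the proposal is incomplete at this step.
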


    \begin{proof}
The fact that, if $P$ is an open subgraph of $R(\lcalg{\cl A})$ then $\cl S_{\infty}(P)$ is a concrete operator $\lcalg{\cl D}$-system
follows easily from Remark~\ref{rem:facts about partial homeo1}.
It remains to show that for any open reflexive and symmetric subset $P$ of $R(\lcalg{\cl A})$, 
we have that $R(\cl S_{\infty}(P)) = P$.
It is clear that $P\subseteq R(\cl S_{\infty}(P))$. Conversely, suppose that $E_{i,j}^k \subseteq R(\cl S_{\infty}(P))$,
for some $i,j$ and $k$ with $i\neq j$. 
By Proposition \ref{prop:local spectral theorem}, $e_{i,j}^k\in \cl S_{\infty}(P)$. 
We claim that $E_{i,j}^k \subseteq P$; clearly, this claim will complete the proof. 

Let $\tilde{\cl A}_p$ be the $\lcalg{\cl D}$-bimodule, generated by $\cl A_p$, $p\in \bb{N}$. 
By \cite[Proposition 4.6]{Power}, there exists a $\lcalg{\cl D}$-bimodule 
surjective projection $\Phi_p : \lcalg{\cl A}\to \tilde{\cl A}_p$. 
Write 
$$\cl S_0 = \SPAN \{ d e_{s,t}^p  : d \in \lcalg{\cl D}, E_{s,t}^p \subseteq P, p\in \bb{N}\}.$$
We have that $e_{i,j}^k = \lim_{m\to\infty} x_m$, for some $x_m\in \cl S_0$, $m\in \bb{N}$; thus,
$$e_{i,j}^k = \lim_{m\to\infty} \Phi_k(x_m).$$
Let $E_k = \cup_{u,v} E_{u,v}^k$. 
Then $\Phi_k(x_m) = \sum_{E_{s,t}^p\subseteq P \cap E^k} d_{s,t}^{p,m} e_{s,t}^k$, 
for some $d_{s,t}^{p,m}\in \lcalg{\cl D}$ 
with $\supp(d_{s,t}^{p,m}) \subseteq X_s^p$.
It follows that 
\begin{equation}\label{eq_exdij}
e_{i,j}^k = \lim_{m\to\infty} \sum_{E_{s,t}^p\subseteq P \cap E^k_{i,j}} d_{s,t}^{p,m} e_{i,j}^k.
\end{equation}
Assume, by way of contradiction, that 
$$Y \stackrel{def}{=} \cup \{E_{s,t}^p : E_{s,t}^p\subseteq P\cap E_{i,j}^k\} \neq E_{i,j}^k.$$
Letting $a\in \lcalg{\cl D}$ be the projection corresponding to $Y$, we have that $a < e_{i,i}^k$
and $e_{i,j}^k = a e_{i,j}^k$, a contradiction. 
It follows that $Y = E_{i,j}^k$; since $P$ is open, $E_{i,j}^k\subseteq P$. 
    \end{proof}

Theorem \ref{thm:bijective correspondance between open symmetric reflexive subsets} allows us
to view the concrete operator $\lcalg{\cl D}$-systems as graph operator systems; 
we formalise this in the following definition.

    \begin{definition}\label{def:infinite graph operator system}
    Let $\lcalg{\cl A}$ be a UHF algebra with canonical masa 
    $\lcalg{\cl D}$.
    An open, reflexive and symmetric subset of $R(\lcalg{\cl A})$ will be called a \emph{Cantor graph}.
   
    If $P$ is a Cantor graph, the operator system 
    $\cl S_{\infty}(P)$ defined in (\ref{eq:S(P)}) will be called the 
    \emph{Cantor graph operator system} of $P$.
  \end{definition}

\subsection{A graph isomorphism theorem}

    In this section, we prove a version of Theorem~\ref{thm:PaulsenOrtiz} for 
    Cantor graph operator systems. 
Let $\lcalg{\cl A}$ and $\lcalg{\cl B}$ be UHF algebras with canonical masas $\lcalg{\cl D}$ and $\lcalg{\cl E}$, respectively,
and let $X_{\infty}=\Delta(\lcalg{\cl D})$ and $Y_{\infty}=\Delta(\lcalg{\cl E})$. 
We write $e_{i,j}^k$ and $E_{i,j}^k$ (resp. $f_{i,j}^k$ and $F_{i,j}^k$) for the canonical matrix units of $\lcalg{\cl A}$ (resp. $\lcalg{\cl B}$)
and their partial graphs.

    Using the notation introduced in (\ref{eq:circedges}), for a set $P\subseteq R(\lcalg{\cl A})$, let
$$            \epsilon(P) = \bigcup\{ E_1 \circ \cdots \circ E_n : 
            n \in \mathbb{N} \mbox{ and for each } j,
            E_j \subseteq P$$
$$ \mbox{ and } E_j = E_{s,t}^p \mbox { for some } s,t,p\}.
        $$

        \begin{lemma}\label{rem:graph produce of operator system is graph of generated c*alg}
Let $\losc{\cl S}$ be a concrete operator $\lcalg{\cl D}$-subsystem of $\lcalg{\cl A}$. 
Then $\epsilon(R(\losc{\cl S})) = R(C^*(\losc{\cl S}))$. 
        \end{lemma}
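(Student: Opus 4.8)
The plan is to set up a precise dictionary between multiplication of the (normalising) matrix units and composition $\circ$ of their partial graphs, and then to squeeze a transitive graph between $R(\losc{\cl S})$ and $R(C^*(\losc{\cl S}))$. The starting point is the multiplicativity of the map $R$: for matrix units $e_{s_1,t_1}^{p_1},\dots,e_{s_n,t_n}^{p_n}$ I would establish
\[
R\big(e_{s_1,t_1}^{p_1}\cdots e_{s_n,t_n}^{p_n}\big)=E_{s_1,t_1}^{p_1}\circ\cdots\circ E_{s_n,t_n}^{p_n}.
\]
By induction it suffices to treat two factors; refining both to a common level $r$ writes each $e_{s,t}^{p}$ as a sum $\sum_a e_{a',a''}^{r}$ of distinct standard level-$r$ matrix units and, correspondingly, $E_{s,t}^{p}$ as the union $\bigcup_a E_{a',a''}^{r}$. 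Expanding the product and applying Remark~\ref{rem:facts about partial homeo1}(iii) (which gives $E_{i,j}^{r}\circ E_{j,l}^{r}=E_{i,l}^{r}$, and emptiness when the inner indices disagree, exactly mirroring $e_{i,j}^r e_{j,l}^r=e_{i,l}^r$ and $e_{i,j}^r e_{k,l}^r=0$ for $j\neq k$) yields the claim. I would also record that such a product is again a partial isometry normalising $\lcalg{\cl D}$, hence a sum $\sum_\beta e_{c_\beta,d_\beta}^{r}$ of \emph{distinct} standard matrix units.

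For the inclusion $\epsilon(R(\losc{\cl S}))\subseteq R(C^*(\losc{\cl S}))$, take $(x,z)\in E_{s_1,t_1}^{p_1}\circ\cdots\circ E_{s_n,t_n}^{p_n}$ with each $E_{s_j,t_j}^{p_j}\subseteq R(\losc{\cl S})$; by Proposition~\ref{prop:local spectral theorem}(i) each $e_{s_j,t_j}^{p_j}\in\losc{\cl S}\subseteq C^*(\losc{\cl S})$, so $a=e_{s_1,t_1}^{p_1}\cdots e_{s_n,t_n}^{p_n}\in C^*(\losc{\cl S})$. By the previous step $a=\sum_\beta e_{c_\beta,d_\beta}^{r}$ with $\bigcup_\beta E_{c_\beta,d_\beta}^{r}$ equal to the given composition, so $(x,z)\in E_{c_\beta,d_\beta}^{r}$ for some $\beta$. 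Since the concrete operator $\lcalg{\cl D}$-system $\losc{\cl S}$ contains $\lcalg{\cl D}$, the diagonal projections $e_{c_\beta,c_\beta}^{r},e_{d_\beta,d_\beta}^{r}$ lie in $C^*(\losc{\cl S})$, and cutting down gives $e_{c_\beta,d_\beta}^{r}=e_{c_\beta,c_\beta}^{r}\,a\,e_{d_\beta,d_\beta}^{r}\in C^*(\losc{\cl S})$ (distinctness of the pairs $(c_\beta,d_\beta)$ ensures only the $\beta$-term survives). By Proposition~\ref{prop:local spectral theorem}(i), $E_{c_\beta,d_\beta}^{r}\subseteq R(C^*(\losc{\cl S}))$, whence $(x,z)\in R(C^*(\losc{\cl S}))$.

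For the reverse inclusion I would avoid re-running a limiting argument and instead invoke the correspondence of Theorem~\ref{thm:bijective correspondance between open symmetric reflexive subsets}. First I would check that $\epsilon(R(\losc{\cl S}))$ is open (each composition of basic sets is, after refinement, a finite union of basic open sets), reflexive and symmetric (using Proposition~\ref{p_orss}), i.e. a Cantor graph, and that it is \emph{transitive}, being by construction closed under $\circ$. Let $\cl T=\cl S_{\infty}(\epsilon(R(\losc{\cl S})))$ be the associated concrete operator $\lcalg{\cl D}$-system; transitivity together with the multiplicativity of $R$ shows that products of its matrix units again have graphs inside $\epsilon(R(\losc{\cl S}))$, so $\cl T$ is closed under multiplication, i.e. a C*-algebra. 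Since $R(\losc{\cl S})\subseteq\epsilon(R(\losc{\cl S}))=R(\cl T)$ and $R(\cl T)\subseteq R(C^*(\losc{\cl S}))$ (the forward inclusion just proved), the bijective correspondence gives $\losc{\cl S}\subseteq\cl T\subseteq C^*(\losc{\cl S})$; minimality of $C^*(\losc{\cl S})$ among C*-algebras containing $\losc{\cl S}$ forces $\cl T=C^*(\losc{\cl S})$, and applying $R$ yields $R(C^*(\losc{\cl S}))=\epsilon(R(\losc{\cl S}))$.

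The main obstacle is the bookkeeping in the first two steps: establishing the exact compatibility of matrix-unit multiplication with graph composition (including the distinctness of the standard matrix units appearing in a product, which legitimises the cut-down extraction of a single $e_{c_\beta,d_\beta}^{r}$), and confirming that transitivity of the graph really corresponds to multiplicative closure of $\cl S_{\infty}(\cdot)$. Both are in essence consequences of Power's spectral theorem for bimodules as recorded in Remark~\ref{rem:facts about partial homeo1} and Proposition~\ref{prop:local spectral theorem}, but they require care to phrase cleanly; the topological verifications (openness of $\epsilon(R(\losc{\cl S}))$, reflexivity and symmetry) are routine.
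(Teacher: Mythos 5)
Your proposal is correct and follows essentially the same route as the paper: both arguments identify $C^*(\losc{\cl S})$ with $\cl S_{\infty}(\epsilon(R(\losc{\cl S})))$ by combining the bijective correspondence of Theorem~\ref{thm:bijective correspondance between open symmetric reflexive subsets}, the fact that a transitive open Cantor graph corresponds to a C*-algebra, and a refinement/compactness argument matching products of canonical matrix units with compositions of their graphs. The paper's version is terser --- it asserts the compatibility of multiplication with $\circ$ and the extraction of individual matrix units without the explicit bookkeeping you carry out, and it obtains the easy inclusion directly from $C^*(\losc{\cl S})\subseteq \cl S_{\infty}(\epsilon(R(\losc{\cl S})))$ rather than via your minimality argument --- but no genuinely new idea separates the two proofs.
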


\begin{proof}
Write $P = R(\losc{\cl S})$ and $Q = \epsilon(P)$; it is clear that $Q$ is the smallest open equivalence relation 
containing $P$. 
Note that $C^*(\cl S_{\infty}) = \cl S_{\infty}(Q)$; indeed, 
every canonical matrix unit in $\cl S_{\infty}$ belongs to $\cl S_{\infty}(Q)$ and, since $\cl S_{\infty}(Q)$
is a C*-algebra, 
$C^*(\cl S_{\infty}) \subseteq \cl S_{\infty}(Q)$. 
Suppose that $e_{i,j}^k\in \cl S_{\infty}(Q)$. 
By Theorem 
\ref{thm:bijective correspondance between open symmetric reflexive subsets},
$E_{i,j}^k\subseteq Q$;
by compactness, $E_{i,j}^k$ is equal to a finite disjoint 
union of sets of the form $E_1 \circ \cdots \circ E_n$, where, for each $j$, the set $E_j $ is a graph of a 
canonical partial homeomorphism contained in $P$.  
            Thus, 
$e_{i,j}^k$ is equal to the sum of 
elements of the form $e_{i_1, j_1}^{k_1} \cdots e_{i_n, j_n}^{k_n}$, 
where $e_{i_r, j_r}^{k_r} \in \los{\cl S} \subseteq \losc{\cl S}$. 
It follows that $\cl S_{\infty}(Q) \subseteq C^*(\cl S_{\infty})$, and hence 
we have that $C^*(\cl S_{\infty}) = \cl S_{\infty}(Q)$. 
By Theorem \ref{thm:bijective correspondance between open symmetric reflexive subsets},
$Q = R(C^*(\losc{\cl S}))$.
\end{proof}

    \begin{theorem}\label{thm:graphs homeo then op system iso}
        Let $\lcalg{\cl A}$ and $\lcalg{\cl B}$ be UHF-algebras with canonical masas $\lcalg{\cl D}$ and $\lcalg{\cl E}$, respectively. 
        Set $X_{\infty} = \Delta(\lcalg{\cl D})$ and $Y_{\infty} = \Delta(\lcalg{\cl E})$. 
        Let $P\subseteq X_{\infty} \times X_{\infty}$ and $Q\subseteq Y_{\infty} \times Y_{\infty}$ 
        be Cantor graphs. 
        The following are equivalent:
        
        (i) \ 
        there exists a  homeomorphism 
        $\varphi: X_{\infty} \rightarrow Y_{\infty}$ such that $(\varphi \times \varphi) (P) = Q$;
        
        (ii) there exists a unital complete order isomorphism 
        $\phi : \cl S_{\infty}(P) \rightarrow \cl S_{\infty}(Q)$ such that $\phi(\lcalg{\cl D}) = \lcalg{\cl E}$.
     \end{theorem}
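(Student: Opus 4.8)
The plan is to transfer both implications to the level of the C*-algebras generated by the two Cantor graph operator systems, where the spectral theory of \cite{Power} governs the passage between masa-preserving *-isomorphisms and homeomorphisms of the associated graphs. I will use repeatedly that, by Theorem~\ref{thm:c*algebra generated by uhf algebra is c*envelope}, the C*-envelopes of $\cl S_{\infty}(P)$ and $\cl S_{\infty}(Q)$ coincide with the generated C*-algebras $C^*(\cl S_{\infty}(P))\subseteq \lcalg{\cl A}$ and $C^*(\cl S_{\infty}(Q))\subseteq \lcalg{\cl B}$, that by Theorem~\ref{thm:bijective correspondance between open symmetric reflexive subsets} we have $R(\cl S_{\infty}(P)) = P$ and $R(\cl S_{\infty}(Q)) = Q$, and that matrix-unit membership is detected by Proposition~\ref{prop:local spectral theorem}.

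For the implication (ii)$\Rightarrow$(i), I would start from the unital complete order isomorphism $\phi$ and extend it, via Remark~\ref{cor:ucoi induces *iso in envelopes}, to a *-isomorphism $\widetilde{\phi}: C^*(\cl S_{\infty}(P))\to C^*(\cl S_{\infty}(Q))$. Since $\lcalg{\cl D}\subseteq \cl S_{\infty}(P)$ and $\phi(\lcalg{\cl D}) = \lcalg{\cl E}$, the map $\widetilde{\phi}$ carries the canonical masa onto the canonical masa and hence, by Gelfand duality, induces a homeomorphism $\varphi: X_{\infty}\to Y_{\infty}$. The final step is to invoke \cite[Proposition~7.2]{Power}, which says that such a masa-preserving *-isomorphism sends the spectral graph of any $\lcalg{\cl D}$-bimodule $\cl M$ onto that of its image, that is $(\varphi\times\varphi)(R(\cl M)) = R(\widetilde{\phi}(\cl M))$. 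Taking $\cl M = \cl S_{\infty}(P)$ and using $\widetilde{\phi}(\cl S_{\infty}(P)) = \phi(\cl S_{\infty}(P)) = \cl S_{\infty}(Q)$ gives exactly $(\varphi\times\varphi)(P) = Q$.

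For the implication (i)$\Rightarrow$(ii), I would run, in the reverse direction, the construction underlying Power's isomorphism theorem \cite[Theorem~5.3]{Power}. Because $\varphi\times\varphi$ preserves the composition of partial graphs defined in (\ref{eq:circedges}) and $(\varphi\times\varphi)(P) = Q$, one checks that $(\varphi\times\varphi)(\epsilon(P)) = \epsilon(Q)$, so by Lemma~\ref{rem:graph produce of operator system is graph of generated c*alg} the homeomorphism $\varphi$ induces a masa-preserving *-isomorphism $\Phi: C^*(\cl S_{\infty}(P))\to C^*(\cl S_{\infty}(Q))$ with $R(\Phi(e_{i,j}^k)) = (\varphi\times\varphi)(E_{i,j}^k)$ for each matrix unit. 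Concretely, for $e_{i,j}^k\in \cl S_{\infty}(P)$ the compact open set $(\varphi\times\varphi)(E_{i,j}^k)$ is a finite disjoint union of basic graphs $F_{s,t}^m$ and $\Phi(e_{i,j}^k) = \sum f_{s,t}^m$. Since $E_{i,j}^k\subseteq P$ forces each such $F_{s,t}^m\subseteq (\varphi\times\varphi)(P) = Q$, Proposition~\ref{prop:local spectral theorem} gives $\Phi(e_{i,j}^k)\in \cl S_{\infty}(Q)$, whence $\Phi(\cl S_{\infty}(P)) \subseteq \cl S_{\infty}(Q)$; applying the same reasoning to $\varphi^{-1}$ yields equality. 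The restriction $\phi = \Phi|_{\cl S_{\infty}(P)}$ is then a unital complete order isomorphism onto $\cl S_{\infty}(Q)$, and since the diagonal matrix units $e_{i,i}^k$ map to diagonal matrix units we obtain $\phi(\lcalg{\cl D}) = \lcalg{\cl E}$.

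The main obstacle will be the (i)$\Rightarrow$(ii) direction: producing the genuine *-isomorphism $\Phi$ of the generated C*-algebras from the bare homeomorphism $\varphi$. This forces one to verify that the assignment on matrix units respects products and adjoints, which rests on the intertwining identities $(\varphi\times\varphi)(E_1)\circ(\varphi\times\varphi)(E_2) = (\varphi\times\varphi)(E_1\circ E_2)$ and ${E_{i,j}^k}^* = E_{j,i}^k$ together with Remark~\ref{rem:facts about partial homeo1}, exactly as in the matrix-unit computations of \cite[Theorem~5.3]{Power}; the remaining bookkeeping (well-definedness across refinements of the index $m$, unitality, and that the restriction lands on $\cl S_{\infty}(Q)$ rather than a larger system) is routine but must be carried out with care.
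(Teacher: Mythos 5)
Your proposal is correct and follows essentially the same route as the paper: both directions pass to the generated C*-algebras (identified with the C*-envelopes via Theorem~\ref{thm:c*algebra generated by uhf algebra is c*envelope}), use Lemma~\ref{rem:graph produce of operator system is graph of generated c*alg} to identify $\epsilon(P)$ with $R(C^*(\cl S_{\infty}(P)))$, and then invoke Power's spectral machinery for AF algebras with canonical masas, restricting back to the operator systems via Proposition~\ref{prop:local spectral theorem}. The only differences are presentational — you build the *-isomorphism in (i)$\Rightarrow$(ii) by hand following Power's Theorem~5.3 where the paper cites Power's Theorem~7.5, and in (ii)$\Rightarrow$(i) you quote a bimodule-level spectral statement where the paper argues matrix unit by matrix unit using Power's Proposition~7.1 and compactness.
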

       
\begin{proof}
 Set $\losc{\cl S} = \cl S_{\infty}(P)$ (resp. $\losc{\cl T} = \cl S_{\infty}(Q)$); then $\losc{\cl S}$ 
 is a concrete operator $\lcalg{\cl D}$-system 
        (resp. a concrete operator $\lcalg{\cl E}$-system).

   (i)$\Rightarrow$(ii)   
   For ease of notation, set $\varphi^{(2)} = \varphi \times \varphi$. 
   Let $\widetilde{P} = \epsilon(P)$ and $\widetilde{Q} = \epsilon(Q)$.    
As in the proof of \cite[Proposition~7.5]{Power},
$\varphi^{(2)}$ is a homeomorphism from $\widetilde{P}$ onto $\widetilde{Q}$. 
By Lemma \ref{rem:graph produce of operator system is graph of generated c*alg},
$\widetilde{P} = R(C^*(\losc{\cl S}))$ and $\widetilde{Q} = R(C^*(\losc{\cl T}))$. 
Since $C^*(\losc{\cl S})$ (resp. $C^*(\losc{\cl T})$) is an AF C*-algebra with a canonical masa 
$\lcalg{\cl D}$ (resp. $\lcalg{\cl E}$), by \cite[Theorem 7.5]{Power},
there exists a *-isomorphism $\psi : C^*(\losc{\cl S})\to C^*(\losc{\cl T})$ such that 
$\psi(\lcalg{\cl D}) = \lcalg{\cl E}$. 
We have that the restriction $\phi$ of $\psi$ to $\losc{\cl S}$ has its range in $\losc{\cl T}$. 
By symmetry, $\phi$ is a bijection, and hence a unital complete order isomorphism.

(ii)$\Rightarrow$(i)   
  By Remark~\ref{cor:ucoi induces *iso in envelopes}, there exists a *-isomorphism 
  $\rho: C_e^*(\losc{\cl S}) \rightarrow C_e^*(\losc{\cl T})$ which extends $\phi.$ 
  By Theorem~\ref{thm:c*algebra generated by uhf algebra is c*envelope}, $\rho: C^*(\losc{\cl S}) \rightarrow C^*(\losc{\cl T})$ is a unital 
  *-isomorphism. Since $C^*(\losc{\cl S})$ and $C^*(\losc{\cl T})$ are subalgebras of $\lcalg{\cl A}$ and $\lcalg{\cl B}$, respectively, 
using \cite[Theorem 7.5]{Power}
  we obtain 
 a homeomorphism $\varphi: X_{\infty} \rightarrow Y_{\infty}$ such that,
  if $\varphi^{(2)} = \varphi \times \varphi$ then the map
        \[
        \varphi^{(2)}:R(C^*(\losc{\cl S})) \rightarrow R(C^*(\losc{\cl T}))
        \]
        is a homeomorphism
        and $R(\rho(e_{i,j}^k))= \varphi^{(2)}(R(e_{i,j}^k))$ for any $e_{i,j}^k \in C^*(\losc{\cl S})$.
        Suppose that 
$E_{i,j}^k \subseteq P$. By Proposition \ref{prop:local spectral theorem}, $e_{i,j}^k\in \losc{\cl S}$. 
Since $\phi$ is a (complete) isometry, \cite[Proposition 7.1]{Power}, along with the compactness of $Y_{\infty}$ shows that 
$\phi(e_{i,j}^k)$ is a sum of canonical matrix units. 
Moreover, by Theorem \ref{thm:bijective correspondance between open symmetric reflexive subsets}, 
$R(\phi(e_{i,j}^k))\subseteq R(\losc{\cl T}) = Q$. Thus, $\varphi^{(2)}(P) \subseteq Q$; 
by symmetry,
        $\varphi^{(2)}(P) = Q$.   
    \end{proof}

We point out that the condition $\phi(\lcalg{\cl D}) = \lcalg{\cl E}$ appearing in 
Theorem \ref{thm:graphs homeo then op system iso} (ii) is rather natural; indeed, since
the algebra $\lcalg{\cl D}$ uniquely determines $X_{\infty}$, this
condition can be thought of as 
the requirement that the map $\psi$ respect the \lq\lq vertex sets'' in the corresponding operator systems 
in order to give rise to a bona fide Cantor graph isomorphism.

    \subsection{A generalisation of Glimm's theorem}\label{sec:glimms theorem generalisation}

We conclude this section with a generalised version of Glimm's theorem (see~\cite{Glimm}).

    \begin{theorem} \label{thm:generalisation Glimms theorem}
        Let $\lcalg{\cl A}$ and $\lcalg{\cl B}$ be UHF algebras with canonical masas $\lcalg{\cl D}$ and $\lcalg{\cl E}$, respectively. 
        Let $\losc{\cl S}$ be a concrete operator $\lcalg{\cl D}$-system and $\losc{\cl T}$ be a concrete operator $\lcalg{\cl E}$-system. 
        The following are equivalent:
        \begin{enumerate}[\rm(i)]
        \item there exists a unital complete order isomorphism $\phi:\losc{\cl S} \rightarrow \losc{\cl T}$ such that 
        $\phi(\lcalg{\cl D}) = \lcalg{\cl E}$;
        \item there exist subsequences $(\cl S_{m_k})_{k \in \mathbb{N}}$ and $(\cl T_{n_k})_{k \in \mathbb{N}}$ 
        of the sequences in the inductive systems associated with $\losc{\cl S}$ and $\losc{\cl T}$, respectively, 
        and unital completely positive maps $\{\phi_k\}_{k \in \bb{N}}$ and $\{\psi_k\}_{k \in \bb{N}}$ such that
        \begin{enumerate}[\rm(a)]
          \item the diagram 
        \begin{equation*}
          \vspace{-2mm}
            \begin{tikzcd}
                \cl S_1 \arrow{r} \arrow{d}{\phi_1} & \cl S_{m_1} \arrow{d}{\phi_2}\arrow{r}&  \cl S_{m_2} \arrow{d}{\phi_3} \arrow{r}& \cdots \\
                \cl T_{n_1} \arrow{r} \arrow[swap]{ur}{\psi_1}      & \cl T_{n_2} \arrow{r} \arrow[swap]{ur}{\psi_2}         &  \cl T_{n_3} \arrow[swap]{ur}{\psi_3} \arrow{r}          & \cdots, \\
            \end{tikzcd}
            \vspace{-10mm}
        \end{equation*}
        commutes, and
        \item $\phi_{k+1}(\cl D_{m_k}) \subseteq \cl E_{n_{k+1}}$ and $\psi_k(\cl E_{n_k}) \subseteq \cl D_{m_k}$, for all $k \in \bb{N}$.
      \end{enumerate}
      \end{enumerate}
    \end{theorem}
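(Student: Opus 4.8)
The plan is to reduce both sides to their canonical finite-dimensional graph approximations and then treat the two implications separately: the implication (ii)$\Rightarrow$(i) will rest on the zig-zag isomorphism machinery for inductive limits, while (i)$\Rightarrow$(ii) will rest on the graph isomorphism Theorem~\ref{thm:graphs homeo then op system iso} combined with a back-and-forth construction in the spirit of Glimm. First I would record the reductions. By Proposition~\ref{thm:closed D_bimodule is ind limit of graph operator systems}, writing $\cl S_k = \losc{\cl S}\cap\cl A_k$ and $\cl T_k = \losc{\cl T}\cap\cl B_k$, each $\cl S_k$ (resp. $\cl T_k$) is a graph operator system and $\losc{\cl S}$ (resp. $\losc{\cl T}$) is the closure of $\los{\cl S}=\limos\cl S_k$ (resp. $\los{\cl T}=\limos\cl T_k$); the horizontal connecting maps of these systems are the restrictions of the injective $*$-homomorphisms $\pi_k$, hence unital complete order embeddings. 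I also set $P = R(\losc{\cl S})$ and $Q = R(\losc{\cl T})$, which are Cantor graphs, and keep in mind the spectral identity $E_{i,j}^k\subseteq R(\losc{\cl S})$ if and only if $e_{i,j}^k\in\losc{\cl S}$ from Proposition~\ref{prop:local spectral theorem}, together with the bijective correspondence of Theorem~\ref{thm:bijective correspondance between open symmetric reflexive subsets}.

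For (ii)$\Rightarrow$(i), reading the commuting triangles of the zig-zag and applying Lemma~\ref{rem:ucio conjgation} repeatedly to the horizontal complete order embeddings, I conclude that every $\phi_k$ and every $\psi_k$ is itself a unital complete order embedding. By Remark~\ref{rem:indlimit subsequence} the inductive limits over the subsequences $(m_k)$ and $(n_k)$ coincide with $\los{\cl S}$ and $\los{\cl T}$, so Proposition~\ref{prop:iso ind limit infinitely many maps} applies to the zig-zag and yields a unital complete order isomorphism between $\los{\cl S}$ and $\los{\cl T}$, which, being completely isometric, extends to an isomorphism $\phi:\losc{\cl S}\to\losc{\cl T}$ of the completions. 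Finally, the containments $\phi_{k+1}(\cl D_{m_k})\subseteq\cl E_{n_{k+1}}$ and $\psi_k(\cl E_{n_k})\subseteq\cl D_{m_k}$ of condition (b) show that $\phi$ carries $\overline{\bigcup_k\cl D_{m_k}}=\lcalg{\cl D}$ into $\lcalg{\cl E}$ and that $\phi^{-1}$ carries $\lcalg{\cl E}$ into $\lcalg{\cl D}$, whence $\phi(\lcalg{\cl D})=\lcalg{\cl E}$. I emphasise that here I do not need the diagonal restrictions to be $*$-homomorphisms, only the set-theoretic containments, so the weaker hypothesis that the $\phi_k,\psi_k$ are merely completely positive suffices.

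For (i)$\Rightarrow$(ii), the hypothesis is precisely condition (ii) of Theorem~\ref{thm:graphs homeo then op system iso}, which therefore supplies a homeomorphism $\varphi:X_\infty\to Y_\infty$ with $(\varphi\times\varphi)(P)=Q$. Writing $\varphi^{(2)}=\varphi\times\varphi$, I would run the classical Glimm back-and-forth. For a matrix unit $e_{i,j}^{k}\in\cl S_k$ the set $E_{i,j}^k$ is compact open in $R(\lcalg{\cl A})$, so $\varphi^{(2)}(E_{i,j}^k)$ is a compact open subset of $Q\subseteq R(\lcalg{\cl B})$ and hence, for $n$ large enough (taken uniformly over the finitely many matrix units at level $k$), equals a finite disjoint union of basic sets $F_{s,t}^{n}$; setting $\phi$ on $e_{i,j}^k$ to be the corresponding sum $\sum f_{s,t}^{n}$ produces, exactly as in the Glimm-type construction of \cite{Power}, a unital $*$-homomorphism on the matrix level whose restriction is a unital completely positive map sending the graph operator system $\cl S_k$ into $\cl T_{n}$ (because $\varphi^{(2)}(E_{i,j}^k)\subseteq Q$ forces each $f_{s,t}^{n}$ to lie in $\losc{\cl T}\cap\cl B_{n}=\cl T_{n}$). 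Reversing the roles via $\varphi^{-1}$ returns maps in the opposite direction, and alternating the two constructions yields the subsequences $(m_k),(n_k)$ and the maps $\phi_k,\psi_k$. Commutativity of the zig-zag follows because $\varphi^{(2)}$ and its inverse compose to the identity on the relevant graphs, so each composite $\psi_k\circ\phi_k$ (resp. $\phi_{k+1}\circ\psi_k$) agrees with the inclusion-induced connecting map; condition (b) holds because $\varphi^{(2)}$ carries each diagonal piece $E_{i,i}^k$ onto a union of diagonal pieces $F_{s,s}^{n}$, so diagonals are sent into diagonals.

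The main obstacle will be the bookkeeping in the (i)$\Rightarrow$(ii) back-and-forth. One must simultaneously guarantee that each $\phi_k$ is a genuine unital $*$-homomorphism of the ambient matrix algebras restricting to $\cl S_{m_{k-1}}$ — this is where the multiplicativity and orthogonality relations among the $E_{i,j}^k$ recorded in Remark~\ref{rem:facts about partial homeo1}, together with the compactness of $\varphi^{(2)}(E_{i,j}^k)$, are essential — that the full zig-zag commutes exactly rather than merely in the limit, and that the diagonal containments of condition (b) are respected at every stage while the subsequences are chosen cofinally. Everything else is an application of the inductive-limit apparatus already developed in Sections~\ref{sect:Inductive limit of operator systems} and~\ref{chapt: operator systems additional structure}; the only genuinely new ingredient is the careful alternation of the graph-theoretic Glimm construction with the tracking of the two canonical masas.
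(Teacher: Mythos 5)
Your proposal is correct and follows essentially the same route as the paper: for (ii)$\Rightarrow$(i) the zig-zag inductive-limit isomorphism (the paper invokes Remark~\ref{rem:theta coi_os}, you invoke Proposition~\ref{prop:iso ind limit infinitely many maps}; both give the same map) followed by extension to the completions and the masa containments from (b); for (i)$\Rightarrow$(ii) the passage through Remark~\ref{cor:ucoi induces *iso in envelopes}, Theorem~\ref{thm:c*algebra generated by uhf algebra is c*envelope} and Theorem~\ref{thm:graphs homeo then op system iso} to obtain the homeomorphism $\varphi$ with $\varphi^{(2)}(R(\losc{\cl S}))=R(\losc{\cl T})$, followed by the Glimm-type back-and-forth of \cite{Power}. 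The one inaccuracy is in your closing paragraph: you cannot ask each $\phi_k$ to be a unital *-homomorphism of the \emph{ambient matrix algebras}, since $\varphi^{(2)}$ is a homeomorphism only between $R(C^*(\losc{\cl S}))$ and $R(C^*(\losc{\cl T}))$, not between $R(\lcalg{\cl A})$ and $R(\lcalg{\cl B})$ (take $\losc{\cl S}=\lcalg{\cl D}$, $\losc{\cl T}=\lcalg{\cl E}$ with $\lcalg{\cl A}\not\cong\lcalg{\cl B}$ to see this fail). The paper sidesteps this by running Power's Theorem~5.3 on the finite-dimensional C*-algebras $C^*(\cl S_{m_k})$ and $C^*(\cl T_{n_k})$ and then restricting; your matrix-unit construction works for exactly the same reason, so the error is only in your description of the obstacle, not in the argument itself.
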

    
    \begin{proof}
    (ii)$\Rightarrow$(i)    
    By Remark~\ref{rem:theta coi_os}, 
    there exists a unital complete order isomorphism 
    $\phi: \limos \cl S_k \rightarrow \limos \cl T_k$; let $\psi: \limos \cl T_k \rightarrow \limos \cl S_k$ be its inverse. 
    Let  
    $\widehat{\phi} : \losc{\cl S} \rightarrow \losc{\cl T}$ 
    (resp. $\widehat{\psi} : \losc{\cl T} \rightarrow \losc{\cl S}$) be the (unital completely positive)
    extension of $\phi$ (resp. $\psi$). Clearly, $\widehat{\phi}$ and $\widehat{\psi}$ are each other's inverses, and thus
    $\losc{\cl S}$ and $\losc{\cl T}$ are unitally completely order isomorphic. 
    Furthermore, condition (b) implies that 
    $\widehat{\phi}(\lcalg{\cl D}) = \lcalg{\cl E}$.

(i)$\Rightarrow$(ii)    
        Suppose that $\phi:\losc{\cl S} \rightarrow \losc{\cl T}$ is a unital complete order isomorphism such that $\phi(\lcalg{\cl D}) = \lcalg{\cl E}$.     
                By Remark~\ref{cor:ucoi induces *iso in envelopes}, 
                there exists a *-isomorphism $\phi: C_e^*(\losc{\cl S}) \rightarrow C_e^*(\losc{\cl T})$ extending $\phi.$
        By Theorem~\ref{thm:c*algebra generated by uhf algebra is c*envelope}, $\phi: C^*(\losc{\cl S}) \rightarrow C^*(\losc{\cl T})$ is a unital 
        *-isomorphism.
        By \cite[Theorem~7.5]{Power}, 
        there exists a homeomorphism $\alpha : X_{\infty} \rightarrow Y_{\infty}$ such that 
        $\alpha^{(2)} : R(C^*(\losc{\cl S})) \rightarrow R(C^*(\losc{\cl T}))$ is a homeomorphism and 
        $\alpha^{(2)}(E_{i,j}^k) = R(\phi(e_{i,j}^k))$.
By Theorem \ref{thm:graphs homeo then op system iso} and its proof, 
\begin{equation}\label{eq_strr}
\alpha^{(2)}(R(\losc{\cl S})) = R(\losc{\cl T}).
\end{equation}

Set $\cl L_k = C^*(\cl S_{k})$ and $\cl M_k = C^*(\cl T_{k})$, $k\in \bb{N}$. 
By (\ref{eq_eeile}) and \cite[Theorem~5.3]{Power} and its proof, 
there exist inductive systems of finite dimensional C*-algebras and corresponding unital *-homomorphisms 
such that the following diagram commutes:
   \begin{equation*}
          \vspace{-2mm}
            \begin{tikzcd}
                \cl L_1 \arrow{r} \arrow{d}{\phi_1} & \cl L_{m_1} \arrow{d}{\phi_2}\arrow{r}&  \cl L_{m_2} \arrow{d}{\phi_3} \arrow{r}& \cdots \\
                \cl M_{n_1} \arrow{r} \arrow[swap]{ur}{\psi_1}      & \cl M_{n_2} \arrow{r} \arrow[swap]{ur}{\psi_2}         &  \cl M_{n_3} \arrow[swap]{ur}{\psi_3} \arrow{r}          & \cdots. \\
            \end{tikzcd}
            \vspace{-10mm}
        \end{equation*}
        
         \bigskip
        
        \smallskip
        
The compactness of $Y_{\infty}$ and \cite[Proposition 7.1]{Power} show that the element $\phi_k(e_{i,j}^{m_k})$ is a sum of 
canonical matrix units. 
By passing to further 
subsequences if necessary, we may therefore assume that 
$\phi_k(\cl S_{m_k})\subseteq \cl T_{n_{k+1}}$, 
$\psi_k(\cl T_{m_k})\subseteq \cl S_{n_{k}}$, 
$\phi_k(\cl D_{m_k})\subseteq \cl E_{n_{k+1}}$ and 
$\psi_k(\cl E_{n_k})\subseteq \cl D_{m_{k}}$, 
for each $k$. 
Thus, conditions (a) and (b) are fulfilled. 
    \end{proof}


\begin{thebibliography}{99}


\bibitem{AlfsenBook}
{\sc M. E. Alfsen},
{\it Compact convex sets and boundary integrals},
{\rm Springer-Verlag, New York-Heidelberg, 1971}.





\bibitem{Arveson}
{\sc W. B. Arveson},
\textit{Subalgebras of C*-algebras},
{\rm Acta Math.  123 (1969), 141-224}.


\bibitem{Blackadar}
{\sc B. Blackadar},
{\it Operator algebras},
{\rm Springer-Verlag, Berlin, 2006}.


\bibitem{Blecher}
{\sc D. P. Blecher},
\textit{The standard dual of an operator space},
{\rm Pacific J. Math. 153 (1992), 15-30}.


\bibitem{BlecherLeMerdy}
{\sc D. P. Blecher and C. Le Merdy},
{\it Operator algebras and their modules---an operator space
              approach},
{\rm The Clarendon Press, Oxford University Press, Oxford, 2004}.





\bibitem{BlecherMuhlyPaulsen}
{\sc D. P. Blecher, P. S. Muhly and V. I. Paulsen},
\textit{Categories of operator modules (Morita equivalence and
              projective modules)},
{\rm Mem. Amer. Math. Soc. 143 (2000), no. 681}.


\bibitem{Bratteli}
{\sc O. Bratteli},
\textit{Inductive limits of finite dimensional C*-algebras},
{\rm Trans. Amer. Math. Soc. 171 (1972), 195-234}.


\bibitem{ChoiEffros}
{\sc M. D. Choi and E. G. Effros},
\textit{Injectivity and operator spaces},
{\rm J. Funct. Anal. 24 (1977), no. 2, 156-209}.



\bibitem{DavidsonBook}
{\sc K. R. Davidson},
{\it C*-algebras by example},
{\rm American Mathematical Society, Providence, 1996}.




\bibitem{DavidsonPower}
{\sc K. R. Davidson and S. C. Power},
\textit{Isometric automorphisms and homology for nonselfadjoint
              operator algebras},
{\rm Quart. J. Math. 42 (1991), 271-292}.




\bibitem{dsw}
{\sc R. Duan, S. Severini and A. Winter}, 
{\it Zero-error communication via quantum channels, non-commutative graphs and a quantum Lov\'{a}sz $\theta$ function},
{\rm IEEE Trans. Inf. Theory 59 (2013), 1164-1174}.

\bibitem{EffrosRuanBook}
{\sc E. G. Effros and Zh.-J. Ruan},
{\it Operator spaces},
{\rm Oxford University Press, New York, 2000}.







\bibitem{Elliott76}
{\sc G. A. Elliott},
\textit{On the classification of inductive limits of sequences of
              semisimple finite-dimensional algebras},
{\rm J. Algebra 38 (1976), no. 1, 29-44}.




\bibitem{fnw} 
{\sc M. Fannes, B. Nachtergaele and R. F. Werner}, 
{\it Finitely correlated states and quantum spin chains,} 
{\rm Commun. Math. Phys.  144 (1992), 443-490.}


\bibitem{fkpt}
{\sc D. Farenick, A. Kavruk, V. I. Paulsen and I. G. Todorov},
{\it Characterisations of the weak expectation property},
{\rm to appear in New York J. Math., arXiv: 1307.1055}.






\bibitem{Glimm}
{\sc J. G. Glimm},
\textit{On a certain class of operator algebras},
{\rm Trans. Amer. Math. Soc.  95 (1960), 318-340}.


\bibitem{Hamana}
{\sc M. Hamana},
{\it Injective envelopes of operator systems},
{\rm Publ. Res. Inst. Math. Sci. 15 (1979), no. 3, 773-785}.



\bibitem{Kadison}
{\sc R. V. Kadison},
{\it A representation theory for commutative topological algebra},
{\rm Mem. Amer. Math. Soc. 7 (1951), 39pp.}


\bibitem{KavrukPaulsenTodorovTomforde}
{\sc A. Kavruk, V. I. Paulsen, I. G. Todorov and
              M. Tomforde},
{\it Quotients, exactness, and nuclearity in the operator system
              category},
{\rm Adv. Math. 235 (2013), 321-360}.


\bibitem{KavrukPaulsenTodorovTomforde2}
{\sc A. Kavruk, V. I. Paulsen, I. G. Todorov and
              M. Tomforde},
{\it Tensor products of operator systems},
{\rm J. Funct. Anal. 261 (2011), no. 2, 267-299}.


\bibitem{Kirchberg}
{\sc E. Kirchberg},
{\it On subalgebras of the CAR-algebra},
{\rm J. Funct. Anal. 129 (1995), no. 1, 35-63}.


\bibitem{KirchbergWassermann}
{\sc E. Kirchberg and S. Wassermann},
{\it C*-algebras generated by operator systems},
{\rm J. Funct. Anal. 155 (1998), no. 2, 324-351}.

\bibitem{Li}
{\sc J. Li},
{\it A construction of inductive limit for operator system},
{\rm preprint, arXiv:1703.05634 (2017)}.

\bibitem{LuthraKumar}
{\sc P. Luthra and A. Kumar},
{\it Nuclearity properties and C*-envelopes of operator system inductive limits},
{\rm preprint, arXiv: 1612.01421 (2016)}. 


\bibitem{MacLane}
{\sc S. Mac Lane},
{\it Categories for the working mathematician},
{\rm Springer Verlag, New York, 1998}.


\bibitem{Murphy}
{\sc G. J. Murphy},
{\it Continuity of the spectrum and spectral radius},
{\rm Proc. Amer. Math. Soc. 82 (1981), no. 4, 619-621}.


\bibitem{MurphyBook}
{\sc G. J. Murphy},
{\it C*-algebras and operator theory},
{\rm Academic Press, Boston, 1990}.


\bibitem{PaulsenOrtiz}
{\sc C. M. Ortiz and V. I. Paulsen},
{\it Lovasz theta type norms and operator systems},
{\rm Linear Algebra Appl. 477 (2015), 128-147}.


\bibitem{PaulsenBook}
{\sc V. I. Paulsen},
{\it Completely bounded maps and operator algebras},
{\rm Cambridge University Press, Cambridge, 2002}.


\bibitem{PaulsenSeveriniStahlkeTodorovWinter}
{\sc V. I. Paulsen, S. Severini, D. Stahlke, I. G. Todorov and A. Winter},
{\it Estimating quantum chromatic numbers},
{\rm J. Funct. Anal. 270 (2016), no. 6, 2188-2222}.


\bibitem{PaulsenPowerSmith}
{\sc V. I. Paulsen, S. C. Power and R. R. Smith},
{\it Schur products and matrix completions},
{\rm J. Funct. Anal. 85 (1989), no. 1, 151-178}.


\bibitem{PaulsenTodorovTomforde}
{\sc V. I. Paulsen, I. G. and Todorov and M. Tomforde},
{\it Operator system structures on ordered spaces},
{\rm Proc. Lond. Math. Soc. (3) 102 (2011), no. 1, 25-49}.


\bibitem{PaulsenTomforde}
{\sc V. I. Paulsen and M. Tomforde},
{\it Vector spaces with an order unit},
{\rm Indiana Univ. Math. J. 58 (2009), no. 3, 1319-1359}.




\bibitem{Power}
{\sc S. C. Power},
{\it Limit algebras: an introduction to subalgebras of C*-algebras},
{\rm Longman Scientific and Technical, Harlow, 1992}.


\bibitem{Power2}
{\sc S. C. Power},
{\it On ideals of nest subalgebras of C*-algebras},
{\rm Proc. London Math. Soc. (3) 50 (1985), no. 2, 314-332}.


\bibitem{Power3}
{\sc S. C. Power},
{\it Classification of tensor products of triangular operator algebras},
{\rm Proc. London Math. Soc. (3) 61 (1990), no. 3, 571-614}.


\bibitem{Power4}
{\sc S. C. Power},
{\it The classification of triangular subalgebras of AF C*-algebras},
{\rm Bull. London Math. Soc. 22 (1990), no. 3, 269-272}.


\bibitem{RordamLarsenLausten}
{\sc M. Rordam, F. Larsen and N. Laustsen},
{\it An introduction to K-theory for C*-algebras},
{\rm Cambridge University Press, Cambridge, 2000}.



\bibitem{Ruan}
{\sc Zh.-J. Ruan},
{\it Subspaces of C*-algebras},
{\rm J. Funct. Anal. 76 (1988), no. 1, 217-230}.










\end{thebibliography}
\end{document}